\DeclareFontFamily{OT1}{rsfs}{}
\DeclareFontShape{OT1}{rsfs}{n}{it}{<-> rsfs10}{}
\DeclareMathAlphabet{\mathscr}{OT1}{rsfs}{n}{it}
\begin{document}
\theoremstyle{plain}

\newtheorem{theorem}{Theorem}[section]
\newtheorem{thm}[equation]{Theorem}
\newtheorem{prop}[equation]{Proposition}
\newtheorem{cor}[equation]{Corollary}
\newtheorem{conj}[equation]{Conjecture}
\newtheorem{lemma}[equation]{Lemma}
\newtheorem{definition}[equation]{Definition}
\newtheorem{question}[equation]{Question}

%%    DEFINITIONS, Remark, EXAMPLES, NOTATIONS
\theoremstyle{definition}
\newtheorem{conjecture}[theorem]{Conjecture}

\newtheorem{example}[equation]{Example}
\numberwithin{equation}{section}

\newtheorem{remark}[equation]{Remark}

\newcommand{\Hecke}{\mathcal{H}}
\newcommand{\Liea}{\mathfrak{a}}
\newcommand{\Cmg}{C_{\mathrm{mg}}}
\newcommand{\Cinftyumg}{C^{\infty}_{\mathrm{umg}}}
\newcommand{\Cfd}{C_{\mathrm{fd}}}
\newcommand{\Cinftyfd}{C^{\infty}_{\mathrm{ufd}}}
\newcommand{\sspace}{\Gamma \backslash G}
\newcommand{\PP}{\mathcal{P}}
\newcommand{\bfP}{\mathbf{P}}
\newcommand{\bfQ}{\mathbf{Q}}
\newcommand{\Siegel}{\mathfrak{S}}
\newcommand{\g}{\mathfrak{g}}
\newcommand{\A}{\mathbb{A}}
\newcommand{\Q}{\mathbb{Q}}
\newcommand{\Gm}{\mathbb{G}_m}
\newcommand{\kk}{\mathfrak{k}}
\newcommand{\nn}{\mathfrak{n}}
\newcommand{\tF}{\tilde{F}}
\newcommand{\p}{\mathfrak{p}}
\newcommand{\m}{\mathfrak{m}}
\newcommand{\bb}{\mathfrak{b}}
\newcommand{\Ad}{{\rm Ad}\,}
\newcommand{\ttt}{\mathfrak{t}}
\newcommand{\frakt}{\mathfrak{t}}
\newcommand{\U}{\mathcal{U}}
\newcommand{\Z}{\mathbb{Z}}
\newcommand{\bfG}{\mathbf{G}}
\newcommand{\bfT}{\mathbf{T}}
\newcommand{\R}{\mathbb{R}}
\newcommand{\ST}{\mathbb{S}}
\newcommand{\h}{\mathfrak{h}}
\newcommand{\bC}{\mathbb{C}}
\newcommand{\C}{\mathbb{C}}
\newcommand{\F}{\mathbb{F}}
\newcommand{\N}{\mathbb{N}}
\newcommand{\qH}{\mathbb {H}}
\newcommand{\temp}{{\rm temp}}
\newcommand{\Hom}{{\rm Hom}}
\newcommand{\Aut}{{\rm Aut}}
\newcommand{\Ext}{{\rm Ext}}
\newcommand{\End}{{\rm End}\,}
\newcommand{\Ind}{{\rm Ind}\,}
\def\circG{{\,^\circ G}}
\def\M{{\rm M}}
\def\diag{{\rm diag}}
\def\Ad{{\rm Ad}}
\def\wG{{\widehat G}}
\def\G{{\rm G}}
\def\SL{{\rm SL}}
\def\PSL{{\rm PSL}}
\def\GSp{{\rm GSp}}
\def\PGSp{{\rm PGSp}}
\def\Sp{{\rm Sp}}
\def\St{{\rm St}}
\def\GU{{\rm GU}}
\def\SU{{\rm SU}}
\def\U{{\rm U}}
\def\GO{{\rm GO}}
\def\GL{{\rm GL}}
\def\PGL{{\rm PGL}}
\def\PD{{\rm PD}}
\def\GSO{{\rm GSO}}
\def\Gal{{\rm Gal}}
\def\SO{{\rm SO}}
\def\O{{\rm  O}}
\def\Sym{{\rm Sym}}
\def\sym{{\rm sym}}
\def\St{{\rm St}}
\def\tr{{\rm tr\,}}
\def\ad{{\rm ad\, }}
\def\Ad{{\rm Ad\, }}
\def\rank{{\rm rank\,}}

\subjclass{Primary 11F70; Secondary 22E55}

\title[Branching laws: the non-tempered case]
{Branching laws for Classical Groups: \\the non-tempered case}

\author{Wee Teck Gan, Benedict H. Gross and Dipendra Prasad}
\thanks{WTG is partially supported by an MOE Tier 2 grant R146-000-233-112.
  DP thanks  Science and Engineering research board of the
  Department of Science and Technology, India for its support
through the JC Bose
National Fellowship of the Govt. of India, project number JBR/2020/000006.
His work was also supported by a grant of
the Government of the Russian Federation
for the state support of scientific research carried out
under the  agreement 14.W03.31.0030 dated 15.02.2018. }

\address{W.T.G.: National University of Singapore,
Singapore 119076.}
\email{matgwt@nus.edu.sg}
\address{B.H.G: Department of Mathematics, University of California San Diego, La Jolla, 92093}\email{gross@math.harvard.edu}
\address{D.P.: Indian Institute of Technology Bombay, Powai, Mumbai-400076} 
\address{D.P.: St Petersburg State University, St Petersburg, Russia}
\email{prasad.dipendra@gmail.com}
\maketitle
    {\hfill \today}
\begin{abstract}
This paper  generalizes the GGP conjectures which were earlier formulated
for tempered or more generally generic L-packets to Arthur packets, especially for the
nongeneric L-packets arising from Arthur parameters.
The paper introduces the key notion of a {\bf relevant pair} of A-parameters which governs the branching laws for $\GL_n$ and all classical groups
over both local fields and global fields.
It plays a role for all the branching problems studied in \cite{GGP} including Bessel models and Fourier-Jacobi models. 
\end{abstract}    

\tableofcontents

\section{Introduction}

This paper is a sequel to our earlier work \cite{GP1, GP2, GGP, GGP2}, which discussed several restriction (or branching)
problems in the representation theory of classical groups. In the local case, the conjectural answer was given in terms of symplectic root numbers associated to the Langlands parameters (L-parameters for short). In the global case, for cuspidal automorphic representations, the conjectural answer was given in terms of the central value of automorphic L-functions. However, all of these predictions were for representations lying in local L-packets whose Langlands parameters are {\bf generic} (in particular, for tempered representations). In this paper, we attempt to generalize these conjectures to certain non-generic L-packets.
\vskip 5pt

We will consider only those representations of a classical group $G$ over a local field $k$ which arise as local components of   automorphic representations in the automorphic dual (a class of representations which was singled out by Arthur).
They have Arthur parameters (A-parameters for short) of the form
$$\phi_A: WD(k) \times \SL_2(\bC) \rightarrow  {^L}G$$
where the restriction of $\phi_A$ to the Weil-Deligne group $WD(k)$ of $k$ is an admissible homomorphism with bounded image and the restriction to $\SL_2$ is algebraic. By the Jacobson-Morozov theorem, the conjugacy class of $\SL_2(\bC)$ in the dual group
$\wG(\C)$ corresponds to a unipotent conjugacy class in $\wG(\C)$.
\vskip 5pt

One can obtain an L-parameter from an A-parameter as follows. The abelianization of the Weil group is isomorphic to the multiplicative group $k^*$, by local class field theory. Let $|- |$ be the canonical absolute value on $k^*$ and map the Weil-Deligne group to $WD(k) \times \SL_2(\bC)$ by
$$w \rightarrow (w, \diag(|w|^{1/2}, |w|^{-1/2})).$$
Composing $\phi_A$ with this homomorphism gives an L-parameter $\phi$. The map 
\[  \phi_A \mapsto \phi  \]
is an injection from the set of A-parameters to the set of L-parameters.
We will call  L-parameters arising in this way the L-parameters of Arthur type. 
 A particular example is when  the restriction of $\phi_A$ to $\SL_2(\C)$ is trivial, in which case $\phi$ is a tempered L-parameter. Hence, we have the containments
 \[  \{ \text{tempered L-parameters} \} \subset \{\text{L-parameters of Arthur type} \} \subset \{ \text{L-parameters}  \}. \]
 We will  consider the restriction problem only for those representations which are contained
 in an A-packet, to be called representations of Arthur type. Much of the paper will in fact deal
 with a sub-class  of representations of Arthur type, contained in the L-packet associated to
the L-parameter $\phi$ which is contained in the  A-packet of $\phi_A$.
\vskip 5pt

When  the L-parameter $\phi$ is tempered, the associated L-packet is generic (i.e., contains a generic member),
so are covered by our original conjectures in \cite{GGP}. In this case, the sum over the L-packet of a pair of classical groups of the multiplicity of restriction is always equal to one, and the adjoint L-function of $\phi$ is regular and non-zero at the point $s = 1$. On the other hand, when the restriction of $\phi_A$ to $\SL_2(\C)$ is non-trivial, the L-parameter $\phi$ is neither tempered nor generic. In particular, the adjoint L-function of the parameter $\phi$ is not regular at the point $s = 1$. Indeed, the Lie algebra of the dual group contains the representation $\bC \otimes \frak {sl}_2(\bC)$ of $WD(k) \times \SL_2(\bC)$, which contributes a simple pole to the adjoint L-function of $\phi$ at $s = 1$. For the general  L-parameters of Arthur type, we conjecture that the sum of multiplicities of the restriction over an L-packet of Arthur type for a pair of classical groups is either zero or one.
We will give a precise conjectural criterion  for which L-packets (of Arthur type)   is  the sum of the multiplicities equal to one. In this case, we further conjecture that the distinguished representation will be selected by the character of the component group of the L-parameter obtained from symplectic root numbers by the same recipe as in the tempered case.
\vskip 5pt

One can consider the analogous restriction problem in the setting of unitary representations, where one works with the direct integral decomposition of a restriction.
Work of Clozel \cite{Cl}  in this direction suggests that an irreducible unitary representation of a reductive group with a specific unipotent conjugacy class in its A-parameter can weakly contain only those representations of a subgroup with a specific closely related unipotent class in their A-parameters. The work of Venkatesh \cite{Ve} makes this  precise for the restriction of unitary representations of $\GL_n(k)$ to $\GL_{n-1}(k)$ (indeed, to $\GL_m(k)$ for any $m < n$), by explicating the map from unipotent classes of $\GL_n(\bC)$ to those of $\GL_{n-1}(\bC)$. 
\vskip 5pt

More precisely, let $\pi$ be a representation of $\GL_n(k)$ of Arthur type for
$k$ a non-archimedean local field, for which  the associated unipotent conjugacy class in $\GL_n(\C)$ corresponds to the
partition $$n_1 \geq n_2 \geq \cdots \geq n_r \geq 1.$$ 
 Then according to Venkatesh \cite{Ve}, the only 
unipotent conjugacy class  of $\GL_{n-1}(\C)$ involved in the restriction problem $\pi|_{\GL_{n-1}(k)}$
from $\GL_n(k)$ to $\GL_{n-1}(k)$,
is the one given by
$$n_1-1 \geq n_2-1 \geq \cdots \geq n_r-1 \geq 0,$$
omitting those $n_i$ which are 1, and adding a few 1's at the 
end if necessary. We should  add that the work of Clozel and Venkatesh deals only with a crude question:
that of  determining the possible ``types'' (i.e., the unipotent conjugacy class associated to the A-parameter)
of representations of $H$ which appear in the spectral decomposition of 
$\pi|_{H}$, and not the precise spectral decomposition or which representations of 
the correct type actually do appear in the spectral decomposition of $\pi|_H$. The extension of Venkatesh's results to the setting of classical groups has been carried out in the PhD thesis \cite{Hen} of A. Hendrickson (a student of the first author).
\vskip 5pt

The work of Clozel and Venkatesh is of course in the context of unitary representations. In this paper, on the other hand,  
we work in the setting of smooth representation theory and formulate a conjecture for the restriction of irreducible smooth representations of classical groups in terms of the notion of a {\bf relevant pair} of A-parameters.
We will find in particular that many more unipotent conjugacy classes 
of $\GL_{n-1}(\C)$ are involved in the restriction problem from $\GL_n(k)$ to $\GL_{n-1}(k)$: these are, so to say, of distance one apart from the unipotent conjugacy class of the representation of $\GL_n(k)$ we are starting with.
\vskip 5pt

The definition of a relevant pair of A-parameters is not too complicated but we defer its precise definition  to \S \ref{S:relevant}. 
An elegant reformulation of this notion was given by Zhiwei Yun and discussed in \S \ref{cor}; there, we will also give a geometric interpretation in terms of a moment map (in the sense of symplectic geometry) arising in the theory of reductive dual pairs. 
In the rest of this introduction, we take this notion as a black box.  \vskip 10pt

We first consider the restriction problem for $\GL_n$ in \S \ref{S:GLn}. The case of $\GL_n(k)$
is simpler than the case of classical groups  as A-packets and  L-packets for $\GL_n(k)$ are singleton sets.  
Let $\pi_M$ be an irreducible representation of $\GL_n(k)$ with A-parameter $M_A$ and associated L-parameter $M$   and let $\pi_N$ be an irreducible representation of $\GL_{n-1}(k)$ with A-parameter $N_A$ and associated L-parameter $N$. We conjecture that $\pi_N$ appears as a quotient of the restriction of $\pi_M$ to $\GL_{n-1}(k)$ if and only if $(M_A,N_A)$ is a relevant pair of A-parameters. We prove this in a number of cases for $p$-adic groups (such as when the Deligne $\SL_2(\C)$ in $WD(k)$ acts trivially) using the theory of derivatives of Bernstein and Zelevinsky \cite{BZ}. Recently,
M. Gurevich \cite{Gu} has extended this work to prove one direction of the conjecture in all cases. Namely, he showed that 
\[  
\Hom_{\GL_{n-1}(k)}(\pi_M, \pi_N) \ne 0 \Longrightarrow  \text{$(M_A,N_A)$ is a relevant pair of A-parameters.} \]
Gurevich also showed the converse in some cases, such as when at least one of $M_A$ or $N_A$ is
tempered\footnote{As this paper was being revised, we  became aware of a paper due to KY Chan in arXiv:2006.02623,  
  ``Restriction for general linear groups: the local non-tempered Gan-Gross-Prasad conjecture (non-Archimedean case)''
  proving the full conjecture for $\GL_n$ in the non-archimedean case.}.
 \vskip 5pt
 
 We also show that when the pair $(M_A,N_A)$ is relevant, the ratio of the local Langlands L-functions 
\begin{equation}  \label{E:R}
L(M,N, s) = \frac{L(M \otimes N^{\vee}, s + ~1/2) \cdot L(M^{\vee} \otimes N, s + ~1/2)}{ L(M \otimes M^{\vee}, s + 1)\cdot  L(N \otimes N^{\vee}, s + 1)} \end{equation}
does not vanish at the point $s = 0$ (but may have a pole).
Note that the L-functions in the denominator are the adjoint L-functions for $\GL_n$ and $\GL_{n-1}$ respectively. At least one of these L-functions has a pole at $s = 0$ in the non-tempered case, so the non-vanishing of $L(M,N, s)$ at $s = 0$ means that at least one of the L-functions in the numerator has a pole at $s = 0$. It is not clear to us if this observation about the analytic behaviour of $L(M,N,s)$ at $s=0$ plays a role in the local restriction problem. 
\vskip 5pt

We now consider the problem of restriction from the split group $G = \SO(V) = \SO_{2n+1}(k)$ to the subgroup $H = \SO(W) = \SO_{2n}(k)$ fixing a non-isotropic line in the representation $V$ (and acting on the orthogonal complement $W$). The representation
$$M_A = \sum_{i = 0}^d M_i \otimes \Sym^i(\bC^2)$$
of dimension $2n$ gives an A-parameter for $G = \SO_{2n+1}$ if and only if the $M_i$ are bounded selfdual representations of $WD(k)$ with $M_i$ symplectic for $i$ even and $M_i$ orthogonal for $i$ odd. Indeed, when these conditions are satisfied, $M_A$ is a symplectic representation of $WD(k) \times \SL_2(\bC)$ of  dimension $2n$.
Note that the action of $WD(k) \times \SL_2(\bC)$ on the Lie algebra of $\wG = \Sp_{2n}(\bC)$ is the representation $\Sym^2 M_A$. Similarly, the representation
$$N_A = \sum_{i = 0}^d N_i \otimes \Sym^i(\bC^2)$$
of dimension $2n$ is an A-parameter for $H = \SO_{2n}$ if and only if the $N_i$ are bounded selfdual  representations of $WD(k)$, with $N_i$ orthogonal for $i$ even, $N_i$  symplectic for $i$ odd and the quadratic character $\det N_A$ is given by the discriminant of the even orthogonal space $W$. Indeed, in this case $N_A$ is an orthogonal representation of the right dimension and determinant.
Note that the action of $WD(k) \times \SL_2(\bC)$ on the Lie algebra of $^L H = \O_{2n}(\bC)$ is the representation $\wedge^2 N_A$.
\vskip 5pt

We conjecture in \S \ref{S:classical} that there is a representation $\pi_G \otimes \pi_H$ in the L-packet associated to $M$ and $N$ with 
$$\dim \Hom_H(\pi_G \otimes \pi_H, \bC) = 1$$
if and only if the A-parameters $(M_A,N_A)$ form a relevant pair, and then the   representation $\pi_G \otimes \pi_H$ in the L-packet associated to $M$ and $N$ with $\dim \Hom_H(\pi_G \otimes \pi_H, \bC) = 1$ is unique.
Furthermore, this representation is determined by the character of the component group of the L-parameter, which is  given by the root numbers associated to the symplectic representation $M \otimes N$ using the recipe of \cite{GGP}. When $(M_A,N_A)$ form a relevant pair, we also show that the ratio of L-functions
\begin{equation}  \label{E:R2} 
L(M,N, s) = \frac{L(M \otimes N, s + ~1/2)}{L(\Sym^2 M \oplus \wedge^2N, s + 1)} \end{equation}
does not vanish at the point $s = 0$, but may have a pole. 

\vskip 5pt

In section \ref{S:A}, we offer a conjecture (Conjecture  \ref{relevant}) on which Arthur packets could have  representations with
$\Hom_H(\pi_G \otimes \pi_H, \bC) \not = 0$, though the conjecture here is not as precise as the one in previous section
for L-packets of Arthur type.  The point here is that we consider all representations in the A-packet and not just those in the associated L-packet. 
The less definitive nature of the conjecture is due to the fact that A-packets for classical groups are not disjoint. So, for example, 
Conjecture  \ref{relevant} predicts that if $\pi_G \times \pi_H$ is a representation belonging to an A-packet (say for  a pair of A-parameters $(M_A,N_A)$) and satisfies
$\Hom_H(\pi_G \otimes \pi_H, \bC) \not = 0$,  then  $\pi_G \times \pi_H$ belongs to some A-packet for a relevant pair $(M'_A, N'_A)$ of A-parameters.  We prove certain theorems in \S  \ref{S:A} to support this conjecture and give a general
construction of Arthur packets where multiplicity $>1$ is achieved. We also construct a counterexample
to the naive expectation that if  $\pi_G \times \pi_H$ is of Arthur type with A-parameter $(M_A , N_A)$ and
$\Hom_H(\pi_G \otimes \pi_H, \bC) \not = 0$, then $(M_A, N_A)$ must be relevant.

\vskip 5pt

We note that the notion of a relevant pair $(M_A,N_A)$ of symplectic and orthogonal representations makes sense in general, and we use this to extend the conjecture on Bessel models for $\SO_{2n+1} \times \SO_{2m}$ in \cite{GGP} to the non-tempered case. It also works well for conjugate symplectic and conjugate orthogonal representations, which allows us to extend the conjectures on Bessel models for $\U_{2n+1} \times \U_{2m}$ in \cite{GGP} to the non-tempered case. One can likewise formulate the analogous local conjecture in the setting of Fourier-Jacobi models for the symplectic/metaplectic groups and the unitary groups, following the procedure in \cite{GGP}. We omit the details in this paper and leave the precise formulation to the reader, using \cite{GGP} as a guide. 

\vskip 5pt

In \S \ref{S:global},  we consider the global setting over a number field. Here, one is considering the automorphic period integral of the automorphic forms belonging to the global L-parameters of Arthur type. In the non-tempered setting, these automorphic forms are not necessarily cuspidal and so some regularization may be needed to make sense of the period integrals. A systematic equivariant regularization procedure for reductive periods has been developed in a recent paper of Zydor \cite{Zy}, following earlier works of others.  Our local conjecture implies that there is at most one representation in the global L-packet (of Arthur type) which can have a non-vanishing period integral.  Whether this distinguished global representation is automorphic or not is governed by Arthur's multiplicity formula in which a certain quadratic character of the global component group (of the A-parameter) plays a prominent role. In the tempered case, this character is trivial, but in general it is given in terms of certain symplectic global root numbers built out of the adjoint representation. The interaction of Arthur's character with the distinguished character arising from the restriction problem is quite interesting and will be discussed in \S \ref{S:global2}.  In any case, we show that the global analog of the ratio of L-functions in (\ref{E:R}) or (\ref{E:R2}) is holomorphic at $s=0$ when $(M_A, N_A)$ is a relevant pair of global A-parameters and its corresponding global root number is $1$ when the distinguished representation in the global L-packet associated to $(M,N)$ is automorphic. We then
conjecture that its non-vanishing is equivalent to the non-vanishing of the (regularized) period integral.  
\vskip 5pt

In \S \ref{S:example}, we discuss several families of examples in low rank classical groups where the restriction problem has been addressed for non-tempered A-packets; these provide some additional support for our local conjecture.
\vskip 5pt
 It is interesting to note that in the ongoing work of Chaudouard and Zydor on the Jacquet-Rallis relative trace formula (which has been used to settle special cases of the global GGP conjectures for unitary groups), the notion of relevant pair of A-parameters seems to show up in the continuous spectrum on the spectral side. In another direction, we explain in \S \ref{S:descent} how the automorphic descent method discovered by Ginzburg-Rallis-Soudry \cite{GRS1, GRS2, GRS3, GRS4} and further extended in a recent work of Jiang-Zhang \cite{JZ} can be 
predicted using our global conjecture. 
\vskip 5pt

Thus, the main innovation of this paper is the introduction of the notion of a relevant pair of A-parameters,
which governs the branching laws of all classical groups, at least for representations belonging to
L-packets of Arthur type. It is meaningful for all local fields, and global fields, and makes sense for $\GL_n(k)$ as well as all classical groups,
and plays a role for all the branching problems studied in \cite{GGP} including Bessel models and Fourier-Jacobi models.\vskip 5pt

The  notion of relevant pair of A-parameters appears in this paper from 
three relatively  independent points of views. 
\vskip 5pt

\begin{enumerate}
\item The condition was discovered in the course of studying branching laws from $\GL_{n+1}(k)$ to $\GL_n(k)$ via the Bernstein-Zelevinsky filtration of a representation of $\GL_{n+1}(k)$ restricted to its mirabolic subgroup.
\vskip 5pt

\item After the work of Ichino-Ikeda,  cf. \cite{II}, it is natural to consider the ratio of L-functions $L(M,N, s)$ given in (\ref{E:R2}). From this L-function theoretic point of view,
 we prove in Theorem \ref{T:interlacing} that under some extra hypothesis,
   $L(M,N, s)$  has no zeros or poles at $s=0$ if and only if $(M, N)$ is a relevant pair of A-parameters.
\vskip 5pt

\item From the point of view of epsilon factors and tempered GGP, it is Theorem \ref{wald2} due
  to Waldspurger which brings out the notion of relevant pairs of A-parameters for those A-packets which
  contain a cuspidal representation.
  \end{enumerate}
   
Finally, from the global point of view, our local conjectures are still incomplete, as
Conjecture \ref{conj-class} 
only deals with an L-packet of Arthur type
which is a subset of the corresponding A-packet,  and Conjecture \ref{relevant} is not as precise as it could be.  It would have been ideal to have a precise conjecture for the entire A-packet, since any representation in the A-packet could have been the local component of an automorphic representation  and thus would intervene in the global period integral problem.
It will thus be very interesting to have a precise prediction for the sum of multiplicities over the entire A-packet (over relevant  pure inner forms) and a conjectural determination of the representations which have nonzero contribution. Unfortunately, despite trying for a few years, such a precise prediction continues to elude us.

\vskip 5pt

We end the introduction with a general comment on the strategy of the proof of tempered GGP,
where one exploits the transfer of  the stable distribution on a classical group associated to a tempered L-packet
to an invariant distribution on $\GL_n(k)$. 
The proof of tempered GGP uses such transfer to move  the branching problem  for  classical groups
to one on $\GL_n(k)$. 
For non-tempered A-packets, one still has such a transfer, but the stable distribution associated to an A-packet
is not simply the sum of irreducible characters. Instead, it is a linear combination of such with coefficients $\eta(z)$, where $z$ is the center of the Arthur $\SL_2$, so that there are  $\pm 1$ in the coefficients. Thus one is led to  wonder if there is a nicer
expression if, instead of summing over all multiplicities in the A-packet, one considers the signed sum of  these multiplicities reflecting  the signs in the stable characters. 
\vskip 5pt

As an example, one may consider a nontempered A-packet of $\U_3$  of the form $\{\pi_c,\pi_n\}$, with $\pi_c$ cuspidal and $\pi_n$ non-tempered.  The cuspidal representation $\pi_c$ belongs to a discrete L-packet $\{\pi_c,\pi_d\}$, 
where $\pi_d$ is a non-cuspidal discrete series representation. There is a exact sequence arising from a principal series representation:
$$0\rightarrow \pi_d  \rightarrow {\Ind} \rightarrow \pi_n \rightarrow 0,$$
so that
$$\pi_c-\pi_n = (\pi_c+\pi_d)-(\pi_d+\pi_n) = (\pi_c+\pi_d) - \Ind$$
is stable. This lends some support to the expectation that for a tempered representation $\pi$ of $\U(2)$,
$$\dim \Hom_{\U(2)}[\pi_c,\pi] - \dim \Hom_{\U(2)} [\pi_n,\pi],$$
may have a nicer expression than 
$$\dim \Hom_{\U(2)}[\pi_c,\pi] + \dim \Hom_{\U(2)} [\pi_n,\pi].$$

\vspace{5mm}

\noindent{\bf Acknowledgement:} Much of this paper was written when the first and the third authors were at MSRI in the Fall of 2014, where we had  discovered the condition -- {\it relevant pair} of A-parameters -- appearing in Conjecture \ref{conj-gln}, about branching laws from
$\GL_{n+1}(k)$ to $\GL_n(k)$, and then the analogous conjectures for classical groups.   The first and the third authors must thank  MSRI for a very stimulating semester. We thank Zhiwei Yun
for section \ref{cor} where he defines {\it correlators}, a concept equivalent to our  relevant pair of A-parameters.
We thank C. M{\oe}glin and J.-L. Waldspurger for the
contents of section \ref{MW}. Section \ref{S:GLn} has benefited greatly from incisive comments of K.Y. Chan. The authors thank the referee for a careful
reading of the manunscript and useful comments.

%In fact, at that time, we had.
%hoped that our proof of the  Conjecture \ref{conj-gln} given in Theorem \ref{thm-gln} could be fine tuned to give a
%full proof of Conjecture \ref{conj-gln} delaying this paper. Conjecture \ref{conj-gln}
%appears to be much harder than we had anticipated. There is now a proof of one part of
%Conjecture \ref{conj-gln} due to M. Gurevitch \cite{Gu}, in which he proves that if branching happens, then the parameters are as proposed here. 
\vskip 10pt

\section{Notation and Preliminaries}
In this paper, unless otherwise specified, we will use $k$ to denote a local field. All the conjectures in this paper are
formulated for all local fields,  archimedean and non-archimedean, but no proofs are given for archimedean local fields. It is hoped
that in due course, not only the few proofs we give in this paper, but also the conjectures that we formulate, will be considered
by others for archimedean fields.
\vskip 5pt

For a local field $k$, we let $W(k)$ be the Weil group of $k$ and $WD(k)$ the Weil-Deligne group of $k$.
So
\[ WD(k) = \begin{cases}
W(k) \text{ if $k$ is archimedean;} \\
W(k) \times \SL_2(\C), \text{ if $k$ is non-archimedean.} \end{cases} \]
In the non-archimedean case, the $\SL_2(\C)$ which comes up here is called the Deligne $\SL_2$.
\vskip 5pt

For a reductive algebraic group $G$ over $k$, let ${}^LG =
\wG(\C) \rtimes W(k)$ be the L-group of $G$ with $\wG(\C)$ the dual group
over $\C$. Langlands parameters for $G$ (L-parameters for short) are admissible homomorphisms
$$\phi: WD(k) \rightarrow  {^L}G,$$
up to conjugacy by $\wG(\C)$. Arthur parameters for $G$  (A-parameters for short) are admissible homomorphisms
$$\phi_A: WD(k) \times \SL_2(\bC) \rightarrow  {^L}G$$
up to conjugacy by $\wG(\C)$, where the restriction of $\phi_A$ to the Weil-Deligne group $WD(k)$
is an admissible homomorphism with bounded image for $W(k)$ and the restriction to $\SL_2(\C)$
 is algebraic. The extra $\SL_2(\C)$ which enters here will be called the Arthur $\SL_2(\C)$. 
\vskip 5pt

One can obtain an L-parameter from an A-parameter as discussed in the introduction, using the map
 $WD(k) \rightarrow WD(k) \times \SL_2(\bC)$ given by
$$w \rightarrow (w, \diag(|w|^{1/2}, |w|^{-1/2})).$$
Composing $\phi_A$ with this homomorphism gives an L-parameter $\phi$. The association
\[  \phi_A \mapsto \phi  \]
is an injective map from the set of A-parameters to the set of L-parameters,
and we will call  its image the set of L-parameters of Arthur type. 
\vskip 5pt

In the case of classical groups as discussed in \cite{GGP},  A-parameters are representations of $  WD(k) \times \SL_2(\bC)$
on finite dimensional complex vector spaces which may come equipped with a symmetric or skew-symmetric bilinear form. For
an A-parameter $M_A$, with associated L-parameter $M$,  we will often denote the associated complex vector
space (the representation space of $M_A$ or $M$) as $M$; sometimes, for sake of clarity, we may use a different symbol
$V$ or $W$ for the representation space of $M_A$ or $M$.

\vskip 5pt

By the work of Harris-Taylor  \cite{HT} for $\GL_n(k)$, Arthur  \cite{Art2} for orthogonal and symplectic groups and Mok \cite{Mok}  for unitary groups, the local Langlands conjecture  for these quasi-split  classical groups over local fields is now known.  The
extended version due to Vogan  \cite{Vo}, involving pure inner forms of these groups,  is also known thanks to the work of
Kaletha-Minguez-Shin-White \cite{KMSW} and M{\oe}glin \cite{M4}. We will assume this through the work, referring to \cite{GGP} for precise assertions. For the notion of A-packets, and their
relationship to A-parameters, we refer to \cite{Art1}, \cite{Art2}, as well as to many works of M{\oe}glin \cite{M1}, \cite{M2}, \cite{M3}.
Both for L-packet and A-packet, we will simultaneously consider all relevant pure inner forms of the pair of groups involved,
and use what may be called Vogan L-packet and Vogan A-packet. We will not dwell further on these matters, as they are totally analogous to \cite{GGP}.
\vskip 5pt

As is well-known,  $\Sym^a(\C^2)$ is the unique $(a+1)$ dimensional irreducible representation of  $\SL_2(\bC)$. We will
often denote  this $(a+1)$ dimensional irreducible representation of  $\SL_2(\bC)$ as $[a+1]$.
If we regard $[a+1]$ as a representation of the Arthur $\SL_2(\C)$, then $[a+1]$ is an A-parameter for $\GL_{a+1}(k)$ whose corresponding A-packet is the singleton set containing only the trivial representation. 
 Hence, by abuse of notation, we will also
denote by $[a+1]$ the trivial representation of $\GL_{a+1}(k)$. The context will make it clear whether $[a+1]$ stands for
an A-parameter, or  the trivial representation of $\GL_{a+1}(k)$.  
 \vskip 5pt

If $\pi_1$ is a representation of $\GL_m(k)$ and $\pi_2$ of $\GL_n(k)$, we let $\pi_1 \times \pi_2$ be  the representation of
$\GL_{m+n}(k)$ parabolically induced from the representation $\pi_1 \boxtimes \pi_2$ of the Levi subgroup $\GL_m(k) \times \GL_n(k)$ of $\GL_{m+n}(k)$. In particular, $[a] \times [b]$ denotes an irreducible (unitary)
representation of $\GL_{a+b}(k)$ which has 
A-parameter  $[a] \oplus [b]$. In \S \ref{S:GLn}, we shall give a more precise description of the A-packets for $\GL_n(k)$. 
\vskip 5pt

For any integer $n\geq 1$,  we set
\[ \nu := |\det|: \GL_n(k) \rightarrow \C^\times. \]
 When $n=1$ so that  $\GL_n(k)=k^\times$, we will also view $\nu$  as the absolute value map on $WD(k)$. 
\vskip 5pt

An L-parameter for a classical group $G$ (not $\GL_n(k)$) $$\phi: WD(k) \rightarrow  \GL(V),$$
is called a discrete L-parameter if it does not factor through a proper Levi subgroup of the dual group $\widehat{G}$; equivalently, if  it is a multiplicity free sum of
selfdual or conjugate-selfdual 
irreducible representations of $WD(k)$ of the correct parity. This terminology originates from the fact that an L-parameter of $G$ is discrete if and only if
the associated representations of $G(k)$ are discrete series representations. Similarly,
an A-parameter for a classical group (not $\GL_n(k)$) $$\phi_A: WD(k)\times \SL_2(\C)  \rightarrow  \GL(V),$$
is called a discrete A-parameter if it is a multiplicity free sum of selfdual or conjugate-selfdual
irreducible representations of $WD(k) \times \SL_2(\C)$.
\vskip 5pt

For all the number theoretic background on $L$ and $\epsilon$ factors, we refer to the article \cite{Tate}, usually without explicit mention. All our global $L$-functions will be completed $L$-functions $L(s, \Pi)$
for an automorphic representation $\Pi$ on $\GL_n(\A_F)$ for $F$
a global field, or an automorphic representation on a classical group treated as
an automorphic representation $\Pi$ on $\GL_n(\A_F)$ after transfer. 

\section{Relevant Pair of A-Parameters}  \label{S:relevant}

In this section, we formulate the notion of a relevant pair of A-parameters $(M_A, N_A)$ in the context of classical groups (including the case of $\GL$). This notion plays a pivotal role in this paper. We also discuss some basic properties of this notion.
\vskip 5pt

With $k$ a local field, recall that a finite dimensional complex representation $M_A$ of $WD(k) \times \SL_2(\bC)$ which arises from an Arthur parameter (of a classical group) has a canonical decomposition of the form
$$M_A = \sum_{i = 0}^d M_i \otimes \Sym^i(\bC^2),$$
where the $M_i$ are bounded admissible representations of $WD(k)$. We say that two representations $M_A$ and $N_A$ form a relevant pair of A-parameters if we have a decomposition of the respective representations of $WD(k)$ as
$$M_i = M_i^+ + M_i^-  \quad \text{and}  \quad N_i = N_i^+ + N_i^-,$$
with the property that
$$\text{$M_i^+ = N_{i+1}^-$ for $i \geq 0$} \quad \text{and} \quad \text{$M_i^- = N_{i-1}^+$ for $i \geq 1$.}$$

Therefore, if we write the decomposition of $M_A$ as
$$M_A = M_0^+ + M_0^- + \sum_{i = 1}^d (M_i^+ + M_i^-) \otimes \Sym^i (\bC^2),$$
we have the following decomposition of $N_A$:
$$N_A = N_0^- + \sum_{i = 0}^d M_i^+ \otimes \Sym^{i+1} (\bC^2) + \sum_{i = 1}^d M_i^- \otimes \Sym^{i-1}(\bC^2).$$

The notion of a relevant pair $(M_A,N_A)$ is symmetric, as we can also write the conditions on the summands as 
$$  \text{$N_i^+ = M_{i+1}^-$ for $i \geq 0$ }  \quad \text{and} \quad  \text{$N_i^- = M_{i-1}^+$ for $i \geq 1$.}  $$
 Therefore, if we write the decomposition of $N_A$ as
$$N_A = N_0^+ + N_0^- + \sum_{i = 1}^d (N_i^+ + N_i^-) \otimes \Sym^i (\bC^2),$$
we have the following decomposition of $M_A$:
$$M_A = M_0^- + \sum_{i = 0}^d N_i^+ \otimes \Sym^{i+1} (\bC^2) + \sum_{i = 1}^d N_i^- \otimes \Sym^{i-1}(\bC^2).$$
Note that the two summands $M_0^-$ and $N_0^-$ in a relevant pair are not constrained by the other parameter. In particular, any two bounded representations $(M_0,N_0)$ of $WD(k)$ with the trivial action of $\SL_2(\bC)$ form a relevant pair of A-parameters. 
\vskip 5pt

The decomposition of the representations $M_i$ and $N_i$ of $WD(k)$ in a relevant pair into components $M_i^{\pm}$ and $N_i^{\pm}$ is unique. Indeed, suppose that the highest (non-zero) summand in the decomposition of $M_A$ is $M_d$. Then the highest summand in the decomposition of $N_A$ is either $N_{d-1}, N_d$ or $N_{d+1}$. We now consider the three cases separately:
\vskip 5pt

\begin{itemize}
\item  Suppose first that it is $N_d$. Then we conclude that $M_d = M_d^-$ and $N_d = N_d^-$. This implies that $N_{d-1}^+ = M_d^-$, which determines $N_{d-1}^-$. Similarly, $M_{d-1}^+ = N_d^-$, which determines $M_{d-1}^-$. Continuing to descend in this manner determines all of the decompositions. 
\vskip 5pt

\item Next assume that the highest summand in the decomposition of $N_A$ is $N_{d-1}$. Again we conclude that $M_d = M_d^-$, but now we also have $M_{d-1} = M_{d-1}^-$ as $N_d = 0$. Then $N_{d-1}^+ = M_d^-$, which determines $N_{d-1}^-$ and $N_{d-2}^+ = M_{d-1}^-$, which determines $N_{d-2}^-$. Continuing to descend in this manner gives the full decomposition. 
\vskip 5pt

\item Finally, if the highest summand in the decomposition of $N_A$ is $N_{d+1}$, then one can switch the roles of $M_A$ and $N_A$ and use the previous argument.
\end{itemize}
\vskip 5pt

We will have occasion to use the following lemma about relevant pair of A-parameters whose simple proof is omitted.

\begin{lemma} \label{EPlemma} Let
$$M_A = M_0^+ + M_0^- + \sum_{i = 1}^d M_i \otimes \Sym^i (\bC^2),$$
$$N_A = N_0^+ +N_0^- + \sum_{i = 1}^d N_i \otimes \Sym^{i} (\bC^2),$$
be a relevant pair of A-parameters. Then we have the relations,
 \begin{eqnarray*}
  \sum_{2i-1 \geq 1}M_{2i-1} & = & \sum_{2i \geq 0} N_{2i} - N^-_0\\
  \sum_{2i-1 \geq 1} N_{2i-1}  & = &  \sum_{2i\geq 0}M_{2i}  - M^-_{0}  .
\end{eqnarray*}

\end{lemma}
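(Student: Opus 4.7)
The plan is a direct bookkeeping argument using the matching conditions that define a relevant pair. The main (and only real) task is to keep track of parity and boundary indices; there is no conceptual obstacle.

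First I would unpack the left-hand side of the first identity using the splitting $M_j = M_j^+ + M_j^-$ for $j\geq 1$, giving
\[
\sum_{j\text{ odd},\, j\geq 1} M_j \;=\; \sum_{j\text{ odd},\, j\geq 1} \bigl(M_j^+ + M_j^-\bigr).
\]
Next, I would apply the two relevance relations $M_j^+ = N_{j+1}^-$ (valid for all $j\geq 0$) and $M_j^- = N_{j-1}^+$ (valid for $j\geq 1$) to rewrite this in terms of pieces of $N_A$. Reindexing by $k = j+1$ in the first sum and $k = j-1$ in the second, and using the fact that $j$ odd corresponds to $k$ even, yields
\[
\sum_{j\text{ odd},\, j\geq 1} M_j \;=\; \sum_{k\text{ even},\, k\geq 2} N_k^- \;+\; \sum_{k\text{ even},\, k\geq 0} N_k^+.
\]
Here the lower bound $k\geq 2$ (not $k\geq 0$) in the first term is the key bookkeeping point: it arises because $j\geq 1$ forces $k=j+1\geq 2$, so the summand $N_0^-$ is absent.

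Finally, I would compare with the right-hand side. Since $N_0 = N_0^+ + N_0^-$ while $N_k = N_k^+ + N_k^-$ for $k\geq 2$ even, one has
\[
\sum_{k\text{ even},\, k\geq 0} N_k \;-\; N_0^- \;=\; N_0^+ \;+\; \sum_{k\text{ even},\, k\geq 2}\bigl(N_k^+ + N_k^-\bigr),
\]
which matches the expression obtained above. This proves the first identity. The second identity is formally identical after interchanging the roles of $M_A$ and $N_A$, which is legitimate because the notion of a relevant pair is symmetric (as recorded in the excerpt just before the lemma). Thus only the first identity really needs to be verified; the second is immediate by symmetry. No subtlety arises at the top index $d$ either, since the matching forces $M_d^+ = N_{d+1}^- = 0$ and symmetrically, so both sides truncate consistently.
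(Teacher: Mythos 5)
Your proof is correct. The paper omits the proof of this lemma (calling it "simple"), so there is no paper argument to compare against; your direct bookkeeping using $M_j^+ = N_{j+1}^-$ and $M_j^- = N_{j-1}^+$, the reindexing $k=j\pm 1$, and the boundary observation that $N_0^-$ has no partner (and $M_d^+=N_{d+1}^-=0$ at the top) is exactly what the omitted computation should be, and the second identity does indeed follow by the noted symmetry of relevance.
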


Suppose now that $(M_A, N_A)$ is a  pair of representations with $M_A$  symplectic and $N_A$ even orthogonal. Then $M_A$ is the A-parameter of an odd special orthogonal group whereas $N_A$ is that of an even special orthogonal group.  If $(M_A, N_A)$ is relevant, the argument we gave for the unicity of the decomposition of $M_i$ and $N_i$ shows that the summands $M_i^{\pm}$ are each symplectic for $i$ even and orthogonal for $i$ odd, and the summands $N_i^{\pm}$ are each orthogonal for $i$ even and symplectic for $i$ odd. 
\vskip 5pt

Likewise, suppose that  $k/k_0$ is a separable quadratic extension and $(M_A, N_A)$ is a   pair of representations of $WD(k) \times \SL_2(\bC)$ with $M_A$ conjugate symplectic and $N_A$ conjugate orthogonal, then $M_A$ is an A-parameter for an even unitary group whereas $N_A$ is an A-parameter of an odd unitary group. When $(M_A, N_A)$ is relevant, one sees that the summands $M_i^{\pm}$ and $N_i^{\pm}$ are conjugate symplectic and conjugate orthogonal respectively.
\vskip 5pt

On the other hand, suppose that $F$ is a global field with conjectural Langlands group $L(F)$, so that global A-parameters of classical groups can be thought of as finite-dimensional complex representations of $L(F) \times \SL_2(\bC)$.  Of course, since the existence of $L(F)$ is not known, one needs to interpret an irreducible $n$-dimensional representation of $L(F)$ as a cuspidal automorphic representation of $\GL_n(\A_F)$.  In the context of classical groups, one needs to interpret an irreducible  $n$-dimensional symplectic (respectively orthogonal) representation of $L(F)$ as a cuspidal automorphic representation of $\GL_n(\A_F)$  for which the exterior square (respectively symmetric square L-function) has a pole at $s=1$.  With this caveat,  one can similarly define the notion of a relevant pair of global A-parameters.  
\vskip 5pt

Since the work of Ichino-Ikeda, cf. \cite{II}, it is natural to consider in the global setting the ratio of L-functions:
\[  L(M,N, s) = \frac{L(M \otimes N^{\vee}, s + ~1/2) \cdot L(M^{\vee} \otimes N, s + ~1/2)}{ L(M \otimes M^{\vee}, s + 1)\cdot  L(N \otimes N^{\vee}, s + 1)} \]
if $M \times N$ is an L-parameter for $\GL_m \times \GL_n$,   or the ratio 
\[  L(M,N, s) = \frac{L(M \otimes N, s + ~1/2)}{L(\Sym^2 M \oplus \wedge^2N, s + 1)}  \]
if $M \times N$ is an L-parameter for $\SO_{2m+1} \times \SO_{2n}$. 
One may of course consider $L(M,N, s)$ in the local context as well. 
 We conclude this section by highlighting some results about the analytic properties of $L(M,N, s)$ at $s=0$. The proofs of these results, which proceed by explicit computation, 
will be given in \S \ref{S:L-GL} and \S \ref{classical} at the end of the paper.
\vskip 5pt

\begin{thm} \label{pole}
Let $k$ be a non-archimedean local field and  let $(M_A, N_A)$ be a relevant pair of A-parameters for $\GL_m(k) \times \GL_n(k)$ with associated pair of L-parameters $(M,N)$.
Then the order of pole at $s=0$ of 
\[  L(M,N, s) = \frac{L(M \otimes N^{\vee}, s + ~1/2) \cdot L(M^{\vee} \otimes N, s + ~1/2)}{ L(M \otimes M^{\vee}, s + 1)\cdot  L(N \otimes N^{\vee}, s + 1)} \]
  is greater than or equal to zero.
\end{thm}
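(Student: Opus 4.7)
The plan is to compute the order of the pole at $s=0$ of each of the four L-functions constituting the ratio $L(M,N,s)$ and to combine the counts so as to exhibit their difference as a manifestly non-negative quantity.

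First, using multiplicativity of L-functions over direct sums together with the orthogonality of tempered irreducible representations of $W(k)$ under the $\dim\Hom_{W(k)}$ inner product, I will reduce to the case in which every Weil-Deligne constituent of $M$ and $N$ has Weil part equal to a fixed irreducible tempered representation $\rho$ of $W(k)$. Within this $\rho$-isotypic setting, the relevant-pair relations $M_i^+ = N_{i+1}^-$ and $M_i^- = N_{i-1}^+$ permit a parametrization by nonnegative integer multiplicities $a_i := \mathrm{mult}_\rho(M_i^+)$ for $i \geq 0$ and $b_i := \mathrm{mult}_\rho(M_i^-)$ for $i \geq 1$, together with the two unmatched leftovers $b_0 := \mathrm{mult}_\rho(M_0^-)$ and $b_0' := \mathrm{mult}_\rho(N_0^-)$. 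The total $\rho$-multiplicities are then $m_i = a_i + b_i$ in $M_i$, $n_j = a_{j-1} + b_{j+1}$ in $N_j$ for $j \geq 1$, and $n_0 = b_0' + b_1$.

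Next I will turn each pole order into a weight count. Assuming first that every $M_i$ and $N_j$ has trivial Deligne action, the Arthur map $w \mapsto (w, \diag(|w|^{1/2}, |w|^{-1/2}))$ sends $M_A = \sum_i M_i \otimes [i+1]$ to the L-parameter $M = \sum_i \sum_k M_i\,\nu^{k/2}$ with $k$ ranging over the $\SL_2$-weights $\{i, i-2, \ldots, -i\}$ of $\Sym^i$. A direct computation then gives the pole order of each of $L(M \otimes N^\vee, s + 1/2)$ and $L(M^\vee \otimes N, s + 1/2)$ at $s = 0$ as
\[
\sum_{i+j\ \text{odd}}  m_i n_j \bigl(\min(i, j) + 1\bigr),
\]
the two being equal by the palindromic symmetry of $\SL_2$ characters, while the denominator factors $L(M \otimes M^\vee, s+1)$ and $L(N \otimes N^\vee, s+1)$ contribute $\sum_{i \equiv i' \pmod 2} m_i m_{i'}\, N(i, i')$ and $\sum_{j \equiv j' \pmod 2} n_j n_{j'}\, N(j, j')$ respectively, where $N(i, i) = i$ and $N(i, i') = \min(i, i') + 1$ when $i \neq i'$. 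Substituting the relevant-pair relations and expanding bilinearly, the difference $P_{\mathrm{num}} - P_{\mathrm{den}}$ decomposes as a sum over ordered pairs of ``building blocks'' drawn from $\{A_i, B_i, B_0, B_0'\}$. A short case check shows that the only non-zero contributions are the self-interactions $a_i^2, b_i^2$ and the cross-terms for blocks sharing a common row of $M$ or $N$, namely $2 a_i b_i$ (blocks $A_i, B_i$ both lying in $M_i$), $2 a_i b_{i+2}$ (blocks $A_i, B_{i+2}$ both lying in $N_{i+1}$), and $2 b_0' b_1$ (blocks $B_0', B_1$ both lying in $N_0$). Collecting these yields the manifestly non-negative identity
\[
P_{\mathrm{num}} - P_{\mathrm{den}} \;=\; a_0(a_0 + 2 b_0) + \sum_{i \geq 1} (a_i + b_i)^2 + 2 \sum_{i \geq 0} a_i b_{i+2} + 2 b_0' b_1 \;\geq\; 0,
\]
completing the argument under the trivial-Deligne assumption.

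The hard part will be to handle the Deligne $\SL_2$ part of $M_i$ and $N_j$. When a constituent takes the form $\rho \otimes [b]_D \otimes [i+1]_A$ with $b > 1$, the Deligne block shifts the L-function argument by $(b-1)/2$, and in tensor products the Clebsch--Gordan decomposition $[b] \otimes [b'] = \bigoplus_c [c]$ produces several pole positions in place of one. I plan to adapt the argument by enumerating Weil-Deligne summands as ``lines'' $\rho \otimes [b] \otimes \nu^e$ and extending the bilinear bookkeeping to track Deligne dimensions alongside Arthur weights; the row-overlap structure of the relevant pair should still yield a non-negative quadratic form, but verifying this will require a careful combinatorial tracking of the additional Clebsch--Gordan multiplicities arising from the tensored Deligne blocks.
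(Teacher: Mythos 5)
Your argument and the paper's take genuinely different routes.  The paper groups the summands of $M_A$ and $N_A$ into matched pairs $(C_i,D_i)$ with $C_i = M_i\boxtimes\Sym^{a_i-1}$, $D_i = M_i\boxtimes\Sym^{b_i-1}$, $a_i-b_i=\pm 1$, and then reduces the theorem to a case analysis (Proposition \ref{lemma-pole}) on ratios built out of two such pairs.  The central device there is the weight-shift operator $M_A\mapsto M_A^+$ and Lemma \ref{L-f}, which converts the pole at $s=1/2$ of the numerator into a pole at $s=1$ of a shifted representation.  One then subtracts the virtual representations, observes that the difference is $M_1\otimes M_2\boxtimes[\Sym^a-\Sym^{a-2b}]$ with $b\ge 0$, and concludes by the monotonicity of pole orders at $s=1$ under discarding interior weights.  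Crucially, throughout, $M_1$ and $M_2$ are arbitrary irreducible tempered representations of $WD(k)=W(k)\times\SL_2(\C)$, so the Deligne $\SL_2$ is absorbed as a black box and requires no separate treatment.  Your strategy instead reduces to a $\rho$-isotypic block, parametrizes by multiplicities $a_i,b_i,b_0,b_0'$, converts each factor of $L(M,N,s)$ to an explicit weight count, and exhibits $P_{\mathrm{num}}-P_{\mathrm{den}}$ as a manifestly non-negative quadratic form.  This is a more explicit, bilinear-expansion route, and if correct it gives the exact pole order rather than just an inequality; the paper only records the exact computation as a separate proposition under the trivial-Deligne hypothesis.

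The gap: your final paragraph is a plan, not a proof.  The reduction to the $\rho$-isotypic case, the multiplicity parametrization, and the weight-count formulas $\sum_{i+j\ \mathrm{odd}} m_i n_j(\min(i,j)+1)$ all rest on the Deligne $\SL_2$ acting trivially, as you concede, and the theorem as stated has no such restriction.  When a constituent is $\rho\otimes[b]_D$ with $b>1$, the multiplicity space $\Hom_{W(k)}(M_i,N_j)$ no longer equals $m_i n_j$ for a single $\rho$, tensor products acquire Clebsch--Gordan sums in the Deligne factor, and the pole-counting must now be indexed by pairs $(\rho,[b])$; you say this "will require a careful combinatorial tracking" without carrying it out.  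The point to internalize is that the paper's monotonicity lemma sidesteps exactly this bookkeeping: the inequality "$\mathrm{ord}_{s=1}L(s,M\boxtimes\Sym^a)\ge\mathrm{ord}_{s=1}L(s,M\boxtimes\Sym^{a-2b})$" holds for any representation $M$ of $WD(k)$, so one never needs to open up the Deligne factor.  To complete your route you would need to show that the non-negativity of the quadratic form survives after replacing each $m_i n_j$-type term by the appropriate Clebsch--Gordan-weighted dimension of a Hom-space, and this is precisely the step that is missing.
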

\vskip 5pt

\begin{thm} \label{interlacing}
 Let $k$ be a non-archimedean local field and  let $(M_A, N_A)$ be a pair of A-parameters for $\SO_{2m+1}(k) \times \SO_{2n}(k)$ with associated pair of L-parameters $(M,N)$.
 \vskip 5pt
 
 (i) If $(M_A, N_A)$ is a relevant pair of A-parameters, then  the order of pole at $s=0$ of 
 the function
 \[  L(M,N, s) = \frac{L(M \otimes N, s + ~1/2)}{L(\Sym^2 M \oplus \wedge^2N, s + 1)}  \]
 is greater than or equal to zero.

 \vskip 5pt
 
 (ii) Suppose  that $M_A$ and $N_A$ are multiplicity-free representations of $WD(k) \times \SL_2(\bC)$ on which the Deligne $\SL_2(\C)$ acts trivially. 
  Then, at $s = 0$,  the function
 $  L(M,N, s)$
has  a zero of order $\geq 0$. It  has neither a zero nor a pole at $s=0$ if and only if
$(M_A, N_A)$ is a relevant pair of A-parameters.
\vskip 5pt

\end{thm}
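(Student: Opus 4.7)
The proof proceeds by direct computation of the orders of pole at $s=0$ of the numerator and denominator of $L(M,N,s)$. For any $WD(k)$-representation $\phi$ and any $c \in \bC$, the order of pole of $L(\phi, s+c)$ at $s=0$ equals the multiplicity of the character $\nu^{-c}$ as a summand of $\phi^{N=0}|_{W(k)}$. Write $e_{-c}(\phi)$ for this multiplicity. Then the order of pole of $L(M,N,s)$ at $s=0$ is $P_{\mathrm{num}} - P_{\mathrm{den}}$ with
\[ P_{\mathrm{num}} := e_{-1/2}(M \otimes N), \qquad P_{\mathrm{den}} := e_{-1}(\Sym^2 M) + e_{-1}(\wedge^2 N). \]
Starting from $M_A = \sum_i M_i \otimes [i+1]$ and substituting $[i+1] \mapsto \nu^{i/2} + \cdots + \nu^{-i/2}$, direct expansion yields
\[ P_{\mathrm{num}} = \sum_{i + k \text{ odd}} [M_i \otimes N_k : \mathbf{1}] \cdot (\min(i,k)+1), \]
where the factor $\min(i,k)+1$ counts integer pairs $(j,l)$ with $0 \leq j \leq i$, $0 \leq l \leq k$, $j+l = (i+k+1)/2$. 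An analogous, slightly more intricate, formula holds for $P_{\mathrm{den}}$, obtained from the splitting of $M \otimes M$ and $N \otimes N$ into symmetric and exterior parts: it comprises diagonal contributions from $\Sym^2 M_i$ (for $i$ odd) and $\wedge^2 N_j$ (for $j$ even), plus off-diagonal contributions from pairs $(i,i')$ (resp.\ $(k,k')$) of equal parity with $0\leq j\leq i$, $0\leq j'\leq i'$, $j+j' = (i+i')/2 + 1$.

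For part (i), use the parametrization of relevant pairs: there exist $WD(k)$-representations $M_0^-$, $N_0^-$, $\{C_i\}_{i\geq 0}$, and $\{D_i\}_{i\geq 0}$ with
\[ M_A = M_0^-\otimes[1] + \sum_i C_i\otimes[i+1] + \sum_i D_i\otimes[i+2], \qquad N_A = N_0^-\otimes[1] + \sum_i D_i\otimes[i+1] + \sum_i C_i\otimes[i+2], \]
where $C_i = M_i^+ = N_{i+1}^-$ and $D_i = M_{i+1}^- = N_i^+$. A summand-by-summand calculation shows that a single selfdual irreducible $\rho$ in $C_i$ (resp.\ $D_i$) contributes exactly $i+1$ (resp.\ $i+2$) to each of $P_{\mathrm{num}}$ and $P_{\mathrm{den}}$. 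The cross-terms between different $C_i$'s, $D_j$'s, and the free pieces $M_0^-, N_0^-$ add to $P_{\mathrm{num}}$ at least as much as to $P_{\mathrm{den}}$, giving $P_{\mathrm{num}} \geq P_{\mathrm{den}}$.

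For part (ii), the extra hypotheses imply that each $M_i$, $N_j$ is a multiplicity-free representation of $W(k)$ alone, consisting of selfdual irreducible summands (paired with their duals where non-selfdual). Organize pole contributions by selfdual irreducible $\rho$ of $W(k)$: setting $A_\rho = \{i : \rho \subset M_i\}$ and $B_\rho = \{j : \rho \subset N_j\}$, the symplectic/orthogonal constraints force $A_\rho$ and $B_\rho$ to lie in opposite parity classes of $\Z_{\geq 0}$ (orthogonal $\rho$ forces $A_\rho \subset \{i \text{ odd}\}$, $B_\rho \subset \{j \text{ even}\}$, and conversely for symplectic $\rho$). The local contributions $P_{\mathrm{num}}^\rho$ and $P_{\mathrm{den}}^\rho$ become explicit combinatorial functions of $A_\rho$ and $B_\rho$, and an induction on $|A_\rho \cup B_\rho|$ (or on its maximum element) establishes $P_{\mathrm{num}}^\rho \leq P_{\mathrm{den}}^\rho$, with equality iff $A_\rho \cup B_\rho$ admits a perfect matching by adjacent pairs $\{a,b\}$ (with $a\in A_\rho$, $b\in B_\rho$, $|a-b|=1$) modulo unpaired elements located at $0$. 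Summed over $\rho$, this matching condition is exactly the relevancy of $(M_A, N_A)$.

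The main obstacle is the combinatorial inequality together with its equality characterization in part (ii). The case analysis must treat selfdual orthogonal and selfdual symplectic $\rho$ separately, since the placement of the invariant form (in $\Sym^2$ vs.\ $\wedge^2$) governs different diagonal denominator contributions. In the inductive step, the maximum element of $A_\rho \cup B_\rho$ must match (in any decomposition achieving equality) with its adjacent neighbour in the opposite set; if no such neighbour exists and the maximum is not at $0$, one obtains a strict denominator excess. Once the top pair is stripped off, the problem reduces to a smaller instance of the same statement. By contrast, part (i) is comparatively easier once the parametrization of relevant pairs is in hand, since the pairing of pole contributions between numerator and denominator is essentially transparent from the explicit formulas.
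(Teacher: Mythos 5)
Your proposal follows essentially the same route as the paper's own proof: in part (i), both arguments split the pole-order count into diagonal contributions from each matched pair $\rho\boxtimes[i+1]$, $\rho\boxtimes[i+2]$ and off-diagonal cross-terms (handled in the paper by a proposition about quadruples of summands); in part (ii), both reduce to $\rho$-isotypic components of $W(k)$ and induct by stripping the highest Arthur-$\SL_2$ summand, with equality exactly when that summand matches an adjacent piece on the other side. Two small slips, neither affecting the structure: the diagonal contribution of a selfdual $\rho$ in $D_i$ is $i+1$, not $i+2$ (by the same $\min+1$ count you gave for the numerator), and the exact equality of diagonal numerator and denominator contributions holds only under the trivial-Deligne-$\SL_2$ hypothesis of part (ii), whereas in part (i) one has, and needs, only the one-sided inequality.
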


\section{Correlator} \label{cor}
In this section, we describe an elegant formulation of the notion of a relevant pair of A-parameters which is due to Zhiwei Yun. 
\vskip 5pt

Consider  a pair $(M_A, N_A)$ of selfdual finite-dimensional representations of $WD(k) \times \SL_2(\C)$
with $M_A$ symplectic and $N_A$ orthogonal, realized on vector spaces  $V$  and $W$ respectively.  
The action of the diagonal torus of $\SL_2(\C)$ induces a $\Z$-grading on $V$ and $W$.  More precisely,  if we identify $\Gm$ with the diagonal torus, 
 taking $t \in \Gm$ to the diagonal matrix $(t, t^{-1})$,
  then the degree $n$ part of $V$ is the eigenspace for the character $t \mapsto t^n$.  
\vskip 5pt

Let $e$ (resp. $e'$) be the nilpotent endomorphism of $V$ (resp. $W$) given by the image of the usual upper triangular element in
the Lie algebra $\mathfrak{sl}_2$ of $\SL_2(\C)$.  Then the action of $e$ and $e'$ shifts degree by $2$ on $V$ and $W$. In other words, $e$ and $e'$ are degree $2$ elements in $\End(V)$ and $\End(W)$  (equipped with the induced grading). 
\vskip 5pt

Here is the key definition of Zhiwei Yun:
\vskip 5pt

\begin{definition}  For   a pair $(M_A, N_A)$ of finite-dimensional representations of $WD(k) \times \SL_2(\C)$ together with invariant
  nondegenerate bilinear forms,
  a correlator for $(M_A, N_A)$ is a $WD(k)$-equivariant linear map 
\[ T: V \rightarrow W, \]
  such that
  \begin{enumerate}
  \item  $T$ shifts degree by 1, i.e. $T$ is an element of degree $1$ in $\Hom(V, W)$;
     \item $T^*T=e$ and $TT^*=e'$, where $T^*:W \rightarrow V$ is the adjoint of $T$.
       \end{enumerate}
  \end{definition}
  \vskip 5pt
  
\begin{remark} Observe that  condition (2) forces the parity of the forms on $V$ and $W$ to be opposite, unless $e$ and $e'$ are zero. \end{remark}
\vskip 5pt

Now we have the main observation:
\vskip 5pt

\begin{lemma} 
The pair $(M_A,N_A)$ of local A-parameters is relevant if and only if there exists a correlator $T: V \rightarrow W$.
 \end{lemma}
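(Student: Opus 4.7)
\bigskip

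\noindent\textbf{Proof plan.}  The plan is to exploit the fundamental identity $Te = e'T$ (and symmetrically $T^* e' = e T^*$), which follows immediately from $T(T^*T) = (TT^*)T$: a correlator intertwines the Arthur-$\mathfrak{sl}_2$ raising operators of $V$ and $W$. Combined with the degree-$1$ condition, this will force $T$ to connect only adjacent $\mathfrak{sl}_2$-isotypic components, exactly matching the nearest-neighbor pattern encoded by relevance.

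For the direction $(\Leftarrow)$, decompose $V = \bigoplus_a V[a]$ and $W = \bigoplus_b W[b]$ into $\mathfrak{sl}_2$-isotypic components with $V[a] \cong M_a \otimes \Sym^a$ and $W[b] \cong N_b \otimes \Sym^b$. The key $\mathfrak{sl}_2$-lemma I will prove is that an $e$-equivariant, degree-$1$ linear map $\Sym^a \to \Sym^b$ vanishes unless $b = a \pm 1$, and is unique up to scalar in those two cases; this is a direct computation from the fact that $e$ is a regular nilpotent on $\Sym^a$, so its centralizer is $k[e]$. Consequently $T$ is determined by $WD(k)$-equivariant top-weight maps $\phi_a : M_a \to N_{a+1}$ and $\psi_a : M_a \to N_{a-1}$ through
\[
 T|_{V[a]} \;=\; \phi_a \otimes T_{a,a+1} \;+\; \psi_a \otimes T_{a,a-1},
\]
with $T_{a,a\pm 1}$ the standard $e$-equivariant building blocks.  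Matching the $V[a]$-component of the equation $T^*T = e$ then yields the identity
\[
 c_a^+ \, \phi_a^* \phi_a \;+\; c_a^- \, \psi_a^* \psi_a \;=\; \mathrm{id}_{M_a} \qquad (a \geq 1),
\]
where $c_a^\pm$ are the (nonzero) scalars arising from $T^*_{a\pm1,a} T_{a,a\pm 1} = c_a^\pm \cdot e_{\Sym^a}$, while matching the $V[a+2]$-component of $T^*T=e$ gives $\psi_{a+2}^*\phi_a = 0$, and the analogous $W[b+2]$-component of $TT^*=e'$ gives $\phi_a \psi_a^* = 0$ for $a \geq 1$.  From the first identity, any $v \in \ker \phi_a$ satisfies $v = c_a^- \psi_a^*(\psi_a v)$ and hence lies in $\psi_a^*(N_{a-1})$; the orthogonality $\phi_a \psi_a^* = 0$ gives the reverse inclusion, yielding $\ker \phi_a = \psi_a^*(N_{a-1})$ for $a \geq 1$. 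Setting $M_a^- := \ker \phi_a$ and choosing a $WD(k)$-equivariant complement $M_a^+$ (using semisimplicity of the bounded $WD(k)$-representations at hand), and defining $N_b^\pm$ analogously, the maps $\phi_a$ and $\psi_{a+1}^*$ induce the isomorphisms $M_a^+ \cong N_{a+1}^-$ and $M_a^- \cong N_{a-1}^+$, which is the relevance decomposition. At $a=0$, $e|_{V[0]} = 0$ imposes no constraint, so $M_0^- = \ker\phi_0$ is unconstrained — matching the definition of relevance.

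For the direction $(\Rightarrow)$, conversely, I will build $T$ pair-by-pair from the relevance decomposition. For each ascending pair $M_i^+ \cong N_{i+1}^- \cong X$, declare $T|_{X \otimes \Sym^i} := \mathrm{id}_X \otimes T_0^+$, where $T_0^+ : \Sym^i \to \Sym^{i+1}$ is the essentially unique $e$-equivariant degree-$1$ map, normalized so that $(T_0^+)^* T_0^+ = e_{\Sym^i}$ and $T_0^+ (T_0^+)^* = e_{\Sym^{i+1}}$. Handle descending pairs $M_i^- \cong N_{i-1}^+$ analogously, and let $T$ (resp.\ $T^*$) vanish on the unpaired summands $M_0^- \subset V$ and $N_0^- \subset W$. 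The correlator conditions $T^*T = e$ and $TT^*=e'$ then reduce pair-by-pair to the building-block normalizations above, which are achievable since the $WD(k)\times\SL_2(\C)$-invariant bilinear forms on each irreducible summand are only determined up to scalar.

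The main technical obstacle will be the scalar bookkeeping: verifying that the constants $c_a^\pm$ in the $M_a$-identity are nonzero (this is what makes the extraction $\ker\phi_a \subseteq \psi_a^*(N_{a-1})$ work) and choosing the form normalizations on $\Sym^n$, $M_a$, and $N_b$ consistently in the forward direction so the building-block identities $(T_0^\pm)^* T_0^\pm = e_{\Sym^\bullet}$ and $T_0^\pm (T_0^\pm)^* = e_{\Sym^\bullet}$ can be achieved simultaneously across all pairs. This amounts to a careful tracking of the freedom to rescale the bilinear forms and the chosen isomorphisms $M_i^+ \cong N_{i+1}^-$ on each irreducible summand.
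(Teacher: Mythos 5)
Your proof is correct and follows essentially the same approach as the paper: decompose $V$ and $W$ into $\SL_2(\C)$-isotypic pieces, show a correlator can only connect adjacent pieces, and read off the relevance decomposition from $\ker T$ (and, conversely, build $T$ block by block from that decomposition). You have usefully made explicit the intertwining identity $Te = e'T$ and the classification of degree-$1$ $e$-equivariant maps $\Sym^a(\C^2) \to \Sym^b(\C^2)$, which the paper compresses into ``one argues / one checks easily,'' and you are right that the remaining scalar normalization is a genuine loose end that the paper also leaves implicit.
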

\vskip 5pt

\begin{proof}Given a relevant pair of A-parameters $(M_A,N_A)$, we construct a correlator as follows.
 Writing
 \[ V  = \bigoplus_n V_n \otimes \Sym^n(\C^2), \]  
  the action of the diagonal torus of $\SL_2(\C)$ gives a further decomposition  of the form 
  \[ V = \sum_n \sum_a V_n \otimes t_n^a, \]
  where  the inner sum is taken over integers $|a| \leq  n$ with
  $a \equiv n (\bmod 2)$.  Moreover, we have written $t_n^a$ for a nonzero eigenvector in 
  $\Sym^n(\C^2)$ for the character $t \mapsto t^a$.
We normalize the choice of $t_n^a$ so that  the  nilpotent element $e$  maps $V_n \otimes t_n^a$ to   $V_n \otimes t_n^{a+2}$.  In particular, the kernel of $e$ on $V$ is $\sum_n V_n \otimes t_n^n$.
\vskip 10pt

 Now  we define a correlator $T$ by giving $WD(k)$-equivariant surjective maps
 \[    \begin{cases}
   V_n^+ \otimes t_n^a   \longrightarrow W_{n+1}^- \otimes t_{n+1}^{a+1} \\
    V_n^- \otimes t_n^a \longrightarrow W_{n-1}^+ \otimes t_{n-1}^{a+1}\end{cases} \]
 for each $n$. In particular, 
 \[ {\rm Ker}(T) = \bigoplus_n V_n^- \otimes t_n^n,\]
 and one checks easily that $T$ is a correlator.
 \vskip 5pt
 
 Conversely, one argues that a correlator $T$ must send $V_n \otimes t_n^a$ to
 $W_{n+1}^- \otimes t_{n+1}^{a+1} \oplus  W_{n-1}^+ \otimes t_{n-1}^{a+1}$ for any $|a| \leq n$. Then ${\rm Ker}(T)$ determines $V_n^-$ for each $n$ and hence the representation $V_n^+$ (since $V_n$ is semisimple).  Hence the existence of $T$ implies that 
 the pair $(M_A, N_A)$ is relevant.
   \end{proof}
\vskip 10pt

\begin{remark} 
In the
 trivial case, when $e=0$ and $e'=0$, one can take $T=0$ and $(M_A,N_A)$ is indeed always relevant in this case.
 \end{remark}
 \vskip 5pt
 
 For $(M_A,N_A)$ a pair of local A-parameters for $\GL_n \times \GL_m$, we can apply the same reformulation above to the selfdual A-parameters
 $M_A+M_A^\vee$ and $N_A+N_A^\vee$ (on vector spaces $V,W$ of dimensions $2n,2m$).  A correlator in this situation becomes a pair of maps of $WD(k)$-representations $T:V \rightarrow W$ and $T^*: W \rightarrow V$, each of degree 1, such that $T^*T=e$  and $TT^*=e'$. One can then check by a similar argument as above that the existence of such a correlator is equivalent to the relevance of $(M_A,N_A)$.
 
\vskip 10pt

 The notion of a correlator $T$ defined above should remind 
the reader familiar with the theory of reductive dual pairs (in the sense of Howe)  of the moment map which arises there.
Let us explicate this connection here.  Given a pair $(M_A, N_A)$ of A-parameters
where $M_A$ (respectively $N_A)$ is a symplectic  (resp. orthogonal) representation of $WD(k)$ on a vector space $V$ (resp. W), one has a symplectic representation $V \otimes W$ of $WD(k)$:
\[  WD(k) \longrightarrow \Sp(V) \times \O(W) \longrightarrow \Sp(V \otimes W). \]
The $\Sp(V) \times \O(W)$-symplectic variety $V \otimes W$ is Hamiltonian and possesses a $WD(k)$-equivariant moment map
\[  \mu = \mu_V \times \mu_W: V \otimes W \longrightarrow   \mathfrak{sp}(V)  \times \mathfrak{so}(W), \]
where $\mathfrak{sp}(V)$  (resp. $\mathfrak{so}(W)$) is the Lie algebra of $\Sp(V)$ (resp. $\SO(W)$)
which we prefer to write as a double fibration:
 \[
 \xymatrix{ &  V \otimes W
 \ar[dl]_{\mu_V} \ar[dr]^{\mu_W}& \\
 \mathfrak{sp}(V)
   & &
 \mathfrak{so}(W).
     &   }
\]

Since $V$ and $W$ are equipped with non-degenerate bilinear forms, we have isomorphisms
\[ V \otimes W \cong \Hom(V,W) \cong \Hom(W,V). \]
The maps $\mu_V$ and $\mu_W$ are then given by:
\[  \mu_V(T) = T^* T \quad \text{and} \quad \mu_W(T) = T  T^* \quad \text{for $T \in \Hom(V,W)$.} \]
The double fibration diagram gives one a correspondence between the set of $\Sp(V)$-adjoint orbits and $\O(W)$-adjoint orbits: say that a $\Sp(V)$-adjoint orbit  $\mathcal{O}_V$ and an $\O(W)$-adjoint orbit $\mathcal{O}_W$ correspond if there exists $T \in V \otimes W$ such that
\[ \mu_V(T) \in \mathcal{O}_W \quad \text{and} \quad \mu_W(T) \in \mathcal{O}_W. \]
\vskip 5pt

Now we may reformulate the notion of a correlator in terms of this moment map. 
Given a pair of A-parameters $(M_A, N_A)$,  recall that one has a pair of nilpotent elements
\[ e \in \mathfrak{sp}(V) \quad \text{and} \quad e' \in \mathfrak{so}(W)\]
which are fixed by $WD(k)$. Recalling that $V \otimes W \cong \Hom(V,W)$ has a $\Z$-grading, 
we see that a correlator for $(M_A, N_A)$ is a degree $1$ element $T \in \Hom(V,W)$ which is fixed by $WD(k)$ and such that 
\[ \mu_V(T) = e \quad \text{and} \quad \mu_W(T) = e'. \]
Hence we have a nice geometric formulation of the notion of relevance: $(M_A, N_A)$ is relevant if and only if the nilpotent orbits of $e$ and $e'$ are in the moment map correspondence  via a $WD(k)$-fixed degree $1$ element $T$.
\vskip 5pt

We note that  such correlator maps $T$ have also appeared in the work of Gomez-Zhu \cite{Z} where they studied the transfer of generalized Gelfand-Graev models (attached to nilpotent orbits) under the local theta correspondence. In their work, the  correlator maps 
were considered on the side of representation theory of the real or p-adic groups, whereas in our work, they appeared on the side of the Langlands dual groups.
%{\color{red}{Question: The correlator $A$ is unique up to  an isomorphism $(\phi_1,\phi_2)$ of $(M,N)$ as $WD(k) \times \SL_2(\C)$-modules, making certain diagrams commute?}}
\vskip 15pt

\section{Local Conjecture for $\GL_{n}$}  \label{S:GLn}

Having introduced the key notion of relevant pairs of A-parameters, we can begin the consideration of restriction problems. 
In this section, we discuss the restriction problem for   $\GL_n(k) \subset \GL_{n+1}(k)$  in the context of  L-parameters of Arthur type. We first explicate the representations of $\GL_n(k)$ which belong to such L-packets. 

\vskip 10pt

An A-parameter of $\GL_n(k)$ is an $n$-dimensional representation of $WD(k) \times \SL_2(\bC)$  of the form
\[  M_A = \bigoplus_{i=1}^r   M_i \boxtimes \Sym^{d_i}(\bC^2), \]
where $M_i$ is an irreducible $m_i$-dimensional representation of $WD(k)$.  As discussed in the introduction, $M_A$ gives rise to an L-parameter $M$. Moreover, the local A-packet associated to $M_A$ is equal to the local L-packet associated to $M$ which is a singleton set. We can describe this unique representation $\pi_M$ as follows. 
By the local Langlands correspondence, each $M_i$ corresponds to a discrete series representation $\pi_{M_i}$ of $\GL_{m_i}(k)$. The representation in the A-packet associated to the A-parameter $M_i \otimes \Sym^{d_i}(\bC^2)$ is a Speh representation ${\rm Speh}(M_i, d_i)$, which is the unique irreducible quotient of the standard module
\[   \pi_{M_i} |\det|^{d_i/2}   \times \pi_{M_i} |\det|^{d_i/2 -1} \times......\times \pi_{M_i} |\det|^{-d_i/2}, \]
where we have used the standard notation for parabolic induction.  Then the representation $\pi_M$ is given by the irreducible parabolic induction
\[  \pi_M  =    {\rm Speh}(M_1, d_1)  \times  {\rm Speh}(M_2, d_2) \times....\times  {\rm Speh}(M_r, d_r). \]
As an example, when $M = \chi \boxtimes \Sym^{n-1}(\C^2)$ with $\chi$ a 1-dimensional character of $WD(k)$, the associated representation $\pi_M$ is the character $\chi \circ \det$ of $\GL_n(k)$. 

\vskip 5pt

We can now consider the restriction problem. Suppose that $M_A$ is an A-parameter of $\GL_{n+1}(k)$ and $N_A$ one for $\GL_n(k)$. 
Let $\pi_M$ and $\pi_N$ be the irreducible representations in the respective L-packets (of Arthur type), which we have described above. 
By \cite{AGRS}, it  is known that
\[  \dim \Hom_{\GL_n(k)}( \pi_M, \pi_N)  \leq 1. \]
When $M_A$ and $N_A$ are tempered A-parameters, so that $\pi_M$ and $\pi_N$ are both tempered (and  generic), it is well-known that the above Hom space is $1$-dimensional.
In general, we make the following conjecture.
 \vskip 5pt
 
\begin{conj} \label{conj-gln}
In the above context ($k$ any local field, archimedean or non-archimedean),  
\[  \Hom_{\GL_n(k)}(\pi_M, \pi_N) \ne 0 \Longleftrightarrow  \text{the pair $(M_A, N_A)$ of A-parameters is relevant.}  \]
\end{conj}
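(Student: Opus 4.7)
The plan is to reduce the restriction problem to a combinatorial question about Bernstein-Zelevinsky derivatives. Since representations of Arthur type for $\GL_n(k)$ are explicit irreducible products of Speh representations,
\[ \pi_M = \mathrm{Speh}(M_1, d_1+1) \times \cdots \times \mathrm{Speh}(M_r, d_r+1), \]
and likewise for $\pi_N$, the question becomes a precise statement about two such products. The key tool is the Bernstein-Zelevinsky filtration: the restriction of $\pi_M$ to the mirabolic subgroup $P_{n+1} \subset \GL_{n+1}(k)$ carries a canonical filtration whose successive quotients are governed by the derivatives $\pi_M^{(k)}$, which are representations of $\GL_{n+1-k}(k)$. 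Combined with Frobenius reciprocity and the explicit description of $\pi_N$ as the Langlands quotient of a standard module, a necessary condition for $\Hom_{\GL_n(k)}(\pi_M, \pi_N)\neq 0$ is the existence of a derivative stratum of $\pi_M$ whose constituents are compatible with $\pi_N$ via a prescribed parabolic induction.

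The heart of the argument is then a computation of the derivative structure of a product of Speh representations. Using the Leibniz rule
\[ (\sigma \times \tau)^{(k)} \sim \sum_{i+j=k} \sigma^{(i)} \times \tau^{(j)} \]
together with the known formulas of Tadi\'c and Zelevinsky for derivatives of a single Speh representation $\mathrm{Speh}(M_i, d_i+1)$, one expresses all derivatives of $\pi_M$ as sums of products of shifted Speh factors. Matching the outcome against the explicit form of $\pi_N$ then turns the problem into a purely combinatorial one. Each factor $M_i \otimes \Sym^{d_i}(\bC^2)$ should contribute in one of two ways as we pass from $\pi_M$ to $\pi_N$: either its Sym-index shifts up by one (the ``$+$'' contribution, corresponding to $M_i^+ = N_{d_i+1}^-$) or it shifts down by one (the ``$-$'' contribution, corresponding to $M_i^- = N_{d_i-1}^+$). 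The existence of a consistent such splitting across all factors is exactly the relevance of $(M_A, N_A)$. In the forward direction, one argues that a nonzero intertwiner forces such a matching by inductively extracting the highest derivatives. In the backward direction, one uses a relevant decomposition to construct an explicit nonzero map, most cleanly when the derivative sums decouple.

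The main obstacle is the backward direction in full generality. The BZ filtration yields only subquotients (not direct summands), and the derivatives of a Speh representation are not irreducible, so one cannot directly ``read off'' the Hom space from derivatives: one must track which irreducible constituents survive as actual quotients throughout the filtration and rule out cancellations between contributions from different factors. This is precisely why the direct proofs presented in the paper are confined to cases where the Deligne $\SL_2(\C)$ acts trivially (Speh representations degenerate and the combinatorics becomes transparent), and why the deeper work of Gurevich (for the forward direction in general, and the converse when $M_A$ or $N_A$ is tempered) and of K.~Y.~Chan (for the full non-archimedean case) is needed. The archimedean case presents an additional difficulty, as one would need an analogous theory of derivatives in the category of Casselman--Wallach modules, which is a separate challenge not addressed by this circle of ideas.
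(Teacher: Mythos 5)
The statement you are addressing is labelled a \emph{conjecture} in the paper: there is no complete proof for it there, only the partial result Theorem~\ref{thm-gln} together with references to Gurevich (one direction in general, both directions when one of $M_A$ or $N_A$ is tempered) and to K.~Y.~Chan for the full non-archimedean case. Your proposal is therefore best read as an outline of the strategy behind Theorem~\ref{thm-gln} rather than as a proof of the conjecture, and read that way it is accurate: the reduction via the Bernstein--Zelevinsky filtration of $\pi_M$ restricted to the mirabolic subgroup, the Leibniz rule applied to derivatives of products of Speh and Steinberg factors, the matching of cuspidal supports between the derivative strata of $\pi_M$ and those of $\pi_N^{\vee}$, and the identification of a consistent ``shift the Sym-index by one'' assignment with relevance of $(M_A,N_A)$ are exactly the ingredients the paper uses. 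You also correctly locate the obstruction: the filtration gives only subquotients, the derivatives are reducible, and one must control $\Ext$ vanishing among the wrong-stratum constituents to lift a nonzero map on one stratum to all of $\pi_M$ --- which is precisely what the cuspidal-support analysis (Lemma~\ref{support}) and the irreducibility input (Lemma~\ref{irr}) are designed to secure under the theorem's hypotheses.

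One small imprecision worth flagging: you write that the paper's direct proof is ``confined to cases where the Deligne $\SL_2(\C)$ acts trivially.'' In fact Theorem~\ref{thm-gln} is proved under the weaker hypotheses (a) and (b), which permit Steinberg-type summands $\rho\otimes\Sym^a(\C^2)\otimes\C$ provided no single irreducible summand engages both $\SL_2$'s nontrivially and no pair $\rho\otimes\C\otimes\Sym^a(\C^2)\oplus\rho\otimes\Sym^a(\C^2)\otimes\C$ occurs in $M_A$ or $N_A$. The Deligne-trivial case is the special situation where (a) and (b) are automatic. Apart from that, your assessment of what is proved, what depends on external work, and why the archimedean case is genuinely out of reach of this circle of ideas matches the state of affairs described in the paper.
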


 \vskip 5pt
 Of course, one could have formulated a restriction problem for an arbitrary $\GL_n \times \GL_m$, i.e. including the case of Bessel
 models and Fourier-Jacobi models. We highlight this particular case here because of its intrinsic simplicity, and will defer the general case  to the next section where we deal with classical groups.  
 
 \vskip 5pt
 Before proceeding further, it is good to do a reality check. Suppose that 
 \[  M_A = \Sym^n(\C^2) \]
 is the A-parameter of the trivial representation of $\GL_{n+1}(k)$,
 so that $\pi_M$ is the trivial representation $\GL_{n+1}(k)$.  Then its restriction to $\GL_n(k)$ is certainly the trivial representation, i.e. the only A-parameter $N_A$ of $\GL_n(k)$ for which $\Hom_{\GL_n(k)}(\pi_M, \pi_N) \ne 0$ 
 is 
 \[  N_A = \Sym^{n-1}(\C^2). \]
  The reader can easily verify that the only $N_A$ (of dimension $n$) such that $(\Sym^n(\C^2), N_A)$ is relevant is $N_A = \Sym^{n-1}(\C^2)$. 
 \vskip 5pt
 
 On the other hand, if we start with $N_A = \Sym^{n-1}(\C^2)$, then the A-parameters $M_A$ (of dimension $(n+1)$) of $\GL_{n+1}(k)$ such that $(M_A, N_A)$ is relevant are:
 \vskip 5pt
 
 \begin{itemize} 
 \item $M_A = \Sym^n(\C^2)$, or
 \item $M_A = \Sym^{n-2}(\C^2)  \oplus  M_0$ where $M_0$ is a bounded 2-dimensional representation of $WD(k)$.
 \end{itemize}
 This reflects the known fact that the irreducible admissible representations of $\GL_{n+1}(k)$ which are of Arthur type and which are $\GL_{n}(k)$-distinguished are either trivial or theta lifts of tempered representations of $\GL_2(k)$
 (see the paper \cite{Venk} for the precise results for {\em all} irreducible representations of $\GL_{n+1}(k)$ for $k$
 a non-archimedean local field).

 \vskip 5pt
 The rest of this section is devoted to proving a special case of Conjecture \ref{conj-gln}.
\vskip 5pt

\begin{thm} \label{thm-gln}
Let $k$ be a non-archimedean local field. Consider a pair of local A-parameters $(M_A, N_A)$ for $\GL_{n+1}(k) \times \GL_n(k)$ satisfying the following extra properties: 
\vskip 5pt

\begin{itemize}
\item[(a)]  on every irreducible summand of $M_A$ or $N_A$, at least one of the two $\SL_2(\C)$'s in
 \[ WD(k) \times \SL_2(\C) =  W(k)\times \SL_2(\C) \times \SL_2(\C) \]  
acts trivially;
 \vskip 5pt
 
\item[(b)]  for any irreducible representation $\rho$ of $W(k)$ and
  $a \geq 1$ an integer,
 \[ \rho \otimes \C \otimes \Sym^a(\C^2)  \oplus \rho \otimes \Sym^a(\C^2) \otimes \C \nsubseteq M_A, \]
 and likewise for $N_A$.  
  \end{itemize} 
   Then  
   \[  {\rm Hom}_{\GL_n(k)}(\pi_M,\pi_N) \not = 0   \Longleftrightarrow \text{$(M_A, N_A)$ is  relevant.} \]
  In particular, the above properties are satisfied and hence the result holds in the case when
  the Deligne $\SL_2(\C)$ (i.e. the $\SL_2(\C)$ in $WD(k)$) acts trivially on both $M_A$ and $N_A$.
  \end{thm}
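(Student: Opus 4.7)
The plan is to use the Bernstein--Zelevinsky theory of derivatives. Recall that for any smooth representation $\pi$ of $\GL_{n+1}(k)$, its restriction to the mirabolic subgroup carries a canonical filtration whose successive quotients are representations induced from the derivatives $\pi^{(k)}$. Applying Frobenius reciprocity, the space $\Hom_{\GL_n(k)}(\pi_M, \pi_N)$ is controlled by these derivatives of $\pi_M$ together with Jacquet modules of $\pi_N$ along appropriate parabolic subgroups. The strategy is therefore to carry out the combinatorics of these derivatives explicitly, using (a) and (b) to keep the bookkeeping tractable.

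First I would record the derivative computations on the building blocks of $\pi_M$. Under (a), each irreducible summand of $M_A$ is either of Speh type $\rho \boxtimes \C \boxtimes \Sym^d(\C^2)$, contributing a Speh representation $\text{Speh}(\pi_\rho, d+1)$, or of discrete type $\rho \boxtimes \Sym^a(\C^2) \boxtimes \C$, contributing an essentially square-integrable representation. The derivatives of Speh representations and of discrete series are classical (e.g.\ via Tadi\'c/M\oe glin formulae), each producing once again a representation of the same form with parameters shifted in a controlled way. The Leibniz rule for BZ derivatives then expresses the derivatives of $\pi_M$ (which is the parabolic induction of these pieces) as a sum of products of shifted blocks; hypothesis (b) is precisely what prevents distinct Leibniz terms from coinciding in degenerate ways, so that the summands can be read off unambiguously.

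The match with relevance is then a bookkeeping exercise. A relevant decomposition assigns to each irreducible summand of $M_A$ a sign $\pm$ and imposes $M_i^+ = N_{i+1}^-$, $M_i^- = N_{i-1}^+$. The $+$-summands are exactly those blocks whose Speh length drops by one in a derivative (producing a block that must appear in $\pi_N$), while the $-$-summands are those transported into $\pi_N$ by a Jacquet-module step without changing the Speh length. The forward direction --- nonvanishing of the Hom space forces relevance --- follows from this BZ analysis directly, since any nonzero map from $\pi_M$ must come from some layer of the mirabolic filtration, and the derivative appearing there must match $\pi_N$ up to Jacquet data, giving a compatible $\pm$ partition of the parameters.

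The main obstacle I expect is the converse: producing an actual nonzero element of $\Hom_{\GL_n(k)}(\pi_M, \pi_N)$ given a relevant $(M_A,N_A)$. Here one must show that the natural candidate map --- extracted from the layer of the BZ filtration in which the relevant derivative appears --- survives, i.e.\ is not killed by the other layers. I would handle this by induction on the total rank of $M_A + N_A$: peel off one matched pair $(M_i^\pm, N_{i\pm 1}^\mp)$ at a time, reduce to a relevant pair of smaller rank, and invoke the inductive hypothesis, with (b) ensuring that the Speh/discrete blocks are sufficiently disjoint that the peeling is clean. A supplementary consideration at the base of the induction is that the tempered pieces (coming from $M_0^-$ and $N_0^-$, which are unconstrained by relevance) contribute automatically, since tempered $\GL_{n+1}\times \GL_n$ branching is already known to give multiplicity one.
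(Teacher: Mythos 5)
Your overall framework --- Bernstein--Zelevinsky derivatives and the mirabolic filtration --- is the same as the paper's, and your forward direction (nonvanishing forces relevance) captures the right idea, though it glosses over the real technical content. The paper does not merely observe that ``the derivative appearing there must match $\pi_N$''; it reduces to a single cuspidal line and proves a genuinely nontrivial statement (its Lemma \ref{support}) pinning down exactly when two products of segments, one twisted by $\nu^{1/2}$ and with $e_i$-shifts, can have the same cuspidal support. Hypotheses (a) and (b) enter precisely there. You would need something of that strength; without it, the claim that a matching layer ``gives a compatible $\pm$ partition'' is an assertion, not a proof.

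The more serious gap is in your converse. You propose an induction on rank, ``peeling off one matched pair at a time,'' but you give no mechanism by which a nonzero $\Hom$ for the peeled pair of smaller rank produces a nonzero $\Hom_{\GL_n(k)}(\pi_M,\pi_N)$. That step is exactly what fails naively: a nonzero intertwiner out of a subquotient of the BZ filtration at level $j$ need not extend to all of $\pi_M$. The paper addresses this via a vanishing of $\Ext^1_{\GL_{n-j}(k)}(C,D)$ for every composition factor pair $(C,D)\neq(A,B)$ at every level $j\leq i$, which in turn is established by showing (after the cuspidal-support lemma and an irreducibility lemma of Badulescu--Lapid--Mínguez type) that there is a \emph{unique} index $j$ and a \emph{unique} pair $(A,B)$ of composition factors of $\nu^{1/2}\pi_M^{j+1}$ and $((\pi_N^\vee)^j)^\vee$ with coinciding cuspidal supports, all other pairs having disjoint supports. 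Your proposal contains no analogue of this $\Ext$-vanishing argument, and without it the ``peeling'' induction has no way to propagate nonvanishing from the base of the induction up to the full representations. I would rewrite the converse to identify the unique matching layer directly and verify the $\Ext$-vanishing, rather than inducting on rank.
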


\begin{proof}
  The proof proceeds by analyzing the restriction of the irreducible representation $\pi_M$
  of $\GL_{n+1}(k)$ to its mirabolic subgroup.
  This restriction was described in a classic theorem of Bernstein-Zelevinsky in \cite{BZ},
   in terms of certain induced representations arising from the Bernstein-Zelevinsky derivatives
  $\pi_M^i$ of the representation $\pi_M$. Since the mirabolic subgroup contains  $\GL_n(k)$, this gives one a way to understand 
the restriction of $\pi_M$ to $\GL_n(k)$. 
\vskip 5pt

By this theorem of Bernstein-Zelevinsky, if $ {\rm Hom}_{\GL_n(k)}(\pi_M,\pi_N) \not = 0$, then 
for some $i \geq 0$, there exists
\vskip 5pt

\begin{itemize}
\item an irreducible composition factor $A$ of $\nu^{1/2} (\pi_M)^{i+1}$,
\item  an irreducible composition factor $B$ of  $((\pi_N^\vee)^{i})^\vee$,
\end{itemize}
such that
\[ {\rm Hom}_{\GL_{n-i}(k)} (A, B) \not = 0. \]

For the converse, given some $(i, A, B)$ as above,
suppose that  we are lucky enough to have the further property:
\vskip 5pt

\begin{itemize}
\item for any $ j\leq i$ and any $(C,D) \not = (A,B)$, where $C$ is an irreducible composition factor  of
$\nu^{1/2} (\pi_M)^{j+1}$ and $D$ is an irreducible composition factor of
$((\pi_N^\vee)^{j})^\vee$,  
$${\rm Ext}^1_{\GL_{n-j}(k)}[ C, D] = 0.$$
\end{itemize}
Then the nonzero homomorphism from a
certain submodule of $\pi_M$ to $\pi_N$ represented by a homomorphism in
${\rm Hom}_{\GL_{n-i}(k)}[ A, B] \not = 0,$ extends to all of $\pi_M$.

To have this vanishing of $\Ext^1[C,D]$, the simplest reason 
would be when the cuspidal support of any irreducible composition factor of $ \nu^{1/2} (\pi_N^\vee)^j$ and of
$ (\pi_M^{j+1})^\vee$ are different (except for a unique choice $(A,B)$ among all pairs $(C,D)$).  
The conditions (a) and (b) in the theorem are imposed to ensure this. 
\vskip 10pt

Denote by $[d]$ the trivial representation of $\GL_d(k)$, and by $[d]^{\times m}$ the irreducible admissible representation of $\GL_{d\cdot m}(k)$
which is $[d] \times \cdots \times [d]$. If $\sigma$ is a cuspidal representation of $\GL_r(k)$, let $\sigma[d]$ be the Speh representation 
of $\GL_{r\cdot d}(k)$ constructed from $\sigma$, i.e., in the Zelevinsky notation $\sigma[d]= Z[\sigma \nu^{-(d-1)/2},\cdots, \sigma \nu^{(d-1)/2} ]$. It is known that the
only nonzero derivative of $\sigma[d]$ is $\sigma[d]^i$ for $i=0, r$, and $\sigma[d]^r = \nu^{-1/2}\sigma[d-1]= Z[\sigma\nu^{-(d-1)/2},\cdots, \sigma\nu^{(d-3)/2}]$.

Denote by $\St_d[\rho]$, the generalized  Steinberg representation of $\GL_{dd'}(k)$  
 where $\rho$ is a cuspidal unitary representation of the  general linear group $\GL_{d'}(k)$.  The derivatives of  
$\St_d[\rho]^i$ are known to be nonzero only for $i = jd'$ for some $0\leq j \leq d$ with   $\St_d[\rho]^{jd'} = \nu ^{j/2} \St_{d-j}[\rho]. $ 
A tempered representation of $\GL_m(k)$ is built as a product of 
the generalized  Steinberg representations $\St_d[\rho]$.
\vskip 5pt

For a cuspidal representation $\rho$ of $\GL_m(k)$ (or the corresponding irreducible representation of $W(k)$),
we call the set of representations $\{\rho \nu^i| i \in \Z\}$, the cuspidal line passing through $\rho$. By Leibnitz rule, if a representation has cuspidal support contained in the cuspidal line passing through $\rho$,
all its derivatives have the same property. Therefore, the analysis below which mostly involves dealing with the non-tempered parts of $\pi_M$ and $\pi_N$, to find 
an irreducible composition factor A of $\nu^{1/2} (\pi_M)^{i+1}$,
and an irreducible composition factor $B$ of  $((\pi_N^\vee)^{i})^\vee$,
such that
${\rm Hom}_{\GL_{n-i}(k)}[ A, B] \not = 0,$
we can focus attention on a cuspidal line passing through a fixed cuspidal representation $\rho$, i.e., we can assume
that the non-tempered parts of $M_A$ and $N_A$ when restricted to $W(k)$ are multiples of $\rho$, without any
constraint on the tempered parts  of $M_A$ and $N_A$. Replacing $\rho$ (an irreducible bounded
representation of
$W(k)$) by the trivial representation
of $W(k)$ has no effect on the analysis done below, and this is what we assume 
in the rest of the proof, i.e., we assume in the rest of the proof that
the non-tempered parts of $M_A$ and $N_A$ when restricted to $W(k)$ are multiples of the trivial representation.

\vskip 5pt

Let
\begin{eqnarray*}
  \pi_M & = & [a_1] \times \cdots \times [a_r] \times [b_1] \times \cdots \times [b_s] \times \St[c_1] \times \cdots \times \St[c_t]
  \times \St[d_1] \times \cdots \times \St[d_u], \\
\pi_N & = & [a'_1] \times \cdots \times [a'_{r'}] \times [b'_1] \times \cdots \times[b'_{s'}] \times \St[c'_1] \times \cdots \times \St[c'_{t'}]
\times \St[d'_1] \times \cdots \times \St[d'_{u'}],
\end{eqnarray*} 
where $\St[a]$ denotes the Steinberg representation of $\GL_a(k)$. There could be a further non-Steinberg tempered part in $\pi_M$, and in $\pi_N$,  which we do not write here out for brevity of notation and which will not play any role in the arguments
below.
Note also that $\St[1]=[1]$, the trivial representation of $\GL_1(k)$; in fact, we will use the notation $\St[a]$ in the above expressions for
$\pi_M,\pi_N$  only for $a>1$. 

The notation above is so made that for calculating $(j+1)$-th derivative of $\pi_M$, and $j$-th derivative of $\pi_N$, we differentiate all the terms
$[a_i], [a'_i], \St[d_i], \St[d'_i]$ at least once, and no more for $[a_i], [a'_i]$; whereas none of  the terms involving
$[b_i], [b'_i], \St[c_i], \St[c'_i]$ are to be  differentiated. If the cuspidal support of
an irreducible composition
factor $C$ of $\nu^{1/2}\pi_M^{j+1}$ and $D$ of $(\pi_N^\vee){^{j}}^\vee$ are
to be the same, we must have equality of cuspidal supports of $\tilde{C}$, and $\tilde{D}$ defined as follows:
\begin{eqnarray*}
 \tilde{C} & = & \nu^{1/2}  \{ \nu^{-1/2}[a_1-1] \times \cdots \times \nu^{-1/2}[a_r-1] \times [b_1] \times \cdots \times [b_s]\\
  & \times  &     \St[c_1] \times \cdots \times \St[c_t]
  \times \nu^{e_1/2}\St[d_1-e_1] \times \cdots \times \nu^{e_u/2}\St[d_u-e_u] \} , \\
\tilde{D} & = & \nu^{1/2}[a'_1-1] \times \cdots \times \nu^{1/2}[a'_{r'}-1] \times [b'_1] \times \cdots \times[b'_{s'}] \times \\
  & \times &  \St[c'_1]
  \times \cdots \times \St[c'_{t'}]
  \times \nu^{-f_1/2}\St[d'_1-f_1] \times \cdots \times \nu^{-f_{u'}/2}\St[d'_{u'}-f_{u'}].
  \end{eqnarray*}

If the          cuspidal supports of the representations $\tilde{C}$ and $\tilde{D}$ 
are
to be the same, we draw the following conclusions from Lemma \ref{support}. (Let us remind ourselves that the cuspidal support
of $\St[a]$ and $[a]$ is the same, so when using Lemma \ref{support}, we will replace all $\St[a]$ in the above expressions
for $\tilde{C}$ and $\tilde{D}$ by the corresponding $[a]$. Further, the hypothesis in Lemma \ref{support}  is easily seen to be the same as the hypothesis in the
Theorem \ref{thm-gln} that for any integer
$a \geq 1$, the representation 
 $\C \otimes \Sym^a(\C^2)  \oplus \Sym^a(\C^2) \otimes \C$
  does not appear in $M_A$ or in $N_A$.

\begin{enumerate}

\item There is no differentiation for $\St[d_i]$ and $\St[d'_i]$.

\item For each $[a_i]$ in $\pi_M$ (which get differentiated once), there is exactly one of
  $[a_i-1]$ or $\St[a_i-1]$ amongst $[b'_i]$ or $\St[c'_i]$ for $\pi_N$
  (these are not differentiated in achieving $\pi_N^j$).
  
\item For each $[a'_i]$ in $\pi_N$ (which get differentiated once), there is exactly one of
  $[a'_i-1]$ or $\St[a'_i-1]$ amongst $[b_i]$ or $\St[c_i]$ for $\pi_M$
  (these are not differentiated in achieving $\pi_M^{j+1}$).
\item Each of the terms in  
  $ \tilde{C}$ and $\tilde{D}$ (in expressing these representations as a sum -- in the Grothendieck group of representations -- of products of representations of the form $[a]$ and $\St[b]$) 
  is matched through (1),(2),(3).
\end{enumerate}

It follows that if the          cuspidal supports of $\tilde{C}$ and $\tilde{D}$ are
to be the same, we must have the following structure for $\pi_M$ and $\pi_N$
(up to multiplication by a tempered representation $\pi_M^t$  in $\pi_M$ and
up to multiplication by a tempered representation $\pi_N^t$  in  $\pi_N$; in using the Leibnitz rule to calculate
$\pi_M^{j+1}$ and $\pi_N^j$, one takes the highest possible derivatives for $\pi_M^t$ and $\pi_N^t$, which has the effect that
the terms $\pi_M^t$ and $\pi_N^t$ do not show up in $\pi_M^{j+1}$ and $\pi_N^j$):

\begin{eqnarray*}
  \pi_M & \stackrel{(1)}= & [b'_1+1] \times \cdots \times[b'_{s'}+1] \times [c'_1+1] \times \cdots \times [c'_{t'}+1]   \times [b_1] \times \cdots \times [b_s]  \\
    &  & \,  \times \St[c_1] \times \cdots \times \St[c_t], \\
  \pi_N & \stackrel{(2)} = & [b_1+1] \times \cdots \times [b_{s}+1] \times
     [c_1+1] \times \cdots \times[c_{t}+1] \times [b'_1] \times \cdots \times[b'_{s'}]  \\
     &  & \,  \times  \St[c'_1] \times \cdots \times \St[c'_{t'}].
\end{eqnarray*}

For these representations $\pi_M,\pi_N$, if there is a composition factor $C$ of 
$\nu^{1/2} \pi_M^{j+1}$, and $D$ of ${(\pi_N^\vee)^{j}}^\vee,$
for which $C,D$ have the same cuspidal support as representations of $\GL_{n-j}(k)$,
the representations $\tilde{C}$ and $\tilde{D}$ must be:

\begin{eqnarray*}
  \tilde{C}
  & \stackrel{(3)}= & [b'_1] \times \cdots \times[b'_{s'}] \times [c'_1] \times \cdots \times [c'_{t'}] 
   \times \nu^{1/2} \{ [b_1] \times \cdots \times [b_s] \times \St[c_1] \times \cdots \times \St[c_t]\}, \\
          \tilde{D}          & \stackrel{(4)}= & 
         \nu^{1/2}\{  [b_1] \times \cdots \times [b_{s}] \times
          [c_1] \times \cdots \times[c_{t}] \}  
           \times  [b'_1] \times \cdots \times[b'_{s'}] \times  \St[c'_1] \times \cdots \times \St[c'_{t'}].
\end{eqnarray*}

By Lemma \ref{irr}, such representations $\tilde{C}, \tilde{D}$ are irreducible, and for
${\rm Hom}_{\GL_{n-j}(k)}[ \tilde{C}, \tilde{D}]$ to be nonzero, 
we must have:
\begin{eqnarray*}
c_i & = & 1 {\rm~ for~ all~} i \\
  c'_i & = & 1 {\rm~ for~ all~} i.
\end{eqnarray*}
Thus, as a consequence, we find that if any of the spaces
${\rm Hom}_{\GL_{n-j}(k)}[ C, D]$ is nonzero, then
$M_A,N_A$ are a  relevant pair of A-parameters.

Next, we prove that $\Hom_{\GL_n(k)}[\pi_M,\pi_N] \not = 0$ when the A-parameter  $M_A$ for an irreducible representation $\pi_M$
  of $\GL_{n+1}(k)$ 
  and $N_A$ for an irreducible representation $\pi_N$ of $\GL_n(k)$
    satisfies all conditions in the
    statement of this theorem with $M_A,N_A$, a relevant pair of A-parameters. With the structure of $\pi_M,\pi_N$ given in (1) and (2),
the only way to get 
an irreducible composition
factor $\tilde{C}$ of $\nu^{1/2}\pi_M^{j+1}$ and $\tilde{D}$ of $(\pi_N^\vee){^{j}}^\vee$ to have  same cuspidal supports
is    that none of  the terms $\St(c_i), \St(c'_i)$ should be
    present in the expressions for $\pi_M$ and $\pi_N$ given in (1) and (2), and also in $\tilde{C},\tilde{D}$ given in (3) and (4),
    i.e.,  we must  take full derivative of all tempered factors of $\pi_M$ which do not match-up with
presence of a $[2]$ in $\pi_N$ and  we must take full derivative on all tempered factors of $\pi_N$ which do not match-up with
presence of a $[2]$ in $\pi_M$.  

Thus the strategy outlined in the beginning of the proof of the theorem that there is a unique $j$, and a
unique irreducible composition
factor $C$ of $\nu^{1/2}\pi_M^{j+1}$ and $D$ of $(\pi_N^\vee){^{j}}^\vee$ with the   same cuspidal supports holds;  we elaborate more on this now.

Write:
  \begin{eqnarray*}
  \pi_M & = & [b'_1+1] \times \cdots \times[b'_{s'}+1] 
   \times [b_1] \times \cdots \times [b_s] \times \tau_1, \\
  \pi_N & = & [b_1+1] \times \cdots \times [b_{s}+1] \times
          [b'_1] \times \cdots \times[b'_{s'}] \times \tau_2 ,
\end{eqnarray*}
where $\tau_1,\tau_2$ are
tempered representations, not generalized Steinberg,  of $\GL_{d_1}(k)$ and $\GL_{d_2}(k)$, for certain integers $d_1,d_2$ with
\begin{eqnarray*}
s'+d_1 + \sum_{i=1}^{s} b_i + \sum_{i=1}^{s'} b'_i & = & n+1 \\
 s+d_2 + \sum_{i=1}^{s} b_i + \sum_{i=1}^{s'} b'_i & = & n.
\end{eqnarray*}

  We see that there is exactly one integer $j$ which by the above equations has the property that:
 \begin{eqnarray*}
j+1 & = & s'+ d_1\\
 j & = & s+d_2,
 \end{eqnarray*}
 and for this integer $j$, there is
 only one composition factor, say $A$,   of $\nu^{1/2} \pi_M^{j+1}$ and  one of ${(\pi_N^\vee)^{j}}^\vee$, say $B$, for which 
${\rm Hom}_{\GL_{n-j}(k)}[ A, B]$
is nonzero, and all the other
composition factors  of $\nu^{1/2} \pi_M^{j+1}$ and   of ${(\pi_N^\vee)^{j}}^\vee$ have different cuspidal
supports, and so is the case also for any composition factors  of $\nu^{1/2} \pi_M^{i+1}$ and   of ${(\pi_N^\vee)^{i}}^\vee$
for $i \not = j$. (A subtlety in this argument may be pointed out,  which is that a particular
composition factor may appear with higher (Jordan-H\"older) multiplicity, for example,  the first derivative
of the representation $ 1 \times 1$ of $\GL_2(k)$ is the trivial representation of $\GL_1(k)$ with multiplicity 2, one which appears as a submodule, and the other which appears as a quotient. This subtlety will show up as soon as
$ b_i=b_j'+1$ for some $i,j$. Since we are not trying to prove multiplicity 1 theorem
for $\Hom(\pi_M,\pi_N)$, but rather only the existence of a nonzero homomorphism, this subtlety does not create a problem for us.)

Therefore, $${\rm Ext}^\ell_{\GL_{n-i}(k)}[ C, D] = 0, {\rm~for~} (A,B) \not = (C,D), {\rm ~and ~for ~all~} \ell \geq 0, $$
completing the proof of the `existence' part of the theorem, i.e., when the parameters $M_A,N_A$ are a
relevant pair of A-parameters,
and  are representations of $WD(k)  \times \SL_2(\C)$ as in the statement of the theorem, 
$\Hom_{\GL_{n}(k)}[ \pi_M, \pi_N] \not = 0$. The other part of the theorem was already proved earlier, thus the proof of the theorem
is complete.
\end{proof}

\begin{lemma} \label{support}
Suppose that  the cuspidal supports of
\begin{eqnarray*}
  V & = & [a_1] \times \cdots \times [a_r] 
  \times \nu^{e_1/2}[b_1] \times \cdots \times \nu^{e_s/2}[b_s] , \\
  W& = &  [c_1] \times \cdots \times[c_{t}] \times \nu^{1/2}\{ [d_1] \times \cdots \times [d_{u}] \}
    \times \nu^{-f_1/2}[g_1] \times \cdots \times \nu^{-f_{v}/2}[g_{v}],
  \end{eqnarray*}
are the same (where $e_i > 0$, $f_i> 0$ are integers). Assume that the following two conditions (V), (W) are satisfied:

\[ \tag{V} a_i +1 \not = b_j+e_j-1 {\rm~~ for ~any~ pair ~} (i,j) {\rm ~with~ }  e_j>1, {\rm ~and} \]
 \[ \tag{W} d_i+1  \not = g_j+f_j {\rm ~~ for ~any ~pair ~ } (i,j). \]

    Then,
  \begin{enumerate}
\item $e_i =1$ for all $i$.
  \item $f_j=0$ for all $j$.
  \item The set of irreducible representations appearing in writing $V$ as a product and $W$ as a product is unique
    up to permutation of the representations involved.
    
\end{enumerate}
\end{lemma}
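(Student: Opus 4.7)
\emph{Proof plan.} My plan is to induct on the common dimension $n=\dim V=\dim W$, treating the three conclusions in turn. The base case $n=0$ is vacuous.

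\emph{Conclusion (2), no negatively-shifted segments in $W$.} Suppose for contradiction that $v\geq 1$. I would choose a term $\nu^{-f/2}[g]$ in $W$ maximizing $f+g$, so that $L:=-(f+g-1)/2$ is the smallest left-endpoint contributed by any such segment. Since every segment of $V$ has nonnegative center, the only way $\nu^L$ can lie in the cuspidal support contributed by a segment of $V$ is if that segment is "wide" enough to cover $L$: either an $[a_i]$ with $a_i\geq f+g$, or a $\nu^{e/2}[b]$ with $b-e\geq f+g$. In both cases, the segment's right endpoint is at least $R:=(f+g-1)/2$. Transferring to $W$, the character $\nu^R$ must lie in the cuspidal support of $W$; and by a descending induction on $f+g$ (using the maximality assumption to rule out new negatively-shifted segments of larger "spread" reaching $\nu^R$), the only way $W$ realizes $\nu^R$ is via a segment $\nu^{1/2}[d]$ with $d=f+g-1$. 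But then $d+1=g+f$, contradicting (W).

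\emph{Conclusion (1), all shifts $e_i=1$ in $V$.} Having established $v=0$, I would now assume for contradiction that some $e_j\geq 2$. Pick the term $\nu^{e/2}[b]$ in $V$ maximizing $b+e$, with right endpoint $R=(b+e-1)/2$. A dual analysis at the top of the support, using that $W$'s segments are now only of type $[c]$ or $\nu^{1/2}[d]$, lets one track the possible contributions to $\nu^R$ and $\nu^{R-1}$. By chasing these constraints (together with the maximality assumption ruling out other $e\geq 2$ segments reaching as far right), one forces $V$ to contain a symmetric segment $[a_i]$ of length $a_i=b+e-2$. Then $a_i+1=b+e-1$, violating condition (V).

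\emph{Conclusion (3), uniqueness of the decomposition.} Once it is known that $V=\prod_i[a_i]\times\prod_j\nu^{1/2}[b_j]$ and $W=\prod_i[c_i]\times\prod_i\nu^{1/2}[d_i]$, the equality of cuspidal supports determines the multisets $\{a_i\}$, $\{b_j\}$ (and likewise for $W$) by a greedy procedure. Namely, the maximum $M$ of the current support must be the right endpoint of some remaining segment; the parity of $M$ (half-integer or integer) together with analysis of the penultimate position distinguishes between candidate segment types (since $[a]$ is centered at $0$ and $\nu^{1/2}[b]$ is centered at $1/2$), one removes the identified segment, and iterates. This also shows the multisets for $V$ and $W$ must coincide.

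\emph{Main obstacle.} The principal technical difficulty is the propagation step in Conclusions (1) and (2). A single application of the conditions (V) or (W) need not yield an immediate contradiction, because the $W$- or $V$-segment forced at the relevant extremum might be larger than the minimum offending size. One must iterate carefully, using the maximality of the chosen offending term at each stage to control what new segments can or cannot arise, until one reaches a configuration where exactly the length ruled out by (V) or (W) is forced.
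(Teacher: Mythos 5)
Your plan takes a genuinely different route from the paper's, and as written it has a gap that I do not think is easy to close.

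The paper's proof is a single inductive segment‑matching argument: let $x$ be the global maximum exponent in the common cuspidal support, locate a segment of $V$ achieving $\nu^x$, case‑split on whether it is an $[a_i]$ (so that $\pm x$ are both extreme) or a $\nu^{e_i/2}[b_i]$ with $e_i$ minimal (so that $\nu^{-x}$ is absent), and show that this segment must coincide with a segment of $W$ of the same type and length. Conditions (V) and (W) enter exactly once, to exclude the two ``tie'' configurations at the boundary. Removing the matched pair and inducting gives all three conclusions simultaneously. The choice of the global maximum is what makes the case analysis airtight: any $W$‑segment covering $\nu^x$ is forced to have its right endpoint exactly at $x$, which pins down its type and length.

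Your argument instead fixes conclusions (1)–(3) one at a time and chooses different extremal data for each, and it is in the very step you flag as the ``Main obstacle'' that it breaks. For conclusion (2) you pick $\nu^{-f/2}[g]\subset W$ maximizing $f+g$ and deduce that $\nu^R$ with $R=(f+g-1)/2$ lies in the support of $W$. You then assert that ``the only way $W$ realizes $\nu^R$ is via a segment $\nu^{1/2}[d]$ with $d=f+g-1$.'' That is not forced by anything you have established: $\nu^R$ could perfectly well be covered by a segment $[c_j]$ with $c_j\ge f+g$, or by a segment $\nu^{1/2}[d_j]$ with $d_j>f+g-1$; condition (W) only forbids the single value $d_j=f+g-1$ paired with this particular $(f,g)$. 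Maximality of $f+g$ rules out another negatively shifted segment reaching $\nu^R$, but it says nothing about the $[c_j]$ or $\nu^{1/2}[d_j]$ terms, since $R$ is not (and need not be) the global maximum of the support. The ``descending induction on $f+g$'' you invoke to close this gap is not spelled out, and I do not see how iterating over the offending segments can control the non‑offending ones. The same issue recurs, in mirror form, in your argument for conclusion (1): maximizing $b+e$ among the shifted segments of $V$ does not guarantee that $R=(b+e-1)/2$ is the overall maximum, so the $W$‑segments covering $\nu^R$ and $\nu^{R-1}$ are not constrained tightly enough to invoke (V).

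If you want to repair your approach, the cleanest fix is to adopt the paper's extremal choice: work from the top of the \emph{global} support rather than from the offending segments. Then the length of the matching $W$‑segment is pinned down and (V), (W) give the contradiction in one step.
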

\begin{proof} We prove the lemma by choosing a component $[a_i]$ or $\nu^{e_j/2}[b_j]$ from $V$ with the property
  that $V$ contains $\nu^x$, with $x$ the maximum half integer such that $\nu^x$ is contained in the cuspidal support
  of $V$, and proving that this component ($[a_i]$ or $\nu^{e_j/2}[b_j]$)  lies in $W$ too. Removing this common component from $V$ and $W$ proves the
  lemma by an inductive process.
 (Recall that the cuspidal support of $V$ is the union with multiplicities of the support of individual
  factors in it, and that the cuspidal support of $[a]$ is $\{\nu^{-(a-1)/2}, \cdots, \nu^{(a-1)/2} \}$.

  The proof of the lemma will be carried out in the following two cases.

  \vspace{2mm}
  
  {\bf Case 1:}
  Suppose $V$ contains $\nu^x$, with $x$ the maximum half integer such that $\nu^x$ is contained in the cuspidal support
  of $V$, and that $\nu^x \in [a_i] \subset V$.

  In this case, $\nu^{-x}$ also belongs to the support of $V$,  $\{-x,x \}$ are the extreme points of the support
  of $[a_i]$ as well as $V$  (since $e_i>0$). We would like to prove that $[a_i] = [c_j]$ for some $[c_j] \subset W$. Assume the contrary,
  i.e., $x \not = c_j$ for any $[c_j] \subset W$.  

  Since  $f_j>0$, if $\nu^x \in \nu^{-f_j/2}[g_j] \subset W$, then $\nu^{-y} \in W$ for some $y>x$, a contradiction to the fact that $\nu^{-y}, y>x$
  does not belong to the support of $V$.

  If $\nu^x \in \nu^{1/2} [d_j] \subset W$, then $x=d_j/2$. Since $\nu^{-x} \in V $, $\nu^{-x}$ also belongs to $W$, but $\nu^{-x}$
  does not belong to terms of the form $[c_j], \nu^{1/2} [d_j] \subset W$, so the only option is that $\nu^{-x}$ belongs to
  $\nu^{-f_j/2}[g_j] \subset W$, i.e., $ 2x = f_j+g_j-1$. On the other hand, $x=d_j/2$.

  Thus $W$ contains both $\nu^{1/2} [d_j]$ and   $\nu^{-f_j/2}[g_j]$ with
  $$2x = d_j = f_j+g_j-1,$$
  which is not allowed by the condition (W) in the statement of the lemma. Thus the only option left is that  $\nu^x \in [c_j] \subset W$.

  \vspace{2mm}
         {\bf Case 2:}
         Suppose $V$ contains $\nu^x$, with $x$ the maximum half integer such that $\nu^x$ is contained in the cuspidal support
         of $V$, and that $\nu^x \in \nu^{e_i/2} [b_i] \subset V$ with $e_i$ minimal positive integer, and
         with $\nu^x \not \in [a_i]$ for any $[a_i] \subset V$. 

  In this case, $\nu^{-x}$ does not belong to the support of $V$, hence does not belong to the support of $W$ either. It follows that
  $\nu^x$ which is supposed to belong to $W$ must belong to either $\nu^{1/2}[d_j]$ or $\nu^{-f_j/2}[g_j]$. Clearly,  $\nu^x$ cannot
  belong to  $\nu^{-f_j/2}[g_j]$ since otherwise $\nu^{-x}$ will belong to the support of $W$.

  If   $\nu^x$ belongs to  $\nu^{1/2}[d_j] \subset W$, $x$ being the largest exponent in $W$, $x= d_j/2$ and  $\nu^{-x+1} \in \nu^{1/2}[d_j]$, hence
  $\nu^{-x+1} \in V$. If $e_i>1$, the only option is that  $\nu^{-x+1} \in [2x-1] \subset V$. But we already have, $\nu^x \in \nu^{e_i/2} [b_i] \subset V$,
  so $V$ contains $[2x-1]$ as well as $ \nu^{e_i/2} [b_i]$ with
  $$2x=e_i+b_i-1,$$ 
  once again a contradiction to our condition (V) in the statement of the lemma. Thus $e_i=1$, $\nu^x \in \nu^{1/2}[b_i] \subset V$ and $\nu^x\in \nu^{1/2}[d_j] \subset W$,
  completing the proof of the lemma.
\end{proof}

\begin{example} We present two examples to show that the conditions in  Lemma \ref{support} are necessary.

  \begin{enumerate}
  \item $[a] \times \nu [a]$ and $\nu^{1/2}[a+1] \times \nu^{1/2}[a-1]$ have the same cuspidal support; said other way,
    $\nu^{1/2}[a] \times \nu^{-1/2} [a]$ and $[a+1] \times [a-1]$ have the same cuspidal support.
    
    \item $\nu^{1/2}[n-1] \times \nu^{-(n-1)/2}$ and $[n]$ have the same cuspidal support. \end{enumerate}
    
  \end{example}

We thank A. Minguez for the proof of the following lemma.
\begin{lemma} \label{irr}
The representations:
\begin{eqnarray*}
  V & = & 
  [a_1] \times \cdots \times [a_r]  \times  \nu^{1/2}\{[b_1] \times \cdots \times   [b_s] 
  \times \St[d_1] \times \cdots \times \St[d_u] \} , \\
  W& = & \nu^{1/2}\{ [a'_1] \times \cdots \times [a'_{r'}] \} \times [b'_1] \times \cdots \times[b'_{s'}] \times 
    \St[d'_1] \times \cdots \times \St[d'_{u'}],
  \end{eqnarray*}
are irreducible, and are isomorphic  if and only if the expressions for $V, W$ as products are the same up to a
permutation, in particular, if $V\cong W$, then $d_i,d'_i \leq 1$.
\end{lemma}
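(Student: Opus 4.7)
I would prove this via Zelevinsky's classification, which assigns to each irreducible smooth representation of $\GL_n(k)$ a unique multisegment. The plan has three steps, with the combinatorial matching of Steinberg-generated singletons being the main technical point.

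\emph{Step 1 (Irreducibility).} Each factor of $V$ is the irreducible module attached to a single Zelevinsky segment: $[a_i]=Z(\Delta_i)$ with $\Delta_i=[\nu^{-(a_i-1)/2},\ldots,\nu^{(a_i-1)/2}]$ centered at $0$; $\nu^{1/2}[b_j]=Z(\Xi_j)$ with $\Xi_j=[\nu^{(2-b_j)/2},\ldots,\nu^{b_j/2}]$ centered at $1/2$; and $\nu^{1/2}\St[d_k]=L(\Theta_k)$ with $\Theta_k$ of the same shape as $\Xi_{d_k}$. By the standard irreducibility criterion for products of single-segment representations (Zelevinsky for same-type pairs, Lapid--Minguez for mixed $Z\times L$ pairs), it suffices to verify pairwise non-linkage. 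Two segments sharing a center are nested and hence unlinked; for segments with different centers, either they lie on different cuspidal lines (trivially unlinked), or they lie on the same line, in which case a short parity argument (the endpoints of $\Delta_{a_i}$ and $\Xi_{b_j}$ coincide in $\R/\Z$ only when $a_i\equiv b_j+1\pmod 2$, which forces $a_i\neq b_j$ and then one is contained in the other) shows they are nested. Hence $V$ is irreducible; $W$ is handled symmetrically.

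\emph{Step 2 (Matching the large Zelevinsky segments).} When a product of single-segment representations is irreducible, its Zelevinsky multisegment is the disjoint union of the factor multisegments: $Z(\Delta)$ contributes the single segment $\Delta$, while $L(\Theta)$ contributes the collection of singletons into which $\Theta$ decomposes (the Zelevinsky dual of a Steinberg is a multisegment of singletons). Uniqueness of the Zelevinsky classification forces $\mathfrak{m}_Z(V)=\mathfrak{m}_Z(W)$ as multisets whenever $V\cong W$. The length-$\geq 2$ segments in $\mathfrak{m}_Z(V)$ centered at $0$ come exclusively from $\{[a_i]:a_i\geq 2\}$, and those centered at $1/2$ from $\{\nu^{1/2}[b_j]:b_j\geq 2\}$; matching with the analogous contributions in $\mathfrak{m}_Z(W)$ yields
\[
\{a_i:a_i\geq 2\}=\{b'_j:b'_j\geq 2\}\quad\text{and}\quad\{b_j:b_j\geq 2\}=\{a'_i:a'_i\geq 2\}.
\]

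\emph{Step 3 (Singleton matching forces $d_k,d'_k\leq 1$).} The only singletons in $\mathfrak{m}_Z(V)$ at positions outside $\{0,1/2\}$ come from the $\nu^{1/2}\St[d_k]$ with $d_k\geq 2$, forming contiguous runs $\{(2-d_k)/2,\ldots,d_k/2\}$ centered at $1/2$; the only such singletons in $\mathfrak{m}_Z(W)$ come from $\St[d'_k]$ with $d'_k\geq 2$, forming runs $\{-(d'_k-1)/2,\ldots,(d'_k-1)/2\}$ centered at $0$. A short induction on the largest $d_k$ (resp.\ $d'_k$), cancelling extremal singleton positions on each side, shows that no disjoint union of ``center-$1/2$'' runs can equal a disjoint union of ``center-$0$'' runs: the rightmost positions $d_k/2$ versus $(d'_j-1)/2$ differ by $1/2$ once the cuspidal-line alignment (integer vs half-integer, depending on the parities of $d_k$ and $d'_j$) is fixed. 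Hence all $d_k\leq 1$ and all $d'_k\leq 1$; every Steinberg factor is then a trivial character and is absorbed into the previous length-$1$ segment matching, yielding the claimed equality of expressions as products.

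The main obstacle is the combinatorial bookkeeping in Step 3: one must organize singletons by cuspidal line ($\Z$ versus $\tfrac{1}{2}+\Z$), track the parities of each $d_k$ and $d'_k$, and then isolate the extremal singletons that cannot be cancelled between the two sides. Once this rigidity of Steinberg-generated runs is established, the conclusion is immediate.
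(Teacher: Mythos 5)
Your route through the Zelevinsky classification is a genuinely different argument from the paper's, which deduces the second half in one line from ring theory: the Grothendieck ring $R=\bigoplus_n R_n$ is a polynomial ring on segments (hence a UFD), and by Tadi\'c the elements $[a]$ and $\St[a]$ (and their $\nu^{1/2}$-twists, since twisting is a ring automorphism) are \emph{prime} in $R$; an isomorphism $V\cong W$ is then an equality of prime factorizations, and $d_i,d_i'\leq 1$ drops out because $\nu^{1/2}\St[d]$ with $d\geq 2$ is not among the primes $\nu^{1/2}[a'_i]$, $[b'_j]$, $\St[d'_\ell]$ occurring in $W$. Your Step~1 coincides with the paper's (BLM Theorem~3.9). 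Steps~2--3 trade the appeal to Tadi\'c's primality theorem for an explicit computation of $\mathfrak{m}_Z(V)$ and $\mathfrak{m}_Z(W)$, which is a legitimate and more hands-on alternative.

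However, Step~3 as written rests on a false claim: that ``the rightmost positions $d_k/2$ versus $(d'_j-1)/2$ differ by $1/2$ once the cuspidal-line alignment is fixed.'' Restricting to the integer line forces $d_k$ even and $d'_j$ odd, so $d_k/2$ and $(d'_j-1)/2$ are \emph{both integers} and coincide precisely when $d'_j=d_k+1$ (and similarly on the half-integer line with parities reversed). Rightmost endpoints of center-$1/2$ runs and center-$0$ runs on a fixed line therefore match without obstruction, and the proposed cancellation stalls. The asymmetry you need lives at the \emph{left} endpoint: if $d_k=d'_j-1$, the leftmost positions $(2-d_k)/2=1-d_k/2$ (in $\mathfrak{m}_Z(V)$) and $-(d'_j-1)/2=-d_k/2$ (in $\mathfrak{m}_Z(W)$) differ by exactly $1$. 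Concretely: work on one cuspidal line, with $m$ the largest singleton position; if a run with $d\geq 2$ is present on either side, one checks $m$ must arise from a run on both sides, and then the smallest singleton position is $1-m$ on the $V$-side but $-m$ on the $W$-side, while the only other singletons (from $a_i{=}1$, $b_j{=}1$, etc.) sit at $0$ or $1/2$; the minima of the two multisets then disagree. This rules out every $d_k,d'_\ell\geq 2$. Finally, note that the induction you gesture at (``cancelling extremal singleton positions'') is not a clean recursion, since removing the rightmost element of a centered run shifts its center rather than leaving a shorter run of the same shape; the one-shot comparison of extremal positions on each cuspidal line is cleaner and closes the gap.
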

\begin{proof} Irreducibility is part of Theorem 3.9 of the paper \cite{BLM} due to Badulescu, Lapid and Minguez which gives a
  sufficient condition for a product $Z({\frak m}) \times L( {\frak m} ')$ to be irreducible: when  no segment of ${\frak m}$ is juxtaposed (in the obvious sense)
  to a segment in ${\frak m} '$.

  To prove that $V\cong W$  if and only if the expressions for $V, W$ as products are the same up to a
  permutation, define  $$R= \sum_{n \geq 0} R_n,$$
  where $R_0=\Z$, and $R_d$ is the Grothendieck
  group of finite length representations of $\GL_d(k)$. Then $R$  is a ring under the product $R_n \times R_m \rightarrow R_{n+m}$
  given by parabolic induction.

  The ring $R$ is known to be a polynomial ring $\Z[S(C)]$ on indeterminates  $S(C)$, where $S(C)$ is the set of all
  segments $\{\rho, \rho \nu,\cdots, \rho \nu^{n-1} \}$ where $\rho$ is a cuspidal representation of some $\GL_d(k)$.
  There is a natural map from the polynomial ring $\Z[S(C)]$ to $R$ 
taking   $S(C)$ to the unique irreducible submodule of the full parabolic induction:
$\rho \times  \rho \nu \times \cdots \times  \rho \nu^{n-1} $,  which is an isomorphism of rings.
In particular, 
  $R$ is a UFD. Further, it is known (by Tadic \cite[Section 3]{Tad}) that the representations $[a]$ and $\St[a]$ are prime elements
  of the ring  $R$, proving the second part of the Lemma.    \end{proof}

\begin{remark}
  An explicit example of a representation $\pi_M$ of $\GL_5(k)$ and $\pi_N$
  of $\GL_4(k)$ was provided by K. Y. Chan where the proof above cannot be carried out to prove that
  $\Hom_{\GL_4(k)}(\pi_M,\pi_N) \not = 0$. For this, take the representation $\pi_M$ of $\GL_5(k)$ and $\pi_N$
  of $\GL_4(k)$ which in the above notation are,
\begin{eqnarray*}
  \pi_M & = & [3] \times [1] \times [1], \\
  \pi_N & = & \St [2] \times [2]
  \end{eqnarray*}
  In this case, it is easy to see $\nu^{1/2} (\pi_M)^{i+1}$,
  and   $((\pi_N^\vee)^{i})^\vee$ have the same cuspidal support for $i = 1, 2$, and the strategy
  of the proof of the theorem  to conclude $\Hom_{\GL_4(k)}(\pi_M,\pi_N) \not = 0$ fails. In fact, we do not know at
  this point
  whether   $\Hom_{\GL_4(k)}(\pi_M,\pi_N)  = 0$ or is nonzero although the representations have a pair of relevant
  A-parameters.
  \end{remark}

The proof of the following proposition follows from the proof of Theorem \ref{thm-gln}.

\begin{prop} Let $\pi_M$ be an irreducible admissible representation of $\GL_{n+1}(k)$ with  A-parameter $M_A$  of dimension $(n+1)$, and  $\pi_N$ be an irreducible admissible representation of $\GL_{n}(k)$ with  A-parameter $N_A$ of  dimension $n$. Assume that the representations
  $M_A, N_A$ of $WD(k) \times \SL_2(\mathbb C)$ are in fact representations of  $W(k) \times \SL_2(\mathbb C)$.
 Then if $${\rm Ext}^i_{\GL_n(k)}(\pi_M,\pi_N)\not = 0, \hspace{1 cm} {\rm for ~some~} i \geq 0,$$
  $M_A,N_A$ are a relevant pair  of A-parameters.
\end{prop}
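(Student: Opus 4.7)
The plan is to rerun the proof of the ``only if'' direction of Theorem \ref{thm-gln} with $\Hom$ replaced by $\Ext^i$, using two standard facts: the Bernstein--Zelevinsky filtration converts into a spectral sequence for Ext, and $\Ext^\ell$ between irreducibles lying in distinct Bernstein components of $\GL_m(k)$ vanishes for all $\ell$. Observe first that the hypothesis that $M_A$ and $N_A$ factor through $W(k) \times \SL_2(\C)$ means the Deligne $\SL_2(\C)$ acts trivially on every summand, so a representation of the form $\rho \otimes \Sym^a(\C^2) \otimes \C$ (with nontrivial Deligne action, $a \geq 1$) cannot occur in $M_A$ or $N_A$; consequently condition (b) of Theorem \ref{thm-gln} is vacuous here, and so is condition (a). Thus once we reduce to the structural cuspidal-support analysis, the conclusion of Theorem \ref{thm-gln} applies directly.

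First, I would restrict $\pi_M$ to the mirabolic $P_{n+1} \subset \GL_{n+1}(k)$ and use the Bernstein--Zelevinsky filtration, whose successive subquotients are (compactly) parabolically induced from the derivatives $(\pi_M)^{i+1}$. Applying $\Ext^\bullet_{\GL_n(k)}(-, \pi_N)$ to this finite filtration yields a convergent spectral sequence (equivalently, a finite chain of long exact sequences). Non-vanishing of some $\Ext^i_{\GL_n(k)}(\pi_M, \pi_N)$ therefore forces non-vanishing of some $E_1$-term, and second adjointness converts this into non-vanishing of
\[ \Ext^\ell_{\GL_{n-j}(k)}(A, B) \neq 0 \]
for some $j \geq 0$, some $\ell \geq 0$, some irreducible composition factor $A$ of $\nu^{1/2}(\pi_M)^{j+1}$, and some irreducible composition factor $B$ of $((\pi_N^\vee)^j)^\vee$.

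Next, invoking the Bernstein decomposition of the category of smooth representations of $\GL_{n-j}(k)$, the non-vanishing of $\Ext^\ell_{\GL_{n-j}(k)}(A,B)$ forces $A$ and $B$ to lie in the same Bernstein component, and in particular to share the same cuspidal support. From this point on the proof is identical to the corresponding part of the argument for Theorem \ref{thm-gln}: the cuspidal-support matching of $A$ and $B$, combined with Lemma \ref{support} and Lemma \ref{irr}, pins down the shape of $\pi_M$ and $\pi_N$ and identifies the pair $(M_A, N_A)$ as relevant.

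The main obstacle I anticipate is bookkeeping around the spectral sequence and the Ext-version of Frobenius reciprocity/second adjointness needed to pass from Ext groups of (compactly) induced pieces on $\GL_n(k)$ to Ext groups on Levis; in particular one must track the appropriate central-character twist. Both ingredients are standard in the $p$-adic representation theory of $\GL$, and once in place, the combinatorial cuspidal-support analysis of the proof of Theorem \ref{thm-gln} delivers the conclusion without modification.
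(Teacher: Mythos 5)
Your proposal is correct and follows essentially the same route the paper intends: the paper's own proof is the one-line remark that the argument for Theorem~\ref{thm-gln} applies, and you have correctly identified the needed adjustments (the Bernstein--Zelevinsky filtration yields a spectral sequence for $\Ext$, and $\Ext$ between irreducibles with different cuspidal supports vanishes, so the cuspidal-support analysis from Theorem~\ref{thm-gln} carries over unchanged). One small imprecision: lying in the same Bernstein \emph{component} only pins down cuspidal support up to unramified twist; what you actually need, and what the paper implicitly invokes when it writes $\Ext^{\ell}_{\GL_{n-i}(k)}[C,D]=0$ for $(C,D)\ne(A,B)$, is that the Bernstein \emph{center} acts on each irreducible through the character determined by its exact cuspidal support, and $\Ext$ vanishes between modules on which it acts through different characters.
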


\begin{remark} It appears to be an interesting problem to understand when ${\rm Ext}^i_{\GL_n(k)}(\pi_M,\pi_N)$
  is nonzero   for some $ i \geq 0,$ among irreducible representations $\pi_M,\pi_N$ of $\GL_n(k)$ with an A-parameter $M_A,N_A$
  (where there is the necessary condition that $M_A, N_A$ restricted to $W(k) \times \Delta \SL_2(\C)
  \subset WD(k) \times \SL_2(\C) $ are isomorphic),
  and also for the restriction problem
  when $\pi_M$ is an irreducible representation of $\GL_{n+1}(k)$ and  $\pi_N$ of $\GL_n(k)$, both  with an A-parameter, where the above proposition
  provides the answer in the first non-trivial case. See a recent work of K. Y. Chan in \cite{Ch} for some results in both the cases.
\end{remark}

\section{Local Conjecture for Classical Groups}  \label{S:classical}
 
In this section,  we consider the restriction problem for classical groups. 
Hence we shall work in the context of \cite{GGP}. More precisely, we fix local fields $k_0 \subset k$ with $[k:k_0] \leq 2$.  Then, with $\epsilon = \pm$, we consider a pair of non-degenerate $\epsilon$-Hermitian spaces  $W \subset V$ over $k$ such that
\vskip 5pt

\begin{itemize}
\item $\epsilon  \cdot (-1)^{\dim W^{\perp}} =  -1$;
\item $W^{\perp}$ is a split space.
\end{itemize}
Setting $G(V)$ to be the identity component of the automorphism group of the space $V$, we assume further that $G(V)$ is quasi-split.
We thus have a diagonal embedding
\[  G(W)^{\Delta} \hookrightarrow  G(V) \times G(W).  \]
Indeed, as was explained in \cite{GGP}, one has a subgroup 
\[  G(W)^{\Delta}  \subset H = G(W)^{\Delta} \cdot N \subset G(V) \times G(W) \]
where $N$ is a certain unipotent subgroup of $G(V)$ normalized by $G(W)^{\Delta}$.  Moreover, a certain representation $\nu$ of $H(k)$ was defined in \cite[\S 14]{GGP}: it is a 1-dimensional character if $\dim W^{\perp}$ is odd and is essentially a Weil representation when $\dim W^{\perp}$ is even.  When $\dim W^{\perp}  =1$, for example, $\nu$ is simply the trivial representation. 

\vskip 5pt

Given an irreducible representation $\pi_V \otimes \pi_W$ of $G(V) \times G(W)$, we defined in \cite[\S 14]{GGP} a multiplicity $d(\pi_V, \pi_W)$, given by
 \[  d(\pi_V, \pi_W)  = \dim \Hom_{G(W)} (\pi_V \otimes \pi_W, \nu). \]
 It is known that 
 \[  d(\pi_V, \pi_W) \leq 1. \] 
We have also introduced in \cite{GGP}  a notion of relevant pure inner forms $G(V') \times G(W')$ of $G(V) \times G(W)$, and one can likewise consider the multiplicity $d(\pi_{V'} , \pi_{W'})$ for representations of the pure inner form.  Our goal is to determine the multiplicity $d(\pi_V , \pi_W)$ when $\pi_V \otimes \pi_W$ belongs to local L-packets of Arthur type. Before formulating the conjecture, we recall some basic facts about A-packets of classical groups. 
\vskip 10pt

Suppose that $M_A$ is a local A-parameter for $G(V)$, with associated L-parameter $M$. By the local Langlands correspondence, $M$ gives rise to a Vogan L-packet $\Pi_M$ consisting of irreducible representations of pure
inner forms $G(V')$ of $G(V)$. Moreover, fixing generic data as in \cite{GGP}, there is a bijection
\[   \Pi_M  \longleftrightarrow  {\rm Irr}(A_M)  \]
where $A_M$ is the component group of the L-parameter $M$ and  is an elementary abelian 2-group with a canonical basis.
Hence
\[   \Pi_M  = \{  \pi_{\eta} \in {\rm Irr}(G(V')):  \eta \in {\rm Irr}(A_M)  \}. \]
On the other hand, 
the A-parameter $M_A$ gives rise to a local A-packet $\Pi_{M_A}$, which is a priori a multi-set of unitary representations of the pure inner forms of $G(V)$ in correspondence with the irreducible characters of the component group $A_{M_A}$ (which is also an elementary abelian 2-group with a canonical basis).
In other words, one has
\[  \Pi_{M_A}  = \{ \pi_{\eta} : \eta \in {\rm Irr}(A_{M_A}) \}\]
where now $\pi_{\eta}$ is a (possibly zero and possibly reducible) finite length  unitary representation of some pure inner form $G(V')$ of $G(V)$.  
The work of M{\oe}glin \cite{M1, M2, M3} and M{\oe}glin-Renard \cite{MR1} has shown that (except possibly over $\R$) the local A-packets $\Pi_{M_A}$ are sets rather than multi-sets. This means that the representations $\pi_{\eta}$ are multiplicity-free and $\pi_{\eta}$ and $\pi_{\eta'}$ have no common constituents if $\eta \ne \eta'$. One expects similar results over $\R$ for which special cases have been shown by M{\oe}glin-Renard \cite{MR2, MR3}.

\vskip 5pt

Regarding $\Pi_{M_A}$ as a set of irreducible representations (by considering the irreducible summands of $\pi_{\eta}$), one thus has a map
\[  \Pi_{M_A}  \longrightarrow {\rm Irr}(A_{M_A}), \]
which is neither surjective nor injective in general.
There is a natural surjective map 
\[  A_{M_A}  \longrightarrow A_M,  \]
giving rise to a natural injective map
\[   {\rm Irr}(A_M)  \hookrightarrow {\rm Irr}(A_{M_A}), \]
and a commutative diagram
\[  \begin{CD}
\Pi_M  @>\text{inj.}>>  \Pi_{M_A}  \\
@V\text{bij.}VV  @VVV  \\
 {\rm Irr}(A_M)  @>>\text{inj}>   {\rm Irr}(A_{M_A}).  \end{CD} \]
 In other words, $\Pi_M$ is a subset of $\Pi_{M_A}$ (this is actually part of the defining property of an A-packet) and the labelling of its elements by characters of the component groups $A_M$ or $A_{M_A}$ is consistent, cf. Conjecture 6.1 (iv) of \cite{Art1}. 
\vskip 5pt

 After this short preparation, we can now formulate the conjecture.
\vskip 5pt

\begin{conj} \label{conj-class}
Let $M_A$ and $N_A$ be local A-parameters for $G(V)$ and $G(W)$ respectively, with associated A-packets $\Pi_{M_A}$ and $\Pi_{N_A}$.  Let $M$ and $N$ be the L-parameters associated to $M_A$ and $N_A$ with corresponding L-packets $\Pi_M$ and $\Pi_N$.

\vskip 5pt

\begin{enumerate}
\item[(a)]  If $(M_A, N_A)$ is not relevant, then 
\[  d(M,N) := \sum_{(\pi, \pi') \in (\Pi_{M} \times \Pi_{N})^{\rm rel}} d(\pi, \pi')  = 0. \]
Here, $(\Pi_{M} \times \Pi_{N})^{\rm rel}$ denotes the subset of representations of relevant pure inner forms of $G(V) \times G(W)$.
\vskip 5pt

\item[(b)]  If $(M_A, N_A)$ is relevant, then there is a unique relevant pair of representations $(\pi_M, \pi_N) \in \Pi_M \times \Pi_N$ such that the multiplicity $d(\pi_M, \pi_N)  =1$. 
In other words,
\[  d(M,N) =  \sum_{(\pi, \pi') \in (\Pi_{M} \times \Pi_{N})^{\rm rel}} d(\pi, \pi')  = 1. \]
  
 \vskip 5pt
 
 \item[(c)] In the setting of (b), the distinguished representation $\pi_M \boxtimes \pi_N$ corresponds to the distinguished character $\chi$ of the component group $A_M \times A_N$  given by the same recipe as in \cite{GGP}.
\end{enumerate} 
\end{conj}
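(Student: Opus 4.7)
The natural strategy is an induction on the ``Arthur depth'' of $(M_A, N_A)$, defined as the total dimension of the summands $M_i \otimes \Sym^i(\C^2)$ and $N_j \otimes \Sym^j(\C^2)$ with $i, j \geq 1$. The base case, where $M_A = M_0$ and $N_A = N_0$ are bounded, is precisely the tempered GGP conjecture of \cite{GGP}, now established for the relevant classical groups through the work of Waldspurger, Moeglin--Waldspurger, and Beuzart-Plessis (cf.\ the discussion surrounding Theorem \ref{wald2}). In this case the pair is automatically relevant, since the defining conditions of \S \ref{S:relevant} only constrain the summands with $i, j \geq 1$, and the unique distinguished pair together with its associated character are those predicted by the tempered GGP, yielding all three parts of the conjecture at depth zero.

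For the inductive step, I would pick an index $i_0 \geq 1$ with $M_{i_0} \ne 0$, extract an irreducible summand $\rho$ of $M_{i_0}$, and consider the smaller A-parameter $M_A'$ obtained from $M_A$ by removing $\rho \otimes \Sym^{i_0}(\C^2)$ (together with its contragredient when $\rho$ is not selfdual of the parity appropriate for $G(V')$). By the Moeglin--Arthur construction of A-packets, each $\pi_\eta \in \Pi_M$ is the unique irreducible subrepresentation (or Langlands quotient, depending on normalization) of a standard module
\[ I_P\bigl(\mathrm{Speh}(\rho, i_0+1) \boxtimes \pi_{\eta'}\bigr), \]
where $P \subset G(V)$ is a parabolic with Levi $\GL \times G(V')$ and $\pi_{\eta'}$ ranges over $\Pi_{M'}$. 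An analogous construction is available on the $N$-side. Applying Frobenius reciprocity together with the Bernstein--Zelevinsky--Casselman geometric lemma to the restriction along $H \subset G(V) \times G(W)$, the multiplicity $d(\pi_M, \pi_N)$ is expressed as a finite sum indexed by $P$--$H$ double cosets; the ``open'' orbit should contribute a term of the form $\Hom_{H'}(\pi_{M'} \boxtimes \pi_{N''}, \nu')$ attached to an explicit smaller pair $(V', W')$ with its own data.

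The combinatorial heart of the argument is the equivalence, immediate from the definition of relevance in \S \ref{S:relevant}, between the relevance of $(M_A, N_A)$ and the relevance of $(M_A', N_A'')$ together with the compatibility that the peeled summand $\rho \otimes \Sym^{i_0}(\C^2)$ on the $M$-side is matched by its mandated partner (either $N^-_{i_0+1}$ or $N^+_{i_0-1}$) on the $N$-side. The inductive hypothesis then supplies the distinguished pair on the smaller classical groups, and Langlands induction propagates it to the distinguished pair for $G(V) \times G(W)$. Part (c) is handled by tracking root numbers through the inductive step: the passage $M_A \leadsto M_A'$ corresponds to removing the symmetric factor $|\cdot|^{i_0/2} + \cdots + |\cdot|^{-i_0/2}$ from the associated L-parameter, whose effect on each component $\epsilon(M_a \otimes N, 1/2)$ of the distinguished character of \cite{GGP} is computable directly from the functional equation.

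The principal obstacle is the vanishing of all non-open orbit contributions in the geometric lemma, which is needed both for the vanishing assertion in (a) and for uniqueness in (b). This is the same kind of obstruction that appears in the tempered GGP proof, and I expect it to require a stable/endoscopic character identity comparing the A-packet character on $G(V) \times G(W)$ with a twisted-endoscopic transfer to $\GL \times \GL$, in the spirit of Waldspurger's Theorem \ref{wald2}. Secondary difficulties include the bookkeeping of Hasse invariants across pure inner forms under the induction, and, in the archimedean setting, the reliance on the results of Moeglin--Renard to guarantee that A-packets are sets (required for the uniqueness in (b)). Short of a new idea controlling the closed-orbit contributions, one should expect to prove (a) and the existence half of (b)--(c) rather cleanly along these lines, while the uniqueness in (b) --- namely that no other pair in $\Pi_M \times \Pi_N$ can contribute --- will genuinely require the character-identity input just described.
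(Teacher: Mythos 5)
The statement you set out to prove is Conjecture~\ref{conj-class}, which the paper \emph{formulates} but does not prove: it is presented as the central conjecture of the paper, with the explicit admission (end of the Introduction and Remark~\ref{conj-class-remark}) that it remains open. There is therefore no proof in the paper to compare your proposal against. What the paper does supply is supporting evidence: Theorem~\ref{wald2} and Proposition~\ref{wald} in the supercuspidal/no-gaps setting, the Proposition at the end of \S\ref{MW} (due to M{\oe}glin) verifying a special case, the low-rank examples of \S\ref{S:example}, and the $\GL_n$-analogue Theorem~\ref{thm-gln}. You should be aware that you are attempting something the authors explicitly say they cannot yet do, not reconstructing a suppressed argument.

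As a research sketch, your strategy --- induct on depth of the Arthur $\SL_2$, realize elements of $\Pi_M$ as Langlands quotients of standard modules induced from $\mathrm{Speh}(\rho,i_0+1)\boxtimes\pi_{\eta'}$, and run the geometric lemma along the closed-orbit filtration of the restricted induced representation --- is in the same spirit as the partial results the paper does prove: \S\ref{MW} uses exactly the open/closed orbit structure of $\SO_{4d}\backslash\SO_{4d+1}/P_d^-$, and \S\ref{S:GLn} uses the Bernstein--Zelevinsky filtration for the mirabolic. The equivalence you invoke between relevance of $(M_A,N_A)$ and relevance of the peeled pair plus a matching condition is indeed how the uniqueness of the decomposition $M_i = M_i^+ + M_i^-$ is established in \S\ref{S:relevant}. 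However, there are concrete gaps beyond the one you flag. First, the standard module realization you write down is too optimistic: the Speh representation you parabolically induce is not a quotient of a principal series with purely ``positive exponents,'' so the geometric lemma produces a filtration whose non-open layers involve induced representations from a mixture of Speh-type and non-Speh-type data, and the required $\Ext$-vanishing between these layers and the target $\pi_N$ is not the kind of statement one can get from cuspidal support disjointness alone (this is precisely the obstruction that forced hypotheses (a) and (b) in Theorem~\ref{thm-gln}, and is already fatal for $\GL$ in the example of K.~Y.~Chan flagged in the Remark after that theorem). Second, your claim that the base case is ``established'' hides the archimedean case, where even the tempered GGP conjecture is not complete in the generality needed, and where the paper itself only assumes the M{\oe}glin--Renard results as a black box. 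Third, and this is the obstruction you identify but should not underestimate: the vanishing of non-open orbit contributions is precisely what Waldspurger's proof of tempered GGP handles through a very long sequence of stable character identities; nothing in the proposal suggests how to adapt those identities to A-packet distributions, where the stable distribution is a \emph{signed} sum of characters (see the end of the Introduction). Your proposal is a plausible plan of attack, honestly self-critical, but it is a plan, not a proof, and its gaps coincide with what the authors themselves identify as the reason the conjecture remains open.
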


 \vskip 5pt
 The recipe for $\chi$ is sufficiently intricate that we have no desire to repeat it here; we refer the reader to \cite{GGP} for its precise definition. 
 Let us make a few  remarks concerning this conjecture:
 \vskip 5pt

 \begin{remark} \label{conj-class-remark}
 
 \begin{enumerate}
 \item  Modulo the notion of ``relevant A-parameters," the above conjecture is a direct generalization of our original conjectures for tempered L-packets  in \cite{GGP} to the setting of L-packets of Arthur type. 
 \vskip 5pt

\item Unlike the case of $G=\GL_n$, we cannot treat the special case of this conjecture for those A-parameters $(M_A,N_A)$
  whose associated L-parameters  $(M,N)$ are unramified (in particular trivial on the Deligne $\SL_2(\C)$ in $WD(k)$). The work of Hendrickson \cite{Hen}
offers a possible strategy for establishing this unramified case, using a combination of Mackey theory and theta correspondence. 

 \end{enumerate}
 \end{remark}

\section{A Conjecture for A-packets} \label{S:A}

In the previous section,  Conjecture \ref{conj-class}  addressed the restriction problem for the local L-packet $\Pi_M \times \Pi_N$ contained in a local  A-packet $\Pi_{M_A} \times \Pi_{N_A}$. In this section, we attempt to address the restriction problem for all members of A-packets, but our conjecture in this section is not as precise.
  Ideally, one would like to know the multiplicities for all relevant pairs of representations in the A-packet.  Namely, one would like to have a prediction for the sum
 \[  d(M_A, N_A):= \sum_{ (\pi, \pi') \in (\Pi_{M_A} \times \Pi_{N_A})^{\rm rel}} d(\pi, \pi') \]
 and a prediction of which relevant pairs $(\pi, \pi')$ give nonzero contribution. 
 Examples of restriction problems for low rank groups (some of which we will discuss later) indicate that the above sum can be $>1$; see Remark \ref{geq2} below.

\vskip 5pt

  Recall that unlike the case of $\GL_n$ where $\Pi_{M_A}  = \Pi_M$ (i.e. an A-packet is equal to its associated L-packet) are singletons and where A-packets corresponding to distinct A-parameters are disjoint, the A-packets of classical groups are more complicated and more interesting:
  \vskip 5pt
  
  \begin{itemize}
  \item Firstly, it is rarely the case that the containment $\Pi_M \subset \Pi_{M_A}$ is an equality. 
  \item Secondly, the sets $\Pi_M$ and $\Pi_{M_A}$ are typically not singletons. 
  \item Thirdly, A-packets are not necessarily disjoint. 
  \end{itemize}
These complications make the problem of predicting the multiplicities in an A-packet rather tricky. 
At this moment, we can only offer the following (optimistic) conjecture, which may be treated more as a question. 
\vskip 10pt

\begin{conj} \label{relevant}
  \begin{enumerate}
  \item If $(M_A, N_A)$ is a pair of A-parameters (for a pair of classical groups $G(V) \times G(W)$) such that the Deligne $\SL_2(\C)$ 
  acts trivially, then 
  \[ d(M_A, N_A) \ne 0 \Longleftrightarrow \text{$(M_A, N_A)$ is relevant.}  \]
  \vskip 5pt

\item  For a representation $\pi_1 \times \pi_2$, an irreducible representation of
  $G(V) \times G(W)$, and belonging to an Arthur packet,
  \[ d(\pi_1,\pi_2) \neq 0 \Longrightarrow \pi_1 \times \pi_2 \in  \Pi_{M_A} \times \Pi_{N_A} \, \, \text{for some relevant pair $(M_A, N_A)$}. \]

  \end{enumerate}
\end{conj}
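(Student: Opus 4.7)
The plan is to treat the two parts separately, since they ask rather different things. For part (1), where the Deligne $\SL_2(\C)$ acts trivially, each A-parameter decomposes as $M_A = \bigoplus_i M_i \otimes \Sym^{d_i}(\C^2)$ with the $M_i$ tempered parameters of $W(k)$, so the A-packet $\Pi_{M_A}$ sits in Moeglin's unipotent setting. The key input is Moeglin's explicit description of the members of $\Pi_{M_A}$ as Langlands quotients (or socles) of standard modules built by parabolic induction from Speh-like building blocks attached to the $M_i$.

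For the forward direction of (1), starting with a relevant pair and its canonical decomposition $M_i = M_i^+ + M_i^-$, $N_i = N_i^+ + N_i^-$ (with $M_i^+ = N_{i+1}^-$ and $M_i^- = N_{i-1}^+$), I would construct explicit representatives $\pi_V \in \Pi_{M_A}$ and $\pi_W \in \Pi_{N_A}$ by parabolic induction from data matched across these identifications, then compute $\Hom_{G(W)}(\pi_V \otimes \pi_W, \nu)$ by Mackey theory. The double coset geometry in the Bessel/Fourier--Jacobi setting produces a principal open orbit whose contribution is already forced to be nonzero by the matching, much as the ``unique index $j$'' argument in the proof of Theorem \ref{thm-gln}. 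The remaining orbits should vanish by a cuspidal support/derivative analysis analogous to Lemma \ref{support}, provided one imposes a genericity hypothesis on the $M_i, N_i$ that can later be removed by deformation (the hypothesis rules out juxtapositions that could create unwanted Ext-type contributions).

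For the reverse direction of (1), I would take Jacquet modules of $\pi_V$ and $\pi_W$ along parabolics adapted to the Arthur $\SL_2$-filtration. In the unipotent setting, Moeglin--Tadic type formulas describe these Jacquet modules explicitly in terms of the building blocks, and nonvanishing of $d(\pi_V,\pi_W)$ forces a matching of supports after projecting; I would then argue by descending induction on $\max_i d_i$ that this matching is equivalent to the existence of the decompositions $M_i^\pm, N_i^\pm$. The main obstacle here is that when several $M_i$ share a cuspidal line (the failure of multiplicity-one analogous to condition (b) of Theorem \ref{thm-gln}), the Jacquet module has enough internal structure that the relevance decomposition is not visibly unique; Lemma \ref{EPlemma} gives an Euler-characteristic level constraint which can serve as a bookkeeping device, but promoting this to an equality of virtual parameters requires some care.

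For part (2), the strategy is to leverage the non-disjointness of A-packets in our favor rather than against us. Given $\pi_1 \otimes \pi_2$ with $d(\pi_1,\pi_2) \neq 0$ lying in some A-packet $\Pi_{M_A} \times \Pi_{N_A}$, I would transfer the stable distribution attached to $(M_A,N_A)$ and its neighbors to $\GL_n$, where the Gan--Gross--Prasad/Venkatesh/Gurevich/Chan results for $\GL_n$ already force the associated transferred parameter to come from a relevant pair. Tracing this back through the signed stable combinations (the $\pm 1$ coefficients $\eta(z)$ discussed at the end of the introduction) should exhibit some relevant pair $(M'_A, N'_A)$ with $\pi_1 \otimes \pi_2 \in \Pi_{M'_A} \times \Pi_{N'_A}$. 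The hardest step, I expect, is controlling exactly this sign bookkeeping, because one needs to ensure that the contribution of $\pi_1 \otimes \pi_2$ to the transferred branching multiplicity does not accidentally cancel, which is why the naive version fails and a counterexample exists in the non-relevant A-packet case. Supporting evidence in concrete rank-one and rank-two cases (unitary $\U_3 \times \U_2$, as in the example following the acknowledgements) can be checked directly, and these should serve as the proving ground for the sign analysis before attempting the general argument.
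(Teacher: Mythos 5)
This statement is a conjecture, and the paper does not prove it. What the paper offers is: (i) a heuristic \emph{global} justification of part (2), via a Burger--Sarnak type argument combined with the global Conjecture \ref{C:global}; (ii) Theorem \ref{wald2}(b), a genuine theorem that verifies the content of Conjecture \ref{relevant}(1) and (2) in the special setting of discrete A-parameters without gaps, by combining Moeglin's supercuspidality criterion (Theorem \ref{T:moeglin1}), her overlap result for A-packets (Theorem \ref{T:moeglin2}), Waldspurger's tempered GGP, and the Aubert--Zelevinsky involution; and (iii) an explicit construction of non-relevant pairs $(M_{A,J}, N)$ with $d(M_{A,J},N)>0$, which is precisely why part (2) is phrased with an existential quantifier over relevant pairs. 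Since the paper leaves the conjecture open, your write-up cannot be correct as a proof, and you should not present it as one.

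As a strategy sketch, your proposal is reasonable but contains gaps that are exactly where the difficulty lies, and it diverges from the route the paper actually uses for its partial result. For part (1) you propose Mackey theory on the Bessel double coset space plus a cuspidal-support analysis modelled on Lemma \ref{support}, with a ``deformation'' step to remove genericity hypotheses. Both of these are unsubstantiated: the orbit analysis for Bessel models on non-quasi-split forms is far more delicate than in the mirabolic $\GL_n$ setting (where there are only two orbits), and the deformation step has no clear mechanism for controlling $\Ext$-groups across reducibility walls for classical groups. The reverse direction via Jacquet modules runs into precisely the multiplicity problem you flag, and Lemma \ref{EPlemma} only gives a numerical constraint, not the identification of the decomposition $M_i^\pm, N_i^\pm$. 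For part (2), the transfer-to-$\GL_n$ strategy you sketch faces the fact that the A-packet stable distribution is a \emph{signed} sum, so branching multiplicities do not transfer naively to $\GL_n$ multiplicities; the paper itself raises exactly this obstruction at the end of the introduction as an open question, not a tool. By contrast, the paper's partial verification bypasses transfer entirely, working with supercuspidal representations where the Aubert--Zelevinsky involution is essentially trivial and the A-packet membership can be tracked explicitly through Moeglin's classification. You should reframe your contribution as heuristics toward the conjecture and, if you want a provable statement, aim at the restricted setting of Theorem \ref{wald2}.
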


\vskip 5pt

We make a few remarks about this conjecture.
\vskip 5pt

\begin{remark} We outline a heuristic global justification --- without in the least pretending to be complete ---
 of  part (2) of Conjecture \ref{relevant}. Assume that $d(\pi_1,\pi_2) \neq 0$. Since
  $\pi_1$ and $\pi_2$ are of Arthur type, we have automorphic representations  $\Pi_1 = \otimes \Pi_{1,v}$, and
  $\Pi_2 = \otimes \Pi_{2,v}$ with $\pi_1=\Pi_{1,v_0}$ and $\pi_2=\Pi_{2,v_0}$ for some place $v_0$ of a global field $F$. Now we can assume by a form of the Burger-Sarnak principle that $\Pi_1$ and $\Pi_2$ are so chosen that
  $\Pi_1 \otimes \Pi_2$  has a nonzero global period integral. By Arthur,   $\Pi_1 \otimes \Pi_2$ has a  (uniquely determined) global A-parameter,
  such that all the local components of $\Pi_1 \otimes \Pi_2$ (in particular $\pi_1 \times \pi_2$)
  have the corresponding local A-parameter. Since the period integral  on $\Pi_1 \otimes \Pi_2$ is nonzero,
   it follows from the global conjecture \ref{C:global} (see later) that the global A-parameter of $(\Pi_1, \Pi_2)$ is relevant. 
  Hence $(\pi_1,\pi_2)$ has a relevant pair of local A-parameters.
  \end{remark}
\vskip 5pt

\begin{remark} \label{Rem:multiplicities}
  It appears to us  that higher multiplicities in a given A-packet
  is the result of  other A-packets (with relevant A-parameters) intersecting this A-packet. To be more precise,
  we feel that for $(M_A, N_A)$, a pair of A-parameter for $G(V) \times G(W)$, 
  \[ d(M_A, N_A) \leq |X|,\]
where $X$ is a set of maximal cardinality, consisting of pairs  $(\pi_1' \times \pi_2', M_A' \times N_A')$ 
such that
\begin{enumerate}
\item $ M_A' \times  N_A'$ is any relevant pair of A-parameter for $G(V) \times G(W)$,
\item $\pi_1' \times \pi_2'$ is a representation belonging to both  the A-packets $\Pi_{M_A} \times \Pi_{N_A}$ and
 $\Pi_{M'_A} \times \Pi_{N'_A}$
\item  the projections from $X$ to both the first and second factor are injective. 
  \end{enumerate}
\end{remark}

 \vskip 5pt

We now offer some evidence for Conjecture \ref{relevant}, but before doing so, we need to recall some
results of M{\oe}glin. Given a discrete L-parameter $M$ of a classical group, M{\oe}glin determines precisely 
which elements of its L-packet $\Pi_M$ can be a supercuspidal representation. To formulate the result, we introduce the following two notions:
\vskip 5pt

\begin{itemize}
\item Say that a discrete L-parameter $M$ is {\em without gaps} (or holes) if the following holds: for any
 selfdual irreducible representation $\rho$ of $W(k)$ and $a \geq 1$,
 \[  \rho \boxtimes [a+2] \subset M \Longrightarrow \rho \boxtimes [a] \subset M. \]
Equivalently,  let 
\[ \rho \boxtimes [1] + \rho \boxtimes [3]  +\cdots \quad \text{or} \quad 
\rho \boxtimes [2] + \rho \boxtimes [4] +\cdots \]
be a maximal chain appearing as a direct summand in  $M$. Then $M$ is without gaps if 
these chains (as $\rho$ varies) span $M$. 
\vskip 5pt

\item Let $M$ be  a discrete L-parameter without gaps and let $A_M$ be its component group, so that $\Pi_M$ is in bijection with ${\rm Irr}(A_M)$. Recall that $A_M$ is a vector space over $\Z/2\Z$ with a canonical basis indexed by the irreducible summands of $M$. 
Say that  $\alpha \in {\rm Irr}(A_M)$  is {\em alternating} if the following holds: for any $\rho \boxtimes [a] + \rho \boxtimes [a+2] \subset M$, 
$$\alpha(\rho \boxtimes [a]) = -\alpha (\rho \boxtimes [a+2]),$$
with the convention that if $a=0$, then $\rho \boxtimes [a]=0$, and $\alpha(\rho \boxtimes[ 0])=1$, so that we must have
$\alpha(\rho \boxtimes[2])=-1$.
\end{itemize}

\vskip 10pt

Here then is M{\oe}glin's first result \cite{M2} that we shall use:
\vskip 5pt

\begin{thm} \label{T:moeglin1}
Let $M$ be a discrete L-parameter for a classical group over a $p$-adic field and let $\alpha \in {\rm Irr}(A_M)$ be a character of its component group. Then the  corresponding representation $\pi(M, \alpha) \in \Pi_M$  is supercuspidal if and only if $M$ is without gaps and $\alpha$ is alternating. 
\end{thm}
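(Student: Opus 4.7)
The plan is to leverage the explicit description of discrete series L-packets for classical $p$-adic groups due to M{\oe}glin--Tadi\'c, together with the Jacquet module computations that underlie it. Recall that supercuspidality is equivalent to the vanishing of the Jacquet module $r_P(\pi)$ for every proper parabolic subgroup $P$. For a classical group $G(V)$, up to association these Jacquet modules are detected by partial Jacquet functors $r_{\rho,x}$ which pull off a copy of a segment $[x,x-1,\ldots]\rho$ on the $\GL$-side. The whole theorem is therefore reduced to a completely combinatorial statement about the pairs $(M,\alpha)$ for which every such partial Jacquet module vanishes.

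For the easy direction (supercuspidal $\Rightarrow$ no gaps and alternating), I would argue by contrapositive. Suppose first that $M$ has a gap, so there is a summand $\rho\boxtimes[a+2]\subset M$ with $a\geq 0$ and $\rho\boxtimes[a]\not\subset M$ (and $a\geq 1$, the case $a=0$ being ruled out by the normalizing convention). Using M{\oe}glin--Tadi\'c one writes $\pi(M,\alpha)$ as the unique irreducible subrepresentation of an induced module obtained from the Steinberg $\St(\rho,a+2) = \langle \rho\nu^{(a+1)/2},\ldots,\rho\nu^{-(a+1)/2}\rangle$ and a discrete series on a smaller classical group; the partial Jacquet functor $r_{\rho,(a+1)/2}$ then produces a non-zero quotient on $\pi(M,\alpha)$, contradicting supercuspidality. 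A parallel argument handles the case that $\alpha$ fails to alternate on a chain $\rho\boxtimes[a] + \rho\boxtimes[a+2]\subset M$: the failure of alternation allows one to realize $\pi(M,\alpha)$ as a subrepresentation of an induced module of the form $\St(\rho,a+1)\nu^{1/2}\rtimes\pi(M',\alpha')$ for an explicit smaller pair $(M',\alpha')$ (obtained by removing the chain), and the corresponding Jacquet module is again non-zero.

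For the harder direction (no gaps and alternating $\Rightarrow$ supercuspidal), the approach is to show directly that all partial Jacquet modules $r_{\rho,x}\pi(M,\alpha)$ vanish. Using the Tadi\'c-style algebra on the Grothendieck group and the combinatorial formula of M{\oe}glin for $r_{\rho,x}$ applied to members of a discrete L-packet, one sees that a non-zero contribution to $r_{\rho,x}\pi(M,\alpha)$ must come from either (i) removing one end of a chain in $M$, which forces a new summand $\rho\boxtimes[a]$ to appear in a smaller discrete parameter that is obstructed by the no-gaps hypothesis unless the end of the chain is already $\rho\boxtimes[1]$ or $\rho\boxtimes[2]$, in which case the alternating condition forces the two competing terms in the formula to cancel; or (ii) combining adjacent pieces of a chain, whose contribution is again cancelled pairwise by the alternating hypothesis. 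This cancellation is the combinatorial heart of M{\oe}glin's argument, and  shows that every partial Jacquet module of $\pi(M,\alpha)$ is zero.

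The main obstacle is the second direction: the cancellation argument depends on a careful bookkeeping of signs in M{\oe}glin's Jacquet module formula and on knowing precisely which members of an A-packet contribute to each term. A fully rigorous treatment requires invoking Arthur's endoscopic classification to identify $\Pi_M$ with $\mathrm{Irr}(A_M)$ in the manner used above, and M{\oe}glin's explicit construction of discrete series via parabolic induction from strongly positive ones; once these ingredients are in place the vanishing becomes a finite check. The forward direction is technically easier but still rests on the same explicit construction.
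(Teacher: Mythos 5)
The paper does not actually prove Theorem~\ref{T:moeglin1}; it states it as a known result of M{\oe}glin and cites \cite{M2} (see the sentence immediately preceding the theorem: ``Here then is M{\oe}glin's first result \cite{M2} that we shall use''). So there is no proof in the paper against which to compare your attempt — you are being asked to prove something the authors treat as a black box.

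That said, your proposed strategy is indeed the one M{\oe}glin uses in the cited reference: translate supercuspidality into vanishing of all partial Jacquet modules $r_{\rho,x}$, then use the M{\oe}glin--Tadi\'c description of discrete series (in terms of admissible triples / strongly positive discrete series and segments) to reduce the vanishing to a combinatorial condition on $(M,\alpha)$. The two directions you isolate are the right ones, and the role you assign to the ``no gaps'' and ``alternating'' hypotheses is essentially correct.

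However, as written your argument is a sketch rather than a proof, and the weight falls precisely where you yourself flag the obstacle. In the forward direction, the claim that a gap produces a Steinberg-type Jacquet quotient, and that failure of alternation exhibits $\pi(M,\alpha)$ inside $\St(\rho,a+1)\nu^{1/2}\rtimes\pi(M',\alpha')$, needs the explicit M{\oe}glin--Tadi\'c parametrization to be set up and the two constructions to be carried out; you state them but do not do them. (Also, your remark that ``$a=0$ is ruled out by the normalizing convention'' misreads the setup: the definition of ``without gaps'' only concerns $a\geq 1$, and the alternating convention $\alpha(\rho\boxtimes[0])=1$ is what makes $a=0$ harmless, rather than something that rules that case out.) In the converse direction, the entire content is the pairwise cancellation in the Jacquet module formula, and you assert this cancellation as ``a finite check'' without exhibiting the formula, identifying which terms pair up, or explaining why the alternating hypothesis makes the signs oppose. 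Without that computation the converse is not established. In short: right approach, but the key lemma is asserted rather than proved, so this is not a self-contained argument — which is consistent with the fact that the paper itself outsources the proof to \cite{M2}.
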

\vskip 10pt

The second result of M{\oe}glin \cite{M1, M2} we shall need concerns the possible overlaps of different local A-packets.  
If $M_A$ is the local A-parameter of a (classical) group $G$ over a non-archimedean local field $k$, let $M_A^{\Delta}$ be the tempered L-parameter defined by: 
\[ \begin{CD}
 M_A^{\Delta}:   W(k) \times \SL_2(\C) @>{\rm Id} \times \Delta>> W(k) \times \SL_2(\C) \times \SL_2(\C)  @> M_A >>  {}^L G \end{CD}  \]
where $\Delta$ is the diagonal embedding of $\SL_2(\C)$. 
\vskip 5pt

\begin{thm} \label{T:moeglin2}
Let  $M_A$ and $M'_A$ be two local A-parameters for a classical group  over a $p$-adic field with associated A-packets $\Pi_{M_A}$ and $\Pi_{M'_A}$. Then
\[ \Pi_{M_A} \cap \Pi_{M'_A} \ne \emptyset \Longrightarrow  
 M_A^{\Delta} \cong {M_A'}^{\Delta}. \]
  Further, if $\Pi_{M_A^{\Delta}}^{sc}$ denotes the set of supercuspidal representations in $\Pi_{M_A^{\Delta}}$, then
 \[ \Pi_{M_A^{\Delta}}^{sc} \subset \Pi_{M_A}. \]
\end{thm}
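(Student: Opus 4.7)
Both assertions are most naturally proved through Mœglin's explicit construction of A-packets \cite{M1, M2, M3}, which builds $\Pi_{M_A}$ by iterated partial Jacquet module / derivative operations starting from a tempered base. The governing principle is that $M_A^{\Delta}$ encodes the \emph{supercuspidal support} (equivalently, the refined infinitesimal character) of every member of $\Pi_{M_A}$, and that Mœglin's construction preserves this datum throughout.

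For the first containment, the plan is to establish that any representation $\pi \in \Pi_{M_A}$ has supercuspidal support (as a multi-set of supercuspidals of $\GL_n(k)$-factors) determined by the Jordan block decomposition $M_A^{\Delta} = \bigoplus_i \rho_i \boxtimes [c_i]$. One reads this off by writing $M_A = \bigoplus_i \rho_i \boxtimes [a_i] \boxtimes [b_i]$ and noting that $M_A^{\Delta}$ packages together precisely the Weil-group data $\rho_i$ with the segment data $[a_i + b_i - 1, \ldots, |a_i - b_i| + 1]$ that controls the cuspidal exponents. The key verification, done inductively over the length of the Arthur $\SL_2$ part of $M_A$, is that Mœglin's construction of $\Pi_{M_A}$ via Jacquet modules does not create representations whose supercuspidal supports lie outside the one dictated by $M_A^{\Delta}$. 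Once this is in place, a common element $\pi \in \Pi_{M_A} \cap \Pi_{M'_A}$ forces $M_A^{\Delta}$ and ${M'_A}^{\Delta}$ to have the same cuspidal support data, and by comparing Jordan blocks, $M_A^{\Delta} \cong {M'_A}^{\Delta}$.

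For the second claim, I would combine Theorem \ref{T:moeglin1} with Mœglin's inductive procedure. By Theorem \ref{T:moeglin1}, the supercuspidal members of $\Pi_{M_A^{\Delta}}$ are precisely the $\pi(M_A^{\Delta}, \alpha)$ where $M_A^{\Delta}$ is discrete and without gaps and $\alpha$ is alternating. The plan is to show that these are exactly the "seeds" at the base of Mœglin's recursive construction of $\Pi_{M_A}$: starting from a supercuspidal $\pi(M_A^{\Delta}, \alpha)$, the iterated parabolic inductions (with extraction of suitable subquotients) assemble all other members of $\Pi_{M_A}$, but in particular the seed itself belongs to the packet. This reduces to checking compatibility between the labelling of $\Pi_{M_A}$ by characters of $A_{M_A}$ (which projects onto $A_{M_A^{\Delta}}$) and the alternating condition on $A_{M_A^{\Delta}}$.

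\textbf{Main obstacle.} The technical heart is the first assertion: rigorously justifying that supercuspidal support is an invariant of an entire A-packet, controlled by $M_A^{\Delta}$. This rests on the combinatorics of Mœglin's partial derivatives and the fact that her construction proceeds along chains of Jordan blocks in a way that preserves Bernstein center data. Carrying this out from scratch would require reconstructing a substantial portion of the material in \cite{M1, M2, M3}, so in practice the proof amounts to invoking Mœglin's construction and verifying that the two claims are built into its structural properties.
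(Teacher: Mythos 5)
The paper does not actually prove Theorem \ref{T:moeglin2}: it is presented as a result of M{\oe}glin, cited to \cite{M1, M2}, with only the brief gloss that the first part ``is eventually related to the observation that two A-parameters $M_A$ and $M'_A$ of $\GL_n(k)$ have the same cuspidal support if and only if $M_A^{\Delta}$ and ${M'_A}^{\Delta}$ are equivalent.'' Your proposal is consistent with that remark and correctly identifies the governing mechanism: $M_A^{\Delta}$ encodes the supercuspidal-support / Bernstein-block data, and this invariant is preserved throughout M{\oe}glin's construction, so a common member of two A-packets forces agreement of the $\Delta$-parameters. Your route to the second assertion, via Theorem \ref{T:moeglin1} and the observation that the supercuspidals in $\Pi_{M_A^{\Delta}}$ sit at the base of the construction, is also the right idea; it is worth noting that the paper itself runs a version of this argument in the proof of Theorem \ref{wald2}(b), using the Aubert--Zelevinsky involution in the special case where the Deligne $\SL_2(\C)$ acts trivially on $M_A$, which gives a cleaner path to $\Pi_{M_A^{\Delta}}^{sc} \subset \Pi_{M_A}$ when that hypothesis holds. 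Your closing acknowledgement that a rigorous proof would require reconstructing a substantial portion of \cite{M1, M2, M3} is accurate and, in effect, matches the authors' decision to cite rather than reprove; as a blind proposal it correctly locates where the real work lies, but it does not constitute a self-contained proof any more than the paper's treatment does.
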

The first part of this theorem of M{\oe}glin is eventually related to the
observation that two A-parameters $M_A$ and $M'_A$  of $\GL_n(k)$ have the same cuspidal support if and only if
 $M_A^{\Delta}$ and ${M'_A}^{\Delta}$ are equivalent. 

%Within the Arthur packet  A-packet $A(\psi_2)$ for $\SO_{m-1}(k)$, one knows by the work of
%M{\oe}glin, the tempered representations which have L-parameter $\Delta \circ \psi_2$;
%in particular, if $\Delta \circ \psi_2$ is a cuspidal tempered parameter, then its cuspidal
%members are in the A-packet  $A(\psi_2)$. Thus if our
%Conjecture \ref{conj-class} is to be true, the tempered components of the restriction of a tempered representation $\pi$
%of a group $G$ which belongs to an  A-packet, restricted to one of the subgroups $H$,
%must have a very special structure! Proposition \ref{wald} below due to Waldspurger proves this.
%Although it is a consequence of the proof of the GGP conjecture (for tempered representations)
%due to
%Waldspurger, the conclusion, which involves calculating certain epsilon factors of representations of $W'_k$, is neither obvious nor  trivial.
\vskip 5pt

At this point, let us take note of the following consequence of tempered GGP which was pointed out to us by Waldspurger:
\vskip 5pt

\begin{prop} \label{wald} 
Let $M_0 \times N_0$ be a tempered L-parameter for $\SO_{2n+1} \times \SO_{2m}$ over a non-archimedean local field $k$ with associated L-packets $\Pi_{M_0} \times \Pi_{N_0}$. 
Let $(\pi, \sigma)$ be the unique member of $\Pi_{M_0} \times \Pi_{N_0}$ such that $d(\pi, \sigma) \ne 0$, so that $(\pi, \sigma)$ corresponds to the distinguished character $ \chi := \chi_{M_0, N_0}$ of the component group by \cite{GGP}. 
\vskip 5pt

\noindent (i) Suppose that $\rho$ is an irreducible selfdual parameter for $\GL_d(k)$ such that for some $a \geq 3$,
both the representations $\rho \otimes [a]$ and $\rho \otimes [a-2]$ of $WD(k)=W(k) \times \SL_2(\C)$
appear with multiplicity 1 in  $M_0$.
In this case, these two summands give rise to two basis elements of the component group associated to $M_0$. Let  
   \[   \chi(\rho,a), \chi(\rho,a-2) \in  \{\pm 1\} \]
be the value of the distinguished character $\chi$ on these two elements respectively.   Then 
  $$\chi(\rho,a) = - \chi(\rho,a-2)$$
 if and only if the parameter $N_0$ contains $\rho \otimes [a-1]$ with odd multiplicity.
\vskip 5pt

\noindent (ii)  Further, if  $\rho \otimes [2]$  appears with multiplicity 1 in  $M_0$,
  then $$\chi(\rho,2) = - 1$$
  if and only if the parameter $N_0$ contains $\rho$ with odd multiplicity.

  Furthermore, there are similar assertions when
  both the representations $\rho \otimes [a]$ and $\rho \otimes [a-2]$ of $WD(k)=W(k) \times \SL_2(\C)$
appear with multiplicity 1 in  $N_0$.
\end{prop}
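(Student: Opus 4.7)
The strategy is to compute $\chi(\rho,a)\cdot \chi(\rho,a-2)$ (which equals $\chi(\rho,a)/\chi(\rho,a-2)$ since both factors are $\pm 1$) explicitly using the tempered GGP recipe of \cite{GGP}, combined with the Clebsch-Gordan identity
\[ [a] \oplus [a-2] \;=\; [a-1] \otimes [2] \]
in the Grothendieck ring of $\SL_2(\C)$. By this recipe, for each irreducible summand $M_i$ of $M_0$,
\[
\chi(M_i) \;=\; \epsilon\!\left(\tfrac{1}{2},\, M_i \otimes N_0,\, \psi\right) \cdot C(M_i),
\]
where $C(M_i)$ is an explicit correction factor involving $\det(M_i)(-1)^{\dim N_0/2}$, $\det(N_0)(-1)^{\dim M_i/2}$, and similar terms. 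Applied to $M_i = \rho\otimes[a]$ and $M_i = \rho \otimes [a-2]$, these correction factors depend on the parities of $a$, $\dim \rho$, $\det(\rho)$, and $\det(N_0)$; since $a$ and $a-2$ differ by an even integer, a short check (using the forced parity matches coming from the selfduality types of $\rho\otimes[a]$ and $\rho\otimes[a-2]$) will show that the correction contributes trivially to the product $\chi(\rho,a)\chi(\rho,a-2)$.

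With normalization handled, the computation reduces to
\[
\epsilon\!\left(\tfrac{1}{2},\, \rho \otimes [a] \otimes N_0\right)\cdot \epsilon\!\left(\tfrac{1}{2},\, \rho \otimes [a-2] \otimes N_0\right)
\;=\; \epsilon\!\left(\tfrac{1}{2},\, \rho \otimes [a-1] \otimes [2] \otimes N_0\right),
\]
by multiplicativity of $\epsilon$-factors over direct sums and the Clebsch-Gordan identity above. Decomposing $N_0 = \bigoplus_j \sigma_j \otimes [b_j]$ further reduces this to a product, over $j$, of $\epsilon\bigl(\tfrac{1}{2},(\rho\otimes\sigma_j) \otimes ([a-1]\otimes [b_j]) \otimes [2]\bigr)$. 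For each $j$, one decomposes $[a-1]\otimes[b_j] = \bigoplus_c [c]$ via Clebsch-Gordan and uses the elementary identity $[c]\otimes[2] = [c+1] \oplus [c-1]$ together with multiplicativity of $\epsilon$. The resulting product telescopes: interior $\epsilon$-factors appear twice, square to $+1$, and cancel, leaving only the boundary contributions
\[
\epsilon\bigl(\tfrac{1}{2}, (\rho\otimes\sigma_j)\otimes [a+b_j-1]\bigr)\cdot \epsilon\bigl(\tfrac{1}{2},(\rho\otimes\sigma_j)\otimes [\,|a-1-b_j|\,]\bigr).
\]

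The crucial boundary analysis is the following. When $b_j = a-1$ and $\sigma_j \cong \rho$, the second boundary is the empty representation $[0]$, and one is left with a single nontrivial boundary $\epsilon$-factor whose value, by selfduality of $\rho\otimes\rho$ and the standard formulas of Deligne for $\epsilon$-factors of Weil-Deligne representations, equals $-1$. In every other case (either $b_j \ne a-1$, or $b_j = a-1$ but $\sigma_j \ne \rho$), the two boundary $\epsilon$-factors combine to give $+1$, either because each is individually trivial by a selfduality argument or because they pair up to cancel. Taking the product over $j$ then yields
\[
\chi(\rho,a)\,\chi(\rho,a-2) \;=\; (-1)^{\,m(\rho\otimes [a-1],\,N_0)},
\]
which is equivalent to the asserted criterion: $\chi(\rho,a) = -\chi(\rho,a-2)$ if and only if $m(\rho\otimes[a-1], N_0)$ is odd.

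Part (ii) (the case $a=2$) is handled by the same argument with $[a-2] = 0$: one has $\chi(\rho,2) = \epsilon(\tfrac{1}{2}, \rho \otimes [2] \otimes N_0)$ up to a normalization sign, and the telescoping with $[a-1] = [1]$ isolates precisely those summands of $N_0$ of the form $\rho = \rho \otimes [1]$. The symmetric claim with the roles of $M_0$ and $N_0$ interchanged is proved identically, with the parity conditions for the symplectic/orthogonal summands matching up consistently in the swapped setting. The main obstacle will be the boundary $\epsilon$-factor calculation: keeping careful track of the normalization conventions in the GGP recipe of \cite{GGP}, and explicitly evaluating $\epsilon(\tfrac{1}{2},\tau\otimes[n])$ for a selfdual $\tau$ of known symplectic/orthogonal type via Deligne's formula for Weil-Deligne $\epsilon$-factors.
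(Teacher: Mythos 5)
Your overall strategy mirrors the paper's: compute $\chi(\rho,a)\,\chi(\rho,a-2)$ as a product over irreducible summands of $N_0$, expand Clebsch--Gordan decompositions, and isolate a $-1$ from the summand $\rho\otimes[a-1]$. The packaging via $[a]\oplus[a-2]=[a-1]\otimes[2]$ is a pleasant variant of the paper's direct Clebsch--Gordan expansion. However, there is a genuine gap in the step where you claim the GGP correction factors ``contribute trivially to the product $\chi(\rho,a)\chi(\rho,a-2)$,'' and the later boundary analysis inherits the error. The residual correction is $(\det N_0)(-1)^{(a-1)\dim\rho}$ (the $\dim N_0$ exponent kills the other factor since $\dim N_0$ is even), and this is $not$ always trivial: when $a$ is even, $\rho$ is orthogonal of odd dimension (e.g.\ $\rho=\mathbf{1}$, $a=4$), the exponent $(a-1)\dim\rho$ is odd and the correction equals $\det N_0(-1)$, which can be $-1$. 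Correspondingly, your claim that for $b_j=a-1$ with $\sigma_j\ne\rho$ the boundary $\epsilon$-factors combine to $+1$ also fails: in that regime the surviving boundary term is $\epsilon(\rho\otimes\sigma_j\otimes[2a-2]) = \epsilon(\rho\otimes\sigma_j)^{2a-2} = \bigl[\det(\rho\otimes\sigma_j)(-1)\bigr]^{a-1}$, and since $\rho\otimes\sigma_j$ is orthogonal here, this can be $-1$ (take $\rho=\mathbf{1}$, $a=4$, $\sigma_j$ a $2$-dimensional orthogonal representation with $\det\sigma_j(-1)=-1$, $b_j=3$). The two signs cancel, which is why the proposition is still true, but your telescoping formula $\epsilon(\rho\otimes[a-1]\otimes[2]\otimes N_0)=(-1)^{m(\rho\otimes[a-1],N_0)}$ is simply false in such cases.

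The fix is exactly what the paper does: bundle each summand's share of the determinant corrections into a per-summand character $\chi_\mu(\rho,a) = \epsilon(\rho\otimes[a]\otimes\mu)\cdot(\det\rho)(-1)^{\cdots}\cdot(\det\mu)(-1)^{\cdots}$ and show $\chi_\mu(\rho,a)\chi_\mu(\rho,a-2)=-1$ precisely when $\mu=\rho\otimes[a-1]$. The correction and the $\epsilon$-factor contributions then cancel summand by summand (each carrying a factor of $\det(\rho\otimes\sigma_j)(-1)$ to an appropriate power) and the telescoping phenomenon you want becomes a genuine identity. Your observation that the interior terms are symplectic, hence have $\epsilon^2=1$, is correct and worth keeping; but the boundary terms $\rho\otimes\sigma_j\otimes[2a-2]$, while also symplectic, can still individually be $-1$, and it is only after reinstating the corrections that the product comes out right.
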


\begin{proof}
  Let us begin by recalling Conjecture 20.1, page 76 of  \cite{GGP}, giving a  recipe 
  for  $\chi(\rho,a)$:
  $$ \chi(\rho,a) = \epsilon(\rho \otimes [a] \otimes  N_0)\cdot
  (\det \rho)(-1)^{(a\dim N_0)/2}
  \cdot (\det N_0)(-1)^{(a\dim \rho)/2} .$$
  Define more generally for any selfdual representation  $\mu$ of $W(k) \times \SL_2(\C)$ with even
  parity (i.e. same as  that of $N_0$), 
  $$\chi_{\mu}(\rho, a)= \epsilon(\rho \otimes [a] \otimes  \mu) \cdot (\det \rho)(-1)^{(a\dim N_0)/2} \cdot (\det \mu)(-1)^{(a\dim \rho)/2} ,$$
  so that if $N_0 = \oplus_i \mu_i$, a sum of irreducible
selfdual representations  $\mu_i$ of $W(k) \times \SL_2(\C)$ with even
  parity (i.e. same as  that of $N_0$), 
  then
\[  
  \chi(\rho, a)= \prod_i \chi_{\mu_i}(\rho, a). \]
  Thus, we need to compare $\chi_{\mu_i}(\rho, a)$ and $\chi_{\mu_i}(\rho, a-2)$ for any $\mu_i$ as before. 
   \vskip 5pt
     
  Recall  from \cite{Tate} that for an irreducible representation  $\lambda \otimes [n]$ of $W(k) \times \SL_2(\C)$, one has
  $$  \epsilon( \lambda \otimes [n]) = \epsilon(\lambda)^n \cdot \det (-F, \lambda^I)^{n-1}, $$
  where $\lambda^I$ denotes the subspace of $\lambda$ fixed by the inertia group $I$  
  and $F$ denotes the Frobenius element of $W(k)/I$. In particular, if $\lambda$ is an irreducible selfdual representation of $W(k)$, then,
  \[  \epsilon( \lambda \otimes [n]) = \begin{cases} \epsilon(\lambda)^n   \;\;\;\;\;\; {\rm if } \lambda \not =1; \\
     (-1)^{n-1}  \;\;  {\rm if } \lambda =1 . \end{cases} \]

  Using the Clebsch-Gordon theorem, \[[a] \otimes [b] = [a+b-1] \oplus [a+b-3] \oplus \cdots \oplus  [|a-b|-1],\]
  it follows  that if $\rho, \tau$ are irreducible selfdual representation of $W(k)$, then
  \[  \epsilon( \rho \otimes [a] \otimes \tau \otimes [b]) = \begin{cases} \epsilon(\rho \otimes \tau )^{ab}   \;\;\;\;\;\;\;\;\;\;\;\;\;\;\;\;\;\;
    \text{ if $\rho \not \cong \tau$,} \\
  \epsilon(\rho \otimes \tau )^{ab}   (-1)^{n(a,b)} \;\;\;\; \text{ if $\rho \cong \tau$,} 
  \end{cases} \]
  where $n(a,b) = {\rm min}\{a,b\} \cdot [ {\rm max}\{a, b\} -1]$. In particular, observe that for $a,b$ positive integers of different parity,
  \[  \epsilon( [a] \otimes  [b]) \cdot \epsilon( [a-2] \otimes  [b])
  = \begin{cases} \;\; \, 1  \;\;\;\;\;\;
    \text{ if $b \not = a-1$,} \\
  -1  \;\;\;\; \;\; \text{ if $b = a-1$.} 
  \end{cases} \]

  More generally,  it is easily seen that for any irreducible representation  $\mu$ of $W(k) \times \SL_2(\C)$ as above and $a \geq 3$,
 \[ 
  \chi_{\mu}(\rho, a) = \begin{cases}  
  \;\;\, \chi_{\mu}(\rho, a-2), \text{ if $\mu \ne \rho \otimes [a-1]$;} \\
  -   \chi_{\mu}(\rho, a-2), \text{ if $\mu =  \rho \otimes [a-1]$.}
  \end{cases} 
  \]
   It follows that $\chi(\rho, a) = - \chi(\rho, a-2)$ if and only if $\rho \otimes [a-1]$ occurs in $N_0$ with odd multiplicity, thus completing the proof of (i).    The proof of (ii), where  $a=2$,  follows along the same lines; we omit the details.
  \end{proof}

\vskip 5pt

After the above preparation, we can prove the following theorem.
Part (b) of the theorem is a contribution to  
Conjecture \ref{relevant}(1) and (2).

\begin{thm} \label{wald2}
(a)  Let $M_0 \times N_0$ be a discrete L-parameter for $\SO_{2n+1} \times \SO_{2n}$ without gaps
   with associated L-packet $\Pi_{M_0} \times \Pi_{N_0}$. 
  Let $(\pi, \sigma)$ be the unique member of $\Pi_{M_0} \times \Pi_{N_0}$ such that $d(\pi, \sigma) \ne 0$.
 Let  $D(M_0) \times D(N_0)$ be the A-parameter obtained from
  the L-parameter  $M_0 \times N_0$ by interchanging the Deligne $\SL_2(\C)$ by Arthur  $\SL_2(\C)$.   \vskip 5pt
 
 Then  $(\pi, \sigma)$ is  supercuspidal  if and only if $(D(M_0),  D(N_0))$ is relevant.
\vskip 5pt

  (b)  Let $M_A \times N_A$ be a discrete A-parameter for $\SO_{2n+1} \times \SO_{2n}$
  over a non-archimedean local field $k$, on which the Deligne $\SL_2(\C)$ acts trivially,
with associated A-packet $\Pi_{M_A} \times \Pi_{N_A}$. Assume that the A-parameter  $M_A \times N_A$ has no gaps (relative to the Arthur $\SL_2(\C)$).
    Then there exists supercuspidal  $(\pi, \sigma) \in \Pi_{M_A} \times \Pi_{N_A}$ such that  $d(\pi,\sigma) \ne 0$
  if and only if $M_A\times N_A$ is relevant.
   \end{thm}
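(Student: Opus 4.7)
My plan is to prove part (a) by combining M{\oe}glin's supercuspidality criterion (Theorem \ref{T:moeglin1}) with the distinguished-character computation of Proposition \ref{wald}, and then to deduce (b) from (a) via the involution $D$ swapping the Deligne and Arthur $\SL_2$'s together with M{\oe}glin's Theorem \ref{T:moeglin2}.

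For (a), tempered GGP identifies the unique distinguished pair $(\pi,\sigma) \in \Pi_{M_0} \times \Pi_{N_0}$ as $\pi(M_0,\chi_{M_0}) \times \pi(N_0,\chi_{N_0})$ for the distinguished character of \cite{GGP}. Since $M_0 \times N_0$ is without gaps by hypothesis, Theorem \ref{T:moeglin1} reduces supercuspidality of $(\pi,\sigma)$ to alternation of both $\chi_{M_0}$ and $\chi_{N_0}$. Proposition \ref{wald} then rewrites alternation of $\chi_{M_0}$ as the following combinatorial condition: for each irreducible selfdual representation $\rho$ of $W(k)$ and each pair $\rho \boxtimes [a],\, \rho \boxtimes [a-2] \subset M_0$ with $a \geq 3$, one has $\rho \boxtimes [a-1] \subset N_0$; and whenever $\rho \boxtimes [2] \subset M_0$, also $\rho \boxtimes [1] \subset N_0$. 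The analogous condition with the roles of $M_0,N_0$ swapped describes alternation of $\chi_{N_0}$. Organizing summands by $\rho$, set $C_M(\rho) = \{a : \rho \boxtimes [a] \subset M_0\}$ and $C_N(\rho) = \{a : \rho \boxtimes [a] \subset N_0\}$; the no-gaps hypothesis and the parity constraints (forced by $M_0$ being symplectic and $N_0$ orthogonal) make each of these a contiguous arithmetic progression of integers of fixed parity. An elementary case analysis, split on whether $\rho$ is symplectic or orthogonal, shows that both alternation conditions hold simultaneously if and only if $C_M(\rho) \cup C_N(\rho)$ is a contiguous block of positive integers starting at $1$ for every $\rho$.

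On the other hand, writing $D(M_0) = \bigoplus_i M_i \otimes \Sym^i(\C^2)$ and $D(N_0) = \bigoplus_i N_i \otimes \Sym^i(\C^2)$, the $W(k)$-isotypic component $\rho$ appears (with multiplicity one) in $M_i$ exactly when $i+1 \in C_M(\rho)$, and analogously for $N_i$. Unwinding the relevance conditions $M_i^+ = N_{i+1}^-$, $M_i^- = N_{i-1}^+$, together with the fact that $M_0^-$ and $N_0^-$ are unconstrained (cf.\ \S\ref{S:relevant}), one finds by the same case analysis that a valid $\pm$-decomposition of the $\rho$-components exists if and only if $C_M(\rho) \cup C_N(\rho)$ is a contiguous block starting from $1$ --- exactly the condition obtained from the alternation analysis. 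This gives (a). The main obstacle is the case-by-case bookkeeping, particularly the treatment of the ``bottom'' of each chain, where the special status of $M_0^-$ and $N_0^-$ on the relevance side must be matched with the special role of the summands $[1]$ and $[2]$ in the alternation criterion.

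For (b), since Deligne acts trivially on $M_A \times N_A$, the tempered L-parameter $M_A^{\Delta} \times N_A^{\Delta}$ coincides with $D(M_A) \times D(N_A)$. The no-gaps and discreteness hypotheses for $M_A \times N_A$ (with respect to the Arthur $\SL_2$) translate to the same properties for $D(M_A) \times D(N_A)$ (with respect to the Deligne $\SL_2$), so part (a) applies to the latter pair. By Theorem \ref{T:moeglin2}, $\Pi_{D(M_A)}^{sc} \subset \Pi_{M_A}$. Conversely, any supercuspidal $\pi \in \Pi_{M_A}$ lies in its own L-packet, viewed as an A-packet with trivial Arthur $\SL_2$; this A-packet intersects $\Pi_{M_A}$ at $\pi$, so by the first part of Theorem \ref{T:moeglin2} its underlying L-parameter equals $M_A^{\Delta} = D(M_A)$, showing $\pi \in \Pi_{D(M_A)}^{sc}$. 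Hence the supercuspidal members of $\Pi_{M_A} \times \Pi_{N_A}$ are exactly those of $\Pi_{D(M_A)} \times \Pi_{D(N_A)}$, and by tempered GGP there is at most one such pair with $d \neq 0$. Applying (a) to $D(M_A) \times D(N_A)$ and using $D \circ D = \mathrm{id}$, this unique pair is supercuspidal if and only if $M_A \times N_A$ is relevant, which is (b).
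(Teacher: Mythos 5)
Your argument is correct. For part (a), your route coincides with the paper's, which merely lists the ingredients (tempered GGP, M{\oe}glin's supercuspidality criterion of Theorem \ref{T:moeglin1}, and the distinguished-character formula of Proposition \ref{wald}) without spelling out the combinatorics; your explicit matching of alternation against relevance, organized around contiguity of $C_M(\rho) \cup C_N(\rho)$, fills in the bookkeeping the paper leaves implicit, and it checks out: for $\rho$ orthogonal both conditions amount to $|C_N(\rho)| - |C_M(\rho)| \in \{0,1\}$, for $\rho$ symplectic to $|C_N(\rho)| - |C_M(\rho)| \in \{-1,0\}$, and each is equivalent to $C_M(\rho) \cup C_N(\rho)$ being a contiguous initial segment of the positive integers. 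For part (b), you deduce the statement from (a) via both halves of Theorem \ref{T:moeglin2} (its first half showing any supercuspidal in $\Pi_{M_A}$ has L-parameter $M_A^{\Delta} = D(M_A)$, its second giving $\Pi_{D(M_A)}^{sc} \subset \Pi_{M_A}$). The paper instead invokes directly the fact that the Aubert--Zelevinsky involution carries the tempered L-packet of $D(M_A)$ to the A-packet of $M_A$ and fixes supercuspidal representations. These two routes are closely related --- Theorem \ref{T:moeglin2} is itself established via Aubert--Zelevinsky duality --- and both are correct; your version is marginally more economical in that it uses only the location of supercuspidals rather than the full packet-level assertion about the involution.
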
 
\begin{proof}
 (a)  This is a consequence of  the tempered GGP  \cite{W}, Theorem \ref{T:moeglin1}, Theorem \ref{T:moeglin2}, and Proposition \ref{wald}.
\vskip 5pt

\noindent (b)   This follows from part $(a)$ on noting that the Aubert-Zelevinsky involution
\[\pi \rightarrow D(\pi) \]
on the category  of admissible representations of a group $G(k)$ takes the tempered L-packet
  $\Pi_{M_0} \times \Pi_{N_0}$ to   $D(\Pi_{M_0}) \times D(\Pi_{N_0})$ which is the A-packet associated to the A-parameter
  $D(M_0) \times D(N_0)$. Now $\pi = D(\pi)$ (up to a sign) for
  supercuspidal representations $\pi$. Hence, 
  if $(\pi, \sigma)$ is a supercuspidal member of $\Pi_{M_0} \times \Pi_{N_0}$ such that $d(\pi, \sigma) \ne 0$,
  then $(\pi, \sigma)$ is also a member of the A-packet  $D(\Pi_{M_0}) \times D(\Pi_{N_0})$ with
  $d(\pi, \sigma) \ne 0$. \end{proof}

\begin{remark} \label{geq2}
  In Theorem \ref{wald2}(b), the representation 
   $\pi \times \sigma$ we produced  in the A-packet  $\Pi_{M_A} \times \Pi_{N_A}$ with $d(\pi, \sigma) \ne 0$ is supercuspidal 
  and hence lies outside of the associated L-packet $\Pi_M \times \Pi_N$  (since the L-packet underlying an A-parameter with non-trivial restriction
  to the Arthur $\SL_2(\C)$ consists only of non-tempered representations). Thus by Conjecture \ref{conj-class}, the sum of multiplicities
  in such A-packets is $\geq 2$. This remark could be considered to be a contribution to
  Remark \ref{Rem:multiplicities}.
\end{remark}
\vskip 10pt

In the rest of this section, we shall construct examples  of {\em non-relevant} A-parameters $(M_A, N_A)$ 
for which $d(M_A,N_A) > 0$, a subtlety which Conjecture
\ref{relevant}(2) takes into account.
 \vskip 10pt

For any subset $J \subset  \{1,2,...,n\}$ (for $n$ fixed),  consider the representation of $\SL_2(\C) \times \SL_2(\C)$ given by
\[ M_{A,J} =  \bigoplus_{i \notin J} [2i] \otimes [1]  \oplus \bigoplus_{j \in J} [1] \otimes [2j]. \]
Then $M_{A,J}$ is a multiplicity-free sum of symplectic representation of $WD(k) \times \SL_2(\C)$ with $W(k)$ acting trivially.   Hence $M_{A,J}$ is a discrete A-parameter
for an odd special orthogonal group of the appropriate size.  
\vskip 5pt

Now observe that $M_{A,J}^{\Delta}$ is independent of $J$ and is a discrete L-parameter without gaps:
\[ M_{A,J}^{\Delta}   = M_{A, \emptyset}  = \bigoplus_{j=1}^n   1_{W(k)} \otimes [2j]. \]
By Theorem \ref{T:moeglin1}, the L-packet of $M_{A,J}^{\Delta}$ contains a unique supercuspidal representation $\pi_{sc}$
for an odd special orthogonal group of the appropriate size, attached to the unique alternating character $\alpha_{sc}$ of the component group:
\[ \alpha_{sc}(2j) = (-1)^j. \]
By Theorem \ref{T:moeglin2}, we see that
\[ \pi_{sc} \in \Pi_{M_{A,J}} \quad \text{ for any $J$.} \]
 \vskip 10pt

We now consider the restriction problem for the pair $(M_{A,J},  N)$ where $M_{A,J}$ is as above and $N$ is the discrete L-parameter
\[ N :=  [1] \oplus [3] \oplus \cdots \oplus [2n-1] \] 
 on which $W(k)$ acts trivially.  Now consider the restriction problem for the pair $(M_{A,\emptyset}, N)$ of discrete L-parameters. 
 One may apply Proposition \ref{wald} (together with Theorem \ref{T:moeglin1}) to deduce that
 \[  d(\pi_{sc}, N)  =1, \]
 where we recall that $\pi_{sc}$ is the unique supercuspidal representation  with L-parameter $M_{A,\emptyset}$. Since $\pi_{sc} \in \Pi_{M_{A,J}}$ for any $J$, we thus conclude that
 \[  d(M_{A,J}, N) \ne 0 \quad \text{ for any $J$.}   \]
 \vskip 10pt

 It is easy to see that 
 \[ \text{ $(M_{A,J}, N)$ is relevant} \Longleftrightarrow   \text{$J = \emptyset$ or $J = \{1 \}$. } \]
  Hence, if $n > 1$, we have produced non-relevant pairs $(M_{A,J}, N)$ such that $d(M_{A,J}, N) > 0$. In other words, ``relevance" is not a necessary condition for branching in A-packets. 
   \vskip 10pt
   
   The simplest example one can take is $n=2$. In this case, we have the A-parameters
   \[ M_A = [1] \boxtimes [4] + [2] \boxtimes [1]  \quad \text{or}  \quad   [1] \boxtimes [4] + [1] \boxtimes [2], \]
   and
  \[  N = [3] \boxtimes [1] + [1] \boxtimes [1]. \]
   The pair $(M_A,N)$ is not relevant, yet  $d(M_A, N) > 0$. (We note that the A-packets of $\SO_7$ corresponding to the two choices of $M_A$ above can be constructed    by theta lifting from an appropriate L-packet and an A-packet  on ${\rm Mp}_2$ respectively and the unique supercuspidal representation  in them is simply the theta lift (with respect to $\psi$) of the odd Weil representation $\omega_{\psi}$). 
 \vskip 10pt
 
\section{A Special Case of the Conjecture} \label{MW}
The results of M{\oe}glin and Waldspurger that were used in the previous section to provide a counterexample to the necessity of relevance for the restriction problem for A-packets can be used in other circumstances to verify special cases of our Conjecture \ref{conj-class}. 
In this section, we consider one such special case of Conjecture \ref{conj-class}, the verification of which is due to C. M{\oe}glin. As a general reference for A-packets on not necessarily quasi-split classical groups, we refer to \cite{MR1}. In what follows, we will
use $\SO(V)^+$
or $\SO_{2\ell+1}^+$ (resp., $\SO(V)^-,$ or $ \SO_{2\ell+1}^-$)
to denote  the split (resp. non-split) orthogonal group in odd number of variables of discriminant 1; the
superscript $\pm$ will be omitted when we consider either of the two possibilities.

%This gives some hints of what Arthur packets look like, and how our present conjectures
%may be related to the earlier conjectures in the tempered case {\it both} because of
%some (parabolic) inductive process, and through A-packets which contain tempered representations.

\vskip 5pt

To describe the special case, we shall introduce the following notations. 
Let $\rho$ be an irreducible $d$-dimensional representation of $W(k)$ and write $[b]$ for the $b$-dimensional irreducible representation of $\SL_2(\C)$.  Then an irreducible representation of $WD(k) = W(k) \times \SL_2(\C) \times \SL_2(\C)$ has the form $\rho \otimes [a] \otimes [b]$ for integers $a,b >0$.   

\vskip 5pt

 Assume now that $\rho$ is an irreducible  $d$-dimensional orthogonal representation of $W(k)$.  We shall consider the following A-parameters
\[  M_A = \rho \otimes [2] \otimes [1] + \rho \otimes [1] \otimes [2],   \]
and
\[ N_A = \rho \otimes [1] \otimes [3] + N_0, \]
where $N_0$ is a $d$-dimensional tempered orthogonal L-parameter. 
Then $M_A$ is an A-parameter for $\SO_{4d+1}$ whereas $N_A$ is an A-parameter for $\SO_{4d}$ (orthogonal group
of a quadratic space whose discriminant is dictated by the A-parameter $N_A$).  Observe that the pair $(M_A, N_A)$ is relevant. 
Thus, according to our conjecture, we expect that there should be a pair of representations $(\pi, \sigma)$ in the L-packet $\Pi_M \times \Pi_N$ associated to $M_A$ and $N_A$ such that $\Hom_{\SO_{4d}}(\pi, \sigma) \ne 0$. 
To verify this, we shall first describe the elements in these L-packets more concretely.

\vskip 5pt

The L-parameter associated to $M_A$ is given by:
\[ M =  \rho \otimes [2] + \rho \nu^{1/2} + \rho \nu^{-1/2}, \] 
where $\nu$ is the character of $W(k)$ corresponding to the absolute value of $k^{\times}$ under the local class field theory. The associated L-packet 
$\Pi_M$  consists of various Langlands quotients of the split group $\SO^+_{4d+1}$ and the non-split $\SO_{4d+1}^-$ defined as follows. 
\vskip 5pt

Let $P_d$ be the maximal parabolic subgroup of $\SO_{4d+1}$ stabilizing a $d$-dimensional isotropic space, so that its Levi factor is isomorphic to $\GL_d \times \SO_{2d+1}$. Then $\Pi_M$ consists of the unique irreducible quotients of the standard modules
\[ {\rm Ind}_{P_d}^{\SO_{4d+1}}  \pi_{\rho} |\det|^{1/2}  \otimes  \tau  \twoheadrightarrow J(\rho, \tau),  \]
where $\pi_{\rho}$ is the irreducible cuspidal representation of $\GL_d$ with L-parameter $\rho$ and $\tau$ runs over the Vogan L-packet of $\SO_{2d+1}$ associated to the L-parameter $\rho \otimes [2]$.
\vskip 5pt

To further explicate the L-packet $\Pi_M$, we need to describe the L-packet  of $\SO_{2d+1}$ with L-parameter $\rho \otimes [2]$.
The component group of the discrete L-parameter $\rho \otimes [2]$   is $\Z/2\Z$. Hence,  its associated L-packet has the form   
\[ \Pi_{\rho \otimes [2]} = \{ \tau^+, \tau^-\}, \]
 with $\tau^+$ a discrete series representation of the split group $\SO^+_{2d+1}$ and $\tau^-$ that of the non-split inner form $\SO^-_{2d+1}$. More precisely, $\tau^+$ is the unique irreducible submodule of the induced representation ${\rm Ind}_{P}^{\SO^+(V)} \rho |\det|^{1/2}$, where $P$ has Levi factor $\GL_d$, and $\tau^-$ is a supercuspidal representation of $\SO_{2d+1}^-$. 
\vskip 5pt

To summarize, we have
\[ \Pi_M =  \{ \pi_L^+, \pi_L^-\}  \]
where 
\[\pi_L^+ = J(\rho, \tau^+) \in {\rm Irr}(\SO^+_{4d+1}) \quad \text{and} \quad \pi_L^- = J(\rho, \tau^-) \in {\rm Irr}(\SO^-_{4d+1}). 
\]

\vskip 5pt

 In fact, we can explicate not just the L-packet $\Pi_M$ but also the entire A-packet $\Pi_{M_A}$. 
For the split group  $\SO^+_{4d+1}$, the A-packet $\Pi^+_{M_A}$ has two elements:
\[ \Pi^+_{M_A}=  \{ \pi^+_L, \pi^+_T \} \]
where $\pi^+_L$ lies in the L-packet $\Pi_M$ as described before, and  $\pi^+_T$ is a tempered
representation. 
The standard module with  $\pi^+_L$ as its Langlands quotient has composition series given by:
$$0 \rightarrow \pi^+_g \rightarrow \Ind^{\SO^+_{4d+1}}_{P_d} (\rho \nu^{1/2} \otimes \tau^+) \rightarrow \pi^+_L \rightarrow 0,$$
with $\pi^+_g$ an irreducible generic tempered representation of $\SO^+_{4d+1}$.
The representation $\pi^+_T$, on the other hand,  is the non-generic component of the following
induced representation  whose other irreducible constituent is $\pi^+_g$:
  $$   \pi^+_T + \pi^+_g = \Ind^{\SO^+_{4d+1}}_{P_{2d} } \St(\rho,2). $$
Here $\St(\rho,2)$ is the generalized Steinberg representation of $\GL_{2d}$ (the Levi factor of $P_{2d}$). 
Observe that the character of $\pi^+_L-\pi^+_T$ is a stable distribution, since:
 \[ \tag{A}\pi^+_L-\pi^+_T = \Ind^{\SO^+_{4d+1}}_{P_d} (\rho \nu^{1/2} \otimes \tau^+) -  \Ind^{\SO^+_{4d+1}}_{P_{2d}} \St(\rho,2).\]
\vskip 5pt

  From (A), we obtain  the packet $\Pi^-_{M_A}$ on the non-split orthogonal group $\SO^-_{4d+1}$ by transfer of the stable distribution
 (A). Since the second induced representation on the right of (A) does not transfer to $\SO^-_{4d+1}$ (as the parabolic subgroup $P_{2d}$ is irrelevant for $\SO^-_{4d+1}$), 
 this gives:
     \[ \tag{B}   \Pi^-_{M_A} = \{ \pi_L^-  \} = \{   \Ind^{\SO^-_{4d+1}}_{P_d^-} (\rho \nu^{1/2} \otimes \tau^- )  \} .\]
     The standard module $\Ind^{\SO^-_{4d+1}}_{P_d^-} (\rho \nu^{1/2} \otimes \tau^- )$ is known to be
     irreducible,  thus is equal to
     its Langlands quotient $\pi_L^-$.
   This fact will be crucially used in the restriction problem considered below.
     \vskip 5pt

Now we consider the orthogonal A-parameter 
\[ N_A = \rho \otimes [1] \otimes [3] + N_0. \]
Its associated L-parameter is
\[ N = (N_0 + \rho) + \rho \nu + \rho \nu^{-1}. \]
If $Q_d$ denotes the maximal parabolic subgroup of $\SO_{4d}$ with  Levi factor $\GL_d \times \SO_{2d}$, the elements of the L-packet $\Pi_N$ consists of Langlands quotient of the standard modules
\[  {\rm Ind}_{Q_d}^{\SO_{4d}} \pi_{\rho} |\det|  \otimes \tau' \twoheadrightarrow J(\rho, \tau'), \]
as $\tau'$ runs over the Vogan L-packet associated to the L-parameter $N_0 + \rho$. 
 
\vskip 10pt

 To understand the restriction problem for the pair $(M_A, N_A)$ on $\SO_{4d+1} \times \SO_{4d}$, we first need to understand the restriction problem for the tempered pair $( \rho \otimes [2], \rho + N_0)$ of $\SO_{2d+1} \times \SO_{2d}$. This latter problem has been understood since Waldspurger has proven the tempered GGP conjecture. 
 \vskip 5pt
 
 More precisely,  for any tempered L-parameter $\Sigma$ of $\SO_{2d}$, with associated L-packet $\Pi_{\Sigma}$, let us set
 \[ d(\tau^{\pm}, \Sigma) = \sum_{\sigma \in \Pi_{\Sigma}} d(\tau^{\pm}, \sigma), \]
 where we recall that
 \[  \Pi_{\rho \otimes [2]} = \{ \tau^+, \tau^-\}. \]
  Then one knows by tempered GGP  that 
 \[ d(\tau^+, \Sigma) + d(\tau^-, \Sigma) = 1. \]
One may ask: for which  $\Sigma$ is $d(\tau^-, \Sigma) = 1$?  Applying Proposition \ref{wald}, we deduce:
\vskip 5pt

\begin{cor} \label{wald-cor}
  Let $\rho$ be an irreducible orthogonal   representation of $W(k)$ of dimension $d$.
 Let $\{ \tau^+, \tau^-\}$ be the L-packet of $\SO_{2d+1}$ associated to the discrete L-parameter $\rho \otimes [2]$ and let $\Sigma$ be a tempered L-parameter of $\SO_{2d}$.
  Then
  \[ d(\tau^-, \Sigma) = 1 \Longleftrightarrow  \Sigma = \rho +  N_0 \quad \text{ for some tempered $N_0$.} \]
  \end{cor}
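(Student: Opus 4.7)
The proof plan is to specialize the tempered Gan--Gross--Prasad theorem (due to Waldspurger) to the case $M_0 = \rho \otimes [2]$ and then combine it with Proposition \ref{wald}(ii). Since $\rho \otimes [2]$ is a discrete orthogonal L-parameter built from a single irreducible summand, its component group $A_{\rho \otimes [2]}$ is cyclic of order two, generated by the class of $\rho \otimes [2]$ itself. Under the standard labelling of $\Pi_{\rho \otimes [2]}$ by characters of this component group, $\tau^+$ on the split $\SO_{2d+1}^+$ corresponds to the trivial character and $\tau^-$ on the non-split $\SO_{2d+1}^-$ corresponds to the unique non-trivial character.

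By tempered GGP, for any tempered L-parameter $\Sigma$ of $\SO_{2d}$ there is exactly one pair $(\pi, \sigma) \in \Pi_{\rho \otimes [2]} \times \Pi_\Sigma$ (across relevant pure inner forms) with $d(\pi, \sigma) = 1$, and the choice of $\pi$ is determined by the value of the distinguished character $\chi := \chi_{\rho \otimes [2], \Sigma}$ on the generator of $A_{\rho \otimes [2]}$. In the notation of Proposition \ref{wald}, this gives
\[ d(\tau^-, \Sigma) = 1 \iff \chi(\rho, 2) = -1. \]
Applying Proposition \ref{wald}(ii) with $M_0 = \rho \otimes [2]$ (so that the hypothesis ``$\rho \otimes [2]$ appears with multiplicity one in $M_0$'' is trivially satisfied) and $N_0 = \Sigma$ identifies the right-hand side with the condition that $\Sigma$ contains $\rho$ with odd multiplicity. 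Since $\Sigma$ has dimension $2d$ and $\rho$ has dimension $d$, the multiplicity of $\rho$ in $\Sigma$ is at most two, so ``odd multiplicity'' is equivalent to multiplicity exactly one, i.e.\ to the assertion that $\Sigma = \rho + N_0$ for some tempered orthogonal $N_0$ of dimension $d$.

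No genuine obstacle is expected in carrying this out: all the substantive work has already been packaged into Proposition \ref{wald}, whose proof rests on the explicit $\epsilon$-factor recipe of \cite[Conjecture 20.1]{GGP} and the Clebsch--Gordan decomposition of $[a] \otimes [b]$. The corollary is a direct bookkeeping specialization once one recognizes that the single basis element of $A_{\rho \otimes [2]}$ is precisely what separates $\tau^-$ from $\tau^+$.
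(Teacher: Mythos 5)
Your proof is correct and is essentially the paper's own argument, which consists of the single sentence "Applying Proposition \ref{wald}, we deduce" the Corollary; you have simply unpacked what that citation means: the component group of $\rho\otimes[2]$ is $\Z/2\Z$ with $\tau^-$ the non-trivial character, tempered GGP reduces the question to whether $\chi(\rho,2)=-1$, and Proposition \ref{wald}(ii) converts that sign condition into the multiplicity condition on $\rho$ in $\Sigma$. One small remark: your final clause asserts that "multiplicity exactly one" is equivalent to "$\Sigma = \rho + N_0$ for some tempered $N_0$," but the latter also holds when $\Sigma = \rho\oplus\rho$ (multiplicity two, so $d(\tau^-,\Sigma)=0$); this imprecision is already present in the paper's formulation of the Corollary, and the correct reading of both your argument and the Corollary is that $\rho$ appears in $\Sigma$ with multiplicity exactly one.
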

\vskip 10pt

 %We will use the following corollary in what follows. It is a consequence of the
%proof of the GGP conjectures and the above proposition.

%The following lemma is clear.

%\begin{lemma} Let $\tau'$ be a tempered representation of $\SO(2d)$ (of any rank) with L-parameter $\rho + \psi_{\temp}$. Then the induced representation   $\Ind^{\SO(4d)}_{\GL(d) \times \SO(2d) } \rho \nu \otimes \tau'$ has a unique irreducible quotient
 % which is in the Langlands packet inside the Arthur packet $A(\psi_2)$.
 % \end{lemma}

The following proposition, which is the main result of this section,  lends some support to Conjecture \ref{conj-class}.

\begin{prop} Let 
  \begin{eqnarray*} M_A & = & \rho \boxtimes [2] \boxtimes [1] + \rho \boxtimes  [1] \boxtimes [2], \\
    N_A & = & \rho \boxtimes [1] \boxtimes [3] + N_0, \end{eqnarray*}
  be a relevant pair of A-parameters for $\SO_{4d+1} \times \SO_{4d}$
where $\rho$ is an irreducible orthogonal   representation of $W(k)$ of dimension $d$. Let
    \[ 
  (\pi, \sigma)  \in \Pi_M  \times \Pi_N \]
  be the representations in the associated L-packets indexed by the distinguished character of the relevant component group as given in Conjecture \ref{conj-class}(c).
  Then
  \[ d(\pi, \sigma)  \ne 0. \] 
        \end{prop}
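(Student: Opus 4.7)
The plan is to identify the distinguished pair $(\pi,\sigma)$ in $\Pi_M \times \Pi_N$ explicitly, exploit the irreducibility of the standard module for $\pi$ noted in (B), and then reduce the nonvanishing of $d(\pi,\sigma)$ to the tempered branching result of Corollary \ref{wald-cor} applied to the smaller pair $(\tau^-, \rho + N_0)$ on $\SO_{2d+1}^- \times \SO_{2d}$.

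First, I would pin down $(\pi,\sigma)$ via the character recipe of \cite{GGP}. The component group $A_M$ is $\Z/2\Z$, generated by the basis element $\rho \otimes [2]$, since the pair $\rho\nu^{\pm 1/2}$ is dual and contributes no basis element; likewise $A_N$ is generated by the selfdual summands of $\rho + N_0$, as $\rho\nu^{\pm 1}$ are exchanged by duality. Writing out the distinguished character $\chi_{M,N}$ via the local $\epsilon$-factor formula, the contributions of the non-tempered pieces $\rho\nu^{\pm 1/2} \subset M$ against any selfdual summand of $N$ occur in mutually inverse pairs and cancel, and similarly for $\rho\nu^{\pm 1} \subset N$ against summands of $M$. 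Thus $\chi_{M,N}$ restricts on $A_M \times A_N$ to the distinguished character $\chi_{\rho\otimes[2],\,\rho+N_0}$ attached to the tempered pair. By Corollary \ref{wald-cor}, the latter singles out the supercuspidal $\tau^-$ on $\SO_{2d+1}^-$ paired with some $\sigma_0 \in \Pi_{\rho+N_0}$ on the corresponding $\SO_{2d}$. This forces $\pi = J(\rho,\tau^-) = \pi_L^-$ to live on the non-split $\SO_{4d+1}^-$ and $\sigma = J(\rho,\sigma_0)$ to be the Langlands quotient on the relevant pure inner form of $\SO(W)$.

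Second, using the crucial fact recorded in (B) that $\pi_L^- = \Ind^{\SO_{4d+1}^-}_{P_d^-}(\rho\nu^{1/2}\boxtimes\tau^-)$ is irreducible as a standard module, I would compute
\[
 \Hom_{\SO(W)}\bigl(\Ind^{\SO(V)}_{P_d^-}(\rho\nu^{1/2}\boxtimes\tau^-),\,\sigma\bigr)
\]
by the geometric lemma / Mackey filtration for the spherical pair $(\SO(V),\SO(W))$. The Hom space is filtered by the $\SO(W)$-orbits on the Grassmannian $\SO(V)/P_d^-$ of isotropic $d$-spaces in $V$, classified by their intersection types with the hyperplane $W$ and the non-isotropic line $W^\perp$. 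Analyzing the open orbit, one obtains a subquotient isomorphic, after unwinding, to a Bessel-type Hom space for $(\tau^-, \sigma_0')$ on $\SO_{2d+1}^- \times \SO_{2d}$, where $\sigma_0'$ is the Jacquet module contribution from $\sigma$ along $Q_d$, which contains $\sigma_0$ as a constituent by the geometric lemma applied to the standard module of $\sigma$. This open-orbit Hom space is nonzero by Corollary \ref{wald-cor}.

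Third, I would verify that the open-orbit contribution survives in the filtration. The supercuspidality of $\tau^-$ forces its non-trivial Jacquet modules to vanish, which rules out any non-open-stratum contribution of matching bidegree; correspondingly, the cuspidal support of the subquotients attached to smaller orbits cannot match that of the relevant constituent of $\sigma$, so no cancellation with the open piece is possible.

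The main obstacle is the orbit analysis in the second step: one must identify precisely the open-orbit piece with the tempered Bessel period and show that the Jacquet module of $\sigma$ restricted to the relevant $\SO_{2d+1}^- \times \SO_{2d}$ Levi projects onto $\sigma_0$. This is a standard but delicate Mackey computation for the Gross-Prasad spherical pair, made tractable here by the irreducibility of $\pi_L^-$ as an induced module and the supercuspidality of $\tau^-$.
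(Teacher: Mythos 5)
Your step 1 (pinning down the distinguished pair via the $\epsilon$-factor recipe) is a reasonable fleshing-out of the last sentence of the paper's proof, and the cancellation of the $\rho\nu^{\pm 1/2}$ contributions is correct. The divergence — and the gap — is in steps 2 and 3.

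The paper constructs the nonzero Hom by restricting the standard module to the \emph{closed} $\SO^-_{4d}$-orbit on $\SO^-_{4d+1}/P^-_d$, not the open one. Since $G/P$ is compact, restriction to the closed orbit $\SO^-_{4d}/Q^-_d$ is a \emph{surjection}
\[
\pi_L^- = \Ind^{\SO^-_{4d+1}}_{P_d^-}(\rho\nu^{1/2}\otimes\tau) \twoheadrightarrow \Ind^{\SO^-_{4d}}_{Q_d^-}(\rho\nu\otimes\tau|_{\SO^-_{2d}}),
\]
and post-composing with the map induced by the nonzero element of $\Hom_{\SO^-_{2d}}(\tau,\tau')$ from Corollary~\ref{wald-cor}, and then with the projection to the Langlands quotient $\sigma^-$, already produces a nonzero element of $\Hom_{\SO^-_{4d}}(\pi_L^-,\sigma^-)$. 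There is no filtration to argue about: a Hom from a quotient of $\pi_L^-|_{\SO(W)}$ is automatically a Hom from $\pi_L^-$.

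You instead go through the \emph{open} orbit, which is the \emph{submodule} in the two-step filtration $0 \to A \to \pi_L^-|_{\SO(W)} \to B \to 0$ (with $B$ the closed-orbit quotient above). A nonzero element of $\Hom(A,\sigma)$ does \emph{not} give an element of $\Hom(\pi_L^-,\sigma)$; what you would need is that it lies in the image of the restriction map $\Hom(\pi_L^-,\sigma)\to\Hom(A,\sigma)$, equivalently that its image in $\Ext^1(B,\sigma)$ vanishes. Your step 3 attempts to justify this, but the justification is false on its face: you assert that the cuspidal support of the subquotients attached to the non-open stratum "cannot match that of the relevant constituent of $\sigma$." This is exactly the opposite of what happens — the non-open (closed) stratum $B = \Ind^{\SO^-_{4d}}_{Q_d^-}(\rho\nu\otimes\tau|_{\SO^-_{2d}})$ surjects onto $\sigma$, so it has matching cuspidal support, and indeed $\Hom(B,\sigma)\neq 0$ is precisely the paper's proof. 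Moreover, the supercuspidality of $\tau^-$ bears on Jacquet modules of $\tau^-$, but the closed-orbit stratum involves only the restriction $\tau^-|_{\SO^-_{2d}}$, not a Jacquet module, so that vanishing argument does not apply. Finally, your identification of the open-orbit contribution with a Bessel Hom for $\SO^-_{2d+1}\times\SO_{2d}$ is not obvious: the stabilizer of a generic isotropic $d$-plane $U$ with $U\not\subset W$ projects into a mirabolic-type subgroup of the $\GL_d$-factor and into a $\GL_{d-1}\times\SO_{2d+2}$-type Levi of $\SO(W)$, so the unwinding does not directly yield the codimension-one tempered branching of Corollary~\ref{wald-cor}. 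The fix is to switch strata: work with the closed orbit, where the composite of surjections makes the nonvanishing immediate.
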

\begin{proof}
In the context of Corollary \ref{wald-cor}, let 
\[ (\tau , \tau') \in \Pi^-_{\rho \boxtimes [2]} \times  \Pi^-_{\rho + N_0} \]
be such that
\[ \Hom_{\SO^-_{2d}}(\tau, \tau') \ne 0, \]
so that the pair $(\tau, \tau')$ corresponds to the distinguished character of the component group for $(\rho\boxtimes [2]) \times (\rho + N_0)$
\vskip 5pt

Let $P^-_d $ be a parabolic subgroup in  $G=\SO^-_{4d+1}$  stabilizing an isotropic subspace of dimension $d$.
 The subgroup $\SO^-_{4d} \subset  \SO^-_{4d+1}$
  operates on the corresponding flag variety $ \SO^-_{4d+1}/P^-_d$  with two orbits: an open dense orbit and  and a closed orbit which is
 given by $\SO^-_{4d}/Q^-_d$ where $Q^-_d$ is the parabolic in $\SO^-_{4d}$ with Levi subgroup $\GL_d \times \SO^-_{2d}$. 
 
 %Take note however that  the unipotent radical of $Q^-_d$ is not contained the unipotent radical of $P^-_d$.
\vskip 5pt

Via restriction to the closed orbit $\SO^-_{4d}/Q_d$, one has a surjective $\SO^-_{4d}$-equivariant homomorphism
  $$ \pi_L^- =   \Ind^{\SO^-_{4d+1}}_{P_d^-} \rho \nu^{1/2} \otimes \tau  \rightarrow
    \Ind^{\SO^-_{4d}}_{Q_d^-} \rho \nu \otimes \tau|_{\SO^-_{2d}}  \rightarrow \Ind^{\SO^-_{4d}}_{Q^-_d}  \rho \nu \otimes \tau'    , $$
 Composing this map with the projection from the last standard module to its Langlands quotient $\sigma^-$ (which is an element of $\Pi^-_N$), 
 we have thus shown that $d(\pi^-_L, \sigma^-) \ne 0$. Moreover, by the properties of the local Langlands correspondence, the characters of component groups associated to the representations $\pi_L^-$ and $\sigma^-$ are inherited from those of $\tau$ and $\tau'$ and is equal to the distinguished character for $(M, N)$. 
 This completes the proof of the proposition.    \end{proof}

%\begin{remark}
%The proof above shows the existence of some quotients $\Hom_{\SO(4d)}(\pi, \tau)$
%using the closed orbit, but  it is too simple to give anything further. For example,
%it says nothing about what quotients $\pi_L$ has for the subgroup $\SO(4d)$ inside the split
%$\SO^+(4d+1)$, and neither 
%gives an `if and only if' result for non-split
%$\SO^-(4d+1)$.  \end{remark}

\section{Global Conjecture}  \label{S:global}
In this section, we shall formulate a conjecture for the global analog of the restriction problem, which concerns the non-vanishing of automorphic period integrals. 
Thus, let $F$ be a global field with ring of ad\`eles $\A$ and $F/F_0$ a separable extension with $[F:F_0] \leq 2$. 
\vskip 5pt

For a connected reductive group $G$ over $F$,   let $\mathcal{A}(G)$ denote the space of automorphic forms on $G$ (with a fixed unitary central character if $G$ is not semisimple) which is a $G(\A)$-module. Any irreducible subquotient of $\mathcal{A}(G)$ is called an automorphic representation of $G$. 
One has the following natural submodules  
\[  \mathcal{A}_{cusp}(G) \subset \mathcal{A}_{disc}(G)   \subset \mathcal{A}(G)  \]
consisting of the cusp forms and the square-integrable (modulo center) automorphic forms respectively.
The submodules $\mathcal{A}_{cusp}(G)$ and $\mathcal{A}_{disc}(G)$ are semisimple and any irreducible summand of
$\mathcal{A}_{cusp}(G)$  is called a cuspidal automorphic representation and any irreducible summand of
$\mathcal{A}_{disc}(G)$  is called a discrete automorphic representation. For quasi-split classical groups $G$,  the discrete automorphic representations are classified by Arthur (for symplectic and orthogonal groups) and Mok (for unitary groups)  in terms of discrete global A-parameters.
In particular, the local components of discrete automorphic representations are among those considered by our local conjectures in \S \ref{S:GLn} and \S \ref{S:classical}. \vskip 5pt

One may also consider the unitary representation of $G(\A)$ on $L^2(G(F) \backslash G(\A))$ (with a fixed central character), which possesses a direct integral decomposition.  The 
irreducible representations which are weakly contained in this direct integral decomposition are unitary
automorphic representations (by the theory of Eisenstein series).
They give a subset of the unitary dual of $G(\A)$,  the closure of which (in the Fell topology)
is called the automorphic dual $\widehat{G(\A)}_{aut}$.  One can consider elements of $\widehat{G(\A)}_{aut}$
as submodules of $\mathcal{A}(G)$, and let $\mathcal{A}_{aut}(G)$ denote
the submodule generated by the irreducible automorphic sub-representations in the automorphic dual, so that 
\[  \mathcal{A}_{cusp}(G) \subset \mathcal{A}_{disc}(G)   \subset \mathcal{A}_{aut}(G) \subset \mathcal{A}(G) \]
 For classical groups, the automorphic representations in the automorphic dual are in fact classified by (not-necessarily-discrete) A-parameters through the work of Arthur, cf. \cite{Art2} and Mok, cf. \cite {Mok}. In other words, the local components of these automorphic representations are precisely the representations considered in our local conjecture.      

\vskip 5pt

Now we place ourselves in the global setting of \cite{GGP}, so that we have a pair $W \subset V$ of non-degenerate 
$\epsilon$-Hermitian spaces over $F$ satisfying the conditions highlighted in \S \ref{S:classical}.  As in the local case, we have a pair of groups
\[   H = G(W) \cdot N  \hookrightarrow G = G(V) \times G(W). \] 
As explained in \cite[\S 23]{GGP}, there is a automorphic representation $\nu$ of $H(\A)$ which is a character when $\epsilon = +$ and is essentially a Weil representation when $\epsilon = -$.  Thus, one may consider the global period integral
\[  F(\nu):  \mathcal{A}_{cusp}(G)  \longrightarrow \C  \]
defined by an absolutely convergent integral 
\[  F(\nu) (f)  = \int_{H(\A) \backslash H(\A)} f(h) \cdot \overline{\nu(h)} \, dh \]
when $\epsilon = +$ and by an analogous integral involving theta functions when $\epsilon= -$. 
\vskip 5pt

The above definition of $F(\nu)$ on the cuspidal spectrum is sufficient for the restriction problem considered in \cite{GGP} since we only dealt with tempered global A-packets (or equivalently tempered L-packets) there: the automorphic representations in these tempered L-packets are necessarily cuspidal. In this paper, we are dealing with possibly non-tempered A-parameters. The automorphic representations in global A-packets are no longer necessarily cuspidal. This means that, for a meaningful consideration  of  the global period problem, one needs to extend the definition of $F(\nu)$ to the larger space $\mathcal{A}_{disc}(G)$ or even  the still larger space $\mathcal{A}_{aut}(G)$.
\vskip 5pt

The definition of a regularized period integral on the non-cuspidal part of the spectrum is a basic problem in the analysis of the spectral side of the relative trace formula. Recently, a general definition of such  a regularized period integral over reductive subgroups of $G$
was given by Zydor \cite{Zy}, following earlier work of Jacquet-Lapid-Rogawski \cite{JLR} and Ichino-Yamana \cite{IY}. In particular, the work of Ichino-Yamana  \cite{IY} provides a regularized period integral on the space of automorphic forms on $\GL_n \times \GL_{n+1}$ and $\U_n \times \U_{n+1}$.
\vskip 5pt

{\em For the purpose of this section, we shall assume the working hypothesis that one has a canonical equivariant extension of $F(\nu)$ to $\mathcal{A}_{aut}(G)$.} 
\vskip 5pt

With this background and caveat, we can now formulate our global conjecture.

 \vskip 5pt

\begin{conj}   \label{C:global}
Let $\pi \otimes \pi'$ be an irreducible representation of $G(\A) = \G(V)(\A) \times \G(W)(\A)$ which occurs as a sub-representation of $\mathcal{A}_{aut}(G)$. Then the restriction of the regularized period integral $F(\nu)$ to $\pi \otimes \pi'$ is nonzero  if and only if the following conditions hold:
\vskip 5pt

\begin{itemize}
\item[1.] the  pair of global A-parameters $(M_A, N_A)$ associated to $\pi \otimes \pi'$ is relevant.  

\vskip 5pt

\item[2.] the local multiplicity $d(\pi_v, \pi'_v)$ is nonzero  for all places $v$ of $F$.  
\vskip 5pt

\item[3.] The ratio of L-functions $L(M,N, s)$ defined by (\ref{E:R}) in the case of $\GL_n$ and (\ref{E:R2}) in the case of classical groups
is nonzero at $s=0$.
\end{itemize}
Further, if conditions (1) and (3) hold, then there exists a globally relevant pure inner form 
$G' = \G(V') \times \G(W')$ of $G$ and  an automorphic representation $\tilde{\pi} \otimes \tilde{\pi}'$  of $G'$ with the same A-parameter $(M_A, N_A)$ such that $F(\nu)$ is nonzero on
$\tilde{\pi} \otimes \tilde{\pi}'$.
  \end{conj}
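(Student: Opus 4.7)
The plan is to separate the conjecture into three necessary conditions for nonvanishing of $F(\nu)$ on a fixed $\pi\otimes\pi'$, and the final existence statement, and to attack each piece via a combination of local representation theory, Arthur's multiplicity formula, and an Ichino-Ikeda-type inner product identity adapted to the non-tempered, regularized setting.

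First I would take up the necessity of conditions (1) and (2). Any nonzero regularized period functional restricted to $\pi\otimes\pi'=\otimes_v(\pi_v\otimes\pi_v')$ defines an element of $\Hom_{H(\A)}(\pi\otimes\pi',\nu)$, which factorizes as the Euler product of the local Hom spaces; this forces $d(\pi_v,\pi_v')\ne 0$ for every place $v$, giving (2). Applying the local Conjecture \ref{conj-class} at each place then forces the local pair $(M_{A,v},N_{A,v})$ to be relevant. That local relevance at all places implies global relevance should follow from the unicity argument of \S \ref{S:relevant}: the decompositions $M_i=M_i^{+}+M_i^{-}$ and $N_i=N_i^{+}+N_i^{-}$ are determined inductively starting from the highest Arthur summand, and this induction makes sense globally because the splitting into Arthur blocks is intrinsic to the global A-parameter and compatible with localization.

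Next I would address necessity of (3) via a conjectural Ichino-Ikeda-type inner product identity
\[
   |F(\nu)(f)|^{2} \;=\; C\cdot L(M,N,0)\cdot \prod_{v} I_{v}(f_{v},\overline{f_{v}}),
\]
where $C$ is a nonzero explicit constant and $I_{v}$ is a normalized local integral whose nonvanishing is equivalent to $d(\pi_{v},\pi'_{v})\neq 0$. Granting such a factorization extended to the regularized non-tempered setting, nonvanishing of $F(\nu)$ forces $L(M,N,0)\ne 0$. The holomorphy of $L(M,N,s)$ at $s=0$ under relevance is the global shadow of Theorems \ref{pole} and \ref{interlacing}, and follows by multiplying local factors. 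Conversely, when (1), (2) and (3) all hold, the same identity should yield the nonvanishing of the period on $\pi\otimes\pi'$.

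For the existence assertion I would argue as follows. Given (1), the local Conjecture \ref{conj-class} produces, on each relevant local pure inner form of $G(V)\times G(W)$, a distinguished local representation $\tilde\pi_{v}\otimes\tilde\pi'_{v}$ in the local L-packet of $(M_{v},N_{v})$, indexed by a prescribed character of the local component group. Whether the abstract restricted tensor product $\tilde\pi\otimes\tilde\pi'$ of such local representations is automorphic is governed by Arthur's multiplicity formula, whose main ingredient is the quadratic character $\epsilon_{A}$ on the global component group built from adjoint symplectic root numbers. The interaction between $\epsilon_{A}$ and the product of local GGP distinguished characters, analyzed in \S \ref{S:global2}, combined with the freedom to pass to a globally relevant pure inner form of $(V,W)$, should under hypothesis (3) pick out a relevant pure inner form on which some $\tilde\pi\otimes\tilde\pi'$ does occur automorphically with the predicted A-parameter; the Ichino-Ikeda identity then converts $L(M,N,0)\ne 0$ into nonvanishing of the period on $\tilde\pi\otimes\tilde\pi'$.

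The main obstacle is unquestionably the regularized non-tempered Ichino-Ikeda formula: one must first justify the extension of $F(\nu)$ to $\mathcal{A}_{aut}(G)$ via Zydor-type regularization, and then derive a clean factorization of the square into a completed L-value and local period integrals, tracking the contributions of residues of Eisenstein series and intertwining operators that enter the spectral expansion. Even in the tempered case this identity is a deep theorem, and the non-tempered analog requires genuinely new analysis. A secondary obstacle is the global sign-theoretic compatibility underpinning the existence assertion, but this should be accessible once the root number recipes of \S \ref{S:global2} are in place.
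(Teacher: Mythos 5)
This statement is a \emph{conjecture}, not a theorem: the paper formulates it, motivates it, and tests it against known cases (Ichino--Yamana for $\GL_n$, the low-rank examples, the descent constructions), but it does not attempt a proof, so there is no argument in the paper against which your proposal can be checked step by step. Your write-up is therefore best read as a strategy sketch, and it is worth noting that you yourself flag the central ingredient — a regularized, non-tempered Ichino--Ikeda identity — as conjectural; the paper does the same, explicitly leaving the formulation of a non-tempered Ichino--Ikeda refinement as an open question at the end of \S\ref{S:global}.

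However, one step in your argument for the necessity of condition (1) is actually wrong, and the paper says so. You assert that local relevance of $(M_{A,v},N_{A,v})$ at every place, obtained from the local conjecture, would imply global relevance of $(M_A,N_A)$, and you gesture at the inductive unicity argument of \S\ref{S:relevant} to justify this. But the remark immediately following Conjecture~\ref{C:global} gives a concrete counterexample: take $\Pi$ a non-CM cuspidal representation of $\PGL_2(\A_F)$ which is a principal series everywhere; then the trivial A-parameter $\Sym^1(\C^2)$ and the L-parameter of $\Sym^2(\Pi)$ are locally relevant at every place (since each local $\Sym^2(\Pi_v)$ contains the trivial character) yet are not globally relevant. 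So the local-global principle for relevance fails in general, and your deduction of global (1) from local data cannot go through as stated. Any genuine proof of the necessity of (1) would need a genuinely global input — precisely why the paper presents this as a conjecture rather than a theorem reducible to the local statements.

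Beyond that, the remainder of your outline tracks the paper's own heuristic discussion (the Euler factorization forcing (2), the role of Arthur's multiplicity character $\epsilon_{Art}$ from \S\ref{S:global2} in the existence assertion, and Theorem~\ref{T:globalR} for holomorphy of $L(M,N,s)$ at $s=0$), but these are evidence and consistency checks, not a proof.
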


\vskip 10pt

Let us make a few remarks about this global conjecture. 
\vskip 5pt

\begin{itemize}
\item As we noted in \cite[Pg. 88]{GGP}, the formulation of the above conjecture essentially decouples the global restriction problem from the local  ones.  
Unfortunately, as we noted before, our local conjecture is somewhat deficient, and does not allow one to completely address the condition (2). 
 In the next  section, we shall consider the global restriction problem starting from our local conjecture and examine its interaction with the Arthur multiplicity formula analogous to what we did in \cite[\S 26]{GGP}.
\vskip 5pt

\item  It is reasonable to ask what one knows a priori about the analytic behaviour of the function $L(M,N,s)$ in (3) at $s = 0$.  Condition (3) seems only reasonable if one knows a priori that $L(M,N, s)$ is holomorphic at $s= 0$. If $M_A$ and $N_A$ are tempered, this is automatic since the adjoint L-functions in the denominator of $L(M,N, s)$ do not vanish at $s=0$. 
  We shall see later that when $(M_A, N_A)$ is a relevant pair, then $L(M,N,s)$ is holomorphic at $s= 0$.  Moreover, the function
  $L(M,N, s)$ decomposes into a product of various automorphic L-functions  and the curious reader will wonder which factors in $L(M,N, s)$ have the potential to contribute a zero at $s = 0$. We address this question at the end of this section when we compute the function $L(M,N,s)$ more explicitly.

\vskip 5pt
\item As mentioned earlier, 
  Ichino and Yamana have defined a regularized global period integral in \cite{IY}  on automorphic forms of $G = \GL_{n} \times \GL_{n+1}$. On the cuspidal spectrum, the global period integral is known to be nonzero on any irreducible cuspidal $\pi \otimes \pi'$ if and only if
$$L(\frac{1}{2}, \pi \otimes \pi') \not = 0.$$
  Ichino-Yamana showed further that when $\pi \otimes \pi'$ is in the part of the discrete spectrum orthogonal to the cuspidal spectrum and the  one dimensional summands, their regularized period integral $F(\nu)$ vanishes on $\pi \otimes \pi'$. As we now explain, this is consistent with the above conjecture, and in fact follows from our local theorem \ref{thm-gln}.
  \vskip 5pt
  
 Indeed, by a result of M{\oe}glin-Waldspurger, a representation $\pi \subset \mathcal{A}_{disc}(\GL_n)$ has an irreducible A-parameter, i.e. one of  the form $M \otimes \Sym^d(\C^2)$ where $M$ is a cuspidal representation of $\GL_r$ with $r \cdot (d+1)  =n$.  Moreover, $\pi$ is cuspidal if and only if $d=0$. Now given $\pi \otimes \pi'$ in the discrete spectrum of $\GL_n \times \GL_{n+1}$, we have a pair of A-parameters
\[   M \otimes \Sym^d(\C^2) \quad \text{and} \quad N \otimes \Sym^e(\C^2).  \]
It is easy to see that for this pair of A-parameters to be relevant,  we must have either 
\vskip 5pt
\begin{enumerate}
\item  $d = e = 0$, so that $\pi \otimes \pi'$ is cuspidal,  or
\item   $d +1= n = e$ and $M = N$ is a 1-dimensional character $\chi$ of $\GL_1$, in which case $\pi = \chi \circ \det$ and $\pi' = \chi \circ \det$. 
\end{enumerate}
\vskip 5pt

Since at all but finitely many places of $F$, the local representations $\pi_v$ and $\pi'_v$ are unramified (hence parametrized by representations of $WD(F_v) \times \SL_2(\C)$ on which $WD(F_v)$ acts through $W(F_v)$), 
Theorem \ref{thm-gln} can be applied to  give a local proof of the theorem of Ichino-Yamana from \cite{IY} about
the vanishing of the regularized period integral on the (non-one-dimensional and non-cuspidal part of) the discrete spectrum of $\GL_n(\A) \times \GL_{n+1}(\A)$.

\vskip 5pt

\item  On the other hand, still in the setting of $G= \GL_n \times \GL_{n+1}$,
  an ongoing work of Chaudouard and Zydor on the spectral side of the Jacquet-Rallis relative trace formula indicates that the regularized periods of some representations in the automorphic dual do intervene in the analysis. Moreover, only relevant pairs of A-parameters can contribute in this analysis. These results, together with the ones of Ichino-Yamana recalled above, indicate that in the global setting, it is natural  to consider the restriction problem for $\mathcal{A}_{aut}(G)$  and not just for the discrete spectrum $\mathcal{A}_{disc}(G)$. However, for classical groups, where discrete A-parameters are simply multiplicity-free sum of
  selfdual representations of the appropriate sign and are not necessarily irreducible,  the consideration of the discrete spectrum should already provide many interesting examples and checks on our global conjecture.
\end{itemize}
\vskip 10pt

\begin{remark}
  It can happen that two global A-parameters are not relevant but they are locally relevant at all places. For example, for a non-CM automorphic representation $\Pi$ on $\PGL_2(\A_F)$, which is a principal series at all places $v$ of $F$, then at each
  local place $v$,  the L-parameter of $\Sym^2(\Pi_v)$ will contain the trivial representation of $WD(F_v)$. Hence
  the A-parameters $\Sym^1(\C^2)$ of $L(F) \times \SL_2(\C)$ on which $L(F)$ acts trivially, and the
  L-parameter of $\Sym^2(\Pi)$ are locally relevant at all places, but not globally relevant.
Thus the local-global principle does not hold for {\it relevance} in general. However, in many practical situations, it
poses no problem to conclude the  relevance of global parameters by local arguments (if one knew local relevance at unramified places for example) as we  saw above for the local proof of the result of Ichino-Yamana in the case of $(\GL_n(\A_F),\GL_{n-1}(\A_F))$.
\end{remark}
\vskip 5pt

\begin{remark}
  Just as understanding multiplicities in a local $A$-packet is of interest (see  Remark \ref{Rem:multiplicities}), so is
  a global analogue of it: for a given group $G(\A) = G(V)(\A) \times G(W)(\A)$ and a global A-packet of automorphic
  representations  on $G(\A)$ (thus a collection of  nearly equivalent automorphic representations on $G(\A)$),
one may ask how many representations in this global A-packet support a nonvanishing global period integral. In the case of tempered A-parameters, the answer is at most one. 
For general A-parameters, considering the restriction problem for the trivial representation of $\PGL(2) \times \PGL(2)$ to $\PGL(2)$ (resp., the
trivial representation of $\PD^\times \times \PD^\times$ to $\PD^\times$
where $D$ is any quaternion division algebra over the global field $F$), we find that global period integral is nonzero on $\PGL(2)$ as well as
$\PD^\times$ for any $D$, so global multiplicity 1 for non-vanishing of period integrals fails, at least when we allow the group   $G(\A) = G(V)(\A) \times G(W)(\A)$ to vary. Can one formulate a reasonable answer to this question?
  \end{remark}
  \vskip 5pt
We conclude this section with a result about the function $L(M,N, s)$ for a relevant pair $(M_A, N_A)$ of discrete A-parameters.  To describe it, we need to introduce some more notation.  Assume for simplicity that we are dealing with the Bessel case, so that $M_A$ and $N_A$ are selfdual or conjugate-selfdual representations of opposite sign; the Fourier-Jacobi case can be similarly treated.  To fix ideas, we shall assume that the sign of $M_A$ is $b(M_A) = -1$ whereas the sign of $N_A$ is $b(N_A) = +1$.  In the following, we will use the symbol $M_i$ to denote  a selfdual (or conjugate-selfdual) representation of sign $-1$ whereas $N_j$ will denote one with sign $+1$. 
With this convention and understanding,  one can decompose the relevant pair $(M_A, N_A)$ of discrete A-parameters as follows:
\vskip 5pt

\begin{equation} \label{E:MA0}
  M_A =       \bigoplus_{i \in I} M_i \boxtimes \Sym^{m_i-1}(\C^2)   \oplus \bigoplus_{j \in J}  N_j \boxtimes \Sym^{n_j-1}(\C^2),  \end{equation} 
and
\begin{equation} \label{E:NA0}   
N_A =    \bigoplus_{i \in I} M_i \boxtimes \Sym^{m'_i -1}(\C^2)  \oplus \bigoplus_{j \in J} N_j \boxtimes  \Sym^{n_j'-1}(\C^2), \end{equation}
where  the numbers $m_i$, $n_j$, $m_i'$ and $n'_j$ are integers $\geq 0$ satisfying
\[  m_i-1 \equiv  n_j \equiv   0 \mod 2 ,\]
and
\[  |m_i - m_i' |  = 1  = |n_j - n_j'|. \]
Here, we understand $\Sym^{-1}(\C^2)$ to be $0$.
Moreover, we assume that $M_i$ (for $i \in I$) and $N_j$ (for $j \in J$) are irreducible.   In other words,  each irreducible summand in $M_A$ has a partner in $N_A$ and vice versa.
As $M_A$ and $N_A$ are discrete A-parameters, these are multiplicity-free decompositions.  
 \vskip 5pt
 
 Now consider a pair of indices $(i,j) \in I \times J$. Assume without loss of generality that $m_i > n_j$ (recalling that they are of different parity). Since $m_i' = m_i \pm 1$ and $n_j' = n_j \pm 1$, we see that typically, one would expect $m_i' > n_j'$.  This is the case, for example, if $m_i - n_j > 2$. We now make the following definition:
 \vskip 5pt
 
 \begin{definition}(Special pairs) \label{D:special} Let $M_A,N_A$ be a pair of relevant discrete global A-parameters, which are a
   sum of distinct irreducible representations as in \ref{E:MA0} and \ref{E:NA0}.
 We call a pair $(i,j) \in I \times J$ special if $m_i  - n_j$ and $m'_i - n'_j$ are of opposite sign and denote the subset of special $(i,j)$ by $(I \times J)^{\heartsuit}$, thus
\[(I \times J)^{\heartsuit} = \{(i,j) \in (I \times J)| \, (m_i,m'_i)= (n'_j,n_j) \}.\]
Equivalently, special pairs are direct summands in $M_A$ and $N_A$ of the form
\[ 
    V \boxtimes \Sym^{i-1}(\C^2)   \oplus   W \boxtimes \Sym^{i}(\C^2) \subset M_A, \]
\[
V \boxtimes \Sym^{i}(\C^2)  \oplus  W \boxtimes  \Sym^{i-1} (\C^2) \subset N_A,
   \]
for some $i \geq 0$, with $V,W$ irreducible, one  orthogonal, and the other symplectic. 
 \end{definition}

 Here is the main result of this section.

 \begin{thm}  \label{T:globalR} Let $M_A,N_A$ be a pair of relevant discrete global A-parameters, which are a
   sum of distinct irreducible representations as in \ref{E:MA0} and \ref{E:NA0},  with $M_A$ symplectic and $N_A$ orthogonal. 
   Then the  function
\begin{eqnarray*}  L(M,N, s)  & = & \frac{L(M \otimes N, s + ~1/2)}{L(\Sym^2 M \oplus \wedge^2 N, s + 1)} \end{eqnarray*}
   is holomorphic at $s=0$.  Moreover, with the notation as above, and with   $(I \times J)^{\heartsuit} $ the set of special pairs,  one has
\[  L(M,N, 0)  = C \cdot \prod_{(i,j) \in (I \times J)^{\heartsuit}}  L( M_i \otimes N_j, 1/2)\]
 for some $C \in \C^{\times}$; more precisely, the order of zero at $s=0$ of the  L-function on the left is the same as that of the L-function on the right.
 \end{thm}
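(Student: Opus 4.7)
The plan is to prove Theorem \ref{T:globalR} by explicitly decomposing both $L(M \otimes N, s+1/2)$ and $L(\Sym^2 M \oplus \wedge^2 N, s+1)$ as products of shifted Rankin--Selberg $L$-factors and tracking which factors cancel between numerator and denominator. Passing from the Arthur parameters to the L-parameters gives
\[
M = \bigoplus_{i \in I} M_i \otimes \mathbf{s}[m_i] \oplus \bigoplus_{j \in J} N_j \otimes \mathbf{s}[n_j], \qquad N = \bigoplus_{i \in I} M_i \otimes \mathbf{s}[m_i'] \oplus \bigoplus_{j \in J} N_j \otimes \mathbf{s}[n_j'],
\]
where $\mathbf{s}[d] := \nu^{(d-1)/2} + \nu^{(d-3)/2} + \cdots + \nu^{-(d-1)/2}$. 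The Clebsch--Gordan identity $\mathbf{s}[d_1] \mathbf{s}[d_2] = \sum_{k=0}^{\min(d_1,d_2)-1} \mathbf{s}[d_1+d_2-1-2k]$, together with the standard decompositions of $\Sym^2(V \otimes W)$ and $\wedge^2(V \otimes W)$, reduces both sides to products $\prod L(U_1 \otimes U_2, \cdot)$ indexed by pairs of irreducible cuspidals $U_1, U_2 \in \{M_i\} \cup \{N_j\}$, and I would then organize the analysis by such pairs.

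For the holomorphy at $s=0$, consider first the case $U_1 = U_2$ (both some $M_i$ or both some $N_j$). The completed Rankin--Selberg $L$-function $L(U_1 \otimes U_1, s)$ has simple poles at $s=0$ and $s=1$, and a direct count of shifts in $\mathbf{s}[m_i]\mathbf{s}[m_i']$ gives exactly $m_i + m_i' - 1$ poles in the numerator at $s = 0$ per $i \in I$, and $n_j + n_j' - 1$ poles per $j \in J$. These are matched exactly by the denominator: $\mathbf{1} \subset \wedge^2 M_i$ produces the requisite poles inside $\Sym^2(M_i \otimes \mathbf{s}[m_i]) \oplus \wedge^2(M_i \otimes \mathbf{s}[m_i'])$ via the zeta factors arising from $\wedge^2 \mathbf{s}[m_i] \oplus \Sym^2 \mathbf{s}[m_i']$, and analogously $\mathbf{1} \subset \Sym^2 N_j$ supplies the $N_j$-poles coming from $\wedge^2 N \oplus \Sym^2 M$. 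When $U_1 \neq U_2^{\vee}$ the Rankin--Selberg $L$-function is entire and contributes no pole. The pole orders on the two sides thus agree, proving that $L(M,N,s)$ is holomorphic at $s=0$.

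For the value at $s = 0$, the essential case is a symplectic-orthogonal pair $(M_i, N_j)$. The entire $L$-function $L(M_i \otimes N_j, t)$ appears in the numerator with argument multiset
\[
\bigl\{\, 1/2 + a + b \; : \; (a,b) \in (\mathbf{s}[m_i] \times \mathbf{s}[n_j']) \cup (\mathbf{s}[n_j] \times \mathbf{s}[m_i']) \,\bigr\}
\]
and in the denominator with
\[
\bigl\{\, 1 + a + b \; : \; (a,b) \in (\mathbf{s}[m_i] \times \mathbf{s}[n_j]) \cup (\mathbf{s}[m_i'] \times \mathbf{s}[n_j']) \,\bigr\}.
\]
Applying the functional equation $L(M_i \otimes N_j, 1-t) = \epsilon \cdot L(M_i \otimes N_j, t)$ to fold arguments $t < 1/2$ onto $t > 1/2$, the net exponent at each $t > 1/2$ telescopes to zero, while the net exponent at $t = 1/2$ equals $+1$ exactly when $m_i - n_j$ and $m_i' - n_j'$ have opposite signs, i.e.\ when $(i,j) \in (I \times J)^{\heartsuit}$. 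The other residual contributions --- from $U_1 = U_2$ pairs after pole cancellation, from cross-pairs $(M_i, M_{i'})$ and $(N_j, N_{j'})$ with distinct indices, and from non-special symplectic-orthogonal pairs --- combine to a non-zero constant, since all surviving $L$-values lie at points with $\Re(t) \geq 1$, where Jacquet--Shalika guarantees non-vanishing on $\Re(t) = 1$ and Euler products converge absolutely for $\Re(t) > 1$. Assembling everything yields $L(M,N,0) = C \cdot \prod_{(i,j) \in (I \times J)^{\heartsuit}} L(M_i \otimes N_j, 1/2)$ with $C \in \C^{\times}$.

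The main obstacle will be the combinatorial verification in the symplectic-orthogonal case: showing that the numerator-minus-denominator multiset of shift arguments, after functional-equation folding, reduces to a single copy of $1/2$ for special pairs and is empty otherwise. This comes down to a case analysis on the quadruple $(m_i, m_i', n_j, n_j')$; the constraint $|m_i - m_i'| = |n_j - n_j'| = 1$ leaves only a handful of sub-cases, each resolved by a direct computation of the sum multisets in $\mathbf{s}[m] \times \mathbf{s}[n]$.
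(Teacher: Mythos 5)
Your approach is the same in essence as the paper's: write both numerator and denominator as products of shifted completed Rankin--Selberg $L$-factors $L(U_1 \otimes U_2, t)$, with shift multisets computed by Clebsch--Gordan, and then track zero/pole orders pair by pair. (The paper organizes the general case as a product over pairs of Arthur components and reduces each factor to a two-component base case; you work directly with cuspidal pairs; this is the same bookkeeping.) The combinatorial heart of your argument, that the exponent of $L(M_i \otimes N_j, 1/2)$ in $L(M,N,0)$ is $\min(m_i,n_j') + \min(n_j,m_i') - \min(m_i,n_j) - \min(m_i',n_j')$, which under $m_i$ odd, $n_j$ even, $|m_i - m_i'| = |n_j - n_j'| = 1$ equals $1$ exactly when $(i,j)$ is special and $0$ otherwise, is correct.

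However, your specific assertion that \emph{the net exponent at each $t > 1/2$ telescopes to zero} after functional-equation folding is false for non-special symplectic--orthogonal pairs. Take $(m_i, m_i', n_j, n_j') = (5,4,2,3)$, not special since $m_i - n_j = 3$ and $m_i' - n_j' = 1$ have the same sign. Writing $\mathbf{s}[d]$ for the multiset $\{(d-1)/2, (d-3)/2, \ldots, -(d-1)/2\}$ as in your notation, the numerator shift multiset at $s=0$ is $\tfrac{1}{2} + \bigl((\mathbf{s}[5]+\mathbf{s}[3]) \sqcup (\mathbf{s}[2]+\mathbf{s}[4])\bigr)$ and the denominator is $1 + \bigl((\mathbf{s}[5]+\mathbf{s}[2]) \sqcup (\mathbf{s}[4]+\mathbf{s}[3])\bigr)$. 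The signed difference is
\[
\bigl\{\ \tfrac{7}{2} : -1,\ \ \tfrac{5}{2} : -1,\ \ \tfrac{3}{2} : 0,\ \ \tfrac{1}{2} : 0,\ \ -\tfrac{1}{2} : 1,\ \ -\tfrac{3}{2} : 1,\ \ -\tfrac{5}{2} : 1\ \bigr\},
\]
and after folding $t \mapsto 1-t$ the residual is $\{\tfrac{7}{2} : 0,\ \tfrac{5}{2} : 0,\ \tfrac{3}{2} : 1,\ \tfrac{1}{2} : 0\}$: there is a leftover exponent $+1$ at $t = 3/2$. (Your very next sentence, which attributes part of the nonzero constant to ``residual contributions $\ldots$ from non-special symplectic--orthogonal pairs,'' already contradicts the telescoping claim.) Fortunately this does not invalidate the proof, for the reason you yourself supply: the parity constraints force every shift argument to be a half-integer, so after folding they all sit either at $t=1/2$ or at $t \geq 3/2 > 1$, and the latter lie in the region of absolute convergence where $L(M_i \otimes N_j, t) \neq 0$, so whatever exponents remain there are absorbed into the constant $C$. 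You should replace the false telescoping assertion with this weaker, correct statement and resolve the internal inconsistency.
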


 \begin{proof}
To simplify notation, for any global L-parameter $\Pi$, let us set:
\[ L(\Pi \boxtimes \Sym^{d-1}(\C^2), s)  := \prod_{i=0}^{d-1} L(\Pi, s  + \frac{d-1-2i}{2}). \]
Before we begin the proof of the theorem, let's observe that for any irreducible cuspidal automorphic representation
   $\Pi_1$ of $\GL_m(\A)$, and $\Pi_2$ of $\GL_n(\A)$, we have the following assertions due to Jacquet, Piatetski-Shapiro and Shalika \cite{JPSS} for  the completed, i.e., including the factors at infinity, Rankin product L-function $L(s, \Pi_1 \otimes \Pi_2)$: 
\vskip 5pt

   \begin{enumerate}
   \item[(a)]  $L( (\Pi_1 \otimes \Pi_2) \boxtimes \Sym^{d-1}(\C^2), s+ \frac{1}{2})$ has a zero at $s=0$ if and only if $d$ is odd and  $L(  \Pi_1
     \otimes \Pi_2, \frac{1}{2}) = 0$.
     \vskip 5pt
     
\item[(b)]  $L( (\Pi_1 \otimes \Pi_2) \boxtimes \Sym^{d-1}(\C^2), s+ \frac{1}{2})$ has a pole at $s=0$ if and only if $d \ne 0$ is even, $m=n$,
  and  $\Pi_1 \cong \Pi_2^\vee$, in which case the pole is simple.
\vskip 5pt

\item[(c)]  $L( (\Pi_1 \otimes \Pi_2) \boxtimes \Sym^{d-1}(\C^2), s+ 1)$ has a zero at $s=0$ if and only if $d\ne 0$ is even, and  
$L(\Pi_1 \otimes \Pi_2, \frac{1}{2}) = 0$.
\vskip 5pt
  \item[(d)]  $L( (\Pi_1 \otimes \Pi_2) \boxtimes \Sym^{d-1}(\C^2), s+ 1)$ has a pole at $s=0$ if and only if $d$ is odd, $m=n$,
  and  $\Pi_1 \cong \Pi_2^\vee$, in which case the pole is simple.
   \end{enumerate}

       We first prove the theorem in the  case when $M_A,N_A$ are the global A-parameters,
   \begin{equation}  \label{E:SMA}
   M_A  =      M_1 \boxtimes \Sym^{i-1}(\C^2)   \oplus   N_1 \boxtimes \Sym^{j-1}(\C^2),  \end{equation}
   
 \begin{equation} \label{E:SNA}
  N_A  =    M_1 \boxtimes \Sym^{i-2}(\C^2)  \oplus  N_1 \boxtimes  \Sym^{j-1 +\epsilon} (\C^2),
   \end{equation}
where $\epsilon = \pm 1$, $M_1$ is symplectic and $N_1$ is orthogonal.
   It follows that:
   \[ M_A \otimes N_A = C +D, \]
   where
   \begin{eqnarray*}
     C & =  & (M_1\otimes M_1) \boxtimes  (\Sym^{i-1}(\C^2)  \otimes \Sym^{i-2}(\C^2)) +  (N_1\otimes N_1)  \boxtimes (\Sym^{j-1}(\C^2)  \otimes \Sym^{j-1 + \epsilon }(\C^2)) \\
     D & = & (M_1 \otimes N_1) \boxtimes [\Sym^{i-1}(\C^2)\otimes \Sym^{j-1 + \epsilon}(\C^2) +  \Sym^{i-2}(\C^2)  \otimes \Sym^{j-1}(\C^2) ]
   \end{eqnarray*}

   Similarly, we write,
\[ \Sym^2(M_A) \oplus  \Lambda^2(N_A) = C' +D', \]
   where
   \begin{eqnarray*}
     C' & =  & \Sym^2(M_1) \boxtimes  [\Sym^2(\Sym^{i-1}(\C^2)) + \Lambda^2(\Sym^{i-2}(\C^2))]  \\
     & & + \Lambda^2(M_1) \boxtimes  [\Lambda^2(\Sym^{i-1}(\C^2)) +
         \Sym^2(\Sym^{i-2}(\C^2))]  \\
     & & + \Sym^2(N_1) \boxtimes  [\Sym^2(\Sym^{j-1}(\C^2)) + \Lambda^2(\Sym^{j-1 + \epsilon}(\C^2))] \\
     & & + \Lambda^2(N_1) \boxtimes  [\Lambda^2(\Sym^{j-1}(\C^2)) +
         \Sym^2(\Sym^{j-1 + \epsilon}(\C^2))],  \\
     D' & = & (M_1 \otimes N_1) \boxtimes [\Sym^{i-1}(\C^2)  \otimes \Sym^{j-1}(\C^2)  \oplus \Sym^{i-2}(\C^2)\otimes \Sym^{j-1 + \epsilon}(\C^2)] .
   \end{eqnarray*}
   With these notations, we have,
   \begin{eqnarray*}  L(M,N, s)  & = &   \frac{L(C, s + ~1/2)}{L(C', s + 1)} \cdot \frac{L(D, s + ~1/2)}{L(D', s + 1)}.
\end{eqnarray*}

    Since $M_A$ is symplectic and $N_A$ is orthogonal, the integers $i$ and $j$ have opposite parity. From the assertions (a)-(d) at the beginning of the proof,    we find that:
    \begin{enumerate}
    \item The terms in $C$ can only give rise to a pole at $s=0$  for $L(C,s+\frac{1}{2})$.

    \item The terms in $C'$ can only give rise to a pole at $s=0$  for $L(C',s+1)$.

    \item The terms in $D$ can only give rise to a zero at $s=0$  for $L(D,s+\frac{1}{2})$.

    \item The terms in $D'$ can only give rise to a zero at $s=0$  for $L(D',s+1)$.

    \end{enumerate}

    Thus     
    \[ {\rm ord}_{s=0}\frac{L(C, s + \frac{1}{2})}{L(C', s + 1)} =
    {\rm ord}_{s=0} L(C, s+ \frac{1}{2}) - {\rm ord}_{s=0} L(C', s+1) \]
    is equal to
    \[
     {\rm ord}_{s=0}  L(M_1 \otimes N_1, s +1/2)^{ \min \{i, j + \epsilon\} + \min\{i-1,j\} -\min \{i-1, j + \epsilon\} - \min \{i,j\} } . \]
      It can be easily  checked that for $i$ and $j$ of different parity, the exponent is  0 in all cases, except when
   \begin{eqnarray*} \label{E:MA}
  M_A & = &     M_1 \boxtimes \Sym^{i-1}(\C^2)   \oplus   N_1 \boxtimes \Sym^{i-2}(\C^2)  \\
N_A & = &   M_1 \boxtimes \Sym^{i-2}(\C^2)  \oplus  N_1 \boxtimes  \Sym^{i-1} (\C^2),
   \end{eqnarray*}
  in which case it is $1$. Likewise, we have
     $$ {\rm ord}_{s=0}\frac{L(D, s + ~1/2)}{L(D', s + 1)}  = {\rm ord}_{s=0} L( M_1 \otimes N_1, s+1)^{ i-1 + \min \{j,j+\epsilon \} - \{ \frac{i-1}{2} + \frac{j}{2} +\frac{i-1}{2} + \frac{j+\epsilon -1}{2} \} } .$$
 It can be easily  that for either choice of $\epsilon = \pm 1$, the exponent is zero. Thus the theorem is proved in the case
when $(M_A,N_A)$ are given by (\ref{E:SMA}) and (\ref{E:SNA}). 
\vskip 5pt

There is the analogous case
when
\[ 
  M_A  =      M_2 \boxtimes \Sym^{i-1}(\C^2)   \oplus   N_2 \boxtimes \Sym^{j-1}(\C^2)  \]
\[
N_A  =    M_2 \boxtimes \Sym^{i}(\C^2)  \oplus  N_2 \boxtimes  \Sym^{j-1 +\epsilon} (\C^2),
   \]
   where $\epsilon = \pm 1$, $M_2$ is symplectic and $N_2$ is orthogonal. This can be analyzed in the same way, and we find that
if for some $i \geq 0$,
\[ 
  M_A  =      M_2 \boxtimes \Sym^{i-1}(\C^2)   \oplus   N_2 \boxtimes \Sym^{i}(\C^2) , \]
\[
N_A  =    M_2 \boxtimes \Sym^{i}(\C^2)  \oplus  N_2 \boxtimes  \Sym^{i-1} (\C^2),
   \]
we have   
   \[  {\rm ord}_{s=0} L(M,N, s)  = {\rm ord}_{s=0}   L( M_2 \otimes N_2, s+1/2),\]   
   and in all other cases (with $M_A,N_A$ as here):
   \[  {\rm ord}_{s=0} L(M,N, s)  = 0.\]   
\vskip 5pt

Now we prove  the theorem in the general case. Assume that
   \begin{eqnarray*} \label{E:MA3}
  M_A & = &     \bigoplus_{\alpha} V_\alpha,  \\
N_A & = &   \bigoplus_{\alpha} W_\alpha,
   \end{eqnarray*}
   is a relevant pair of discrete global A-parameters with $M_A$ symplectic and $N_A$ orthogonal and where $(V_\alpha, W_\alpha)$ are relevant pairs of irreducible global A-parameters. 

   Then
    \begin{align} \label{E1E2}
     M_A \otimes N_A &=  \sum_{\alpha} V_\alpha \otimes W_\alpha + \sum_{\alpha \not = \beta} (V_\alpha \otimes W_\beta + V_{\beta} \otimes W_{\alpha}), \\
     \Sym^2 M_A  + \Lambda^2 N_A &=   \sum_{\alpha} (\Sym^2 V_\alpha + \Lambda^2 W_\alpha) +
     \sum_{\alpha \not = \beta} (V_\alpha \otimes V_\beta + W_\alpha \otimes W_\beta) ,  \notag \end{align}
   where in both the sums above, the sum $\sum_{\alpha \not = \beta}$ is over un-ordered distinct pair
   of indices $\alpha, \beta$.
   \vskip 5pt
   
   We now  analyze $L(M, N, s)$ in two steps.
   \vskip 5pt

   \noindent{\bf Step 1.}
   We prove that   the diagonal terms
     $$ L(V_\alpha, W_\alpha,s)= \frac{L(V_\alpha \otimes W_\alpha, s + ~1/2)}{L(\Sym^2 V_\alpha \oplus \wedge^2 W_\alpha, s + 1)},$$
   have neither a zero nor a pole      at $s=0$.

   Let 
     \[ V_\alpha= V \boxtimes \Sym^i(\C^2) \quad \text{and} \quad W_\alpha= V \boxtimes \Sym^{i+ \epsilon}(\C^2),\]
      with $V$ irreducible and $\epsilon = \pm 1$. In this case it is easy to see that the numerator and denominators  of $L(s,V_\alpha, W_\alpha)$ can only have poles at $s=0$.
   Assume  first that $V$ is symplectic so that $i$ is even (since $V_{\alpha}$ is symplectic).   Then we have
    \[ - {\rm ord}_{s=0}L(V_\alpha \otimes W_\alpha, s + \frac{1}{2}) =  \min\{i+1, i+1+ \epsilon\} \]
and
\[ - {\rm ord}_{s=0}L(\Sym^2 V_\alpha \oplus \wedge^2 W_\alpha, s + 1) = \begin{cases}
 i, \text{ if $\epsilon =-1$,} \\
  i+1, \text{  if $\epsilon = 1$.} \end{cases} \]
 Therefore we find that  $ L(V_\alpha, W_\alpha, s)$ has neither
     a zero nor a pole at $s=0$. An analogous argument works in the case when $V$ is orthogonal.
     
   \vskip 5pt
     
   \noindent{\bf Step 2.}
   Observe that,
\begin{eqnarray*}
  L(V_\alpha+ V_\beta, W_\alpha+W_\beta, s) & = & \frac{L([V_\alpha + V_\beta]\otimes [W_\alpha +W_\beta], s + ~1/2)}{L(\Sym^2 [V_\alpha+V_\beta] \oplus \wedge^2 [W_\alpha+ W_\beta], s + 1)}, \\
        & = &  L(V_\alpha, W_\alpha, s)  L( V_\beta, W_\beta,s) \frac{L( V_\alpha \otimes W_\beta +
      V_\beta \otimes W_\alpha, s + ~1/2)}{L( V_\alpha \otimes V_\beta + W_\alpha \otimes W_\beta, s + 1)}.
     \end{eqnarray*}

Since  by step 1, $L(V_\alpha, W_\alpha,s)$ and $  L( V_\beta, W_\beta,s)$ have neither a zero nor a pole at $s=0$,
the order of zero or pole at $s=0$ of $ L(V_\alpha+ V_\beta, W_\alpha+W_\beta,s)$ is the same as that of

$$ \frac{L( V_\alpha \otimes W_\beta +
  V_\beta \otimes W_\alpha, s + ~1/2)}{L( V_\alpha \otimes V_\beta + W_\alpha \otimes W_\beta, s + 1)}.$$

Now, by the special case of the theorem treated above, we understand the order of zero or pole at $s=0$ of $ L(V_\alpha+ V_\beta, W_\alpha+W_\beta,s)$ (in terms of special pairs),
and therefore by the two equalities in (\ref{E1E2}), the proof of the theorem is completed.
 \end{proof}

   Using Theorem \ref{T:globalR},  we can reformulate  Conjecture \ref{C:global} for the discrete spectrum as follows:
  \begin{conj}
  Let $M_A \times N_A$ be a discrete A-parameter of the quasi-split group $G(\A) = \G(V)(\A) \times \G(W)(\A)$ with endoscopic decomposition as given in (\ref{E:MA0}) and (\ref{E:NA0}):
\[    M_A =       \bigoplus_{i \in I} M_i \boxtimes \Sym^{m_i-1}(\C^2)   \oplus \bigoplus_{j \in J}  N_j \boxtimes \Sym^{n_j-1}(\C^2) , \] 
and
\[
N_A =    \bigoplus_{i \in I} M_i \boxtimes \Sym^{m'_i -1}(\C^2)  \oplus \bigoplus_{j \in J} N_j \boxtimes  \Sym^{n_j'-1}(\C^2). \]
Then the regularized period integral  is nonzero on the submodule of the automorphic discrete spectrum asscoiated to $M_A \times N_A$ 
on some relevant pure inner form  $G'(\A) = \G(V')(\A) \times \G(W')(\A)$ of $G(\A) = \G(V)(\A) \times \G(W)(\A)$
 if and only if $L(1/2, M_i \otimes N_j)$ is nonzero for all special pairs $(i,j) \in (I \times J)^{\heartsuit}$ (as in Definition \ref{D:special}).
 
   \end{conj}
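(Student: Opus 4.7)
The plan is to derive this restatement directly from Conjecture \ref{C:global} together with Theorem \ref{T:globalR}, treating the present assertion as the specialization of the general global conjecture to the discrete spectrum. The decomposition written down in (\ref{E:MA0})--(\ref{E:NA0}) already matches every irreducible summand of $M_A$ with a partner in $N_A$ (with $|m_i-m'_i|=|n_j-n'_j|=1$ and with the parities of $m_i{-}1$ and $n_j$ correctly prescribed), so $(M_A,N_A)$ is automatically a relevant pair of global A-parameters in the sense of \S\ref{S:relevant}. Thus condition (1) of Conjecture \ref{C:global} is satisfied by construction and need not be imposed separately.

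For the ``only if'' direction I would start from a nonvanishing regularized period on some irreducible $\pi\otimes\pi'$ belonging to the discrete A-packet of $(M_A,N_A)$ on a relevant pure inner form. Conjecture \ref{C:global} then forces $L(M,N,0)\neq 0$, and Theorem \ref{T:globalR} identifies this value, up to a nonzero constant, with
$\prod_{(i,j)\in(I\times J)^{\heartsuit}}L(M_i\otimes N_j,1/2)$.
Hence every central value $L(1/2,M_i\otimes N_j)$ indexed by a special pair must be nonzero.

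For the converse I would proceed symmetrically. Assume $L(1/2,M_i\otimes N_j)\neq 0$ for every special pair $(i,j)$. Theorem \ref{T:globalR} then gives $L(M,N,0)\neq 0$, supplying condition (3) of Conjecture \ref{C:global}. Since condition (1) has already been verified, the ``further'' clause of Conjecture \ref{C:global} produces a relevant pure inner form $G'(\A)=\G(V')(\A)\times\G(W')(\A)$ and an automorphic representation $\tilde\pi\otimes\tilde\pi'$ in the associated discrete A-packet on which the regularized period integral does not vanish, which is precisely the stated conclusion.

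The genuine obstacle is not the bookkeeping but the inputs themselves: Conjecture \ref{C:global} is unproven in general, and its applicability in the discrete (non-cuspidal) setting depends on the working hypothesis flagged before its statement, namely the existence of a canonical equivariant extension of $F(\nu)$ to $\mathcal{A}_{\mathrm{aut}}(G)$ along the lines of \cite{Zy, IY}. Equally delicate is the second condition of Conjecture \ref{C:global} about nonvanishing of all local multiplicities, which is absorbed here into the freedom to pass to a suitable pure inner form and to choose the local constituents inside the local A-packets; making this choice explicitly is where a direct proof would have to engage the incomplete parts of Conjecture \ref{relevant} on branching within A-packets. Once those foundational issues are granted, the only substantive analytic step is the zero/pole computation already carried out in the proof of Theorem \ref{T:globalR}, which cancels the adjoint factors in the denominator against the Rankin--Selberg factors in the numerator and isolates exactly the special-pair contributions.
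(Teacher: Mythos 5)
Your proposal correctly identifies that the statement under review is itself a conjecture obtained by specializing Conjecture~\ref{C:global} to the discrete spectrum and substituting the identity of Theorem~\ref{T:globalR} for condition~(3); this is exactly what the paper means by "Using Theorem~\ref{T:globalR}, we can reformulate Conjecture~\ref{C:global} for the discrete spectrum as follows." Your unpacking of the two directions (the forward implication from the main biconditional of Conjecture~\ref{C:global}, the converse from its "further" clause), the observation that relevance is built into the shape of the decomposition (\ref{E:MA0})--(\ref{E:NA0}), and the caveats about the working hypothesis on regularized periods all match the paper's reasoning.
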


\vskip 5pt

Finally, it is natural to ask if the refined conjecture of Ichino-Ikeda \cite{II}, which gives a precise formula relating the period integral and the relevant L-value in the case of tempered L-parameters, can be formulated in the nontempered setting. An important issue
in the formulation of the Ichino-Ikeda conjecture
is the definition of the local period functionals which intervene in their conjectural formula. In the tempered case, these local functionals are given by (absolutely convergent) integrals of matrix coefficients and can be characterized using the spectral
decomposition in the $L^2$-theory.  
We do not know how to formulate an extension to the nontempered case, but hope that this paper will stimulate some work in this direction. 

\vskip 10pt

\section{Revisiting the Global Conjecture} \label{S:global2}

We now revisit the global conjecture \ref{C:global} from the viewpoint of the local conjecture \ref{conj-class}.
\vskip 5pt

 Fix a pair of spaces $W_0 \subset V_0$ 
and a pair of global A-parameters $(M_A, N_A)$ for $G(V_0) \times G(W_0)$. This gives rise to local A-parameters $(M_{A,v}, N_{A,v})$ at all places $v$ and their associated  local A-packets. Because our local conjecture does not address the restriction problem for the whole A-packet, we shall restrict our attention only to the associated Vogan L-packet $\Pi_{M_v} \times \Pi_{N_v}$ contained in the A-packet.  
Based on this, we can draw the following conclusions from our local conjecture.
\vskip 5pt

\begin{itemize}
\item[(i)] the pair $(M_{A,v}, N_{A,v})$ of local A-parameters should be relevant if we want a nonzero local multiplicity in $\Pi_{M_v} \times \Pi_{N_v}$. This is implied by the relevance of the pair of global A-parameters $(M_A, N_A)$.   Hence we assume that $(M_A, N_A)$ is relevant in the ensuing discussion.
\vskip 5pt

\item[(ii)] when $(M_A, N_A)$ is relevant, there is a unique representation in $\Pi_{M_v} \times \Pi_{N_v}$ with nonzero multiplicity. This unique representation is indexed by a character $\chi_v$ of the local component group $A_{M,v} \times A_{N,v}$ and is a representation of a pure inner form of $G_v := G(V)(F_v) \times G(W)(F_v)$. We denote it by $\pi_{\chi_v}$.

\vskip 5pt

\item[(iii)]  For almost all $v$, $\chi_v$ is trivial and the representation $\pi_{\chi_v}$ is an unramified representation of $G_{0,v} = G(V_0)(F_v) \times G(W_0)(F_v)$.
\end{itemize}
In view of the above, we can form the restricted direct product groups
\[   
  G_{\A}  =  \prod_v' G_v  \supset H_{\A} = \prod'_v  H_v \]
and the abstract representations
\[  \text{$\pi_{\chi} :=  \otimes'_v \pi_{\chi_v}$ of $G_{\A}$ and $\nu = \otimes_v \nu_v$ of $H_{\A}$.} \]
The abstract representation $\pi_{\chi}$ is the only one in the global L-packet $\Pi_M \times \Pi_N$ which could possibly have a nonzero global  period integral. 
However, before considering the global restriction problem, we need to ensure that it is actually an automorphic representation. For this, we need to address the following questions in turn:
\vskip 5pt

\begin{itemize}
\item[(1)]  Is $H_{\A} \hookrightarrow G_{\A}$  the group of adelic points of algebraic groups 
\[ H = G(W) \cdot N \hookrightarrow G = G(V) \times G(W) \]
associated to a relevant pair of spaces $W \subset V$?

\vskip 5pt
This question can be answered by the same consideration as in \cite[Pg 99]{GGP}. We see that the answer is Yes if and only if 
\[  \epsilon(1/2, M \otimes N)  = 1. \]
Assuming this, we are led to the next question:

\vskip 5pt

\item[(2)] Is the abstract representation $\pi_{\chi}$ of $G_{\A} = G(V)(\A)\times G(W)(\A)$ automorphic?  
 
\vskip 5pt

\item[(3)] Finally, if the answer to Question (2) is affirmative, one can consider the restriction of the regularized period integral $F(\nu)$ to $\pi_{\chi}$ and ask if this restriction is nonzero. 
\end{itemize}

We shall now consider Question (2) in some detail.
To address this question, we can reduce to the case of discrete A-parameters, so we assume that $M_A$ and $N_A$ are discrete. Then the automorphy of $\pi_{\chi}$ is determined by the Arthur multiplicity formula,  where a certain quadratic character $\epsilon_{Art}$ of the global component group of the A-parameter intervenes. More precisely, given the global A-parameter $M_A \times N_A$ with associated L-parameter $M \times N$,  one has the following commutative diagram of component groups
\[  \begin{CD}
 A_{M_A} \times A_{N_A}   @>\Delta>>  \prod_v  ( A_{M_{A,v}} \times A_{N_{A,v}})  \\
 @VVV  @VVV  \\
A_M \times A_N @>\Delta>>  \prod_v (A_{M_v} \times A_{N_v} ).
\end{CD} \]
 The representation $\pi_{\chi} = \otimes'_v \pi_{\chi_v}$ corresponds to a character $\otimes_v \chi_v$ of the group $ \prod_v (A_{M_v} \times A_{N_v} )$. Pulling back by the above diagram, one obtains a character $\chi$ of $A_{M_A} \times A_{N_A}$. Now 
 \[  \text{$\pi_{\chi}$ is automorphic if and only if $\chi = \epsilon_{Art}$.} \]
 The difference between \cite{GGP} and the situation here is that for tempered A-parameters considered in \cite{GGP}, this quadratic character $\epsilon_{Art}$ is trivial, whereas here it  is not necessarily trivial. 

\vskip 5pt

 To explicate the condition $\chi = \epsilon_{Art}$, we need to determine $\epsilon_{Art}$. For this, let us return to the decomposition of $M_A$ and $N_A$ given in (\ref{E:MA0}) and (\ref{E:NA0}).  Then we may write
 \[   A_{M_A}  =  \prod_{i \in I, m_i >0} \Z/2\Z \cdot a_i  \times  \prod_{j \in J, n_j >0}  \Z/2\Z \cdot b_j  \]
and
\[  A_{N_A} = \prod_{i \in I: m_i' >0}  \Z/2\Z \cdot a_i'  \times \prod_{j\in J: n'_j >0} \Z/2\Z \cdot b_j'. \]
In other words, these component groups are vector spaces over $\Z/2\Z$ equipped with  canonical bases.
To specify $\epsilon_{Art}$, it suffices to evaluate the signs $\epsilon_{Art}(a_i)$, $\epsilon_{Art}(b_j)$, $\epsilon_{Art}(a'_i)$ and $\epsilon_{Art}(b_i')$.
\vskip 5pt

\begin{lemma} \label{L:art}
With the above notation, one has:

\[ \epsilon_{Art}(a_i) =  \prod_{j \in J}  \epsilon(1/2, M_i \otimes N_j)^{\min(m_i, n_j)} =  \prod_{j \in J: m_i < n_j} \epsilon(1/2, M_i \otimes N_j). \]

\[  \epsilon_{Art}(b_j)  =  \prod_{i \in I} \epsilon(1/2, M_i \otimes N_j)^{\min(m_i, n_j)} =  \prod_{i\in I: m_i < n_j}  \epsilon(1/2, M_i \otimes N_j). \]

\[ \epsilon_{Art}(a_i')  = \prod_{j \in J} \epsilon(1/2, M_i \otimes N_j)^{\min(m'_i, n'_j)} =  \prod_{j \in J: m'_i > n'_j} \epsilon(1/2, M_i \otimes N_j) \]

\[  \epsilon_{Art}(b_j') = \prod_{i\in I} \epsilon(1/2, M_i \otimes N_j)^{\min(m'_i, n'_j)} = \prod_{i\in I: m'_i > n'_j} \epsilon(1/2, M_i \otimes N_j). \]
\end{lemma}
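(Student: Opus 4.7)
The plan is to derive all four identities by a direct application of Arthur's explicit formula for the character $\epsilon_{Art}$ attached to a discrete global A-parameter of a classical group, followed by an elementary parity simplification. Specifically, I will invoke the following form of Arthur's formula: for $\psi = \bigoplus_\ell \mu_\ell \boxtimes [d_\ell]$ a discrete A-parameter of a classical group with canonical basis $\{s_\ell\}$ of its component group,
$$\epsilon_{Art}(s_\ell) \; = \; \prod_{\ell' \, : \, \mu_\ell \otimes \mu_{\ell'} \text{ symplectic}} \epsilon(1/2, \mu_\ell \otimes \mu_{\ell'})^{\min(d_\ell, d_{\ell'})},$$
the product being taken over those summands whose base representation $\mu_{\ell'}$ has parity opposite to that of $\mu_\ell$, so that $\mu_\ell \otimes \mu_{\ell'}$ is symplectic as a representation of the global Langlands group. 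This is the version of Arthur's formula most convenient here, and it is equivalent to the one given in Chapter 1.5 of Arthur's book. Pinning down this reformulation is the only delicate point in the argument.

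Next I would apply the formula separately to $\psi = M_A$ and $\psi = N_A$. Because $M_A$ is symplectic and $m_i$ is odd while $n_j$ is even, the summand $M_i \boxtimes \Sym^{m_i-1}(\C^2)$ forces $M_i$ to be symplectic, and the summand $N_j \boxtimes \Sym^{n_j-1}(\C^2)$ forces $N_j$ to be orthogonal. Hence the opposite-parity partners of $M_i$ among the summands of $M_A$ are exactly the $N_j$ for $j \in J$, and inserting $d_\ell = m_i$ and $d_{\ell'} = n_j$ yields the first equality in the identity for $\epsilon_{Art}(a_i)$. A symmetric analysis, with the roles of $M_i$ and $N_j$ swapped, produces the identity for $\epsilon_{Art}(b_j)$. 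For $a'_i$ and $b'_j$ the same argument, now applied to $\psi = N_A$ (orthogonal) with summand $\SL_2$-dimensions $m'_i$ and $n'_j$, gives the remaining two identities.

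To extract the second equality in each line, I would use that $\epsilon(1/2, M_i \otimes N_j)$ is a symplectic root number and hence lies in $\{\pm 1\}$; thus only the parity of the exponent matters. Since $m_i$ is odd and $n_j$ is even, $\min(m_i, n_j)$ is odd precisely when $m_i < n_j$; and since $m'_i = m_i \pm 1$ is even while $n'_j = n_j \pm 1$ is odd, $\min(m'_i, n'_j)$ is odd precisely when $m'_i > n'_j$. The main obstacle in executing this plan is locating (or reconstructing) the above packaging of Arthur's character formula in terms of the irreducible constituents of $\psi$; once that is in hand, everything else is completely mechanical.
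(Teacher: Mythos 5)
Your proposal is correct, and since the paper states Lemma~\ref{L:art} without proof (treating it as an immediate unpacking of Arthur's multiplicity formula), your write-up supplies exactly the argument the authors had in mind: invoke Arthur's sign character in the form $\epsilon_{\psi}(s_\ell) = \prod_{\ell'} \epsilon(1/2, \mu_\ell \times \mu_{\ell'})^{\min(d_\ell, d_{\ell'})}$ over the opposite-parity constituents, identify these constituents using the sign conventions ($M_i$ symplectic, $N_j$ orthogonal), and then reduce the exponent modulo $2$. Your parity bookkeeping is correct throughout: $m_i$ odd and $n_j$ even give $\min(m_i,n_j)$ odd iff $m_i < n_j$, while $m'_i$ even and $n'_j$ odd give $\min(m'_i,n'_j)$ odd iff $m'_i > n'_j$, matching the four displayed identities.
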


\vskip 5pt

On the other hand, we have: 
 
\vskip 5pt

\begin{lemma}  \label{L:chi}
The character $\chi$ is given by:
\[  \chi(a_i)  = \epsilon(1/2, M_i \otimes N)  = \prod_{j \in J} \epsilon(1/2, M_i \otimes N_j), \]
 and
\[  \chi(b'_j)  = \epsilon(1/2, M \otimes N_j)  =  \prod_{i\in I}  \epsilon(1/2, M_i \otimes N_j). \]
Moreover,  $\chi(b_j)  =1  =  \chi(a_i')$. 
\end{lemma}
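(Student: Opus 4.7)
The plan is to reduce Lemma \ref{L:chi} to the tempered distinguished-character recipe of \cite{GGP} applied to the L-parameters $(M,N)$, and then factor the resulting global Rankin--Selberg $\epsilon$-factors using the functional equation for selfdual representations. The first task is to track the basis vectors under the canonical maps $A_{M_A} \to A_M$ and $A_{N_A} \to A_N$. Applying the Arthur-to-Langlands restriction $w \mapsto (w, \diag(|w|^{1/2}, |w|^{-1/2}))$, the summand $M_i \boxtimes \Sym^{m_i-1}(\C^2) \subset M_A$ decomposes as an L-parameter into $\bigoplus_{k} M_i \otimes |\cdot|^{k}$ for $k$ ranging over $-(m_i-1)/2,\dots,(m_i-1)/2$; since $m_i$ is odd, this contains the central selfdual term $M_i$ (of the symplectic sign matching $M$) together with unordered pairs $M_i \otimes |\cdot|^{\pm k}$ that are not selfdual. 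Hence $a_i$ maps to the basis vector $a_i^M \in A_M$ indexed by $M_i$. By contrast, $N_j \boxtimes \Sym^{n_j-1}(\C^2) \subset M_A$ has $n_j$ even, so all its L-parameter summands occur in non-selfdual pairs: $b_j \mapsto 0$ in $A_M$ and in each $A_{M_v}$. The same analysis applied to $N_A$ gives $a'_i \mapsto 0$ in $A_N$ (since $m'_i$ is even) and $b'_j \mapsto b_j^N$, the basis vector for the central $N_j$ (since $n'_j$ is odd). Consequently $\chi(b_j) = \chi(a'_i) = 1$ for free.

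For $\chi(a_i)$ and $\chi(b'_j)$, invoke Conjecture \ref{conj-class}(c), which asserts that the local distinguished character on $\Pi_{M_v} \times \Pi_{N_v}$ is given by the same recipe as in the tempered case of \cite{GGP}. At each place,
\[ \chi_v(a_i^{M_v}) = \epsilon(1/2, M_{i,v} \otimes N_v, \psi_v) \cdot (\det M_{i,v})(-1)^{\dim N/2} \cdot (\det N_v)(-1)^{\dim M_i/2}, \]
and symmetrically for $\chi_v(b_j^{N_v})$. Taking the product over all places $v$, the product of local $\epsilon$-factors gives the global Rankin--Selberg $\epsilon(1/2, M_i \otimes N)$, while the determinant twists collapse to $1$ by the product formula, since $\det M_i$ and $\det N$ are idele class characters trivial on $F^\times$. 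Thus $\chi(a_i) = \epsilon(1/2, M_i \otimes N)$ and $\chi(b'_j) = \epsilon(1/2, M \otimes N_j)$.

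It remains to factor these global $\epsilon$-factors. Decompose the L-parameter $N$ into shifted irreducibles: each summand $M_{i'} \boxtimes \Sym^{m'_{i'}-1}(\C^2) \subset N_A$ contributes only symmetric pairs $M_{i'} \otimes |\cdot|^{\pm k}$ with $k$ half-integral and nonzero (as $m'_{i'}$ is even), while $N_j \boxtimes \Sym^{n'_j - 1}(\C^2)$ contributes the central term $N_j$ together with pairs $N_j \otimes |\cdot|^{\pm k'}$ for $k' \geq 1$ (as $n'_j$ is odd). Since $M_i \otimes M_{i'}$ and $M_i \otimes N_j$ are selfdual (of orthogonal and symplectic type respectively), the global Rankin--Selberg functional equation $\epsilon(s, \pi)\epsilon(1-s, \pi) = 1$ for selfdual $\pi$ forces each pair $\epsilon(1/2 + k, \cdot)\epsilon(1/2 - k, \cdot) = 1$. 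Only the central terms $\epsilon(1/2, M_i \otimes N_j)$ (one per $j \in J$) survive, yielding $\chi(a_i) = \prod_{j \in J} \epsilon(1/2, M_i \otimes N_j)$; the parallel computation handles $\chi(b'_j)$. The main subtlety in the argument is organizational: correctly following the component-group maps $A_{M_A} \to A_M \to \prod_v A_{M_v}$ and normalizing the \cite{GGP} recipe consistently in the Arthur-type setting. The $\epsilon$-factor cancellation from the selfdual functional equation and the product formula for automorphic determinant characters are standard.
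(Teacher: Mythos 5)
Your proof is correct, and it supplies an argument that the paper itself omits: Lemma~\ref{L:chi} (like Lemma~\ref{L:art} just before it) is stated in the paper without proof, so there is no authorial argument to compare against. Your four-step route --- tracking the images of the canonical basis vectors under $A_{M_A}\times A_{N_A}\to A_M\times A_N$ via the parity of $m_i,n_j,m'_i,n'_j$; applying the GGP recipe place-by-place; killing the determinant twists by the product formula for the Hecke characters $\det M_i$, $\det N$; and collapsing the global Rankin--Selberg $\epsilon$-factor via $\epsilon(s,\pi)\epsilon(1-s,\pi)=1$ for selfdual $\pi$ --- is the natural one and is surely what the authors had in mind.

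Two small points worth making explicit when you write this up. First, the reason every $j\in J$ contributes exactly one central term to $N$ is that $n_j$ is even while $|n_j-n'_j|=1$, which forces $n'_j$ odd and hence $n'_j\ge 1$; this is what justifies the unrestricted product $\prod_{j\in J}\epsilon(1/2,M_i\otimes N_j)$ rather than one over a proper subset. Second, when you invoke the product formula, the exponents $\dim N/2$ and $\dim M_i/2$ in the determinant twists are place-independent global integers, so the product over $v$ really does reduce to $(\det M_i)(-1)^{\dim N/2}$ evaluated on the principal idele $(-1)\in F^\times$, which is $1$. Both points are implicit in your write-up but deserve a sentence each.
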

\vskip 10pt

As a consequence of Lemma \ref{L:art} and Lemma \ref{L:chi}, we deduce:
\vskip 5pt

\begin{cor}  \label{C:auto}
The representation $\pi_{\chi}$ in the L-packet associated to $M \times N$ is automorphic if and only if the following holds:
\vskip 5pt
\[   \prod_{j \in J: m_i > n_j} \epsilon(1/2, M_i \otimes N_j) =1  \quad \text{for all $i \in I$ with $m_i >0$;} \]

\[   \prod_{j \in J: m'_i > n'_j}  \epsilon(1/2, M_i \otimes N_j) =1  \quad \text{for all $i \in I$ with $m'_i >0$;} \]

\[  \prod_{i\in I: m_i < n_j}  \epsilon(1/2, M_i \otimes N_j) =1 \quad \text{for all $j \in J$ with $n_j > 0$;} \]

\[  \prod_{i\in I: m'_i < n'_j}  \epsilon(1/2, M_i \otimes N_j) =1 \quad \text{for all $j \in J$ with $n'_j > 0$;} \]   
In particular, when these conditions hold, one has: 
\begin{equation} \label{E:I}
  \prod_{j: (i,j) \in (I \times J)^{\heartsuit}} \epsilon(1/2, M_i \otimes N_j)  =1  \quad \text{ for any fixed $i \in I$.} \end{equation}
Likewise,
\begin{equation}  \label{E:J}
  \prod_{i:  (i,j) \in  (I \times J)^{\heartsuit}} \epsilon(1/2, M_i \otimes N_j)  =1  \quad \text{ for any fixed $j\in J$.} \end{equation}
\end{cor}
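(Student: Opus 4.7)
The plan is to deduce the corollary as a direct consequence of Arthur's multiplicity formula combined with the explicit evaluations of Lemmas \ref{L:art} and \ref{L:chi}. By the multiplicity formula, $\pi_{\chi}$ is automorphic if and only if $\chi = \epsilon_{Art}$ as characters of $A_{M_A} \times A_{N_A}$. Since this component group is an elementary abelian $2$-group with canonical basis $\{a_i, b_j, a'_i, b'_j\}$ (indexed by the constituents with $m_i, n_j, m'_i, n'_j > 0$ respectively), it suffices to verify the equality on each basis element.

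The first step is to compare the two lemmas term by term. Setting $\chi(a_i) = \epsilon_{Art}(a_i)$ gives
\[ \prod_{j \in J} \epsilon(1/2, M_i \otimes N_j) \;=\; \prod_{j:\, m_i < n_j} \epsilon(1/2, M_i \otimes N_j), \]
which rearranges to $\prod_{j:\, m_i \geq n_j} \epsilon(1/2, M_i \otimes N_j) = 1$; since $m_i$ is odd and $n_j$ is even, the case $m_i = n_j$ cannot occur, and this is the first asserted condition. The conditions on $b_j$ and $a'_i$ follow immediately from $\chi(b_j) = \chi(a'_i) = 1$, and the condition on $b'_j$ is the mirror-image calculation of the one on $a_i$, now using the parity $m'_i$ even versus $n'_j$ odd to replace $\leq$ by $<$. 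The boundary cases $m_i = 0$ or $m'_i = 0$, which by relevance force the partner to equal $1$, are vacuous on the absent basis element; a short direct check shows that the surviving single condition still captures (\ref{E:I}) at that $i$.

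For the consequence about special pairs, I would fix $i \in I$ and multiply the first two displayed conditions of the corollary. Writing $[\,\cdot\,]$ for the Iverson bracket, the product becomes
\[ \prod_{j \in J} \epsilon(1/2, M_i \otimes N_j)^{[m_i > n_j] + [m'_i > n'_j]} \;=\; 1, \]
and since each factor is $\pm 1$, only the indices $j$ whose exponent is odd contribute. By inspection the exponent is odd precisely when $m_i - n_j$ and $m'_i - n'_j$ have opposite signs, which by Definition \ref{D:special} is the condition $(i,j) \in (I \times J)^{\heartsuit}$. This yields (\ref{E:I}); the identity (\ref{E:J}) follows symmetrically by fixing $j$ and multiplying the third and fourth conditions. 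The argument is entirely bookkeeping once the two lemmas are in hand, and the only delicate point is the parity argument that replaces $\geq$ by $>$ in the various products, which is ultimately a consequence of the sign conventions making $M_A$ symplectic and $N_A$ orthogonal.
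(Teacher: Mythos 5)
Your proof is correct and follows essentially the same route as the paper: you derive the four displayed conditions by equating $\chi$ with $\epsilon_{Art}$ basis element by basis element (using Lemmas \ref{L:art} and \ref{L:chi}), then obtain (\ref{E:I}) and (\ref{E:J}) by combining the first pair and second pair of conditions — the paper says "dividing," you say "multiplying," which is the same thing in an elementary $2$-group. Your Iverson-bracket justification for why exactly the special pairs survive, and your discussion of the boundary cases $m_i=0$ or $m'_i=0$, supply details the paper leaves implicit, but the underlying argument is identical.
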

\vskip 5pt

\begin{proof}
The first four identities simply follow by equating $\chi$ with $\epsilon_{Art}$.  Equation (\ref{E:I})  follows by dividing the first and second identities, whereas equation (\ref{E:J}) follows by dividing  the third and fourth identities.  
\end{proof}
\vskip 5pt

At this point, we have explicated the answer to Question (2), and it is instructive to recall Theorem \ref{T:globalR}, which says that
\[   L(M,N, 0)  = C \cdot \prod_{(i,j) \in (I \times J)^{\heartsuit}}  L( M_i \times N_j, 1/2) \quad \text{for some $C \in \C^{\times}$.}\]
One can rewrite:
\begin{equation} \label{E:I2}
  L(M,N,0) = C \cdot \prod_{i \in I}   \, \left(  \prod_{j: (i,j) \in (I \times J)^{\heartsuit}}  L(M_i \otimes N_j, 1/2) \right) \end{equation}
or
\begin{equation} \label{E:J2}
  L(M,N,0) = C \cdot \prod_{j \in J}   \, \left(  \prod_{i: (i,j) \in (I \times J)^{\heartsuit}}  L(M_i \otimes N_j, 1/2) \right) .\end{equation}
 Now observe that  (\ref{E:I}) is saying that the global root number of the interior  product of L-functions in (\ref{E:I2}) is $+1$. Likewise, (\ref{E:J}) is saying that the global root number of the interior  product of L-functions in (\ref{E:J2}) is +1. 
 \vskip 5pt
 Given the above, it is not unreasonable to conjecture that, under the hypothesis that $(M_A, N_A)$ is relevant and in the context of Question (3),  the regularized period integral $F(\nu)$ is nonzero on $\pi_{\chi}$ if and only if $L(M,N, 0) \ne 0$, given that the conditions of automorphy of $\pi_{\chi}$ expressed in Corollary \ref{C:auto} hold.
  
\vskip 10pt

\section{Low Rank Examples}  \label{S:example}

In this section, we consider a few examples of our conjectures in low rank groups. The restriction problem considered in this paper has been studied in several low rank examples by various people and we shall examine their results in light of our conjectures. 
\vskip 5pt

\begin{example} $\SO_2 \times \SO_3$ \end{example}
We begin with  this simple example where $G = \SO_3(k) = \PGL_2(k)$ and $\wG = \SL_2(\bC)$. 
Consider the A-parameter 
\[ M_A = M_{\alpha} \otimes \Sym^1(\bC^2)  \]
where $M_{\alpha}$ is  a 1-dimensional orthogonal representation given by a quadratic character
$$\alpha: k^*/k^{*2} \rightarrow \langle \pm 1 \rangle.$$
 The L-packet of the corresponding  L-parameter  $M$ contains a single   representation $\pi(\alpha)$ of dimension one, where the group $G$ acts through a composition of $\alpha$ with the determinant character
$$\det: \PGL_2(k) \rightarrow k^*/k^{*2}.$$
\vskip 5pt

Let $H =\SO_2$ be the subgroup of $G$ fixing a non-isotropic line in the standard three dimensional representation of $G$. Then there is an \'etale quadratic algebra $K$ over $k$ such that $H = K^*/k^*$ and the irreducible representations $\chi$ of $H$ all have dimension one. In this case, the A-parameters and  L-parameters of $H$ are both equal to the two dimensional orthogonal representation 
\[  N = \Ind(\chi) \]
 of $W(k)$. When $K = k + k$, $N = \chi + \chi^{-1}$. When $K$ is a field corresponding to a non-trivial quadratic character $\beta$ of $k^*$, one has $\det(N) = \beta$. In this case, the representation $N$ is irreducible unless $\chi$ is the composition of a quadratic character $\chi_k$ of $k^*$ with the norm, when $N = \chi_k + \chi_k \, \beta$. There are two representations in the Vogan L-packet, for the groups of the two orthogonal spaces of dimension $2$ with the correct discriminant, but only one of these spaces is relevant.
\vskip 5pt

The restriction of the one dimensional representation $\pi(\alpha)$ of $G = \SO_3$ to the subgroup $ H = \SO_2$ is the character $\chi_{\alpha}$ given by the composition of the quadratic character $\alpha$ of $k^*$ with the norm from $K^*$ to $k^*$. (Since the norms from $K^*$ form a subgroup of index two in $k^*$ when $K$ is a field, this character is also the composition of the quadratic character $\alpha .\beta$ with the norm.) As  $\chi_{\alpha} = \chi_{\alpha}^{-1}$, we have
$$\Hom_H(\pi(\alpha) \otimes \chi_{\alpha}, \bC) = \bC.$$
In this case, the A-parameters $(M,N)$ form a relevant pair: $N = \alpha.\beta + \alpha = N_0 + M_{\alpha}$. This is the only possible $N$ of dimension $2$ and the correct determinant which combines with $M$ to give a relevant pair. 
For characters $\chi$ of $H$ which are not equal to $\chi_{\alpha}$, we have
$$\Hom_H(\pi(\alpha) \otimes \chi, \bC) = 0.$$
In particular, the results of this elementary example is in accordance with our local conjecture \ref{conj-class}.
\vskip 5pt

The L-function of the tensor product representation is given by
$$L(M \otimes N, s) = L(\alpha \otimes \Ind(\chi), s + ~1/2) \cdot L(\alpha \otimes \Ind(\chi), s - ~1/2).$$
This has a pole at $s = 1/2$ if and only if the quadratic character $\alpha$ appears in the representation $\Ind(\chi)$, in which case the order of the pole is the multiplicity of the character $\alpha$ in this induced representation. The L-function of the adjoint representation is given by
$$L(\Sym^2 M, s) \cdot L(\wedge^2, s) = L(\bC, s + 1)\cdot  L(\bC, s) \cdot L(\bC, s - 1) \cdot L(\beta, s).$$ 
This has a simple pole at $s = 1$. We conclude that the ratio
$$L(M,N, s) = L(M \otimes N, s + ~1/2)\big /L(\Sym^2 M \oplus \wedge^2N, s + 1)$$
has order less than or equal to zero at $s = 0$ if and only if $\alpha$ appears in $\Ind(\chi)$, or equivalently, if  and only if
$$\Hom_H(\pi(\alpha) \otimes \chi, \bC) = \bC.$$
When $K$ is a field, the parameter is discrete, and we see that $L(M,N,s)$ is regular and non-zero at $s = 0$, When $K = k + k$, the parameter for $\chi_{\alpha}$ is not discrete, and the character $\alpha$ appears with multiplicity $2$ in the induced representation. In this case, the ratio
$L(M,N,s)$ has a simple pole at $s = 0$.

\vskip 15pt

\begin{example} Restriction of 1-dimensional characters. \end{example}
One can easily extend the above example to cover the restriction of one dimensional representations of the split group $\SO_{2n+1} = \SO(V)$ to the special orthogonal group $\SO_{2n}= \SO(W)$ of a codimension one subspace $W$. The one dimensional representations of $\SO_{2n+1}$ have symplectic Arthur parameters
\[  M = M_{\alpha} \otimes \Sym^{2n-1}(\bC^2), \]
 where $M_{\alpha}$ is given by a quadratic character $\alpha$ as above. The associated representation is the composition of $\alpha$ with the spinor norm. Let $\beta$ be the quadratic character given by the discriminant of the even dimensional subspace $W$. Then the only Arthur parameter for the subgroup $\SO_{2n} = \SO(W)$ which gives a relevant pair $(M,N)$ is 
 \[  N = N_0^- + N_{2n-2}^+ \otimes \Sym^{2n-2}(\bC^2) \]
  with $N_0^- = \alpha.\beta$ and  $N_{2n-2}^+ = \alpha$. This is indeed the A-parameter for the restricted one dimensional representation of $\SO_{2n}$.
\vskip 5pt

We can easily calculate the order of $L(M,N,s)$ at $s = 0$ in this case.  A short computation gives:
\begin{align}
\Sym^2(M) &= \Sym^{4n-2}(\bC^2) + \Sym^{4n-6}(\bC^2) + \cdots + \Sym^2(\bC^2), \notag \\
\wedge^2(N) &= \Sym^{4n-6}(\bC^2) + \Sym^{4n-10}(\bC^2) + \cdots + \Sym^2(\bC^2) ~+~ \Sym^{2n-2}(\bC^2) \otimes \beta, \notag \\
M \otimes N &= \Sym^{4n-3}(\bC^2) + \Sym^{4n-7}(\bC^2) + \cdots + \Sym^1(\bC^2) ~+~ \Sym^{2n-1}(\bC^2) \otimes \beta. \notag
\end{align}
Therefore, the L-function of the numerator $L(M \otimes N, s + ~1/2)$ has a pole of order $2n-1$ at $s = 0$ if $\beta \neq 1$, and a pole of order $2n$ at $s = 0$ if $\beta = 1$. The same is true for the L-function of the denominator $L(\Sym^2 M, s + 1) \cdot L(\wedge^2 N, s+1)$, provided that $n > 1$. Hence, when $n > 1$, the ratio $L(M,N,s)$ is regular and non-zero at the point $s = 0$. In the special case where $n = 1$ and $\beta = 1$ (so $\SO(W) \cong \mathbb G_m$), this ratio has a simple pole at $s = 0$ as we noted already at the end of the previous example.

\vskip 10pt
\begin{example} $\SO_3 \subset  \SO_4$. \end{example}
We consider the case of $\SO_3 \subset \SO_4$  with $\SO_4$ quasi-split.  
This corresponds to   

$$\PGL_2(k) \longrightarrow  \GL_2^+(K) /k^\times,$$    
where  $K$ is a quadratic algebra over  a $p$-adic field  $k$ and $\GL_2^+(K)$ is the subgroup of $\GL_2(K)$ with determinant in 
$k^\times$. We can in fact consider slightly more generally
the embedding $\GL_2(k) \hookrightarrow \GL_2^+(K)$. Thus, for irreducible representations $\pi_1$ of $\GL_2^+(K)$ and $\pi_2$ of $\GL_2(k)$,
we would like to compute $d(\pi_1, \pi_2) = \dim \Hom_{\GL_2(k)}(\pi_1, \pi_2)$. 

\vskip 5pt
We shall consider the case when $\pi_1$ is a generic (thus infinite-dimensional) representation. 
In this case, the 4-dimensional Langlands parameter associated to $\pi_1$ is more usually called the Asai representation, or tensor induction,
denoted as $As(\pi_1)$. This notion is typically defined for representations of $\GL_2(K)$, but  one can also define it for representations $\pi_1$ of $\GL_2^+(K)$ by choosing an arbitrary irreducible representation of  $\GL_2(K)$ containing $\pi_1$ and 
doing the Asai construction for this. The result is  independent of the choice of the representation of $\GL_2(K)$, so it is legitimate to 
call it $As(\pi_1)$ for $\pi_1$ on $\GL_2^+(K)$. 
\vskip 5pt

If the representation $\pi_2$ of $\GL_2(k)$ is infinite dimensional, this amounts to questions already 
considered in \cite{P}, which is in the setting of tempered GGP. So we only analyze the situation when the representation $\pi_2$ of $\GL_2(k)$ is 
one dimensional, which to simplify notation, we take to be the trivial representation.  
\vskip 5pt
 
Thus, we want to understand which generic representations $\pi_1$ of $\GL_2^+(K)$ have an invariant 
form for  $\GL_2(k)$. By our conjecture,  one would therefore expect that a representation of
$\GL_2^+(K)$
has a $\GL_2(k)$ invariant form    if and only if the 4-dimensional representation of $W'_k$ contains the trivial representation, 
or equivalently  the Asai L-function has a  pole at 0.    

\vskip 5pt

This is exactly what was proved by
Harder-Langlands-Rapoport globally \cite{HLR},    
and later also locally by others, see \cite{Fl}, \cite{Ka} for results for $\GL_2$, and Lemma 4.1 of \cite{AP} for $\SL_2$.     The nice fact is that this criterion ---which is usually stated for discrete series representations--- 
is valid for all generic  representations $\pi_1$ of $\GL_2^+(K)$.    
 \vskip 5pt
 
\begin{thm} 
For $K$ a quadratic extension of a $p$-adic field $k$, 
an infinite dimensional  irreducible representation $\pi$ of $\GL_2(K)$ whose central character 
is trivial when restricted to $k^\times$ has a nonzero  $\GL_2(k)$ invariant linear form if and only if its Asai L-function has a pole at $s=0$.
\vskip 5pt

Assuming that the representation $\pi$ of $\GL_2(K)$ has a $\GL_2(k)$ invariant form, an irreducible sub-representation
$\pi'$ of $\pi$ restricted to $\GL^+_2(K)$ has a $\GL_2(k)$ invariant linear form if and only if $\pi'$ is generic for a character
$\psi: K \rightarrow \C^\times$ which is trivial on $k$. \end{thm}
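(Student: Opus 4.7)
Since the theorem is well-documented (\cite{HLR}, \cite{Fl}, \cite{Ka}, \cite{AP}), the plan is to assemble a coherent argument drawing on these sources. For the first statement, I would split into cases by Langlands type of $\pi$. For supercuspidal and discrete series $\pi$, I would invoke Flicker's proof: the ``only if'' direction proceeds by showing that the Flicker integral of matrix coefficients against the trivial character of $\GL_2(k)$ equals, up to nonzero factors, the Asai $L$-function, so nonvanishing of the invariant form produces a pole at $s=0$; the converse is obtained by directly constructing the invariant form from the residue at the pole. For the remaining case of $\pi$ an irreducible principal series $\pi(\chi_1,\chi_2)$, I would proceed by Mackey analysis of $\pi|_{\GL_2(k)}$ using the double coset decomposition of $\GL_2(k)\backslash \GL_2(K)/B(K)$, which has two orbits (open and closed). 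The contributions from each orbit give, via Frobenius reciprocity, the conditions that either $\chi_1|_{k^\times}$ or $\chi_2|_{k^\times}$ is trivial (from the closed orbit) or that $\chi_1\chi_2^\sigma$ is trivial on an appropriate subgroup (from the open orbit). The Asai parameter decomposes as
\[
\mathrm{As}(\chi_1 \oplus \chi_2) \,=\, \tilde\chi_1 \,\oplus\, \tilde\chi_2 \,\oplus\, \mathrm{Ind}_{W_K}^{W_k}(\chi_1 \chi_2^\sigma),
\]
and one checks that the three possibilities for a pole at $s=0$ match the three possible contributions to $\Hom_{\GL_2(k)}(\pi,\C)$, yielding the equivalence.

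For the second statement, the strategy is to realize the $\GL_2(k)$-invariant form explicitly in the Whittaker model of $\pi$. By the multiplicity one theorem for Flicker models, the space of $\GL_2(k)$-invariant forms on $\pi$ has dimension at most one. Fixing a $\psi$-Whittaker model $\mathcal{W}(\pi,\psi)$, any such invariant form is forced (by a Kirillov-style argument) to be proportional to the Flicker integral
\[
\ell_\psi(W) \,=\, \int_{k^\times} W\!\left(\begin{pmatrix} a & 0 \\ 0 & 1 \end{pmatrix}\right) \, d^\times a, \qquad W \in \mathcal{W}(\pi,\psi).
\]
Invariance of $\ell_\psi$ under the upper unipotent subgroup $\bigl\{ \bigl(\begin{smallmatrix} 1 & x \\ 0 & 1 \end{smallmatrix}\bigr) : x \in k \bigr\} \subset \GL_2(k)$ forces $\psi|_k = 1$; invariance under the diagonal $k^\times$ and under $w=\bigl(\begin{smallmatrix} 0 & 1 \\ 1 & 0 \end{smallmatrix}\bigr)$ is standard, and in the latter case follows from the local functional equation. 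Now the restriction $\pi|_{\GL_2^+(K)}$ decomposes according to $\GL_2^+(K)$-orbits of nontrivial additive characters $\psi$ of $K$: two characters $\psi$ and $\psi'=\psi(\alpha\,\cdot\,)$ lie in the same orbit iff $\alpha \in k^\times$ (via conjugation by $\mathrm{diag}(\alpha,1) \in \GL_2^+(K)$), so the orbits are parametrized by $K^\times/k^\times$. The invariant form then picks out the unique irreducible summand $\pi' \subset \pi|_{\GL_2^+(K)}$ whose Whittaker model is with respect to a $\psi$ trivial on $k$, and conversely any such summand inherits the invariant form, proving the second assertion.

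The main obstacle in executing this plan is the case of reducible principal series in Part 1: at reducibility points, $\pi$ is a proper Langlands quotient of the induced representation, and one must verify that the Mackey-filtration pieces, which a priori live on the whole induced representation, descend precisely to the irreducible quotient $\pi$ and account correctly for the order of the pole of the Asai $L$-function. For genuinely irreducible principal series the Mackey analysis is direct, but at reducibility points one must appeal to vanishing of certain $\Ext$-groups for intertwining operators (as in the arguments of Flicker and Kable) to ensure that the invariant form survives the passage to the composition factor.
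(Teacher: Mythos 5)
The paper itself does not prove this theorem: the passage preceding the theorem statement explicitly attributes the result to Harder--Langlands--Rapoport, Flicker, Kable, and Anandavardhanan--Prasad, and then states the theorem as a summary of what those sources establish. So there is no "paper's proof" to compare against; your task was effectively to reconstruct the argument from the cited literature, and your proposal does that, broadly correctly, with the right ingredients (Flicker matrix-coefficient/Asai integrals for discrete series, Mackey analysis on $\GL_2(k)\backslash\GL_2(K)/B(K)$ for principal series, Kirillov/Whittaker realization of the invariant form for the second statement).

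Two technical points deserve correction or elaboration, though neither overturns the argument. First, the claim that invariance of $\ell_\psi$ under the Weyl element "follows from the local functional equation" is where the real content lies: the functional equation gives $\ell_\psi(\pi(w)v)$ only up to a local $\gamma$-/$\varepsilon$-factor, and one must verify this sign is $+1$ at the relevant point; this root-number computation is precisely what the cited sources carry out and should not be elided in a self-contained proof. Second, the $\GL_2^+(K)$-orbits on nontrivial additive characters of $K$ are parametrized by $K^\times/(k^\times K^{\times 2})$, not by $K^\times/k^\times$: a general diagonal element of $\GL_2^+(K)$ is $\mathrm{diag}(a,b)$ with $ab\in k^\times$, which scales $\psi$ by $a/b = a^2/(ab)\in K^{\times 2}k^\times$. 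Your stated parametrization over-counts orbits. Fortunately the conclusion you draw is unaffected: the characters $\psi$ trivial on $k$ are exactly $\psi_K(\alpha\,\cdot\,)$ with $\alpha$ of trace zero, and these form a single $k^\times$-coset, hence a single class in $K^\times/(k^\times K^{\times 2})$, so there is indeed a unique summand of $\pi|_{\GL_2^+(K)}$ that is $\psi$-generic for such $\psi$. Finally, the reducibility-point discussion in your principal-series analysis is acknowledged as the hard spot but left unresolved; if you intend this to be a proof rather than a proof sketch, you would need to supply the $\Ext$-vanishing or intertwining-operator argument you allude to, or else cite it precisely (Kable's paper handles this).
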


 In this case, where the A-parameter of $\pi_2$ is $[2]$, the distinguished character predicted by our  local conjecture is the trivial character, which indexes  the representation 
of $\GL_2^+(K)$ in the L-packet  with Whittaker model with respect to a character of $K/k$. This is indeed what the previous theorem proves.
Our may also consider the case when $\GL_2(k)$ is replaced by $D^{\times}$, where $D$ is the unique quaternion $k$-algebra. In this case, the restriction problem is addressed by the tempered GGP. 
\medskip

\begin{example} $\U_2 \subset \U_3$. \end{example}
  
  For a quadratic extension $K/k$ of local fields, we consider a pair of unitary group $\U_2 \subset \U_3$ and
  the restriction of representations from $\U_3$ to $\U_2$. In what follows, we will be  interested in
understanding  which representations of $\U_3$ contain the trivial character of $\U_2$ as a quotient; such representations of $U_3$ are said to be $\U_2$-distinguished. This has been considered by Gelbart-Rogawski-Soudry, culminating in their work \cite{GRS}.
We recall their results briefly, casting them in the language of  our local and global conjectures. 

\vskip 5pt

\begin{thm} Let $\U_2 \subset \U_3$ be a pair of quasi-split unitary groups defined for a quadratic extension $K/k$ of non-archimedean local fields.
Suppose that $M_A$ is an A-parameter for $\U_3$ (regarded as a conjugate orthogonal representation of $W(K)$) with associated A-packet $\Pi_{M_A}$. 
A representation $\pi \in \Pi_{M_A}$ is $U_2$-distinguished if and only if one of the following holds:
\begin{itemize}
\item $\pi$ is the trivial representation of $\U_3$, i.e. $M_A = [3]$;
\item $\pi$ is a  tempered generic representation such that  $L(s,BC(\pi))$ has a pole at $s=0$ (where $BC(\pi)$ is the base change of $\pi$ to $K$).
In this case, $M_A =  1+M_0$ with $M_0$ tempered. 
 \end{itemize}
In particular, the pair $(M_A, [2])$ is relevant and the unique representation $\pi$ which is $\U_2$-distinguished lies in $\Pi_M$ and corresponds to the distinguished character of Conjecture \ref{conj-class}(c). 
\end{thm}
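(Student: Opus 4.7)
The approach is to combine the classification of $\U_2(k)$-distinguished irreducible admissible representations of $\U_3(k)$ due to Gelbart-Rogawski-Soudry \cite{GRS} with a direct verification of the relevance condition for the pairs $(M_A, [2])$ defined in Section \ref{S:relevant}. Since the statement is already attributed to \cite{GRS}, the task is largely to translate their classification into the language of A-parameters and to check compatibility with Conjecture \ref{conj-class}.

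First I would enumerate the A-parameters $M_A$ for $\U_3$, which are 3-dimensional conjugate-orthogonal representations of $WD(K) \times \SL_2(\C)$. These fall into: tempered parameters (trivial on the Arthur $\SL_2(\C)$); the non-tempered parameter $1 \boxtimes [3]$ and its twists $\chi \boxtimes [3]$ with $\chi$ a 1-dimensional conjugate-orthogonal character; and parameters of the form $\chi \boxtimes [2] + \chi'$ with $\chi$ conjugate-symplectic and $\chi'$ conjugate-orthogonal (so that the total parity is conjugate-orthogonal). With $N_A = [2] = 1 \boxtimes \Sym^1(\C^2)$, applying the definition from Section \ref{S:relevant} to the decomposition $M_A = \bigoplus_{i \geq 0} M_i \otimes \Sym^i(\C^2)$, one sees by a short computation that relevance forces $N_1 = N_1^- = 1$, whence $N_1^+ = M_2^- = 0$, and $N_1^- = M_0^+$ forces $1 \subset M_0^+$. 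Thus the only relevant $M_A$ are $M_A = [3]$ (where all of $M_2$ is $M_2^-$) and tempered $M_A$ of the form $1 + M_0$ with $M_0$ a 2-dimensional conjugate-orthogonal tempered parameter; in particular, the families $\chi \boxtimes [3]$ with $\chi \neq 1$ and $\chi \boxtimes [2] + \chi'$ are excluded, since relevance would demand components incompatible with $N_0 = 0$ or with $N_1 \neq \chi$.

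Next, I would invoke \cite{GRS}: a $\U_2$-distinguished irreducible representation of $\U_3$ is either the trivial character or a tempered generic representation $\pi$ with $L(s, BC(\pi))$ having a pole at $s=0$. The trivial character has A-parameter $[3]$, matching case (i). For a tempered generic $\pi \in \Pi_M$, the Asai L-function has a pole at $s=0$ precisely when the trivial character is a summand of $M$, matching the relevance condition for $M_A = 1 + M_0$. The uniqueness of the distinguished representation inside $\Pi_M$ and its identification with the distinguished character of Conjecture \ref{conj-class}(c) then follow, in the tempered case, from tempered GGP for $(\U_3, \U_2)$ (now known for unitary groups): the recipe of \cite{GGP} applied to $M \otimes [2]$, when $1 \subset M$, selects the generic member of $\Pi_M$, which is exactly the one singled out by GRS.

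The hard part is ruling out $\U_2$-distinction for members of the A-packet $\Pi_{M_A}$ lying outside the L-packet $\Pi_M$. The delicate case is $M_A = [3]$, whose A-packet $\Pi_{M_A} = \{1_{\U_3}, \pi_c\}$ contains a supercuspidal $\pi_c$ arising as a theta lift from $\U_1$; one must show $\pi_c$ is not $\U_2$-distinguished. This is contained in \cite{GRS}, which classifies all irreducible admissible $\U_2$-distinguished representations of $\U_3$ and not merely those in a prescribed L-packet; conceptually, a seesaw argument for the dual pair $(\U_1, \U_3)$ reduces the question to one about the trivial representation of $\U_1$, where the outcome is forced. For A-parameters $M_A$ outside the two relevant families, the absence of $\U_2$-distinguished members in $\Pi_{M_A}$ follows similarly from the exhaustiveness of the GRS classification.
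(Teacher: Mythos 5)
Your proposal is correct and takes the same approach as the paper, which presents this theorem purely as a restatement of the Gelbart--Rogawski--Soudry classification \cite{GRS} recast in the language of A-parameters and relevance, offering no independent proof beyond the translation. Your relevance computation and the remarks about the non-generic supercuspidal $\pi_c$ in the A-packet of $[3]$ are sound, though the phrasing ``relevance forces $N_1 = N_1^- = 1$'' momentarily forgets the other alternative $N_1 = N_1^+ = M_2^-$ (which gives exactly $M_A = [3]$) before you recover it in the next sentence.
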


Here is the corresponding global theorem from \cite{GRS}.

\begin{thm} Let $E/F$ be a quadratic extension of number fields with corresponding adele rings $\A_E$ and $\A_F$
and consider a pair $\U_2 \subset \U_3$ of unitary groups over $F$. 
 Let $\pi = \otimes\pi_{v}$ be an irreducible infinite-dimensional cuspidal automorphic
  representation of $\U_3(\A_F)$ with base change
   $BC(\pi)$ on $\GL_3(E)$. 
  Then  the period integral over $\U_2$  is nonzero on $\pi$  if and only if
\begin{enumerate}
\item for each place $v$ of $F$, $\pi_v$ is $\U_2(F_v)$-distinguished;
\item $L(s,BC(\pi))$ has a pole at $s=1$.   
\end{enumerate}
In particular, $\pi$ belongs to a tempered L-packet with L-parameter $M = 1 + M_0$, so that $(M, N_A)$ is relevant  (with $N_A = [2]$). Moreover, the L-function
\[  L(M, N, s)   = \frac{L_E( M, s) \cdot L_E(M,s+1)}{ \zeta_F(s) \cdot \zeta_F(s+1) \cdot \zeta_F(s+2) \cdot L( \omega_{E/F}, s+1) \cdot L( M, {\rm} Ad, s+1)}  \]
is finite nonzero at $s=0$. 
\end{thm}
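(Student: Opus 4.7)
The plan is to establish necessity via factorization and a Rankin--Selberg / Eisenstein unfolding, and sufficiency via realization of $\pi$ as a global theta lift from $\U_1$ combined with the local distinction criterion. The ``in particular'' assertions then follow from the sufficiency step together with Theorem~\ref{T:globalR}.

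Condition~(1) is immediate: for cuspidal $\pi$, the $\U_2$-period is a factorizable $\U_2(\A_F)$-invariant functional on $\pi$, so its global nonvanishing forces each $\pi_v$ to be $\U_2(F_v)$-distinguished. To obtain condition~(2), I would attach to a test section in a degenerate principal series of $\U_3$ an Eisenstein series $E(g,s)$ on $\U_3$ whose residue at $s=1$ is controlled by $L(s,BC(\pi))$. Pairing $\phi \in \pi$ with the restriction of $E(g,s)$ along the $\U_2$-cycle and unfolding on the Eisenstein side, one reads the open double-coset contribution as an Euler product equal to $L(s,BC(\pi))$ up to finitely many harmless factors. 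Equating leading behavior at $s=1$ shows that nonvanishing of the $\U_2$-period of $\phi$ forces a pole of $L(s,BC(\pi))$ at $s=1$.

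For sufficiency, the pole at $s=1$ together with the classification of the discrete spectrum of $\GL_3(\A_E)$ and Rogawski's base-change theory for $\U_3$ forces $BC(\pi) = \mathbf{1}_{\A_E} \boxplus \sigma$ with $\sigma$ a conjugate-self-dual cuspidal representation of $\GL_2(\A_E)$. This immediately yields the ``in particular'' assertion that $\pi$ has tempered L-parameter $M = \mathbf{1} + M_0$ and that $(M,N_A)$ is relevant. Moreover, $\pi$ then lies in the image of endoscopic transfer from $\U_1 \times \U_2$ and may be realized as a global theta lift $\theta_{\psi,\chi}(\mu)$ from a Hecke character $\mu$ of $E^{(1)} \backslash \A_E^{(1)}$. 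Writing $\phi \in \pi$ as the integral against $\mu$ of a theta kernel, I would apply Fubini and the see-saw identity relating the dual pairs $(\U_1,\U_3)$ and $(\U_2, \U_1 \times \U_1)$ in a suitable Hermitian / skew-Hermitian configuration to convert the $\U_2$-period of $\phi$ into a mixed theta inner product. The Siegel--Weil / Rallis inner product formula evaluates this as a nonzero residue times an Euler product of local zeta integrals, each of which is nonzero exactly when $\pi_v$ is $\U_2(F_v)$-distinguished; condition~(1) then yields global nonvanishing.

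The main obstacle lies in Step~2: test sections must be arranged so that only the open $\U_2$-orbit contributes and the resulting Euler product is unambiguously identified with $L(s,BC(\pi))$, which requires careful control of the degenerate orbit contributions throughout the unfolding. A secondary matching is needed in Step~3 between the local zeta integrals produced by the see-saw identity and the local $\U_2(F_v)$-invariant functionals furnished by the preceding local theorem, so that the Euler product factors coincide with the local distinguished periods rather than some related but inequivalent functional. Finally, the finite nonzero value of $L(M,N,0)$ follows from Theorem~\ref{T:globalR} applied to the relevant pair $(M, N_A) = (\mathbf{1}+M_0, [2])$: the set of special pairs $(I \times J)^{\heartsuit}$ in the sense of Definition~\ref{D:special} is empty, so $L(M,N,0)$ reduces to a nonzero scalar arising from a comparison of residues of $\zeta_E$ and $\zeta_F$ at $s=1$ (together with the regular nonzero values $L(M_0,0)$, $L(M_0,1)$, $L(\omega_{E/F},1)$, and the adjoint L-value $L(M,\mathrm{Ad},1)$).
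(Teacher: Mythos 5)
The paper does not actually prove this theorem; it states it as a recollection of the main result of Gelbart--Rogawski--Soudry \cite{GRS}, recast in the language of relevant A-parameters. So there is no ``paper's proof'' to compare against step by step; what you are doing is reconstructing the \cite{GRS} argument, and your overall plan (theta lift from $\U_1$, see-saw duality, Rallis inner product / Siegel--Weil, Rogawski's classification) is indeed the correct ingredient list. That said, two steps in your sketch are not yet proofs.

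First, for necessity, \cite{GRS} do not unfold an Eisenstein series on $\U_3$ against the $\U_2$-cycle; they pass directly through the see-saw to show that a nonzero $\U_2$-period of $\varphi \in \pi$ forces a nonzero theta lift of $\pi$ to a Weil representation attached to $\U_1$, and then invoke the tower property and Rogawski's endoscopic classification to conclude that $BC(\pi)$ contains the trivial character, i.e.\ $L(s, BC(\pi))$ has a pole at $s=1$. Your Eisenstein unfolding, if it can be made to work, proves a stronger Waldspurger-type formula, and the orbit analysis you flag as ``the main obstacle'' is genuinely delicate (one must kill the closed-orbit contributions and pin down the local factors); as written this direction is an outline rather than an argument. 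Second, and more seriously, in the sufficiency step you pass from ``$BC(\pi) = \mathbf{1} \boxplus \sigma$'' to ``$\pi$ is a global theta lift from a Hecke character of $\U_1$.'' This is not automatic: the endoscopic $A$-packet attached to $\mathbf{1} \boxplus \sigma$ has several members, and only specific ones (determined by local signs, the multiplicity formula, and the choice of splitting character $\chi$) are realized as theta lifts of a given $\mu$. This is precisely where hypothesis~(1) -- that every $\pi_v$ is locally $\U_2(F_v)$-distinguished -- must be used to identify $\pi$ as the distinguished automorphic member and hence the actual theta lift; you invoke~(1) only afterward to get nonvanishing of local zeta integrals, which puts the dependence in the wrong place. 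Your final paragraph on $L(M,N,s)$ is essentially correct: $(I\times J)^\heartsuit = \emptyset$ for the pair $(\mathbf{1}+M_0,\,[2])$, and the finite nonzero value then drops out of Theorem~\ref{T:globalR}, with the minor caveat that \ref{T:globalR} is stated for orthogonal groups and one should invoke its (routine) conjugate-dual analogue for unitary groups.
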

\vskip 5pt

 One may also start with a  non-tempered A-parameter $M_A$  of $\U_3$ and 
 consider the restriction of the representations in the associated A-packets to $\U_2$. 
 Disregarding 1-dimensional representations of $\U_3$, these A-parameters are of the form $M_A = \mu + \nu \boxtimes [2]$, with $\mu$, $\nu$ 1-dimensional characters which are conjugate dual of appropriate sign.  This restriction problem  has been studied in two papers \cite{H1, H2}
 of Jaeho Haan. His results are in accordance with the expectations of Conjecture \ref{conj-class}. We refer the reader to his papers for the precise results. 
\vskip 5pt
 \begin{example}  $\SO_4 \subset \SO_5$.
 \end{example}
 
 We now consider the restriction problem for some non-tempered A-packets of $\SO_5 = \PGSp_4$, namely the Saito-Kurokawa packets and the A-packets of Soudry type. The restriction of representations in these packets to $\SO_4$ has been determined in \cite{GG} and \cite{GS}.
 \vskip 5pt
 
 Consider first the Saito-Kurokawa case over a non-archimedean local field $k$. Here the A-parameter $M_A$ of $\SO_5$ is of the form
 \[  M_A =    M_0 +  [2], \]
 where $M_0$ is a tempered L-parameter of $\SO_3$. The A-packet of $M_A$ can be explicated as follows. The component group of $M_A$ is
 \[ A_{M_A} =  \begin{cases}
 \Z/2\Z \times \Z/2\Z \text{  if $M_0$ irreducible;} \\
  \{1 \} \times \Z/2\Z \,\,\text{~~ if $M_0$ reducible.} 
  \end{cases} \]
  Here, we think of the first component in the direct product as associated to $M_0$, whereas the second $\Z/2\Z$ is associated to $[2]$.
 The A-packet $\Pi_{M_A}$ thus consists of $4$ or $2$ representations in the respective cases and we may label them by the irreducible characters of $A_{M_A}$ as:
 \[ \Pi_{M_A} = \{ \pi^{\epsilon_1, \epsilon_2}: \epsilon_1, \epsilon_2 = \pm \} \]
 where we understand that when $M_0$ is reducible, $\pi^{-, \epsilon} = 0$.  Moreover, a representation $\pi^{\epsilon_1, \epsilon_2}$ is a representation of the split group $\SO_5^+$ if and only if $\epsilon_1 = \epsilon_2$; otherwise it is a representation of the non-split inner form $\SO_5^-$.   The associated L-packet $\Pi_M$ is the subset
 \[ \Pi_M = \{ \pi^{\epsilon_1, +}: \epsilon_1 = \pm \} \subset \Pi_{M_A}. \] 
 
 \vskip 10pt

 The A-parameters $N_A$ of $\SO_4$  for which $(M_A, N_A)$ is relevant are those of the form:
 \vskip 5pt
 
 \begin{itemize}
 \item[(a)] $N_A = N_0 + 1$, where $N_0$ is a 3-dimensional orthogonal L-parameter.
 \item[(b)] $N_A = \chi_0 + [3]$, where $\chi_0$ is a 1-dimensional character.
 \end{itemize}
 For simplicity, we assume that the $\SO_4$ in question has trivial discriminant, in which case $N_0$ and $\chi_0$ have trivial determinant. 
 Thus, in (b), $N_A$ is the A-parameter of the trivial representation. As this case will be treated in some generality in the last example below, we shall focus on (a). 
 \vskip 5pt

  In this case, the results of \cite{GG} can be summarized as:
 \vskip 5pt
 
 \begin{thm}
 Fix a Saito-Kurokawa A-parameter $M_A = M_0 + [2]$ of $\SO_5$ and let $N_A$ be a tempered A-parameter of $\SO_4$ with $\det(N_A)$ trivial.
 For $\pi^{\epsilon_1, \epsilon_2} \in \Pi_{M_A}$, set 
 \[ d( \pi^{\epsilon_1, \epsilon_2}, N_A) =  \sum_{\sigma \in \Pi_{N_A}} d(\pi^{\epsilon_1, \epsilon_2}, \sigma). \]
 Also, set
 \[ d(M_A, N_A) =  \sum_{\epsilon_1, \epsilon_2} d(\pi^{\epsilon_1, \epsilon_2}, N_A). \]
 Then
 \[ d(M_A, N_A) \ne 0 \Longleftrightarrow  \text{$(M_A, N_A)$ is relevant.} \]
 When $(M_A, N_A)$ is relevant, with $N_A = N_0 +1$ tempered, we have:
 \[ d( \pi^{\epsilon_1, \epsilon_2}, N_A)  \ne 0 \Longleftrightarrow    \epsilon_1 = \epsilon(1/2, M_0 \otimes N_A) = \epsilon(1/2, M_0) \cdot \epsilon(1/2, M_0 \otimes N_0). \] 
 Further, if $ d( \pi^{\epsilon_1, \epsilon_2}, N_A)  \ne 0$,
 then $ d( \pi^{\epsilon_1, \epsilon_2}, N_A)  =1$.
 \end{thm}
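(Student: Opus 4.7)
The plan is to realize the Saito-Kurokawa A-packet of $\SO_5$ via the theta correspondence, and then reduce the restriction problem to the tempered GGP conjecture for the pair $(\SO_3, \SO_2)$ through a seesaw duality argument. This is consistent with the construction used in \cite{GG}, but I will outline it from the point of view of checking Conjecture \ref{conj-class}.

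First, I would recall the theta-lift construction of $\Pi_{M_A}$: each representation $\pi^{\epsilon_1,\epsilon_2} \in \Pi_{M_A}$ is obtained as the theta lift, via the dual pair $({\rm Mp}_2, \O_5)$, of a representation $\tau^{\epsilon_1}$ in the Waldspurger packet on an appropriate form of ${\rm Mp}_2$ attached to $M_0$. The bookkeeping is arranged so that $\epsilon_1$ records the choice of member within the Waldspurger packet (a nontrivial choice only when $M_0$ is irreducible), while $\epsilon_2$ distinguishes the theta lifts landing on the split $\SO_5^+$ from those landing on the non-quasi-split $\SO_5^-$; the L-packet members $\Pi_M$ are precisely the theta lifts to $\SO_5^+$.

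Second, I would apply the seesaw dual pair attached to an orthogonal decomposition $V_5 = V_4 \oplus \ell$ with $\ell$ a non-isotropic line, which relates the restriction $\O(V_5) \supset \O(V_4) \times \O(\ell)$ with the inclusion ${\rm Mp}_2 \subset {\rm Mp}_2 \times {\rm Mp}_2$. Combined with Frobenius reciprocity for the theta correspondence, one obtains an isomorphism
\[
\Hom_{\O(V_4)}\bigl(\pi^{\epsilon_1,\epsilon_2}, \sigma\bigr) \;\cong\; \Hom_{{\rm Mp}_2}\!\bigl(\tau^{\epsilon_1},\, \Theta_{\O(V_4)}(\sigma) \otimes \omega_\ell\bigr),
\]
where $\omega_\ell$ is the Weil representation of ${\rm Mp}_2$ attached to the 1-dimensional form on $\ell$ and $\Theta_{\O(V_4)}(\sigma)$ is the ${\rm Mp}_2$-theta lift of $\sigma$. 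For $N_A$ tempered with trivial determinant, the parameter of $\Theta_{\O(V_4)}(\sigma) \otimes \omega_\ell$ is tempered, and its Waldspurger parameter matches $M_0$ if and only if the trivial character appears as a summand of $N_A$; this is the relevance condition $N_A = N_0 + 1$ and yields the equivalence $d(M_A, N_A) \ne 0 \iff (M_A, N_A)$ is relevant. The analysis applies uniformly to either choice of inner form, so that both the split $\SO_5^+$ side and the non-split $\SO_5^-$ side produce a distinguished representation of the corresponding $\SO_4$, accounting for the absence of a constraint on $\epsilon_2$ in the statement. Assuming $(M_A, N_A)$ is relevant, the tempered GGP conjecture for $(\SO_3, \SO_2)$, which in this metaplectic guise is Waldspurger's formula for toric periods on $\PGL_2$, then singles out a unique member of the Waldspurger packet of $M_0$ with multiplicity one, determined by the epsilon-factor condition $\epsilon_1 = \epsilon(1/2, M_0 \otimes N_A) = \epsilon(1/2, M_0)\cdot\epsilon(1/2, M_0 \otimes N_0)$; the factor $\epsilon(1/2, M_0)$ arises precisely from the local epsilon factor of the twist by $\omega_\ell$ in the seesaw identity.

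The main obstacle will be the careful bookkeeping of sign conventions: identifying the abstract component-group character $\epsilon_1$ with the Waldspurger sign, reconciling the $\psi$-dependence of the Waldspurger packets on ${\rm Mp}_2$ with the canonical basis of the component group $A_{M_A}$, and verifying that the local epsilon factor of $\omega_\ell$ contributes exactly $\epsilon(1/2, M_0)$ so that the final formula takes the form predicted by \cite{GGP}. A secondary subtlety is the case of reducible $M_0$, where the Waldspurger packet is a singleton and the constraint on $\epsilon_1$ is automatic; the same seesaw argument applies, with the simplification that no choice within the Waldspurger packet is needed.
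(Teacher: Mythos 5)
The paper does not give its own proof of this theorem: it states that "the results of \cite{GG} can be summarized as" this statement and cites Gan-Gurevich. Your proposal is a correct reconstruction of that approach — you realize the Saito-Kurokawa packet by theta lifting Waldspurger packets on ${\rm Mp}_2$, apply the seesaw identity attached to the decomposition $V_5 = V_4 \oplus \ell$ to convert the $\SO_4$-restriction multiplicity into a trilinear $\Hom$ space for ${\rm Mp}_2$, and invoke Waldspurger's theorem on toric/trilinear periods to obtain the epsilon-factor criterion. Your handling of the parity of the three Weil representations in the seesaw (genuine for $V_5$ and $\ell$, non-genuine for $V_4$) is consistent, and your observation that the reducible-$M_0$ case degenerates to a trivial constraint on $\epsilon_1$ is correct. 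The one caution is that the identity you label "Frobenius reciprocity for the theta correspondence" is really the seesaw identity (adjunction of restriction of the Weil representation to the two dual pairs), and one must be careful about whether the $\Hom$ spaces are taken in the category of quotients or sub-objects, especially since $\Theta$ is a big theta lift; [GG] handles this via the structure of the induced modules. The bookkeeping issues you flag (matching the abstract $\epsilon_1$ with the Waldspurger sign, the $\psi$-dependence, and isolating the contribution $\epsilon(1/2, M_0)$ from $\omega_\ell$) are indeed the technical heart of [GG]; the equality $\epsilon(1/2, M_0 \otimes N_A) = \epsilon(1/2, M_0)\cdot\epsilon(1/2, M_0 \otimes N_0)$ is immediate from multiplicativity once $N_A = N_0 + 1$, but tracing it through the seesaw to confirm it agrees with the GGP recipe is precisely what makes the verification nontrivial. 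Overall your plan is sound and matches the cited proof.
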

 \vskip 5pt
 
 \begin{cor}
In the context of the above theorem,  if one considers the L-packet $\Pi_M$, then we have:
\[ d(M,N) = \begin{cases} 
0, \text{  if $(M_A, N_A)$ is not relevant;} \\
1, \text{  if $(M_A,N_A)$ is relevant. }
\end{cases} \]
When $(M_A,N_A)$ is relevant, the unique representation $\pi^{\epsilon_1, +} \in\Pi_M$ for which $d(\pi^{\epsilon_1, +} \, N) \ne 0$ is
given by
\[ \epsilon_1 =   \epsilon(1/2, M_0 \otimes N_A). \]
This is the distinguished character predicted by Conjecture \ref{conj-class}.
 \end{cor}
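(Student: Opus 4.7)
The plan is to derive this corollary as a direct restriction of the preceding theorem from the full A-packet to the L-packet $\Pi_M \subset \Pi_{M_A}$, followed by a short check that the resulting distinguished representation matches the Conjecture \ref{conj-class}(c) recipe of \cite{GGP}. Recall from the paragraph preceding the theorem that $\Pi_M = \{\pi^{\epsilon_1, +} : \epsilon_1 = \pm\}$, i.e. the L-packet is precisely the $\epsilon_2 = +$ slice of $\Pi_{M_A}$. By definition,
\[ d(M,N) \;=\; \sum_{\epsilon_1 = \pm} d(\pi^{\epsilon_1, +}, N_A). \]

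First I would handle the non-relevant case. If $(M_A, N_A)$ is not relevant, the theorem states $d(M_A, N_A) = 0$; since each $d(\pi^{\epsilon_1, \epsilon_2}, N_A)$ is a non-negative integer, each individual term vanishes and so does the above partial sum, giving $d(M,N) = 0$. Next, if $(M_A, N_A)$ is relevant, the theorem asserts $d(\pi^{\epsilon_1, \epsilon_2}, N_A) = 1$ precisely when $\epsilon_1 = \epsilon(1/2, M_0 \otimes N_A)$ (independently of $\epsilon_2$) and vanishes otherwise. Restricting to $\epsilon_2 = +$ therefore picks out exactly one nonzero term, namely the one at $\epsilon_1 = \epsilon(1/2, M_0 \otimes N_A)$, producing $d(M,N) = 1$ and identifying the unique distinguished member of $\Pi_M$.

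Finally I would verify that this character agrees with the GGP recipe. The L-parameter associated to $M_A = M_0 + [2]$ is $M = M_0 + \nu^{1/2} + \nu^{-1/2}$; here $\nu^{\pm 1/2}$ form a non-selfdual pair and contribute no generator, while $M_0$ (assumed irreducible) is a selfdual symplectic summand. Hence $A_M \cong \Z/2\Z$ with canonical basis the element corresponding to $M_0$. The distinguished character of \cite{GGP} evaluated on this basis element is
\[ \chi(M_0) \;=\; \epsilon(1/2, M_0 \otimes N) \cdot (\det M_0)(-1)^{\dim N / 2} \cdot (\det N)(-1)^{\dim M_0 / 2}. \]
Since $M_0$ is symplectic one has $\det M_0 = 1$; since we assumed $\SO_4$ has trivial discriminant, $\det N$ is trivial; so both correction factors disappear and we are left with $\chi(M_0) = \epsilon(1/2, M_0 \otimes N_A)$, exactly as claimed.

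There is no substantive obstacle here: the entire argument is bookkeeping, consisting of a sub-packet restriction of the theorem plus a routine simplification of the sign recipe. If anything, the one point that warrants care is confirming that the two correction signs in the GGP formula genuinely vanish in this specific Saito--Kurokawa setup (controlled by the parity of $\dim N / 2$ and by the standing hypothesis $\det N = 1$), but these are immediate in the case at hand.
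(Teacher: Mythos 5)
Your proof is correct and takes essentially the same (and the intended) approach: the corollary is the direct restriction of the theorem's conclusion from the full A-packet $\Pi_{M_A}$ to its $\epsilon_2 = +$ slice $\Pi_M$, combined with the elementary observation that in the Saito--Kurokawa setting the two correction signs in the GGP character recipe drop out (since $\det M_0 = 1$ as $M_0$ is symplectic and $\det N = 1$ by hypothesis), leaving exactly $\epsilon(1/2, M_0 \otimes N_A)$.
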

 \vskip 10pt

 Observe that the above theorem goes beyond our local Conjecture \ref{conj-class} as it addresses the restriction problem for the whole A-packet $\Pi_{M_A}$ and not just the L-packet $\Pi_M$. Note also that when $M_0$ is irreducible,  one has $d(M_A, N_A) = 2$.
 \vskip 10pt
 
There is also an analogous global theorem which we state here.
\vskip 10pt

\begin{thm}
Consider the non-tempered global A-parameter $M_A = M_0 \oplus [2]$  over a number field $F$, where $ M_0$ corresponds to a cuspidal representation of $\PGL_2(\A_F)$. Consider an element of the global A-packet $\Pi_{M_A}$ for $G(\A_F)=\SO_5(\A_F)$:
\[   \pi^{\underline{\epsilon}_1, \underline{\epsilon}_2} = \otimes_v \pi^{\epsilon_{1,v},  \epsilon_{2,v}}. \]
\vskip 5pt

(i)  This representation occurs in the automorphic discrete spectrum if and only if 
\[ \prod_v \epsilon_{1,v} = \epsilon(1/2, M_0)  =  \prod_v \epsilon_{2,v}. \]
\vskip 5pt

(ii) When the condition in (i) holds, the global period integral of $\pi^{\underline{\epsilon}_1, \underline{\epsilon}_2}$ against the cuspidal representations with tempered L-parameter $N_A$ is zero unless $(M_A, N_A)$ is relevant, i.e. when $N_A = N_0 + 1$. 
 \vskip 5pt
 
 (iii) When the condition in (i) holds and  $(M_A, N_A)$  is relevant, the period integral in (ii) is nonzero if and only if
 \[ L(1/2, M_0 \otimes N_0) \ne 0. \]
\end{thm}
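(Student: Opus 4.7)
The plan is to reduce all three statements to known local and global inputs by realizing the Saito-Kurokawa A-packet on $\PGSp_4=\SO_5$ as a global theta lift from the metaplectic double cover $\widetilde{\SL}_2$. Concretely, let $\tilde{\sigma}=\otimes_v \tilde{\sigma}_v$ be a genuine cuspidal automorphic representation of $\widetilde{\SL}_2(\A_F)$ whose Waldspurger--Shimura lift to $\PGL_2(\A_F)$ is the cuspidal representation with L-parameter $M_0$. The local Waldspurger dichotomy partitions the components $\tilde{\sigma}_v$ between the two (pure inner) forms of $\widetilde{\SL}_2(F_v)$ and gives, after global $\theta$-lifting (with respect to $\psi$) to $\PGSp_4(\A_F)$, precisely the members $\pi^{\underline{\epsilon}_1,\underline{\epsilon}_2}$ of the global A-packet attached to $M_A = M_0\oplus[2]$. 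This is the natural globalization of the local construction used in \cite{GG}.

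For part (i), Arthur's multiplicity formula for $M_A=M_0\oplus[2]$ requires computing the global Arthur character $\epsilon_{\mathrm{Art}}$ on the two canonical generators $a_1,a_2$ of $A_{M_A}$ (attached to $M_0$ and $[2]$ respectively). Decomposing the symplectic adjoint representation of the Arthur $\SL_2$ on $\Sym^2 M_A$ and extracting the factors relevant to $a_1$ and $a_2$ is a direct calculation that identifies $\epsilon_{\mathrm{Art}}(a_j)=\epsilon(1/2,M_0)$ for $j=1,2$. The global character $\chi$ attached to $\pi^{\underline{\epsilon}_1,\underline{\epsilon}_2}$ equals $\prod_v\epsilon_{1,v}$ on $a_1$ and $\prod_v\epsilon_{2,v}$ on $a_2$, so the stated condition is the equality $\chi=\epsilon_{\mathrm{Art}}$.

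For parts (ii) and (iii), the relevance calculation carried out before Definition~\ref{D:special} shows that among tempered $N_A$ only $N_A=N_0\oplus 1$ can be relevant with respect to $M_A$; Theorem~\ref{T:globalR} then identifies $L(M,N,0)$, up to a nonzero constant, with $L(1/2,M_0\otimes N_0)$. The vanishing claim in (ii) for non-relevant $N_A$ follows from the local GGP result for the Saito-Kurokawa packet proved in \cite{GG}, applied at any place where $M_A$ is unramified: the local Hom-space is already zero there. For the non-vanishing criterion in (iii), one invokes a seesaw duality for the pairs $(\PGSp_4,\widetilde{\SL}_2)$ and $(\SO(V_4),\widetilde{\SL}_2)$, where $V_4$ is the split four-dimensional quadratic space underlying $\SO_4$. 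Writing $\pi^{\underline{\epsilon}_1,\underline{\epsilon}_2}=\theta_\psi(\tilde{\sigma})$ and unfolding the Bessel period against $\sigma\subset\mathcal{A}(\SO_4)$ reduces, after integration against the theta kernel, to a toric/Waldspurger-type period of $\tilde{\sigma}$ twisted by $\sigma$. The Waldspurger formula (or, equivalently, the Rallis inner product formula on $\widetilde{\SL}_2\times\SO_4$) identifies the non-vanishing of this inner period with $L(1/2,M_0\otimes N_0)\ne 0$.

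The main obstacle is purely technical: the Saito-Kurokawa representation is residual rather than cuspidal, so the Bessel period must be interpreted via the Zydor regularization \cite{Zy}, and one must check that the seesaw unfolding is compatible with this regularization (no residual terms enter beyond the expected toric period). A cleaner route, if available, is to invoke the refined Ichino-Ikeda-type formula for the Saito-Kurokawa case (established by seesaw and Rallis inner product arguments of the same flavor) and to read off (iii) directly from that formula; either way, the delicate point is ensuring that the regularization and the local sign matching combine correctly so that the global period truly reduces to the factor $L(1/2,M_0\otimes N_0)$ predicted by Theorem~\ref{T:globalR}.
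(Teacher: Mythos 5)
The paper itself gives no proof of this statement. It is quoted as a summary of results from \cite{GG}, preceded by the remark that the restriction problem for Saito-Kurokawa packets to $\SO_4$ ``has been determined in \cite{GG}''; the only content the paper adds after stating the theorem is the observation that
\[ L(s, M_A, N_A) = \frac{L(s+1/2, M_0\otimes N_0)}{\zeta_F(s+2)\cdot L(s+3/2, M_0)\cdot L(s+1, M_0, \mathrm{Ad})} \]
has denominator holomorphic and nonzero at $s=0$, so that the criterion in (iii) is exactly the non-vanishing of $L(0, M_A, N_A)$ predicted by Conjecture \ref{C:global}. Your proposal correctly identifies the mechanism of the cited proof: the Saito-Kurokawa packet is the global $\theta$-lift from $\widetilde{\SL}_2(\A_F)$, (i) is read off from Arthur's multiplicity formula, and (ii)--(iii) reduce by a seesaw to a Waldspurger-type period on $\widetilde{\SL}_2$ evaluated by a Rallis inner product argument. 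This is precisely the route of \cite{GG}, so the approach is correct.

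A few points of precision worth flagging. Your Arthur sign computation is right and agrees with Lemma \ref{L:art}: for $M_A = M_0\boxtimes[1] + \mathbf{1}\boxtimes[2]$, both generators $a_1$ (attached to $M_0$) and $b_1$ (attached to $[2]$) receive $\epsilon(1/2, M_0\otimes\mathbf{1})^{\min(1,2)} = \epsilon(1/2, M_0)$, since $\min(1,2)=1$ is odd. For (ii), the phrase ``applied at any place where $M_A$ is unramified'' is not quite what is needed: what you actually need is the existence of a place $v$ where the \emph{local} pair $(M_{A,v}, N_{A,v})$ fails to be relevant, i.e.\ where $\mathbf{1}\not\subset N_{A,v}$. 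This is not automatic at every unramified place, but when the global tempered $N_A$ has no trivial constituent it does hold at all places outside a density-zero set (if $\mathbf{1}\subset N_{A,v}$ at almost all $v$, then by strong multiplicity one for $\GL_n$ the trivial character would appear in $N_A$ globally), and vanishing of the period then follows. For (iii), Waldspurger's formula for torus periods and the Rallis inner product formula for Petersson norms of $\theta$-lifts are related but distinct tools, not ``equivalent''; the seesaw unfolding converts the Bessel period into a triple integral on $\widetilde{\SL}_2(\A_F)$ involving $\tilde\sigma$, an elementary $\O_1$-theta, and the backward lift of $\sigma$ from $\SO_4$, and it is this integral that the Rallis inner product (in the form used in \cite{GG}) computes. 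Your acknowledgement that the residual representation $J(\pi_M,1/2)$ forces one to regularize the Bessel period and check compatibility with the seesaw is exactly the delicate point; it is handled carefully in \cite{GG}, and no additional idea beyond what you sketch is needed.
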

Noting that 
\[ L(s, M_A , N_A) =   \frac{L(s+ 1/2,  M_0 \otimes N_0)}{\zeta_F(s+ 2) \cdot L(s +3/2, M_0) \cdot L(s+1, M_0, Ad)}, \]
we see that the condition in (iii) is equivalent to saying that $L(0, M_A, N_A) \ne 0$. 
\vskip 10pt

Now we consider the case of A-packets of Soudry type, i.e an A-parameter of $\SO_5$ of the form
\[ M_A = M_0 \otimes [2] \]
where $M_0$ is an  orthogonal L-parameter.  Over a local field $k$, we enumerate the possible  component group $A_{M_A}$ according to the following 4 disjoint scenarios:
\begin{itemize}
\item[(a)]  If $M_0 = \chi + \chi^{-1}$ with $\chi$ unitary but non-quadratic, then $\det(M_0)$ is trivial and $A_{M_A}$ is trivial;
\item[(b)]  If $M_0 =\chi + \chi$, with $\chi$ quadratic, then $\det(M_0)$ is trivial and $A_{M_A} = \Z/2\Z$;
\item[(c)]  If $M_0$ is irreducible, then $\det(M_0)$ is nontrivial and $A_{M_A} = \Z/2\Z$;
\item[(d)]  If $M_0 = \chi_1 + \chi_2$ with $\chi_1 \ne \chi_2$ quadratic, then $\det(M_0) = \Z/2\Z \times \Z/2\Z$. 
\end{itemize}
Accordingly, the local A-packet $\Pi_{M_A}$ has $1$, $2$, $2$ or $4$ representations in the respective cases. 
We may denote the A-packet by
\[ \Pi_{M_A} = \{ \pi^{\epsilon_1, \epsilon_2}: \epsilon_1, \epsilon_2 = \pm \} \]
with the understanding that:
\vskip 5pt

\begin{itemize}
\item[-]  in case (a), all representations are $0$ except for $\pi^{++}$;
\item[-]  in case (b),  the representations $\pi^{+,-}$ and $\pi^{-,+}$ are zero;
\item[-]  in case (c),  the representations $\pi^{-,-}$ and $\pi^{+, -}$ are zero.
\end{itemize}
Moreover, the representation $\pi^{\epsilon_1, \epsilon_2}$ is a representation of the split $\SO^+_5$ if and only if $\epsilon_1 = \epsilon_2$. 
Thus the A-packet $\Pi_{M_A}$ contains some representations of the non-split $\SO^-_5$ only in cases (c) and (d) (i.e. when $\det(M_0)$ is nontrivial). The associated L-packet $\Pi_M$ is the subset
\[ \Pi_M  = \{ \pi^{+,+} \} \subset \Pi_{M_A}. \]
\vskip 5pt

The A-parameters $N_A$ of $\SO_4$ for which the pair $(M_A, N_A)$ is relevant are those of the form
\begin{itemize}
\item [(a)]$N_A = M_0 + N_0$ with $N_0$ tempered orthogonal of dimension $2$;
\item[(b)]  $N_A = \chi_1 \otimes [3] + \chi_2$, if $M_0 = \chi_1 + \chi_2$ ($\chi_1$, $\chi_2$ not necessarily distinct).
\end{itemize}
Here is a sample of the local results of \cite{GS}.
\vskip 5pt

\begin{thm}
Consider the non-tempered local A-parameter $M_A = M_0 \boxtimes [2]$ for $\SO_5$ and let $N_A$ be a tempered A-parameter for $\SO_4$.
Assume for simplicity that $\det(N_A)$ is trivial. 
 \vskip 5pt

(i) $d(M_A, N_A) \ne 0 \Longrightarrow (M_A,N_A)$ is relevant.
\vskip 5pt

(ii) Suppose that  $(M_A,N_A)$ is relevant and $\det(M_0)$ is trivial (i.e. in cases (a) and (b)). Then $d(\pi, N_A) =1$ for any $\pi \in \Pi_{M_A}$. 
Thus
\[ d(M_A, N_A) =  
\begin{cases}
1, \text{    if $M_0 = \chi + \chi^{-1}$, $\chi$ non-quadratic;  } \\
2, \text{    if $M_0 = \chi + \chi^{-1}$, with $\chi$ quadratic.}
\end{cases}
\]
\vskip 5pt

(iii) Suppose that  $(M_A,N_A)$ is relevant and  $\det(M_0)$ is nontrivial (i.e. in cases (c) and (d)). Then we have:
\vskip 5pt

\begin{itemize}
\item in case (c),  where $M_0$ is irreducible, we have $d(\pi, N_A) =1$ for any $\pi \in \Pi_{M_A}$, so that $d(M_A, N_A) =2$. 
\vskip 5pt

\item in case (d), where $M_0$ is reducible, we have $d(\pi, N_A) =1$ for any $\pi \in \Pi_{M_A}$, so that $d(M_A, N_A) =4$, unless
$N_A = M_0 + N_0$ with $N_0$ reducible. In this latter case, we have:
\[ d(\pi^{\epsilon_1, \epsilon_2}, N_A) = 1 \Longleftrightarrow \epsilon_1 = +,  \]
so that $d(M_A, N_A) =2$. 
\end{itemize}
\vskip 5pt

(iv) In all cases where $(M_A, N_A)$ is relevant, we have
 \[ d(\pi^{++}, N) = 1. \]
\end{thm}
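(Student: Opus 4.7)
The plan is to proceed via the theta correspondence, following the approach of \cite{GS}. Members of the Soudry-type A-packet $\Pi_{M_A}$ with $M_A = M_0 \boxtimes [2]$ on pure inner forms of $\SO_5 = \PGSp_4$ are realized as theta lifts from pure inner forms of $\O_4$, starting from tempered representations whose L-parameter is controlled by $M_0$. Under this realization, the label $\epsilon_2$ records the Witt class of the underlying $4$-dimensional quadratic space, while $\epsilon_1$ labels the component of the tempered L-packet on the relevant $\SO_4$. The component-group arithmetic on $\Pi_{M_A}$ then matches the tempered L-packet structure on the $\SO_4$-side, with the four scenarios (a)--(d) corresponding exactly to the possible shapes of the tempered L-packet of $M_0$ on $\O_4$.

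Next I would invoke see-saw duality to convert the restriction problem from $\SO_5$ to $\SO_4$ into a pairing on smaller groups. Writing $\pi^{\epsilon_1,\epsilon_2} = \theta(\tau)$ for the tempered $\tau$ on the appropriate inner form of $\SO_4$, the see-saw yields an isomorphism of the schematic form
\[
\Hom_{\SO_4}(\pi^{\epsilon_1,\epsilon_2} \otimes \sigma, \bC) \; \cong \; \Hom_{\SO_4 \times \SO_3}(\tau \otimes \sigma', \omega),
\]
where $\sigma'$ is a tempered constituent of the restriction of $\sigma$ (from $\Pi_{N_A}$) to a smaller classical group and $\omega$ is a Weil representation; the right-hand side is then controlled by tempered GGP, a theorem of Waldspurger \cite{W} in the $p$-adic case. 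Part (i) follows because the right-hand side can only be nonzero when the L-parameters of $\tau$ and $\sigma'$ satisfy a GGP-compatibility, and unwinding this on the $\SO_5 \times \SO_4$ side translates precisely to the relevance of $(M_A, N_A)$ at the level of the Arthur nilpotent data.

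Parts (ii) and (iii) are then a case-by-case bookkeeping. For each shape (a)--(d) of $M_0$, I would enumerate the inner forms of $\O_4$ that contribute a theta lift to $\Pi_{M_A}$ and sum the tempered GGP multiplicities. Individual multiplicities $d(\pi^{\epsilon_1,\epsilon_2}, N_A) = 1$ come directly from multiplicity-one in tempered GGP, and the totals $1, 2, 2, 4$ reflect $|A_{M_A}|$ in the respective scenarios. The exceptional clause in (d) with $N_0$ reducible corresponds to an endoscopic degeneracy: the theta lift from $\SO_4^-$ either vanishes in the stable range or produces zero Hom-space on the GGP side, forcing $\epsilon_1 = +$. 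Part (iv) is the cleanest universal statement and matches the prediction of Conjecture \ref{conj-class}(c): the representation $\pi^{++}$ is always the theta lift from the split $\SO_4^+$ of a canonical tempered representation in the L-packet attached to $M_0$, and the see-saw furnishes a nonzero Hom-space under the relevance hypothesis through a tempered GGP pairing that is automatic on the dual side.

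The main obstacle will be the careful matching of component-group characters across the see-saw. The distinguished character predicted by our GGP recipe for $(M_A, N_A)$ involves symplectic root numbers twisted by the Arthur $\SL_2$, and this must be reconciled with the distinguished character predicted by tempered GGP for the smaller parameters on the dual side, where the Arthur $\SL_2$ has been absorbed into the L-parameter by composition with $w \mapsto (w, \diag(|w|^{1/2}, |w|^{-1/2}))$. Tracking the induced sign shifts consistently across all inner forms, and in particular verifying the predicted vanishing in the endoscopic subcase of (d), will be the most delicate bookkeeping; the cleanest way to handle it is likely through the Howe duality conservation relations combined with the explicit formulas of \cite{W} for the tempered local GGP characters.
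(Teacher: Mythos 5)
The paper does not prove this theorem. It is stated as a digest of results of Gurevich and Szpruch, and the text explicitly refers the reader to \cite{GS} for the proofs of this local theorem and its global companion. So there is no internal argument of the paper to compare yours against. Your general plan --- realize the Soudry packet members as theta lifts, convert the Bessel restriction problem to a tempered GGP problem via a see-saw, and then carry out component-group bookkeeping --- is indeed the strategy of \cite{GS} in outline. But an outline is all it is: the proposal defers every nontrivial verification (in particular the exceptional vanishing in part (iii), case (d) with $N_0$ reducible, and the identification of the distinguished character in part (iv)) to ``careful bookkeeping'' without carrying any of it out.

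There is also a concrete error in the setup. The Soudry packet $\Pi_{M_A}$ on $\SO_5 \cong \PGSp_4$, with $M_A = M_0 \boxtimes [2]$ and $M_0$ two-dimensional orthogonal, consists of the Howe--Piatetski-Shapiro representations; these are theta lifts from the rank-deficient dual pair with $\GO_2$ (i.e., from characters of $K^\times$ for quadratic \'etale $K/k$), not from $\O_4$ as you write. It is exactly the deficiency of this dual pair in the stable range that forces the Arthur $\SL_2$ to act through $[2]$ on the lift. The near-equal-rank correspondence $\O_4 \times \Sp_4$, applied to the tempered source representations your proposal uses, generically produces tempered output and so cannot yield the parameter $M_0 \boxtimes [2]$. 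This mislabeling changes the parametrization of the packet (your reading of $\epsilon_2$ as the Witt class of a four-dimensional quadratic space does not account for, e.g., the two nonzero members in case (c) sharing $\epsilon_2 = +$) and changes the shape of the see-saw, which should pair the $\GO_2$ side against something of $\GL_2$/$\SO_3$ size rather than the $\SO_4 \times \SO_3$ pairing you write. Fixing the dual pair and honestly tracking the characters across the correspondence, as \cite{GS} does, is required before the conclusions (i)--(iv) follow.
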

This theorem verifies Conjecture \ref{conj-class} in this particular case; indeed it goes further by determining $d(\pi, N_A)$ for all $\pi \in \Pi_{M_A}$. 
There is also a companion global theorem for which we refer the reader to \cite{GS}.

\vskip 10pt
\begin{example}  $\Sp_4(k) \subset \GL_4(k)$. \end{example}
Representations of $\GL_{2n}(k)$ distinguished by $\Sp_{2n}(k)$
are classified by the work of Offen and Sayag, cf. \cite{OS}. A low rank  case of their work, $\Sp_4(k) \subset \GL_4(k)$,
 is closely related to the pair  $\SO_5(k) \subset \SO_6(k)$ and  can be used to verify our conjectures which we do
now.
\vskip 5pt

In the relationship of $\GL_4(k)$ with $\SO_6(k)$, if the L-parameter of a representation $\pi$ 
of $\GL_4(k)$
is $\phi$, then the corresponding parameter of the representation of $\SO_6(k)$ with values in
$\SO_6(\C)$ is $M = \det^{-1/2}(\phi) \Lambda^2(\phi)$ where $\det^{1/2} (\phi)
$  is a square root
of $\det(\phi)$ (which must exist for a representation $\pi$  of $\GL_4(k)$ to be related to a representation of $\SO_6(k)$). 
\vskip 5pt

Since the A-parameter of the trivial representation of $\SO_5(k)$ is $[4] = \Sym^3(\C^2)$, 
 it follows by our conjecture that  the only option for the 6-dimensional orthogonal representation 
$M= \det^{-1/2} (\phi) \Lambda^2(\phi)$ are:

\begin{enumerate}
\item $[5] + [1]$;
\item $[3] + M_0$ with $M_0$ tempered  and 3-dimensional.
\end{enumerate}
Here case $(1)$ corresponds to characters of $\GL_4(k)$, whereas in case $(2)$, the  $M_0$ must be orthogonal. It can be seen that
  $M = \det^{-1/2}(\phi) \Lambda^2(\phi)$ has this shape for a 
  4-dimensional L-parameter $\phi$ 
  of Arthur type  with  A-parameter $\phi_A$ if and only if $\phi_A$ is of the form $\tau \otimes [2]$, where $\tau$ is
  a 2-dimensional tempered parameter.
  Thus, $M$ is of Arthur type with associated A-parameter
  \[ M_A =   \Lambda^2(\tau \otimes [2]) = {\rm Sym}^2(\tau) \oplus \Lambda^2(\tau) [3]. \]
Thus our conjecture is in conformity with the results of Offen-Sayag that the only representations of $\GL_4(k)$ which are distinguished 
by $\Sp_4(k)$ are either one dimensional, or have  A-parameter of the form $\tau \otimes [2]$. 
 \vskip 10pt

 \begin{example}
Distinction of $\SO_n$ by $\SO_{n-1}$.
 \end{example}
 We have considered several low rank instances of the problem of determining the representations (of Arthur type) of $\SO_n$ (resp. $\U_n$) distinguished by $\SO_{n-1}$ (resp. $\U_{n-1}$). This problem can be studied in general using the theory of theta correspondence. 
 More precisely, one can show:
 \vskip 5pt
 
 \begin{prop} 
 Let $k$ be a non-archimedean local field and consider a  non-degenerate quadratic space $V$ over $k$.
 For $a \in k$, set $V_a = \{ x \in V: q(x) =a \}$. 
 Let
 \[ G = \begin{cases} 
 \SL_2(k) \text{   if $\dim V$ is even;} \\
 {\rm Mp}_2(k), \text{(the two fold metaplectic cover of $\SL_2(k)$),    if $\dim V$ is odd.}
 \end{cases}\]
  For a fixed nontrivial additive character $\psi$ of $k$, one may consider the $\psi$-theta correspondence for $G \times \O(V)$. 
  Given an irreducible  representation $\pi$ of $\O(V)$, let $\Theta_{\psi}(\pi)$ be the big theta lift of $\pi$ to $G(k)$. 
Then for $a \in k^{\times}$, 
\[  \Theta_{\psi}(\pi)_{N,\psi_a}  \cong \begin{cases}
0, \text{     if $V_a(k)$ is empty;} \\
\Hom_{\O(U_a)} (\pi, \C), \text{   if $V_a(k)$ is nonempty;} \end{cases} \]
where in the latter case,  $U_a = x_a^{\perp}$ for some $x_a \in V_a(k)$. Here, $N$ is a maximal unipotent subgroup of $G$ and $\psi_a(x) = \psi(ax)$ is regarded as a generic character of $N(k) = k$. 
\end{prop}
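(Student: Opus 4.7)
The plan is to carry out the computation in the Schr\"odinger model $\omega_\psi \cong \mathcal{S}(V)$ of the Weil representation. In this model, $\O(V)$ acts through its natural linear action on $V$, while the unipotent radical $N\cong k$ of $G$ acts via $\omega_\psi(n(b))f(x) = \psi(b\cdot q(x))f(x)$; the splitting of $N$ in the metaplectic cover is canonical when $\dim V$ is odd, so this formula is unambiguous in both cases.

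\medskip

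The core step is to establish the $\O(V)$-equivariant isomorphism
\[
  (\omega_\psi)_{N,\psi_a} \;\cong\; \mathcal{S}(V_a).
\]
I would prove this by considering the $\O(V)$-equivariant fiber restriction map $r_a\colon \mathcal{S}(V)\to \mathcal{S}(V_a)$, $f\mapsto f|_{V_a}$. It is surjective when $V_a$ is nonempty (any Schwartz--Bruhat function on the closed $p$-adic submanifold $V_a$ extends to $V$), and its kernel coincides with the span of the Jacquet relations $\omega_\psi(n(b))f - \psi_a(n(b))f = \bigl(\psi(b(q(x)-a))-1\bigr)\psi(ab)f$ with $b\in k$ and $f\in \mathcal{S}(V)$. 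Dually, this amounts to the classical fact that an $(N,\psi_a)$-equivariant distribution on $\mathcal{S}(V)$ is precisely a distribution on $V$ supported on the fiber $V_a$. When $V_a$ is empty, $\mathcal{S}(V_a)=0$, which already yields $\Theta_\psi(\pi)_{N,\psi_a}=0$.

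\medskip

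When $V_a$ is nonempty (and $a\neq 0$), Witt's extension theorem tells us that $\O(V)$ acts transitively on $V_a$; with stabilizer $\O(U_a)$ at $x_a$, where $U_a = x_a^\perp$, one obtains $\mathcal{S}(V_a)\cong \mathrm{c\text{-}ind}_{\O(U_a)}^{\O(V)}\mathbf{1}$. By the defining property of the big theta lift, there is a $G\times \O(V)$-equivariant surjection $\omega_\psi \twoheadrightarrow \Theta_\psi(\pi)\boxtimes \pi$. Applying the exact $(N,\psi_a)$-Jacquet functor on the $G$-side yields a surjection
\[
 \mathrm{c\text{-}ind}_{\O(U_a)}^{\O(V)} \mathbf{1} \;\twoheadrightarrow\; \Theta_\psi(\pi)_{N,\psi_a}\boxtimes \pi.
\]
Extracting the $G$-side via the universal property of the maximal $\pi$-coisotypic quotient, together with Frobenius reciprocity for compact induction and the natural pairing with $\pi$, then produces the claimed isomorphism $\Theta_\psi(\pi)_{N,\psi_a} \cong \Hom_{\O(U_a)}(\pi,\C)$.

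\medskip

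I expect the kernel description in the first step to be the main technical point; it is the standard but nontrivial Fourier-analytic assertion about the Schwartz--Bruhat space of a smooth $p$-adic variety cut out by the non-degenerate equation $q=a$. A secondary care-point lies in the final Frobenius step, where one must be consistent with the convention for $\Theta_\psi(\pi)$ (as a smooth $G$-module versus its smooth dual, i.e.\ the coisotypic quotient versus $\Hom_{\O(V)}(\omega_\psi,\pi)$) in order to land on $\Hom_{\O(U_a)}(\pi,\C)$ on the nose rather than the formally adjoint object $\pi^{\O(U_a)}$.
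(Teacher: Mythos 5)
The paper states this proposition without proof and immediately uses it to deduce the corollary that follows, so there is no argument in the text for your proposal to be measured against; it stands or falls on its own. Your strategy --- compute $(\omega_\psi)_{N,\psi_a}$ in the Schr\"odinger model, identify it with $\mathcal{S}(V_a)$, recognize the latter as the compact induction $\mathrm{ind}_{\O(U_a)}^{\O(V)}\mathbf{1}$ via Witt's theorem, and push the surjection $\omega_\psi\to\Theta_\psi(\pi)\boxtimes\pi$ through the exact twisted Jacquet functor --- is the standard route and it is correct. The kernel identification in your first step is in fact easier than you suggest: since Schwartz--Bruhat functions on $V$ are locally constant, $f\vert_{V_a}=0$ automatically forces $f$ to vanish on an open neighbourhood of $V_a$, after which the averaging idempotent $e^{N_0}_{\psi_a}$ for a large compact open $N_0\subset N$ kills $f$ in the quotient; no delicate Fourier analysis is needed. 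Both $\O(V)$ and $\O(U_a)$ are unimodular, so no modulus twist is hidden in the compact induction either, and the emptiness case is immediate since $\Theta_\psi(\pi)\boxtimes\pi$ is a quotient of $\omega_\psi$.

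The step that genuinely needs to be spelled out is the final Frobenius step, and you are right to flag it, though the alternative you should worry about is not $\pi^{\O(U_a)}$. Since $\O(U_a)$ is closed but \emph{not} open in $\O(V)$, compact induction is not a left adjoint of restriction, and the naive Frobenius identity producing $\pi^{\O(U_a)}$ is simply unavailable. What does hold is its distributional form: for admissible $\pi$ one has $\Hom_{\O(V)}(\mathrm{ind}_{\O(U_a)}^{\O(V)}\mathbf{1},\pi)\cong \Hom_{\O(U_a)}(\pi^\vee,\C)$. Feeding this into the universal property $\Hom_G(\Theta_\psi(\pi),\sigma)\cong\Hom_{G\times\O(V)}(\omega_\psi,\sigma\boxtimes\pi)$, specialized to $\sigma=\mathrm{Ind}_N^G\psi_a$ and combined with ordinary Frobenius reciprocity on the $G$-side, yields
\[ \bigl(\Theta_\psi(\pi)_{N,\psi_a}\bigr)^* \;\cong\; \Hom_{\O(V)}(\mathcal{S}(V_a),\pi) \;\cong\; \Hom_{\O(U_a)}(\pi^\vee,\C). \]
Converting this into $\Theta_\psi(\pi)_{N,\psi_a}\cong\Hom_{\O(U_a)}(\pi,\C)$ on the nose is not a formal manipulation: it uses that both spaces have dimension at most one (the multiplicity-one theorem of \cite{AGRS}) together with the fact that $\pi^\vee\cong\pi$ for irreducible smooth representations of the full orthogonal group (the MVW involution is inner for $\O(V)$). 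With these two inputs your outline closes into a correct proof, and in particular the two sides are simultaneously zero or simultaneously one-dimensional, which is all the corollary requires.
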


\vskip 5pt

 \begin{cor} \label{distinction} Let $\pi$ be an irreducible admissible representation of $\O(V)$.
 Then $\pi$ is distinguished by $\O(W)$ with $V/W = \langle a \rangle$ if and only if the big $\psi$-theta lift of $\pi$ to $G$ is $\psi_a$-generic. In particular, one has:
 \vskip 5pt
 
 (i) If $\pi$ is distinguished by $\O(W)$ where $W$ is a codimension one nondegenerate
   subspace of $V$,   then $\pi$ has a nonzero theta lift to  $G$.
   \vskip 5pt
   
   (ii) If $\pi$ is the $\psi$-theta lift of a $\psi_a$-generic representation of $G$, then $\pi$ is
   distinguished by $\O(W)$ for a codimension one nondegenerate
   subspace $W$ of $V$ with $V/W = \langle a\rangle$.
   \end{cor}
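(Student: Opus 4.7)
The plan is to deduce the corollary as a direct consequence of the preceding proposition by unpacking what ``$\psi_a$-generic'' means for a representation of $G$. Recall that a smooth representation $\Sigma$ of $G = \SL_2(k)$ or $\mathrm{Mp}_2(k)$ is $\psi_a$-generic precisely when its twisted Jacquet module $\Sigma_{N,\psi_a}$ (for $N \cong k$ a maximal unipotent subgroup) is nonzero. Applying the proposition to $\Sigma = \Theta_\psi(\pi)$, we obtain the equivalence
\[
\Theta_\psi(\pi) \text{ is } \psi_a\text{-generic} \iff V_a(k) \neq \emptyset \text{ and } \Hom_{\O(U_a)}(\pi, \C) \neq 0,
\]
where $U_a = x_a^\perp$ for any $x_a \in V_a(k)$. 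Since a codimension-one nondegenerate subspace $W \subset V$ with $V/W = \langle a \rangle$ is exactly of the form $x_a^\perp$ for some $x_a$ of norm $a$ (with $\O(W)$ canonically identified with $\O(U_a)$), this is precisely the main equivalence of the corollary.

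For part (i), if $\pi$ is distinguished by $\O(W)$ for some codimension-one nondegenerate $W$, then writing $V/W = \langle a \rangle$ for a suitable $a \in k^\times$, the equivalence above forces $\Theta_\psi(\pi)$ to be $\psi_a$-generic, and in particular nonzero. For part (ii), suppose $\pi = \theta_\psi(\sigma)$ is the small $\psi$-theta lift of a $\psi_a$-generic representation $\sigma$ of $G$. By the definition of the theta correspondence, $\sigma$ appears as an irreducible quotient of $\Theta_\psi(\pi)$, so $\Theta_\psi(\pi)$ inherits a nonzero $\psi_a$-Whittaker functional from $\sigma$. The main equivalence then implies that $V_a(k) \neq \emptyset$ and $\Hom_{\O(U_a)}(\pi, \C) \neq 0$, i.e.\ $\pi$ is distinguished by $\O(W)$ for any codimension-one nondegenerate $W$ with $V/W = \langle a \rangle$.

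The chief subtlety to attend to is the metaplectic case (when $\dim V$ is odd): one must use the notion of $\psi_a$-genericity appropriate for the genuine representations of $\mathrm{Mp}_2(k)$, and verify that this matches the additive character $\psi_a$ appearing in the twisted Jacquet module computed by the proposition. This is standard but depends on fixing the splitting conventions for the metaplectic cover consistent with those used to define the Weil representation (hence the theta lift itself). Once the conventions are pinned down, no further input beyond the proposition is needed, and the corollary follows immediately.
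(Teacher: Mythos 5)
Your argument is correct and is essentially the deduction the paper intends (the paper gives no separate proof, presenting the corollary as following directly from the proposition). You correctly identify $\psi_a$-genericity with nonvanishing of the twisted Jacquet module, invoke the proposition for the main equivalence, use right-exactness of the Jacquet functor together with Howe duality for part (ii), and flag the one genuine subtlety about splitting conventions over $\mathrm{Mp}_2$.
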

 
 When $\dim V \geq 3$, as is well-known, the theta correspondence for $G \times \O(V)$ reduces to one 
for  $G \times \SO(V)$.  If $\sigma \in {\rm Irr}(G)$ has tempered A-parameter $M_0$, one knows precisely how to describe its small theta lift 
$\theta_{\psi}(\sigma)$ to $\SO(V)$ in terms of the local Langlands correspondence  (see \cite{Mu1, Mu2, Mu3} and \cite{AG}). From this description, 
one sees that  $\theta_{\psi}(\sigma)$ is of Arthur type with A-parameter of the form 
\[ \begin{cases}
M_0 +  [2n-3], \text{ if $\dim V = 2n$;} \\
M_0+ [2n-2], \text{ if $\dim V = 2n+1$.} \end{cases} \]
Moreover, one knows from results of M{\oe}glin \cite{M3, GI} that all representations with such an A-parameter are obtained as theta lifts from the L-packet of $G$ with L-parameter $M_0$.  From this and Corollary \ref{distinction}, we see that:

\begin{thm} Let $\pi$ be an irreducible admissible representation of $\SO(V)$ of Arthur type with A-parameter $M_A$ and suppose that
 $\pi$ is distinguished by $\SO(W)$ for some codimension one nondegenerate
   subspace $W$ of $V$.  Then  $M_A$ must have one of the following form:

\begin{itemize}
\item if $\dim V = 2n+1$ is odd, then $M_A = [2n]$ (which corresponding to the trivial representation) or $M_A = [2n-2] + M_0$, with $M_0$ tempered 2-dimensional (which corresponds to theta lifts of tempered representations of ${\rm Mp}_2(k)$);
\vskip 5pt

\item if $\dim V = 2n$ is even, then $M_A = [2n-1] + \chi$ (which corresponds to the trivial representation) or $M_A = [2n-3] + M_0$ with $M_0$ tempered 3-dimensional (which corresponds to theta lifts of tempered representations of $\SL_2(k)$).
\end{itemize}
Conversely, if $M_A$ is an A-parameter of $\SO(V)$ of the above form, then any representation of $\SO(V)$ in the associated A-packet $\Pi_{M_A}$ is distinguished by $\SO(W)$ for some codimension one nondegenerate subspace $W \subset V$. 
\end{thm}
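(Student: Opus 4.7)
The proof will be built around the theta-genericity dictionary encoded in Corollary~\ref{distinction}, which exactly translates ``$\pi$ is $\O(W)$-distinguished with $V/W = \langle a \rangle$" into ``$\Theta_\psi(\pi)$ is $\psi_a$-generic on $G$". The two other ingredients already recalled in the excerpt are: (i) the explicit description of the A-parameter of a small theta lift $\theta_\psi(\sigma)$ of a tempered $\sigma \in \mathrm{Irr}(G)$ with L-parameter $M_0$, namely $M_0 + [2n-2]$ or $M_0 + [2n-3]$ according to the parity of $\dim V$; and (ii) M\oe glin's theorem (\cite{M3, GI}) that every representation of $\SO(V)$ with such an A-parameter is obtained in this way from the full tempered L-packet $\Pi_{M_0}$. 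The passage between $\O$ and $\SO$ is standard: given $\pi$ on $\SO(V)$, one picks an extension $\tilde\pi$ to $\O(V)$ (if $\pi$ extends) or an irreducible summand $\tilde\pi$ of $\mathrm{Ind}^{\O(V)}_{\SO(V)} \pi$ (if it does not), and uses that $\SO(W)$-distinction of $\pi$ forces $\O(W)$-distinction of at least one of $\tilde\pi$ or $\tilde\pi \otimes \det$.

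For the forward direction, suppose $\pi$ is $\SO(W)$-distinguished and write $V = W \oplus \langle x_a\rangle$ with $q(x_a) = a$. After the $\O$-extension described above, Corollary~\ref{distinction} yields that $\sigma := \theta_\psi(\tilde\pi)$ is a nonzero irreducible $\psi_a$-generic representation of $G$. Since $\pi$ is of Arthur type and the theta correspondence respects A-parameters (by M\oe glin's work again), $\sigma$ is also of Arthur type; Shahidi's conjecture for $\SL_2(k)$ and $\mathrm{Mp}_2(k)$ (which is known) then forces $\sigma$ to be tempered, with tempered L-parameter $M_0$ of dimension $2$ (when $\dim V = 2n+1$) or $3$ (when $\dim V = 2n$). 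Inserting $M_0$ into the theta-lift A-parameter formula recovers $M_A$ in exactly the listed shape; the extremal case $\sigma$ the trivial character of $\SL_2$ (resp.\ the trivial representation of $\mathrm{Mp}_2$) accounts for the sub-case $M_A = [2n-1]+\chi$ (resp.\ $M_A = [2n]$) where $\pi$ is the trivial representation.

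For the converse, write the given $M_A$ as $M_0 + [d]$ with $d=2n-2$ or $2n-3$ and $M_0$ tempered (treating the trivial-representation sub-cases by inspection, where $\SO(W)$-distinction is obvious). By M\oe glin's theorem, an arbitrary $\pi \in \Pi_{M_A}$ has the form $\pi = \theta_\psi(\tilde\sigma)$ for some $\tilde\sigma \in \Pi_{M_0}$. Every tempered representation of $\SL_2(k)$ or $\mathrm{Mp}_2(k)$ admits a Whittaker model with respect to some nontrivial additive character, and as one cycles through $\psi_a$ for $a$ ranging over $k^\times/k^{\times 2}$, every member of the tempered L-packet is realised as $\psi_a$-generic for some $a$. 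Pick such an $a$ for our $\tilde\sigma$ and apply the nonvanishing direction of Corollary~\ref{distinction}: $\pi$ is $\O(U_a)$-distinguished for any codimension-one nondegenerate subspace $U_a \subset V$ with $V/U_a \cong \langle a\rangle$. Restricting the invariant functional to $\SO(U_a)$ finishes the proof with $W = U_a$.

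The main obstacle is the packet-genericity input in the converse: for each specific $\pi \in \Pi_{M_A}$ one needs the particular preimage $\tilde\sigma \in \Pi_{M_0}$ to be $\psi_a$-generic for some $a$, not merely that the L-packet $\Pi_{M_0}$ contains \emph{some} generic member. This reduces to the classical statement that a tempered L-packet of $\SL_2(k)$ or $\mathrm{Mp}_2(k)$ is, member by member, generic for appropriate additive characters parameterised by $k^\times/k^{\times 2}$; this is well-known but requires a small case analysis according to the structure of $M_0$ (principal series vs.\ discrete series, and splitting behaviour of the underlying quadratic datum). A secondary point to check is the asserted compatibility of A-parameters under the theta correspondence for the pairs $(G, \SO(V))$ at hand, which is by now standard but invoked as a black box via \cite{Mu1, Mu2, Mu3, AG, M3, GI}.
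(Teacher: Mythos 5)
Your proof follows the same route the paper gestures at (Corollary~\ref{distinction} plus the cited theta-correspondence facts and M\oe glin's exhaustion theorem), but the forward direction has a gap. You assert that Corollary~\ref{distinction} yields that the small theta lift $\sigma := \theta_\psi(\tilde\pi)$ is a nonzero irreducible $\psi_a$-generic representation of $G$, and you build the temperedness deduction (via Shahidi's conjecture) on this. What the Proposition preceding Corollary~\ref{distinction} actually gives is $\Theta_\psi(\tilde\pi)_{N,\psi_a} \cong \Hom_{\O(U_a)}(\tilde\pi,\C) \neq 0$: it is the \emph{big} theta lift whose $\psi_a$-twisted Jacquet module is nonzero. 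This says some Jordan--H\"older constituent of $\Theta_\psi(\tilde\pi)$ is $\psi_a$-generic, but it does not by itself force the unique irreducible quotient $\theta_\psi(\tilde\pi)$ to be $\psi_a$-generic: since the twisted Jacquet functor is only exact, the nonzero space $\Theta_\psi(\tilde\pi)_{N,\psi_a}$ could be supplied entirely by the radical. Your own final sentence of that paragraph already contemplates $\sigma$ being a one-dimensional character of $\SL_2(k)$ or the even Weil representation of ${\rm Mp}_2(k)$ (note that ${\rm Mp}_2(k)$ has no genuine trivial representation), and neither of these is $\psi_a$-generic for any $a$ --- contradicting the blanket genericity you assert two sentences earlier.

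A repair that avoids genericity of $\sigma$ altogether: from $\Theta_\psi(\tilde\pi) \neq 0$ one knows $\pi$ lies in the image of theta from $G$, with $\sigma = \theta_\psi(\tilde\pi)$ irreducible; the A-parameter compatibility of this correspondence (\cite{AG}, \cite{Mu1}, \cite{Mu2}, \cite{Mu3}, \cite{M3}, \cite{GI}) then gives that $\sigma$ is of Arthur type. The irreducible Arthur-type representations of $\SL_2(k)$ (respectively the genuine ones of ${\rm Mp}_2(k)$) are exactly the tempered ones together with the one-dimensional characters (respectively the even Weil representation), and feeding these two alternatives into the A-parameter formula for $\theta_\psi$ produces the two shapes of $M_A$ in the statement without any appeal to a Whittaker model on $\sigma$. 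Your converse direction is essentially the paper's and is sound, granted the member-by-member packet-genericity input that you correctly flag at the end.
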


The theorem above  is in accordance with Conjecture \ref{conj-class}. There is of course an entirely analogous theorem (and proof!)
in the unitary case, with the quasi-split $\U_2$ playing the role of $G$.

  \vskip 10pt

\section{Automorphic Descent}   \label{S:descent}
  In a series of papers \cite{GRS1, GRS2, GRS3, GRS4},  Ginzburg-Rallis-Soudry pioneered the automorphic descent or backward lifting technique which allowed them to construct the backward functorial lifting from general linear groups to classical groups.  The input of their construction is an discrete tempered global L-parameter $M$ for the classical group in question, thought of as an isobaric automorphic representation of the appropriate $\GL_N$, and the output is the unique globally generic cuspidal representation in  global L-packet determined by $M$.
  \vskip 5pt
  
   In a recent paper \cite{JZ}, D. H. Jiang and L. Zhang extended this construction so that it has the potential to construct all the automorphic members of the global L-packet associated to $M$. By their very construction,  the automorphic descent method is closely connected to the conjectures of \cite{GGP} since it involves the consideration of Bessel and Fourier-Jacobi models. In this section, we explain how these automorphic descent constructions fit with our conjectures.
  \vskip 5pt
  
  Let us consider the example of $\SO_{2n+1}$ and give a brief description of the descent construction. Suppose we are given a tempered L-parameter $M = M_1 \oplus ...\oplus M_r$ of $\SO_{2n+1}$, with $M_i$ distinct cuspidal representations of some $\GL$'s of symplectic type. We may regard $M$ as an L-parameter of $\GL_{2n}$, giving rise to an automorphic representation
  \[  \pi_M = \pi_{M_1}  \times ...\times \pi_{M_r}  \]
  of $\GL_{2n}$. Now consider the non-tempered A-parameter
  \[  \tilde{M}_A =  M \boxtimes [2]. \]
  It is an A-parameter of $\SO_{4n}$ whose associated L-parameter is
  \[  \tilde{M} = M \cdot |-|^{1/2}  \oplus M \cdot |-|^{-1/2}. \]
  One sees that the local and global component groups of $\tilde{M}$ are trivial, so that 
  the local L-packets and global L-packet of $\SO_{4n}$ associated to the L-parameter $\tilde{M}$ are singleton sets.  This unique global representation is  the Langlands quotient  $J(\pi_M, 1/2)$ of the standard module
  induced from the representation $\pi_M |-|^{1/2}$ on the Siegel parabolic subgroup of $\SO_{4n}$. The embedding of $J(\pi_M, 1/2)$ into the automorphic discrete spectrum can be constructed as an iterated residue of the corresponding Eisenstein series. 
 \vskip 5pt
 
 Consider now the Bessel coefficient of $J(\pi_M, 1/2)$ with respect to $\SO_{2n+1}$. This gives rise to an automorphic representation of $\SO_{2n+1}$ which Ginzburg-Rallis-Soudry showed to be an irreducible globally generic cuspidal representation belonging to the L-packet of $M$. In fact, since it is globally generic, it is the cuspidal representation $\sigma_M$ whose local components correspond to the trivial character of the local component group at each place.
 \vskip 5pt

 To explain this construction from the viewpoint of our conjectures in this paper,  observe that we are considering the pair $\SO_{4n} \times \SO_{2n+1}$ and we have a non-tempered A-parameter $\tilde{M}_A$ of $\SO_{4n}$. One now reasons as follows:
 \vskip 5pt
 
 \begin{itemize}
 \item  For which A-parameter $N_A$ of $\SO_{2n+1}$ can the pair $(\tilde{M}_A, N_A)$ be relevant? It is clear that the only possibility is $N_A = M$. Hence, by our global conjecture \ref{C:global}(1),  the Bessel descent  of $J(\pi_M, 1/2)$ down to $\SO_{2n+1}$ (which can be shown to be cuspidal) can only contain representations in the L-packet of $M$. At this point, however, it  is  still possible that this Bessel descent is  $0$.          \vskip 5pt
    
    \item Is there any reason to hope that the Bessel descent is nonzero? To address this, we first consider the local setting. 
 Since $J(\pi_M,1/2)$ belongs to the L-packet $\Pi_{\tilde{M}}$ which is a singleton (both locally and globally), our local conjecture \ref{conj-class} can be applied. 
One can compute the distinguished character   of the local component group $A_{\tilde{M}_v} \times A_M$ given by Conjecture \ref{conj-class}(c); in this case it turns out to be the trivial character. 
Hence, our local conjecture  \ref{conj-class}(c)  implies that at each place $v$, the multiplicity 
 \[  d(J(\pi_{M,v}, 1/2),  \sigma_{M_v})  =1. \]
 Because of the uniqueness part in Conjecture \ref{conj-class}(b), we see that $\sigma_M$ is the only element $\sigma$  in the global L-packet of $M$ such that $J(\pi_M, 1/2) \otimes \sigma$   supports an abstract Bessel period.  
 \vskip 5pt
 
\item  Finally, is the global Bessel period integral nonzero on the automorphic representation $J(\pi_M, 1/2) \otimes \sigma_M$? We can appeal to our global conjecture \ref{C:global}. The above shows that conditions (1) and (2) in Conjecture \ref{C:global} already hold, so it remains to verify the condition (3). 
But  the function $L(\tilde{M}, M, s)$ is given by
 \[    \frac{L(s+1 , M \times M) \cdot  L(s, M \times M)}{L(s+1, \Sym^2 M) \cdot  L(s+1, \Sym^2 M) \cdot L(s, \wedge^2 M) \cdot L(s+1, \wedge^2 M) \cdot L(s+2, \wedge^2 M)}. \]
 At $s= 0 $,  this is holomorphic and nonzero (alternatively, one can appeal directly to Theorem \ref{T:globalR}) . Hence, our global conjecture implies that the global Bessel period integral is nonzero on $J(\pi_M, 1/2) \times \sigma_M$. In particular, the Bessel descent of $J(\pi_M, 1/2)$ to $\SO_{2n+1}$ is nonzero and equal to $\sigma_M$.
 \end{itemize}
 \vskip 10pt
 
 We mention that there is a local analog of the above construction: the local descent map. This was considered in, for example, the papers \cite{GRS1, GRS3, GRS4, ST}.  While the local descent can be defined starting from any L-parameter of a classical group, the results obtained in these works are most complete in the following two situations:
 \vskip 5pt
 
 \begin{itemize}
 \item[(a)]  when the L-parameter is supercuspidal, i.e. a multiplicity-free sum of self dual representations of the Weil group (instead of the Weil-Deligne group)
 of the relevant sign. In this case, it was shown that the local descent is an irreducible generic supercuspidal representation of the classical group with the initially given L-parameter.  
 \vskip 5pt
 
 \item[(b)]  when the L-parameter involved is unramified. In this case, it was shown that the only unramified representation which could occur as a quotient in the local descent is the one with the given L-parameter. Such results in the unramified case is necessary for the proof of the global results described above.
 \end{itemize}
 Thus, (a) is in complete accordance with our local Conjecture \ref{conj-class} whereas (b) provides supporting evidence for the same conjecture. 
 
 \vskip 10pt
 
The twisted automorphic descent of Jiang-Zhang offers the possibility of constructing the other members of the global L-packet $M$ of $\SO_{2n+1}$ beyond the representation $\sigma_M$.  In the above construction, Ginzburg-Rallis-Soudry showed that the automorphic descent of $J(\pi_M, 1/2)$ is equal to $\sigma_M$  by showing that this descent is cuspidal and all of its irreducible summands are globally generic.  Suppose that $\sigma_{\eta}  = \otimes'_v \sigma_{\eta_v}$ is a cuspidal representation of the global L-packet of $M$. How can one construct $\sigma_{\eta}$ by an analogous process as above? 
\vskip 5pt

The idea of Jiang-Zhang is as follows.  The cuspidal representation $\sigma_{\eta}$ must possess some nonzero Bessel coefficient with respect to a smaller even orthogonal group $\SO_{2m}$. Suppose that the Bessel coefficient of $\sigma_{\eta}$ down to $\SO_{2m}$ contains a cuspidal representation $\tau_{\chi} = \otimes_v \tau_{\chi_v}$ belonging to a tempered A-parameter $N$ of $\SO_{2m}$.  By the uniqueness part of the global conjecture in \cite{GGP} (in the tempered case), one knows that $\sigma_{\eta}$ is characterized as the unique representation in the L-packet of  $M$  such that $\sigma_{\eta} \otimes \tau_{\chi}$ supports the relevant  global Bessel period for some $\tau_{\chi}$ in the L-packet of $N$. This means that we have a way of distinguishing  the representation 
$\sigma_{\eta}$  from the other members of the L-packet of $M$ (just as one uses the Whittaker-Fourier coefficient to detect the representation $\sigma_M$). If this is how one is hoping to detect the representation $\sigma_{\eta}$, then it is only reasonable that one should somehow involve the auxiliary $\tau_{\chi}$ in the descent construction of $\sigma_{\eta}$. 
 \vskip 5pt
 
 Here then is the construction of Jiang-Zhang.  We first pick a tempered L-parameter $N$ of a smaller group $\SO_{2m}$ such that the global Bessel period of $\sigma_{\eta} \otimes \tau_{\chi}$ is nonzero for some $\tau_{\chi}$ with L-parameter $N$. The existence of the data $(\SO_{2m}, N, \tau_{\chi})$ is not obvious, especially with the condition that $N$ is tempered. This is in fact taken as a hypothesis in \cite{JZ}. Hence the construction in \cite{JZ} is so far conditional, though one would like to think that this hypothesis is not unreasonable. 
 In any case, under this hypothesis, one considers the standard module 
   \[  I(\pi_M \otimes \tau, 1/2) =  \pi_M \cdot |-|^{1/2}  \rtimes   \tau_{\chi}  \]
 of $\SO_{4n+2m}$ defined by induction from the parabolic subgroup with Levi factor $\GL_{2n} \times \SO_{2m}$. Let $J(\pi_M \otimes \tau_{\chi}, 1/2)$ be the Langlands quotient of this standard module. It can be embedded into the automorphic discrete spectrum as an iterated residue of the corresponding Eisenstein series and one can then consider the  
Bessel descent  of $J(\pi_M \otimes \tau_{\chi}, 1/2)$ down to $\SO_{2n+1}$.   Jiang-Zhang verified that one gets a cuspidal automorphic representation of $\SO_{2n+1}$ all of whose irreducible components $\sigma'$ belong to the global L-packet of $M$. They then computed the Bessel period  of $\sigma'  \otimes \tau_{\chi}$ and showed that it is nonzero, thus showing that $\sigma' \cong \sigma_{\eta}$.  
  \vskip 5pt
  
  Again, we can understand this construction of Jiang-Zhang from the viewpoint of the conjectures of this paper. 
  \vskip 5pt
  
  \begin{itemize}
  \item The choice of $(\SO_{2m}, N, \tau_{\chi})$ so that $\sigma_{\eta} \otimes \tau_{\chi}$ has nonzero global Bessel period implies that 
   for each place $v$, the local character $\eta_v  \times \chi_v$ is the distinguished character of the local component group $A_{M_v} \times A_{N_v}$ from the local conjecture of \cite{GGP} for the pair $\SO_{2n+1} \times \SO_{2m}$.  
   \vskip 5pt
   
   \item The discrete automorphic representation $J(\pi_M \otimes \tau_{\chi}, 1/2)$ belongs to the A-packet of $\SO_{4n+2m}$ with non-tempered A-parameter
   \[  \tilde{M}_A  =  M \boxtimes [2]  \oplus N.   \]
  Indeed, it lies in the L-packet associated to the corresponding L-parameter 
  \[  \tilde{M} =  M \cdot |-|^{1/2}  \oplus M \cdot |-|^{-1/2} \oplus   N. \]
    One has a natural identification of the local component group
  \[       A_{\tilde{M}_v} \cong A_{N_v}  \]
  for each place $v$.   Under this identification, the character  of $A_{\tilde{M}_v}$ which corresponds to the representation 
  $J(\pi_M \otimes \tau_{\chi}, 1/2)$ is the character $\chi_v$ (which indexes the representation $\tau_{\chi_v}$ of $\SO_{2m}$). 
  \vskip 5pt
  
  \item Now if one considers the Bessel period in the context of $\SO_{4n+2m} \times \SO_{2n+1}$, one sees that the pair of A-parameters $(\tilde{M}_A, M)$ is relevant. Indeed, $M$ is the only A-parameter of $\SO_{2n+1}$ which can form a relevant pair with $\tilde{M}_A$. Hence, by our global conjecture \ref{C:global}, the only cuspidal representations of $\SO_{2n+1}$ which can potentially be contained in the Bessel descent of $J(\pi_M \otimes\tau_{\chi}, 1/2)$  are those with L-parameter $M$. 
  \vskip 5pt
  
  \item To determine  which cuspidal representations in the L-packet of $M$ are contained in the Bessel descent, we appeal to our local conjecture \ref{conj-class} for the relevant pair $(\tilde{M}_A, M)$ on $\SO_{4n+2m} \times \SO_{2n+1}$. The conjectures gives a distinguished representation with nonzero multiplicity in the local L-packet of $\tilde{M}_v \times M_v$ corresponding to a   distinguished character on the local component group
  \[  A_{\tilde{M}_v} \times   A_{M_v}  \cong A_{N_v} \times A_{M_v}. \]
 A short computation shows that this distinguished character of   $ A_{\tilde{M}_v} \times   A_{M_v}$ is equal to the distinguished character of $ A_{N_v} \times A_{M_v}$ under the above natural identification of component groups, and thus  is none other than $\chi_v \times \eta_v$. 
 Hence the global representation $J(\pi_M \otimes \tau_{\chi}, 1/2) \otimes \sigma_{\eta}$ is the unique representation in the global L-packet of $\tilde{M} \times  M$ which supports a nonzero abstract Bessel period. This implies that the only cuspidal representation that could be contained in the Bessel descent of $J(\pi_M \otimes \tau_{\chi}, 1/2)$ to $\SO_{2n+1}$ is $\sigma_{\eta}$.
 \vskip 5pt
 
 \item  It remains to show that the Bessel descent is nonzero, or equivalently that the global Bessel period of $J(\pi_M \otimes \tau_{\chi}, 1/2) \otimes \sigma_{\eta}$ is nonzero.
 For this, our global conjecture \ref{C:global} implies that it suffices to check the non-vanishing of $L(\tilde{M}, M, s)$ at $s=0$.  But Theorem \ref{T:globalR} implies that the desired non-vanishing. This shows that the cuspidal part of the Bessel descent is precisely $\sigma_{\eta}$, as desired.
 \end{itemize}
 This concludes our explanation of the automorphic descent method from the viewpoint of our conjectures. 
  \vskip 15pt

\section{L-functions: $\GL$ case}   \label{S:L-GL}
The purpose of this section is to give the proof of Theorem \ref{pole}.   For the convenience of the reader, we restate Theorem \ref{pole} here
  \vskip 5pt
  
  \begin{thm} \label{T:pole}
  Let $k$ be a non-archimedean local field and  let $(M_A, N_A)$ be a relevant pair of A-parameters for $\GL_m(k) \times \GL_n(k)$ with associated pair of L-parameters $(M,N)$.
Then the order of pole at $s=0$ of 
\[  L(M,N, s) = \frac{L(M \otimes N^{\vee}, s + ~1/2) \cdot L(M^{\vee} \otimes N, s + ~1/2)}{ L(M \otimes M^{\vee}, s + 1)\cdot  L(N \otimes N^{\vee}, s + 1)} \]
  is greater than or equal to zero.
\end{thm}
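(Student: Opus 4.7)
The plan is to verify the theorem by explicit computation of $\mathrm{ord}_{s=0} L(M,N,s)$, reducing first to a per-isotypic-component analysis. Each irreducible summand of an A-parameter has the form $\tau \boxtimes [a] \boxtimes \Sym^{c-1}(\C^2)$, with $\tau$ an irreducible bounded representation of $W(k)$, $[a]$ the Deligne $\SL_2$ piece, and $\Sym^{c-1}(\C^2)$ the Arthur $\SL_2$ piece. One decomposes $M_A$ and $N_A$ according to the $W(k)$-part $\tau$. For distinct $\tau_1 \not\cong \tau_2$, the tensor product $\tau_1 \otimes \tau_2^\vee$ contains no trivial $W(k)$-subrepresentation (assuming no accidental self-twist equivalences), so $L$-factors involving cross-$\tau$ terms are holomorphic and nonzero at the relevant points. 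This yields
\[ \mathrm{ord}_{s=0} L(M,N,s) = \sum_\tau \mathrm{ord}_{s=0} L(M(\tau), N(\tau), s), \]
and since the relevance condition respects the $\tau$-isotypic decomposition, the theorem reduces to a statement about a single $\tau$.

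Within a fixed $\tau$-isotype, let $u_{a,c}$ and $v_{a,c}$ denote the multiplicities of $\tau \boxtimes [a] \boxtimes \Sym^{c-1}(\C^2)$ in $M_A$ and $N_A$ respectively. The relevance condition decomposes, for each $a$ separately, the multiplicities as $u_{a,c} = u_{a,c}^+ + u_{a,c}^-$ and $v_{a,c} = v_{a,c}^+ + v_{a,c}^-$ with the matching $u_{a,c}^+ = v_{a,c+1}^-$ for $c \geq 1$ and $u_{a,c}^- = v_{a,c-1}^+$ for $c \geq 2$, leaving $u_{a,1}^-$ and $v_{a,1}^-$ free. Introducing block parameters $p_{a,c} := u_{a,c}^+$ and $q_{a,c} := v_{a,c}^+$ for $c \geq 1$, together with $q_{a,0} := u_{a,1}^-$ and $p_{a,0} := v_{a,1}^-$, one has $u_{a,c} = p_{a,c} + q_{a,c-1}$ and $v_{a,c} = q_{a,c} + p_{a,c-1}$. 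This exhibits the pair $(M_A, N_A)$ (within the $\tau$-isotype) as a sum of bipartite ``building blocks'': each nonzero $p_{a,c}$ contributes a pair of the form $(\tau \boxtimes [a] \boxtimes \Sym^{c-1}(\C^2),\ \tau \boxtimes [a] \boxtimes \Sym^c(\C^2))$, and each nonzero $q_{a,c}$ contributes its reverse.

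Each of the four $L$-factors appearing in $L(M,N,s)$ can now be computed explicitly. Using the Clebsch-Gordan decompositions $\Sym^{c-1}(\C^2) \otimes \Sym^{c'-1}(\C^2) = \bigoplus_k \Sym^{|c-c'|+2k}(\C^2)$ on the Arthur side and $[a_1] \otimes [a_2] = \bigoplus_r [|a_1-a_2|+2r+1]$ on the Deligne side, together with the observation that $\tau \otimes \tau^\vee$ contains the trivial $W(k)$-representation with multiplicity one, the multiplicity of a summand $\nu^{-b/2} \boxtimes [b]$ in $M \otimes N^\vee$ (which counts $-\mathrm{ord}_{s=0}L(s+1/2, M \otimes N^\vee)$, and analogously for the other three factors) is enumerated as a finite combinatorial sum indexed by $(a_1,a_2,c_1,c_2,r,k)$ subject to parity and range conditions. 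Assembling these counts, $-\mathrm{ord}_{s=0} L(M,N,s)$ takes the form of a specific alternating sum in the $u_{a,c}, v_{a,c}$, involving differences of products $u_{a_1,c_1} v_{a_2,c_2} \min(\cdot,\cdot)$ and $u_{a_1,c_1} u_{a_2,c_2} \min(\cdot,\cdot)$, balanced against their $v$-analogues. The main obstacle is then to verify that this alternating sum is non-negative for multiplicities arising from a relevance decomposition: substituting $u_{a,c} = p_{a,c} + q_{a,c-1}$ and $v_{a,c} = q_{a,c} + p_{a,c-1}$ and reorganizing, the sum decomposes into non-negative contributions from individual blocks together with cross-block interactions that recombine non-negatively via a careful index-shift argument. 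The Deligne correction contributed by the $[a]$ factors of $\rho = \tau \boxtimes [a]$ appears symmetrically in numerator and denominator, preserving the sign of the inequality; the simplest case of a single bipartite block yields a simple pole at $s=0$, providing a sanity check consistent with the one-block computation.
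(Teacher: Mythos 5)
Your proposal sets up a reasonable combinatorial framework, but it stops precisely where the real work begins. The heart of the theorem is the verification that the alternating sum of pole orders is non-negative, and you dispose of this with the sentence that the cross-block interactions ``recombine non-negatively via a careful index-shift argument.'' That argument is the theorem; it is not carried out. The paper's proof reduces to a concrete pairwise statement (its Proposition \ref{lemma-pole}), which it then verifies by explicit Clebsch-Gordan computation in two separate cases ($b_i = a_i + 1$ for both indices, and $b_1 = a_1 - 1$, $b_2 = a_2 + 1$), each requiring a further case split on whether $i > j$, $i = j$, or $i = j+1$. None of this is automatic, and without it your proposal is a plan, not a proof.

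A second, subtler issue: you assert that the Deligne $\SL_2$ factors ``appear symmetrically in numerator and denominator, preserving the sign of the inequality,'' but this is not obviously true and you give no argument. The paper sidesteps this entirely by keeping $M_i$ as an irreducible representation of the full $WD(k) = W(k) \times \SL_2(\C)$, so the Deligne piece is absorbed into $M_i$ and never needs to be separated out. More importantly, the paper's proof hinges on a tool your sketch never develops: the shift operator $M_A \mapsto M_A^+$ and its Lemma \ref{L-f}, which converts pole orders at $s = 1/2$ into pole orders at $s = 1$ by incrementing the Arthur $\SL_2$ dimension. This is what lets one compare the numerator (evaluated at $s + 1/2$) with the denominator (evaluated at $s + 1$) as a single virtual representation whose sign is visible. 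Without an analogous device, a direct enumeration of poles in terms of your multiplicities $u_{a,c}$, $v_{a,c}$ would require tracking many more index conditions than you indicate, and it is not clear that the resulting sum organizes itself as cleanly as you claim. Your sanity check that a single block contributes a simple pole is correct and is a useful consistency test, but the non-negativity of the general cross-block contribution is exactly what Proposition \ref{lemma-pole}(a) establishes and what your proposal leaves unproved.
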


 \vskip 10pt

 Before we start making the calculations on 
the order of the pole of $L(s, M, N) $, 
we need to recall some generalities on L-functions.
For any irreducible representation $V\otimes \Sym^{i}(\C^2) \otimes \Sym^j(\C^2)$ of 
$WD(k) \times \SL_2(\C) = W(k) \times \SL_2(\C) \times \SL_2(\C)$, the L-function  $L(s,V\otimes \Sym^{i}(\C^2) \otimes \Sym^j(\C^2))$ 
has a pole at $s=1/2$ if and only if 
\begin{enumerate}
\item $V$ is the trivial representation of $W(k)$;
\vskip 5pt

\item $j>i$, $j\equiv (i+1) \bmod 2$.

\end{enumerate}
\vskip 5pt

Similarly,  the L-function  $L(s,V\otimes \Sym^{i}(\C^2) \otimes \Sym^j(\C^2))$ 
has a pole at $s=1$ if and only if 
\begin{enumerate}
\item $V$ is the trivial representation of $W(k)$;
\vskip 5pt

\item $j>i$, $j\equiv i \bmod 2$.

\end{enumerate}
\vskip 5pt
\noindent  In either of the two  cases, the pole is simple if it exists.
As a consequence of the  above, we note the following  lemma.

\begin{lemma} \label{L-f} 
For any representation $M$ of $WD(k) = W(k) \times \SL_2(\C)$,
  the order of the pole of $L(s, M \otimes \Sym^j(\C^2))$ at $s=1/2$ is the same as the order
of pole of $L(s,M \otimes \Sym^{j+1}(\C^2))$ at $s=1$, and these are the same as the corresponding orders of pole for $M$ replaced by $M^\vee$.
\end{lemma}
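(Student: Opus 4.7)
The plan is to reduce the statement to a termwise check on irreducible constituents of $M$ and then read off the equivalence directly from the criteria recalled just above the lemma.

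First I would use multiplicativity of $L$-factors over direct sums: writing $M = \bigoplus_\alpha V_\alpha \otimes \Sym^{i_\alpha}(\C^2)$ with each $V_\alpha$ an irreducible representation of $W(k)$, we have
\[
L(s, M \otimes \Sym^j(\C^2)) = \prod_\alpha L\bigl(s, V_\alpha \otimes \Sym^{i_\alpha}(\C^2) \otimes \Sym^j(\C^2)\bigr),
\]
and similarly with $j$ replaced by $j+1$. Each individual factor has at most a simple pole at any given point by the criteria stated in the excerpt, so orders of poles add, and it suffices to treat the case $M = V \otimes \Sym^i(\C^2)$ with $V$ irreducible on $W(k)$.

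In that case both poles orders in question are either $0$ or $1$, so the claim reduces to the equivalence of two sets of conditions. By the first criterion, $L(s, V \otimes \Sym^i(\C^2) \otimes \Sym^j(\C^2))$ has a pole at $s=1/2$ iff (a) $V$ is trivial, (b) $j > i$, and (c) $j \equiv i+1 \pmod 2$. By the second criterion, $L(s, V \otimes \Sym^i(\C^2) \otimes \Sym^{j+1}(\C^2))$ has a pole at $s=1$ iff (a) $V$ is trivial, (b$'$) $j+1 > i$, and (c$'$) $j+1 \equiv i \pmod 2$. Condition (a) is the same on both sides, and (c) is clearly equivalent to (c$'$). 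Given that common parity condition, (b) $\Rightarrow$ (b$'$) is immediate, while conversely (b$'$) together with (c$'$) forces $j+1 \geq i+2$ (since $j+1$ and $i$ share parity yet $j+1 > i$), whence $j > i$, i.e.\ (b). This gives the asserted equality of pole orders.

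For the final assertion about $M^\vee$, the observation is that an irreducible representation $V$ of $W(k)$ is trivial if and only if $V^\vee$ is, and $\Sym^j(\C^2)$ is self-dual; hence decomposing $M^\vee$ as $\bigoplus_\alpha V_\alpha^\vee \otimes \Sym^{i_\alpha}(\C^2)$ and applying the same criteria summand by summand yields pole orders identical to those for $M$. There is no serious obstacle here: the only point requiring care is the bookkeeping that (b) and (b$'$) are equivalent in the presence of the parity condition, which I have indicated above.
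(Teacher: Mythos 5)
Your proof is correct and is essentially the paper's intended argument: the paper presents the lemma as an immediate "consequence of the above" criteria, and you have simply filled in the natural details (decomposing into irreducibles, invoking multiplicativity of $L$-factors together with absence of cancellation for local non-archimedean $L$-functions, and checking the equivalence of conditions (b), (c) with (b$'$), (c$'$)). Nothing further is needed.
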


This lemma suggests the  introduction of a map on the set of representations of $ WD(k) \times \SL_2(\C)$ to itself which we
denote by $M \rightarrow M^+$,  and defined as follows.
For $M_A$ a representation of $WD(k) \times \SL_2(\C)$ of the form
$$M_A = \sum _{i\geq 0} M_i \otimes \Sym^i(\C^2),$$
define,
$$M_A^+ = \sum _{i\geq 0} M_i \otimes \Sym^{i+1}(\C^2).$$
From Lemma \ref{L-f}, the order of pole of $M_A$ at $s = 1/2$ is the same as the
order of pole of $M_A^+$ at $s=1$.

\vskip 10pt

 We are now ready to begin the proof of Theorem \ref{T:pole}
We write the parameters $M_A$ and $N_A$ as,
\begin{eqnarray*}
  M_A & = & A_1 + A_2 + \cdots + A_n + A_0, \\
  N_A & = & B_1 + B_2 + \cdots + B_n + B_0 ,
\end{eqnarray*}
where for $i \geq 1$, $A_i$ and $B_i$ are irreducible representations of $ WD(k) \times \SL_2(\mathbb C)$ of the form,
\begin{eqnarray*}
  A_i & = & M_i \boxtimes \Sym^{a_i-1}(\C^2), \\
    B_i & = & M_i \boxtimes \Sym^{b_i-1}(\C^2),
  \end{eqnarray*}
with $$ a_i-b_i = \pm 1,$$
and with $A_0,B_0$  tempered representations of $WD(k)$. Clearly, the following proposition proves the theorem.

\begin{prop} \label{lemma-pole}
  Let $C_1, C_2,D_1,D_2$ be irreducible representations of $WD(k) \times \SL_2(\C)$
    of the form:
  \begin{eqnarray*}
    C_i & =  & M_i \boxtimes \Sym^{a_i-1}(\C^2), \\
    D_i & = & M_i \boxtimes \Sym^{b_i-1}(\C^2),
  \end{eqnarray*}
for irreducible representations $\sigma_i$ of $WD(k)$   with $a_i-b_i = \pm 1$ for $i=1,2$. Then,
 \[L(s,C_1,C_2,D_1,D_2)= \frac{L(s+1/2, C_1\otimes D_2 + C_2 \otimes D_1 ) 
  }
  {L(s+1, C_1\otimes C_2)
     L(s+1, D_1\otimes  D_2)  } , \tag{a} \]
has a pole of order $\geq 0$ at $s=0$.

Further,
\[L(s,C_1,D_1)= \frac{L(s+1/2, C_1 \otimes D_1 )^2}  {L(s+1, C_1\otimes C_1)
     L(s+1, D_1\otimes  D_1)  } , \tag{b} \]
  has a pole of order $\geq 0$ at $s=0$.

  Finally, for any tempered representation $A_0$ of $WD(k)$,
\[L(s,C_1,D_1, A_0)= \frac{L(s+1/2, C_1 \otimes A_0 )}  {
     L(s+1, D_1\otimes  A_0)  } , \tag{c} \]
  has a pole of order $\geq 0$ at $s=0$.
  
\end{prop}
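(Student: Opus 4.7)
The plan is to translate each of (a), (b), (c) in Proposition~\ref{lemma-pole} into a combinatorial statement about Clebsch--Gordan decompositions of tensor products of $\SL_2(\C)$-representations, and then verify those combinatorial inequalities by short direct computations that exploit $a_i - b_i = \pm 1$. The starting point is the explicit pole criterion stated just before Lemma~\ref{L-f}: an irreducible constituent $\sigma \boxtimes [c] \boxtimes [d]$ (with $\sigma$ irreducible of $W(k)$, $[c]$ the Deligne $\SL_2$-block and $[d]$ the Arthur $\SL_2$-block) contributes a simple pole at $s=1$ (respectively $s=1/2$) to its $L$-function precisely when $\sigma$ is trivial, $d > c$, and $d \equiv c \pmod 2$ (respectively $d \not\equiv c \pmod 2$). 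Writing each irreducible $M_i = \rho_i \boxtimes [c_i]$ with $\rho_i$ irreducible of $W(k)$, I would expand
\[
C_i \otimes D_j \;=\; (\rho_i \otimes \rho_j) \boxtimes ([c_i] \otimes [c_j]) \boxtimes ([a_i] \otimes [b_j])
\]
and analogously for $C_i\otimes C_j$, $D_i\otimes D_j$. Setting $\mu_{ij} := \dim \Hom_{W(k)}(\mathbb{C}, \rho_i \otimes \rho_j)$, which equals $1$ if $\rho_j \cong \rho_i^{\vee}$ and $0$ otherwise, the pole orders on both sides factor through $\mu_{ij}$, so one may reduce to the nonzero case; the claim then becomes a pure question about $\SL_2^{\mathrm{Del}}\times\SL_2^{\mathrm{Arth}}$-representations.

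For part~(a), let $\phi(x,y;c) := \#\{d \in [x] \otimes [y] : d > c\}$, where $[x] \otimes [y]$ is viewed as the multiset of summand dimensions $\{|x-y|+1, |x-y|+3, \ldots, x+y-1\}$. The hypothesis $a_i - b_i = \pm 1$ forces the Clebsch--Gordan summands of $[a_1] \otimes [b_2]$ and $[a_2] \otimes [b_1]$ to have one fixed parity, while those of $[a_1] \otimes [a_2]$ and $[b_1] \otimes [b_2]$ have the opposite parity. This is exactly what makes the ``opposite parity'' pole condition at $s=1/2$ compatible with the ``same parity'' condition at $s=1$: both contribute nontrivially only for $c\in [c_1]\otimes [c_2]$ of one fixed parity. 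The inequality then reduces to the combinatorial claim
\[
\phi(a_1, b_2; c) + \phi(a_2, b_1; c) \;\geq\; \phi(a_1, a_2; c) + \phi(b_1, b_2; c)
\]
for each such $c$. A useful sanity check: the virtual $\SL_2$-representation $[a_1]\otimes [b_2] + [a_2]\otimes [b_1] - [a_1]\otimes [a_2] - [b_1]\otimes [b_2]$ has total dimension $-(a_1-b_1)(a_2-b_2)\in\{-1,+1\}$, so LHS$-$RHS is small; in examples it always equals $0$ or $1$. I would verify the claim by case analysis on $(\epsilon_1, \epsilon_2) = (a_1-b_1, a_2-b_2) \in \{\pm 1\}^2$ together with the position of $c$ relative to the breakpoints $|a_1-b_2|, |a_1-a_2|$, etc., using the explicit piecewise-linear formula for $\phi$.

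Part~(b) is the diagonal specialization $C_2 = C_1$, $D_2 = D_1$ of the same setup and reduces to the single-pair inequality $2\,\phi(a_1, b_1; c) \geq \phi(a_1, a_1; c) + \phi(b_1, b_1; c)$ for $c$ of the correct parity, whose virtual dimension is $-(a_1-b_1)^2 = -1$, and is handled by the same type of case analysis. Part~(c) is the easiest: since $A_0$ is tempered, its Arthur block is $[1]$, so the Arthur side of $C_1\otimes A_0$ is just $[a_1]$ and no Arthur Clebsch--Gordan is needed. After decomposing $A_0 = \bigoplus_\alpha \rho_{0,\alpha}\boxtimes [c_{0,\alpha}]$ and restricting to those $\alpha$ with $\rho_{0,\alpha} \cong \rho_1^{\vee}$, the numerator pole order at $s=1/2$ counts $c \in [c_1] \otimes [c_{0,\alpha}]$ with $c < a_1$ and $c \not\equiv a_1 \pmod 2$, while the denominator pole order at $s=1$ counts $c$ with $c < b_1$ and $c \equiv b_1\pmod 2$. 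Since $a_1 - b_1 = \pm 1$, the two parity conditions on $c$ coincide and the size constraint $c < a_1$ is at least as permissive as $c < b_1$, so the numerator count dominates. The main obstacle will be the case analysis in part~(a), which though elementary is multi-layered (four sign combinations, boundary effects when $c$ lies near $|x-y|$ or $x+y-1$, and equality degeneracies like $a_1 = a_2$ or $b_1 = b_2$); the bookkeeping has to be done carefully to avoid off-by-one errors.
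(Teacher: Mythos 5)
Your reduction in part (a) to the combinatorial inequality
\[
\phi(a_1,b_2;c)+\phi(a_2,b_1;c)\;\geq\;\phi(a_1,a_2;c)+\phi(b_1,b_2;c)
\]
and the accompanying parity bookkeeping (only one parity class of $c$ can contribute poles in either the numerator at $s=1/2$ or the denominator at $s=1$, and for those $c$ the parity constraint on $d$ is automatic) is correct, and in spirit it is the same computation the paper performs. What you do differently is compare the pole counts directly at $s=1/2$ versus $s=1$; the paper instead first applies the shift $M_A\mapsto M_A^+$ of Lemma \ref{L-f} to move everything to $s=1$, and then the Clebsch--Gordan arithmetic collapses. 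For instance in the case $b_1=a_1+1$, $b_2=a_2+1$ with $i=a_1-1\geq j=a_2-1>0$, the paper finds
\[
(C_1\otimes D_2+C_2\otimes D_1)^+-(C_1\otimes C_2+D_1\otimes D_2)=(M_1\otimes M_2)\boxtimes\bigl[\Sym^{i+j+2}(\C^2)-\Sym^{i-j}(\C^2)\bigr],
\]
a difference of just two irreducibles of the same parity, so the pole count is $\geq 0$ by a one-line monotonicity observation ($\Sym^a$ dominates $\Sym^{a-2b}$ at $s=1$). This is a genuine simplification: it converts the multi-case counting argument you propose into a single dominance check. As your proposal stands, the combinatorial inequality is asserted, illustrated by examples, and a case analysis is sketched, but the inequality itself is never proved; so the argument for part (a), and likewise part (b), is incomplete.

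Part (c) also contains a concrete slip. You write that ``the size constraint $c<a_1$ is at least as permissive as $c<b_1$, so the numerator count dominates.'' This is literally false when $a_1=b_1-1$: there $c<a_1$ is strictly more restrictive than $c<b_1$. The conclusion is still right, but only after feeding in the parity constraint you noted in the preceding sentence: with $c\equiv b_1\pmod 2$, the effective upper bounds become $c\leq a_1-1$ in the numerator and $c\leq b_1-2$ in the denominator, and $a_1-1\geq b_1-2$ holds for both sign choices of $a_1-b_1$. As written, your argument would give the wrong inequality in the case $a_1<b_1$; the correct comparison is $a_1-1$ versus $b_1-2$, not $a_1$ versus $b_1$.
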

\begin{proof} We will only prove part (a) of the proposition, the other parts are proved analogously. 
It suffices to prove part (a) of the proposition in the following two cases.

  \begin{enumerate}
  \item $b_1 = a_1+1, b_2=a_2+1,$
  \item $b_1 = a_1-1, b_2=a_2+1.$
    \end{enumerate}
 We will deal with these two cases separately by a direct calculation, starting with case (1).

\vskip 10pt
Let
\[\begin{array} {ccccccc}
    C_1 &  = &  M_1 \boxtimes \Sym^i(\C^2),  &  & C_2  & =  & M_2 \boxtimes \Sym^j(\C^2), \\
    D_1 & =  & M_1 \boxtimes \Sym^{i+1}(\C^2),  &   & D_2 & =  & M_2 \boxtimes \Sym^{j+1}(\C^2).
  \end{array} \]
be irreducible representations of $WD(k) \times \SL_2(\C)$
for irreducible representations $M_1,M_2$ of $WD(k)$ where we assume without loss of generality that $i\geq j$.
It follows that
\begin{eqnarray*}
C_1\otimes D_2 + C_2 \otimes D_1    & =  & (M_1 \otimes M_2) \boxtimes \left [ \Sym^i(\C^2) \otimes \Sym^{j+1}(\C^2) + \Sym^j(\C^2)  \otimes \Sym^{i+1}(\C^2) \right ] \\
C_1\otimes C_2 + D_1 \otimes D_2  & = & (M_1 \otimes M_2) \boxtimes \left [ \Sym^i(\C^2) \otimes \Sym^j(\C^2) + \Sym^{i+1}(\C^2)   \otimes \Sym^{j+1}(\C^2) \right ].
\end{eqnarray*}
Hence,  $C_1\otimes D_2 + C_2 \otimes D_1 $ is equal to (assuming $i > j$),
$$(M_1 \otimes M_2) \boxtimes \left [ \Sym^{i+j+1}(\C^2) + \cdots + \Sym^{i-(j+1)}(\C^2)  + \Sym^{j+i+1}(\C^2) + \cdots +  \Sym^{i+1-j}(\C^2)  \right ]$$
and $C_1\otimes C_2 + D_1 \otimes D_2$ is equal to,
$$ (M_1 \otimes M_2) \boxtimes \left [ \Sym^{i+j}(\C^2) + \cdots +  \Sym^{i-j}(\C^2)  + \Sym^{i+j+2}(\C^2)  + \cdots +  \Sym^{i-j}(\C^2) \right ].$$

For any representation $M_A$ of $WD(k) \times \SL_2(\C)$, we have defined the representation $M_A^+$ earlier. 
With this notation, we have,
\[
  (C_1\otimes D_2 + C_2 \otimes D_1)^+ - ( C_1\otimes C_2 + D_1 \otimes D_2) \]
  \[ =
    (M_1 \otimes M_2) \boxtimes \left [ \Sym^{i+j+2}(\C^2) -  \Sym^{i- j}(\C^2) \right ].
\]
By Lemma \ref{L-f}, the order of pole of
  $$\frac{L(s+1/2, C_1\otimes D_2 + C_2 \otimes D_1 ) 
  }
  {L(s+1, C_1\otimes C_2)
     L(s+1, D_1\otimes  D_2)  } , $$
  at $s=0$ is the same as the order of pole of the virtual representation
  $$(C_1\otimes D_2 + C_2 \otimes D_1)^+ - ( C_1\otimes C_2 + D_1 \otimes D_2) ,$$
  of $WD(k) \times \SL_2(\C)$ at $s = 1$.
  \vskip 5pt
  
  On the other hand, for any irreducible representation
  $M$ of $WD(k)$, the order of pole at $s=1$ of the virtual representation 
  \[ M \boxtimes \left [ \Sym^{i+j+2}(\C^2) -  \Sym^{i- j}(\C^2) \right ] \]
  is $\geq 0$, since the order of pole at $s=1$ of the representation $M \boxtimes  \Sym^{a}(\C^2)$ is always greater than or equal to
the order of pole at $s=1$ of the representation $M \boxtimes  \Sym^{a-2b}(\C^2)$ for any integer $b \geq 0$. Hence 
so $L(s,C_1,C_2,D_1,D_2)$ has a pole of order $\geq 0$ at $s=0$,
  proving the proposition  in this case (assuming $i>j$ here).
\vskip 5pt

  If $i=j$, it can be seen that,
\[
  (C_1\otimes D_2 + C_2 \otimes D_1)^+ - ( C_1\otimes C_2 + D_1 \otimes D_2) \]
  \[ =    (M_1 \otimes M_2) \boxtimes \left [ \Sym^{2i+2}(\C^2) -  2\C \right ].
  \]
Since tempered representations have no poles in the region $Re(s)>0$, the term  $ 2\C$
contributes to no poles for the virtual representation $ M_1 \otimes M_2 \boxtimes \left [ \Sym^{2i+2}(\C^2) -  2\C \right ],$ leaving us with the
true representation $ M_1 \otimes M_2 \boxtimes \Sym^{2i+2}(\C^2)$ which can only contribute a non-negative number of poles at $s=1$,
completing the proof of the proposition for $i=j$ case of case (1).

\vskip 10pt

Next we consider case (2), where
\[\begin{array} {ccccccc}
    C_1 &  = &  M_1 \boxtimes \Sym^i(\C^2),  &  & C_2  & =  & M_2 \boxtimes \Sym^j(\C^2), \\
    D_1 & =  & M_1 \boxtimes \Sym^{i-1}(\C^2),  &   & D_2 & =  & M_2 \boxtimes \Sym^{j+1}(\C^2).
  \end{array} \]
for irreducible representations $M_1,M_2$ of $WD(k)$, and assume that $i \geq j$.
It follows that, 
\begin{eqnarray*} C_1\otimes D_2 + C_2 \otimes D_1 & = &  (M_1 \otimes M_2) \otimes \left [  \Sym^i(\C^2) \otimes \Sym^{j+1}(\C^2) + \Sym^j(\C^2)  \otimes \Sym^{i-1}(\C^2) \right ],\\
C_1\otimes C_2 + D_1 \otimes D_2 & = & (M_1 \otimes M_2) \otimes \left [ \Sym^i(\C^2) \otimes \Sym^j(\C^2) + \Sym^{i-1}(\C^2)   \otimes \Sym^{j+1}(\C^2) \right ].
\end{eqnarray*} 
\vskip 10pt

Assuming $i >j+1$, we deduce that  $C_1\otimes D_2 + C_2 \otimes D_1 $ is equal to,
$$  (M_1 \otimes M_2) \boxtimes \left [ \Sym^{i+j+1}(\C^2) + \cdots + \Sym^{i-(j+1)}(\C^2) + \Sym^{j+i-1}(\C^2) + \cdots +  \Sym^{i-1-j}(\C^2) \right ],$$
and $C_1\otimes C_2 + D_1 \otimes D_2$ is equal to,
$$ (M_1 \otimes M_2) \boxtimes \left [\Sym^{i+j}(\C^2) + \cdots +  \Sym^{i-j}(\C^2) + \Sym^{i+j}(\C^2)  + \cdots +  \Sym^{i-j-2}(\C^2) \right ].$$
Therefore,
\[
  (C_1\otimes D_2 + C_2 \otimes D_1)^+ - ( C_1\otimes C_2 + D_1 \otimes D_2) \]
  \[
   =    (M_1 \otimes M_2) \boxtimes \left [ \Sym^{i+j+2}(\C^2) -  \Sym^{i- j-2}(\C^2) \right ].
\]
\vskip 10pt

As in case (1),  the order of pole of
  $$\frac{L(s+1/2, C_1\otimes D_2 + C_2 \otimes D_1 ) 
  }
  {L(s+1, C_1\otimes C_2)
     L(s+1, D_1\otimes  D_2)  } , $$
  at $s=0$ is the same as the order of pole for the L-function of the virtual representation
  $$(C_1\otimes D_2 + C_2 \otimes D_1)^+ - ( C_1\otimes C_2 + D_1 \otimes D_2)  $$
  \[ = (M_1 \otimes M_2) \boxtimes \left [ \Sym^{i+j+2}(\C^2) -  \Sym^{i- j-2}(\C^2) \right ] \]
at $s=1$ which is $\geq 0$.
\vskip 5pt

If $i=j+1$, then,
\begin{eqnarray*}
  (C_1\otimes D_2 + C_2 \otimes D_1)^+ - ( C_1\otimes C_2 + D_1 \otimes D_2) 
  & =  &  (M_1 \otimes M_2) \boxtimes \left [ \Sym^{2i+1}(\C^2)  \right ],
\end{eqnarray*}
and  we find that the  order of pole of
  $$\frac{L(s+1/2, C_1\otimes D_2 + C_2 \otimes D_1 ) 
  }
  {L(s+1, C_1\otimes C_2)
     L(s+1, D_1\otimes  D_2)  } , $$
  at $s=0$ is $\geq 0$.
\vskip 5pt

If $i=j$, then
\begin{eqnarray*}
  (C_1\otimes D_2 + C_2 \otimes D_1)^+ - ( C_1\otimes C_2 + D_1 \otimes D_2) 
  & =  &  (M_1 \otimes M_2) \boxtimes \left [ \Sym^{2i+2}(\C^2) -  \C \right ],
\end{eqnarray*}
and once again, we find that the  order of pole of
  $$\frac{L(s+1/2, C_1\otimes D_2 + C_2 \otimes D_1 ) 
  }
  {L(s+1, C_1\otimes C_2)
     L(s+1, D_1\otimes  D_2)  } , $$
  at $s=0$ is $\geq 0$.
\end{proof}

\vskip 10pt

We have thus completed the proof of Theorem \ref{T:pole} (i.e. Theorem \ref{pole}).
 In fact, the order of pole in Theorem \ref{T:pole} can be explicitly determined if the action of the Deligne $\SL_2(\C)$ is trivial on the representations $M_A$ and $N_A$. More precisely, we have: 
 \vskip 5pt
 
\begin{prop}
 Let $(M_A, N_A)$ be a pair of relevant A-parameters for $(\GL_m(k),\GL_n(k))$ on which the Deligne $\SL_2(\C)$ acts trivially. Write:
 \begin{eqnarray*} M_A & = &   \sum_{i \geq 1} M_i \boxtimes \Sym^{i-1}(\C^2) = \sum_{i \geq 1} (M_i^+ + M_i^-)\boxtimes \Sym^{i-1}(\C^2) ,  \\ 
  N_A & = &   \sum_{i \geq 1} W_i\boxtimes \Sym^{i-1}(\C^2) = \sum_{i\geq 1} (N_i^+ + N_i^-) \boxtimes \Sym^{i-1}(\C^2), \end{eqnarray*} 
with $M_i, N_i$ representations of $W(k)$,
such that  $M_i^+ = N_{i+1}^-$ for $i \geq 1$, and $M_i^- = N_{i-1}^+$ for $i \geq 2$.
\vskip 5pt

Then  the order of pole at $s=0$ of $L(s,M, N)$ is given by the dimension of $W(k)$-invariants in,
\begin{eqnarray*}
 & &  \sum_{i \geq 1} \left \{\Hom[M_i, N_{i-1} ] + \Hom[M_i, N_{i+1}] -\Hom[M^+_i, N_{i-1}^-] -\Hom[M_i^-, N_{i+1}^+]\right \}, \end{eqnarray*}
with the understanding that $N_0=N^-_0=0$. 
\end{prop}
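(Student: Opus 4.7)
My plan is to compute both the order of pole $P$ of $L(s, M, N)$ at $s = 0$ and the dimension $Q$ of the $W(k)$-invariants of the stated virtual sum as explicit quadratic functions of the multiplicities $[M_i:V]$ and $[N_j:V]$, and then to verify $P = Q$ term-by-term.

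First I would expand $M \otimes N^\vee$, $M^\vee \otimes N$, $M \otimes M^\vee$, and $N \otimes N^\vee$ using the decompositions of $M$ and $N$, together with the Clebsch--Gordan formula $\Sym^{i-1}(\C^2) \otimes \Sym^{j-1}(\C^2) = \bigoplus_{r=0}^{\min(i,j)-1}\Sym^{i+j-2-2r}(\C^2)$. Since the Deligne $\SL_2$ acts trivially, the pole structure recorded around Lemma \ref{L-f} applies directly: for an irreducible $W(k)$-representation $V$ and integer $k \geq 1$, $L(s, V \boxtimes \Sym^{k-1}(\C^2))$ has a simple pole at $s = 1/2$ iff $V$ is trivial and $k$ is even $\geq 2$, and a simple pole at $s = 1$ iff $V$ is trivial and $k$ is odd $\geq 3$. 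This yields
\begin{align*}
P \;=\; & 2 \sum_{\substack{i,j \geq 1\\ i+j \text{ odd}}} \min(i,j)\, \dim \Hom_{W(k)}[M_i,N_j] - \sum_{i \geq 1}(i-1)\bigl(\dim \Hom_{W(k)}[M_i,M_i] + \dim \Hom_{W(k)}[N_i,N_i]\bigr) \\
 & \;\; - 2 \sum_{\substack{1 \leq i<j\\ i+j \text{ even}}} i\,\bigl(\dim \Hom_{W(k)}[M_i,M_j] + \dim \Hom_{W(k)}[N_i,N_j]\bigr).
\end{align*}

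To match this with $Q$, I would reduce to a polynomial identity for each irreducible $W(k)$-representation $V$ by setting $a_i = [M_i^+:V]$, $b_i = [M_i^-:V]$, and $c_1 = [N_1^-:V]$. The relevance relations force $[N_j^-:V] = a_{j-1}$ for $j \geq 2$ and $[N_j^+:V] = b_{j+1}$ for $j \geq 1$, so $[M_i:V] = a_i + b_i$, $[N_1:V] = b_2 + c_1$, and $[N_j:V] = a_{j-1}+b_{j+1}$ for $j \geq 2$. Expanding $Q$ using the relevance identifications $M_i^+ \cong N_{i+1}^-$ and $M_i^- \cong N_{i-1}^+$ gives, for each $V$,
\[ Q_V = a_1(a_1 + b_1) + b_2 c_1 + \sum_{i \geq 2}(a_i+b_i)^2 + 2\sum_{i\geq 1} a_i b_{i+2}. \]
Substituting the formulas for $[M_i:V]$ and $[N_j:V]$ into the expression for $P$ expresses $P_V$ as a quadratic polynomial in $\{a_i,b_i,c_1\}$. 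The identity $P_V = Q_V$ then reduces to checking the coefficient of each monomial (of type $a_i a_j$, $a_i b_j$, $b_i b_j$, $a_i c_1$, $b_i c_1$, or $c_1^2$) on both sides, which is a finite bilinear verification.

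The main difficulty is the bookkeeping of this verification. The sum $P_1$ over pairs with $i+j$ odd produces every mixed monomial, and the two denominator sums (over $i+j$ even, plus the diagonal $i = j$ corrections coming from $P_3$) produce square-type monomials that cancel most off-diagonal contributions. I expect organizing the computation by the distance $|j-i|$ between indices will clarify the cancellations, with special care needed at the boundary indices $i = 1, 2$, where the free summands $M_1^-$ and $N_1^-$, unconstrained by the relevance relations, appear asymmetrically.
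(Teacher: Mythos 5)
Your strategy---expand both sides to explicit quadratic polynomials in the multiplicities $a_i=[M_i^+:V]$, $b_i=[M_i^-:V]$, $c_1=[N_1^-:V]$ and match coefficients---is a legitimate and in fact more detailed approach than the paper's one-line hint (``examine the arguments in Proposition~\ref{lemma-pole} carefully''). Your formula for the pole order $P$ is correct, and your expansion of the proposed invariant count to $Q_V = a_1(a_1+b_1) + b_2 c_1 + \sum_{i\ge 2}(a_i+b_i)^2 + 2\sum_{i\ge 1}a_i b_{i+2}$ is a correct unwinding of the displayed virtual sum. The problem is that you stop there and declare the identity $P_V=Q_V$ to be ``a finite bilinear verification,'' which you never carry out. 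That verification is precisely where the proof would break down.

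Consider the relevant pair $M_A = V\boxtimes\Sym^1(\C^2)$, $N_A = 2V$, with $V$ irreducible for $W(k)$. Here $M_2=M_2^-=V$, $N_1^+ = V$, $N_1^- = V$, so $b_2=c_1=1$ and all other multiplicities vanish. The L-parameters are $M = V\nu^{1/2}+V\nu^{-1/2}$, $N=2V$, and one computes directly
\[
  L(M,N,s) \;=\; \frac{L(V\otimes V^\vee, s)^3}{L(V\otimes V^\vee, s+2)\,L(V\otimes V^\vee,s+1)^2},
\]
which has a pole of order $3$ at $s=0$ (this also agrees with your $P_V = 2\cdot 1\cdot 2 - 1 = 3$). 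The stated virtual sum contributes only $\Hom[M_2,N_1]=\Hom[V,2V]$, of invariant dimension $2$, and your $Q_V = b_2^2+b_2c_1 = 2$. So $P_V - Q_V = b_2c_1 = 1 \ne 0$. The symmetric example $M_A=2V$, $N_A = V\boxtimes\Sym^1(\C^2)$ gives $P_V-Q_V = a_1 b_1$. More generally, in the small cases I checked, the discrepancy is $a_1 b_1 + b_2 c_1 = \dim\Hom_{W(k)}[M_1^+, M_1^-] + \dim\Hom_{W(k)}[N_1^+, N_1^-]$, supported exactly on the boundary contributions of the free summands $M_1^-$ and $N_1^-$ that you yourself flag as needing ``special care.'' So the gap is not cosmetic: the final verification step you defer would not close, and either the proposed formula needs a correction term at the boundary indices, or the statement requires additional hypotheses (e.g.\ disjointness of $M_1^+$ from $M_1^-$ and of $N_1^+$ from $N_1^-$). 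You should carry out the bilinear verification explicitly, isolate the boundary terms, and either correct the formula or determine the missing hypothesis before claiming the proposition.
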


\begin{proof} The proof follows by examining the arguments made in Proposition \ref{lemma-pole} carefully, a task  which we leave to the reader.
We only  point out here that in the course of the proof of this proposition, say in case 1 of case (a), we had to deal with
  L-function of the virtual representation:
$$M \boxtimes \left [ \Sym^{i+j+2}(\C^2) -  \Sym^{i- j}(\C^2) \right ],$$
  at the point $s=1$. 
  Clearly, if $M$ is a representation of $WD(k)$ which factors through $W(k)$, such a virtual representation has neither a zero nor a pole at $s=1$
  for $i>j$, allowing us to calculate all the poles appearing in Proposition \ref{lemma-pole}.
  \end{proof}

\vskip 15pt

\section{L-functions: Classical Groups} \label{classical}
 The goal of this section is to give the proof of Theorem \ref{interlacing}.
 For the convenience of the reader, we restate the theorem here:

 \begin{thm} \label{T:interlacing}
 Let $k$ be a non-archimedean local field and  let $(M_A, N_A)$ be a pair of A-parameters for $\SO_{2m+1}(k) \times \SO_{2n}(k)$ with associated pair of L-parameters $(M,N)$.
 \vskip 5pt
 
 (i) If $(M_A, N_A)$ is a relevant pair of A-parameters, then  the order of pole at $s=0$ of 
 the function
 \[  L(M,N, s) = \frac{L(M \otimes N, s + ~1/2)}{L(\Sym^2 M \oplus \wedge^2N, s + 1)}  \]
 is greater than or equal to zero.

 \vskip 5pt
 
 (ii) Suppose  that $M_A$ and $N_A$ are multiplicity-free representations of $WD(k) \times \SL_2(\bC)$ on which the Deligne $\SL_2(\C)$ acts trivially. 
  Then, at $s = 0$,  the function
 $  L(M,N, s)$
has  a zero of order $\geq 0$. It  has neither a zero nor a pole at $s=0$ if and only if
$(M_A, N_A)$ is a relevant pair of A-parameters.
\end{thm}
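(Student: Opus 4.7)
The plan is to follow the template of the $\GL_n$ proof (Proposition \ref{lemma-pole}), with the key technical device being the shift operator $\pi \mapsto \pi^+$ that sends an irreducible summand $V \otimes \Sym^i(\C^2) \otimes \Sym^j(\C^2)$ of a parameter to $V \otimes \Sym^i(\C^2) \otimes \Sym^{j+1}(\C^2)$ (shifting only the Arthur factor). A generalization of Lemma \ref{L-f} to this three-factor setting matches the pole order of $L(\pi, s)$ at $s = 1/2$ with that of $L(\pi^+, s)$ at $s = 1$. Applying this to the numerator of $L(M, N, s)$, part (i) reduces to showing that the virtual representation
\[
\Phi \;:=\; (M_A \otimes N_A)^+ \;-\; \bigl(\Sym^2 M_A + \wedge^2 N_A\bigr)
\]
has non-negative pole order at $s = 1$, and part (ii) to the sharpening that this pole order is exactly zero iff $(M_A, N_A)$ is relevant.

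Next, one uses the relevance to decompose $M_A = X + \sum_\alpha V_\alpha$ and $N_A = Y + \sum_\alpha W_\alpha$, where $X = M_0^-$, $Y = N_0^-$ are the unpaired (tempered) parts at Arthur level $\Sym^0(\C^2)$, and each $(V_\alpha, W_\alpha) = (\rho_\alpha \otimes \Sym^{a_\alpha}(\C^2), \rho_\alpha \otimes \Sym^{a_\alpha \pm 1}(\C^2))$ is a paired pair of irreducible summands arising from the identifications $M_i^\pm = N_{i \pm 1}^\mp$ (here $\rho_\alpha$ is an irreducible representation of $WD(k)$). Expanding the tensor product, the symmetric square, and the exterior square in this decomposition, $\Phi$ splits as a sum of four kinds of pieces:
\begin{itemize}
\item[(a)] diagonal-paired: $(V_\alpha \otimes W_\alpha)^+ - (\Sym^2 V_\alpha + \wedge^2 W_\alpha)$ for each $\alpha$;
\item[(b)] cross-paired: $(V_\alpha \otimes W_\beta + V_\beta \otimes W_\alpha)^+ - (V_\alpha \otimes V_\beta + W_\alpha \otimes W_\beta)$ for $\alpha \neq \beta$;
\item[(c)] mixed: $(V_\alpha \otimes Y + X \otimes W_\alpha)^+ - (V_\alpha \otimes X + W_\alpha \otimes Y)$ for each $\alpha$;
\item[(d)] unpaired-unpaired: $(X \otimes Y)^+ - (\Sym^2 X + \wedge^2 Y)$.
\end{itemize}

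For each of these pieces, the analysis proceeds by Clebsch--Gordan decomposition on the Arthur factors, $\Sym^a \otimes \Sym^b = \Sym^{a+b} + \Sym^{a+b-2} + \cdots + \Sym^{|a-b|}$, combined with the pole criterion: for $V$ the trivial representation of $W(k)$, $L(V \otimes \Sym^i \otimes \Sym^j, s)$ has a simple pole at $s = 1$ iff $j > i$ and $j \equiv i \pmod{2}$. The diagonal-paired piece (a) contributes pole order exactly zero, in direct parallel with Step 1 of the proof of Theorem \ref{T:globalR}. The cross-paired piece (b) is the classical-group analog of Proposition \ref{lemma-pole}: a sub-case analysis on the two pairs' signs, combined with the Clebsch--Gordan expansion of the four Arthur tensor products, reduces the virtual character to $(\rho_\alpha \otimes \rho_\beta) \otimes$ (a specific difference of $\Sym^?$ summands) whose pole order at $s = 1$ is manifestly non-negative. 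The mixed (c) and unpaired-unpaired (d) pieces are simpler: since $X$ and $Y$ are tempered, their L-functions (and those of their tensors with paired pieces) have no poles in $\mathrm{Re}(s) > 0$, so these contributions have pole order exactly zero at $s = 1$.

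For part (ii), the Deligne $\SL_2$-triviality turns each summand into $\rho \otimes \Sym^0(\C^2) \otimes \Sym^a(\C^2)$ for some irreducible $\rho$ of $W(k)$, and multiplicity-freeness makes all such pairs $(\rho, a)$ distinct. The case analysis sharpens: pieces (a), (c), (d) contribute pole order exactly zero, and each cross-paired piece (b) contributes zero unless the four-tuple $(V_\alpha, W_\alpha, V_\beta, W_\beta)$ produces a Clebsch--Gordan overlap between the shifted numerator and the denominator that fails to cancel. Such an overlap occurs precisely when some ``extraneous'' pairing between summands of $M_A$ and $N_A$ outside the given relevance pairing $M_i^\pm = N_{i \pm 1}^\mp$ is forced --- which is exactly the failure of relevance. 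The main obstacle will be this final sharpening: one must not only bound each piece's pole order from below by zero, but also identify precisely which cross-paired pieces contribute strictly positive zeros, and verify that no cancellation across pieces can occur. This is a careful combinatorial analysis of the Clebsch--Gordan decompositions, more intricate than but structurally similar to the case analysis in Proposition \ref{lemma-pole}.
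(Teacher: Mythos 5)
Your proposal for part (i) is essentially the paper's strategy: decompose $M_A$ and $N_A$ into paired pieces and unpaired tempered parts, reduce the cross-terms to the $\GL$ cases of Proposition~\ref{lemma-pole}, and treat the diagonal $\Sym^2 V_\alpha$, $\wedge^2 W_\alpha$ terms via the product formulas for $\Sym^2$ and $\wedge^2$ of a tensor product. (One small inaccuracy: locally, pieces (a) and (c) cannot be claimed to have pole order \emph{exactly} zero in general in the setting of part (i); they have pole order $\geq 0$. This does not affect the conclusion of (i), but do not assert more than you prove.)

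For part (ii), there is a genuine structural gap. The decomposition $M_A = X + \sum_\alpha V_\alpha$, $N_A = Y + \sum_\alpha W_\alpha$ into matched irreducible pairs $(V_\alpha, W_\alpha) = (\rho_\alpha \otimes \Sym^{a_\alpha}, \rho_\alpha \otimes \Sym^{a_\alpha \pm 1})$ together with unpaired tempered pieces $X, Y$ \emph{presupposes} relevance. But part (ii) makes two assertions: first, that $L(M,N,s)$ has a zero of order $\geq 0$ at $s=0$ for \emph{arbitrary} multiplicity-free $(M_A, N_A)$ with Deligne $\SL_2$ trivial, not just relevant ones; and second, the biconditional ``order zero $\iff$ relevant'', whose ``only if'' direction concerns precisely the non-relevant case. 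Your framework provides no foothold on either: if $(M_A, N_A)$ is not relevant, the four-way splitting of $\Phi$ into pieces (a)--(d) does not exist, and the final appeal to ``an extraneous pairing outside the relevance pairing is forced'' is circular --- it assumes one has started with a relevance pairing to deviate from. You also omit a step the paper needs before anything else: the $\rho$-isotypic reduction, which splits $L(M,N,s)$ into a product over irreducible selfdual $\rho$ of $W(k)$ and reduces the whole problem to the case where $W(k)$ acts trivially on $M_A$ and $N_A$, so that one is literally just handling multiplicity-free representations $V$ (symplectic) and $W$ (orthogonal) of the Arthur $\SL_2(\C)$. The paper's argument for (ii) is then an induction ``from the top'': strip off the largest summand $\Sym^{d-1}(\C^2)$ of $V$ and the largest $\Sym^{e-1}(\C^2)$ of $W$, and show that the ratio $L(V,W,s)/L(V',W',s)$ contributes a zero of order $\geq 0$ at $s=0$, with order exactly zero if and only if $d = e+1$. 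This inductive step makes no reference to relevance and therefore applies to all multiplicity-free $(V,W)$; relevance is recovered post hoc as the condition that at every step of the induction one has $d = e+1$. Without this (or some substitute that treats arbitrary parameters), the ``only if'' half of part (ii) cannot be reached.
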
 
   
\vskip 10pt

We begin with the proof of (i) which is analogous to that of Theorem \ref{T:pole}.
Write the parameters $M_A$ and $N_A$ as
\begin{eqnarray*}
  M_A & = & A_1 + A_2 + \cdots + A_n + A_0, \\
  N_A & = & B_1 + B_2 + \cdots + B_n + B_0 ,
\end{eqnarray*}
where for $i \geq 1$, $A_i$ and $B_i$ are irreducible representations of $ WD(k) \times \SL_2(\mathbb C)$ of the form,
\begin{eqnarray*}
  A_i & = & M_i \otimes \Sym^{a_i}(\C^2), \\
    B_i & = & M_i \otimes \Sym^{b_i}(\C^2),
  \end{eqnarray*}
with $$ a_i-b_i = \pm 1,$$
and with $A_0,B_0$ tempered parameters of $WD(k)$. 
\vskip 5pt

Observe that  
$$\Sym^2 M_A = \sum_{i > j}  A_i \otimes A_j + \sum_i \Sym^2(A_i),$$
and similarly,
$$\Lambda^2 N_A = \sum_{i > j}  B_i \otimes B_j + \sum_i \Lambda^2(B_i).$$
Therefore, the {\it non-diagonal} contributions to  the order of pole at $s=1$ of the L-function 
$L(\Sym^2 M_A,s) L(\Lambda^2 N_A,s)$ is as in cases (a) and (c) in the statement of Proposition \ref{lemma-pole}.

\vskip 5pt

It suffices to prove the following analogue of case (b) in the statement of Proposition \ref{lemma-pole}, asserting that
 \[L(C,D, s)= \frac{L( C \otimes D, s+1/2 )}  {L( \Sym^2(C), s+1)
     L( \Lambda^2(D), s+1)  } \]
  has a pole of order $\geq 0$ at $s=0$ whenever, 
\begin{eqnarray*}
  C & = & 
M \otimes \Sym^{a}(\C^2), \\
    D & = & M \otimes \Sym^{b}(\C^2),
  \end{eqnarray*}
for $M$ an irreducible tempered representation of $WD(k)$ with $$ a-b = \pm 1.$$ 
\vskip 5pt

To calculate symmetric and exterior square of representations $C$ and $D$ of $WD(k) \times \SL_2(\C)$, note that for any two representations $V,W$ of any group $G$, we have the identity of representations,
$$\Sym^2(V \otimes W) = \Sym^2(V) \otimes \Sym^2(W) \oplus \Lambda^2(V) \otimes \Lambda^2(W),$$ 
and,
$$\Lambda^2(V \otimes W) = \Sym^2(V) \otimes \Lambda^2(W) \oplus \Lambda^2(V) \otimes \Sym^2(W).$$
\vskip 5pt

\noindent Using the well-known structure of $\Sym^2(\Sym^i(\C^2)) $ and $\Lambda^2(\Sym^i(\C^2))$ given by,
$$\Sym^2(\Sym^i(\C^2)) = \Sym^{2i}(\C^2) + \Sym^{2i-4}(\C^2) + \cdots,$$   
and,
$$\Lambda^2(\Sym^i(\C^2)) = \Sym^{2i-2}(\C^2) + \Sym^{2i-6}(\C^2) + \cdots,
$$   
one easily concludes that $L(s,C,D)$  has a pole of order $\geq 0$ at $s=1$, concluding the proof of part (i) of Theorem \ref{T:interlacing}. 

\vskip 10pt

We come now to the proof of (ii). 
Thus, let $M_A$ and $N_A$ be A-parameters for which the Deligne $\SL_2(\C)$ acts trivially. 
\vskip 10pt

 For any
 irreducible representation $\rho$ of $W(k)$, let $M_A[\rho], N_A[\rho]$
 be the $\rho$-isotypic part of $M_A, N_A$ (as a $W(k)$-module). Since the representations $M_A, N_A$
 of $W(k) \times \SL_2(\C)$ are multiplicity free and are selfdual, if  $M_A[\rho] \not = 0$, or $N_a[\rho] \not = 0$, $\rho$ must be a
 selfdual
 representation of $W(k)$.

It is easy to see that the order of zero of $L(M,N,s)$ at $s=0$ is
  the sum of  the order of zeros  of $L(M_A[\rho], N_A[\rho],s)$ at $s=0$ for various distinct irreducible representations
  $\rho$ of $W(k)$. For $\rho$,  an
  irreducible  selfdual representation of $W(k)$, let's write
  \[ M_A[\rho] =  \rho \boxtimes V  \quad \text{and} \quad N_A[\rho] =  \rho \boxtimes W \]
  for representations $V$ and $W$ of (the Arthur) $\SL_2(\C)$. Since $M_A$
  is supposed to be a symplectic representation and $N_A$ an orthogonal representation, if $\rho$ is
  orthogonal, $V$ will be a symplectic representation of (the Arthur) $\SL_2(\C)$, and
  $W$ will be an orthogonal representation of (the Arthur) $\SL_2(\C)$.
    Now using the identities:
     \begin{eqnarray*}
       \Sym^2( \rho \boxtimes V )  & = &  \Sym^2(\rho) \boxtimes \Sym^2(V)  +
       \Lambda^2(\rho) \boxtimes \Lambda^2(V) \\
              \Lambda^2( \rho \boxtimes V )  & = &  \Sym^2(\rho) \boxtimes \Lambda^2(V)  +
              \Lambda^2(\rho) \boxtimes \Sym^2(V),
                  \end{eqnarray*}
  we find that if $\rho$ is irreducible and orthogonal representation of $W(k)$,
  \[  {\rm ord}_{s=0} \left( L(M_A[\rho], N_A[\rho],s) \right)= {\rm ord}_{s=0} \left( L(V ,W,s) \right). \]

  On the other hand, if $\rho$ is an irreducible symplectic representation of $W(k)$, $W$ will be a symplectic representation of (the Arthur)
  $\SL_2(\C)$, and
  $V$ will be an orthogonal representation of (the Arthur) $\SL_2(\C)$, and
    \[  {\rm ord}_{s=0} \left( L( M_A[\rho], N_A[\rho],s) \right)= {\rm ord}_{s=0} \left( L(W ,V,s) \right). \]

  Thus it suffices to prove the theorem assuming that $WD(k)$ acts trivially on $M_A$ and $N_A$. In other words, we have multiplicity-free representations
       \begin{eqnarray*}
      M_A & = & V =  \Sym^{a_1-1}(\C^2)  + \Sym^{a_2-1}(\C^2)  + \cdots +  \Sym^{a_r-1}(\C^2), \\
      N_A & = & W = \Sym^{b_1-1}(\C^2)  + \Sym^{b_2-1}(\C^2)  + \cdots +  \Sym^{b_s-1}(\C^2),
    \end{eqnarray*}
    with $V$ symplectic and $W$ orthogonal, thus with all $a_i$ even, and $b_i$ odd.
\vskip 5pt

    We prove the theorem  using an inductive argument from the `top'. Suppose
 $d$ is the largest integer $a$ such that $ \Sym^{a-1}(\C^2)$ is contained in either $V$ or $W$. We will assume that
    $ \Sym^{d-1}(\C^2)$   appears in $V$,
    a similar argument can be given if $ \Sym^{d-1}(\C^2)$   appears in $W$.
    Suppose $e$ is the largest integer such that $ \Sym^{e-1}(\C^2)$ appears in $W$. By hypothesis,
   all the integers $a_i$ for which $  \Sym^{a_i-1}(\C^2) \subset V$ is even,
   and all the integers $b_i$ for which $ \Sym^{b_i-1}(\C^2) \subset W$ is odd, in particular,
   $d$ is even, $e$ is odd, and $d>e$.
\vskip 5pt

  Define,
    \begin{eqnarray*}
  V' & = & V - \Sym^{d-1}(\C^2), \\
  W' & = & W -   \Sym^{e-1}(\C^2).
    \end{eqnarray*}
We will prove the following proposition, which then allows us to complete the proof of  Theorem \ref{T:interlacing}(ii) by an inductive argument.

\vskip 5pt

\begin{prop}
  With the notation and assumptions as above, in particular $V$ symplectic and $W$ orthogonal multiplicity-free
  representation of the Arthur
  $\SL_2(\C)$,
the order of pole at $s=0$ of $L(V,W,s)$ defined by:
\[ L(V,W, s) = \frac{L(V \otimes W, s + ~1/2)}{L(\Sym^2 V \oplus \wedge^2W, s + 1)}, \] 
is less than or equal to  that of
    $L(V',W',s)$. Equality holds if and only if 
   $d = (e+1)$. 
   \end{prop}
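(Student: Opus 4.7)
The plan is to reduce the proposition to a weight-space computation for the Arthur $\SL_2(\C)$ and then verify the resulting combinatorial identity directly.

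Since $WD(k)$ acts trivially on $V$ and $W$, the associated L-parameters factor through the pullback along $w \mapsto \diag(|w|^{1/2}, |w|^{-1/2})$. For any representation $U$ of the Arthur $\SL_2(\C)$ this yields $L(s, U) = \prod_m \zeta_k(s + m/2)^{\dim U_m}$, where $U_m$ denotes the weight-$m$ space. All poles at $s = 1/2$ and $s = 1$ are therefore simple, with orders $\mathrm{ord}^{\mathrm{pole}}_{s=1/2} L(s, U) = \dim U_{+1}$ and $\mathrm{ord}^{\mathrm{pole}}_{s=1} L(s, U) = \dim U_{+2}$. Writing $V = V' \oplus A$ and $W = W' \oplus B$ with $A = \Sym^{d-1}(\C^2)$ and $B = \Sym^{e-1}(\C^2)$, bilinearity of tensor and (anti)symmetric squares gives
\begin{align*}
\mathrm{ord}^{\mathrm{pole}}_{s=0} L(V,W,s) &- \mathrm{ord}^{\mathrm{pole}}_{s=0} L(V',W',s) \\
&= \dim\bigl(V'\otimes B \oplus A\otimes W' \oplus A\otimes B\bigr)_{+1} \\
&\quad - \dim\bigl(V'\otimes A \oplus W'\otimes B \oplus \Sym^2 A \oplus \wedge^2 B\bigr)_{+2}.
\end{align*}

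I would then compute each of these seven weight-space dimensions via Clebsch-Gordan. Using that $V'$ is a multiplicity-free sum of $\Sym^{a_i-1}(\C^2)$ with $a_i$ even and $a_i \leq d-2$, while $W'$ is a multiplicity-free sum of $\Sym^{b_j-1}(\C^2)$ with $b_j$ odd and $b_j \leq e-2$, direct calculation yields $\dim(\Sym^2 A)_{+2} = d/2$, $\dim(\wedge^2 B)_{+2} = (e-1)/2$, $\dim(A\otimes B)_{+1} = e$, $\dim(V'\otimes A)_{+2} = \sum_i a_i$, and $\dim(V'\otimes B)_{+1} = \sum_i \min(a_i, e)$; crucially the two $W'$-cross-terms cancel exactly,
\[ \dim(A\otimes W')_{+1} = \sum_j b_j = \dim(W'\otimes B)_{+2}, \]
an identity which uses the strict inequality $b_j \leq e-2$ arising from multiplicity-freeness of $W$ after removal of $B$. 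Summing these contributions collapses the difference to
\[ -\frac{d - e - 1}{2} \; - \; \sum_{i:\, a_i \geq e+1}(a_i - e), \]
a sum of two manifestly non-positive terms, which proves the asserted inequality.

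For the equality statement, $d$ even and $e$ odd with $d > e$ force $(d-e-1)/2$ to be a non-negative integer vanishing precisely when $d = e+1$. When $d = e+1$, the constraint $a_i \leq d - 2 = e - 1$ on summands of $V'$ automatically empties the second sum; conversely when $d > e+1$ the first term is already strictly negative. Hence equality holds if and only if $d = e+1$. The main obstacle in executing this plan is the careful parity bookkeeping in the Clebsch-Gordan expansions; the most delicate point is the exact matching of the two $W'$-cross-terms, which depends on the strict inequality $b_j \leq e-2$ guaranteed by the multiplicity-freeness hypothesis.
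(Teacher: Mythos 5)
Your argument is correct, and it is the paper's argument recast in the cleaner language of $\SL_2(\C)$-weight spaces: the decomposition $V = V' \oplus A$, $W = W' \oplus B$ with $A = \Sym^{d-1}(\C^2)$ and $B = \Sym^{e-1}(\C^2)$ is identical, and your groupings of weight-space terms correspond exactly to the paper's factorization of $L(V,W,s)/L(V',W',s)$ into $A(s)B(s)C(s)$. Specifically, your contribution $e - \dim(\Sym^2 A)_{+2} - \dim(\wedge^2 B)_{+2} = -(d-e-1)/2$ is the paper's analysis of $A(s)$; your cancellation $\dim(A \otimes W')_{+1} = \sum_j b_j = \dim(W' \otimes B)_{+2}$ is the observation that $B(s)$ has neither zero nor pole at $s=0$; and your term $\dim(V'\otimes B)_{+1} - \dim(V'\otimes A)_{+2} = -\sum_{a_i > e}(a_i-e)$ is the analysis of $C(s)$.

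On that third factor your explicit formula is in fact sharper than the paper's estimate: the paper asserts that the order of zero of $C(s)$ at $s=0$ is at most $(d-e-1)/2$, whereas your computation gives it exactly as $\sum_{a_i > e}(a_i - e)$, which can exceed that bound (e.g.\ $d=6$, $e=1$, $V' = \Sym^3(\C^2) \oplus \Sym^1(\C^2)$ gives order $3+1=4$, not $\le 2$). The paper's overstatement is harmless because only the non-negativity of that order is used, but your closed formula is the correct one and makes both the inequality and the equality criterion immediate. One small precision worth adding: multiplicity-freeness of $W$ gives only the strict inequality $b_j < e$; it is the parity of $b_j$ and $e$ (both odd) that upgrades this to $b_j \le e-2$, and that gap is what ensures $e-b_j \ge 2$, so the bottom Clebsch--Gordan summand $\Sym^{e-b_j}(\C^2)$ of $\Sym^{b_j-1}(\C^2)\otimes\Sym^{e-1}(\C^2)$ still contributes to the weight-$2$ space; the other cross-term $A\otimes W'$ needs only $d>b_j$, which is automatic.
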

\vskip 5pt

\begin{proof}
Clearly,
\begin{eqnarray*} 
V \otimes W & =  &  V' \otimes W' +  W' \otimes  \Sym^{d-1}(\C^2) + V' \otimes
   \Sym^{e-1}(\C^2)+  \Sym^{d-1}(\C^2)\otimes \Sym^{e-1}(\C^2) , \\
  \Sym^2(V) & = & \Sym^2(V') + \Sym^2 (\Sym^{d-1}(\C^2)) +   V' \otimes \Sym^{d-1}(\C^2) , \\
  \Lambda^2(W) & = & \Lambda^2(W') + \Lambda^2 (  \Sym^{e-1}(\C^2)) +   W' \otimes  \Sym^{e-1}(\C^2).
  \end{eqnarray*}
  
\noindent Therefore, 
\[  \frac{L(V,W,s)}{ L(V',W',s)} = \] 
\[ \frac{ L(
  W'\otimes  \Sym^{d-1}(\C^2) + V' \otimes  \Sym^{e-1}(\C^2) +  \Sym^{d-1}(\C^2)\otimes \Sym^{e-1}(\C^2), s+\frac{1}{2}, )}
{L( W' \otimes \Sym^{e-1}(\C^2)  + V' \otimes \Sym^{d-1}(\C^2) +   \Sym^2 \Sym^{d-1}(\C^2) + \Lambda^2 \Sym^{e-1}(\C^2), s+1)}.\]
\vskip 10pt

We will analyze the order of pole at $s=0$ of the following L-functions,

\begin{eqnarray*}  A(s) & = & \frac{ L(
    \Sym^{d-1}(\C^2)\otimes \Sym^{e-1}(\C^2),s+\frac{1}{2} )}
     {L(\Sym^2  \Sym^{d-1}(\C^2) + \Lambda^2 \Sym^{e-1}(\C^2) , s+1)}, \\
      B(s) & = & \frac{ L(
  W'\otimes  \Sym^{d-1}(\C^2), s+\frac{1}{2})}
          {L( W'\otimes  \Sym^{e-1}(\C^2), s+1)}, \\
          C(s) & = & \frac{ L(
   V' \otimes \Sym^{e-1}(\C^2), s+\frac{1}{2})}
{L(  V' \otimes  \Sym^{d-1}(\C^2), s+1)}. \end{eqnarray*}

\vskip 5pt

\begin{itemize}
\item Analyzing $A(s)$: 
 Since $d$ is even, $e$ is odd, and $d>e$,  $A(s)$ has a zero at $s=0$ of order
 $$ a = -e+ (d+e-1)/2 = (d-e-1)/2.$$
\vskip 5pt

\item Analyzing $B(s)$: 
   Again, since $d$ is even, $e$ is odd, and $d>e$,
 and $e$ is bigger than all integers $b$ for which
 $\Sym^{b-1}(\C^2)   $ is contained in $W'$, it follows that, in writing $W'$ as a sum of irreducible
 pieces  $\Sym^{b-1}(\C^2)$'s, the $ \Sym^{b-1}(\C^2)   $'s contribute  the same number of
 poles in the denominator as in the numerator of $B(s)$. Therefore
  $B(s)$ has neither a zero nor a pole at $s=0$.
\vskip 5pt

\item Analyzing $C(s)$:
  Again, since $d$ is even, $e$ is odd, and $d>e$, the number poles at $s=0$ in the
 denominator of $C(s)$ is $\geq $    the number of poles at $s=0$ in the
 numerator of $C(s)$, the difference is contributed by  those $\Sym^{t-1}(\C^2)$, with $d> t >e$ and $t$ even,
 which may belong to $V'$.
  Thus we get  
 $$0 \leq c \leq   (d-e-1)/2,$$
 for the number of zeros for the factor $C(s)$ at $s=0$.
\end{itemize}
 \vskip 5pt
 
 In conclusion, we find that
 \[ {\rm ord}_{s=0} \left(  A(s)B(s)C(s) \right) \geq 0. \]
 Moreover, if  $A(s)B(s)C(s)$ has no zero at $s=0$, then we must have $d= e+1$. Conversely,  if $d=e+1$,
 then there are no zeros or poles for $A(s)B(s)C(s)$ at $s=0$,
completing the proof of the proposition. 
\end{proof}
\vskip 10pt

%\begin{example} Let,
%\begin{enumerate}
%\item $\sigma_1 =  \Sym^{k-1}(\C^2),$
 % \item $\sigma_2=  \Sym^{\ell-1}(\C^2),$
%\end{enumerate}
%with $k$ even and $\ell$ odd, so that $\sigma_1$ is symplectic and $\sigma_2$ orthogonal. In this case, $L(s+1/2, \sigma_1 \times \sigma_2)$ has $\min(k,\ell)$ order of pole at $s=0$, whereas  $L(s+1, \Sym^2 \sigma_1 + \Lambda^2 \sigma_2) $ has $k/2+ (\ell -1)/2$ order of pole. 
%Therefore, $$L(s,\sigma_1,\sigma_2) 
%=  \frac{L(s+1/2, \sigma_1\otimes \sigma_2)}{
 % L(s+1, \Sym^2 \sigma_1) L(s+1, \Lambda^2 \sigma_2) }, $$
%has neither a zero nor a pole at $s=0$ if and only if $k=\ell \pm 1$. \end{example}

   There seems no simple generalization of Theorem \ref{T:interlacing} to all discrete A-parameters of classical groups --- although it would be highly desirable. We give two examples of its failure, beginning with one in which we
  allow general tempered parts for $M_A$ and $N_A$,  keeping other conditions in Theorem \ref{T:interlacing} intact. Let
\begin{eqnarray*}
  M_A & = & [10] \times [1], \\
  N_A & = & [5]\times [1] + [1] \times [7] + [1] \times [9].
  \end{eqnarray*}
It can be seen
that
$$L(M,N,s) 
=  \frac{L(
  M\otimes N,s+1/2)}{
  L(\Sym^2 M, s+1) L( \Lambda^2 N, s+1) }, $$
has neither a zero nor a  pole at $s=0$, even though the pair $(M_A, N_A)$ is irrelevant.
  
\vskip 10pt

  As second example, consider the relevant pair of A-parameters
  \begin{eqnarray*}
  M_A & = & [3] \times [4] + [5] \times [4], \\
  N_A & = & [3] \times [3] + [5] \times [5].
  \end{eqnarray*}
    These satisfy all the
  conditions in Theorem \ref{T:interlacing} except that the Deligne $\SL_2(\C)$ does not act trivially. It can be seen that in this case,
  $L(
  M \otimes N, s+1/2)$ has a pole of order 25, whereas  $ L(\Sym^2 M,s+1) L( \Lambda^2 N,s+1) $ has a pole of order 20, so
    in this case $L( M,N,s)$
has a pole at $s=0$ of order 5.  

\end{document}